\newcommand{\Mat}{\operatorname{M}}
\newcommand{\Mats}{\operatorname{S}}
\newcommand{\Mata}{\operatorname{A}}
\newcommand{\GL}{\operatorname{GL}}
\newcommand{\Ker}{\operatorname{Ker}}
\newcommand{\Vect}{\operatorname{span}}
\newcommand{\im}{\operatorname{Im}}
\newcommand{\urk}{\operatorname{urk}}
\newcommand{\rk}{\operatorname{rk}}
\newcommand{\codim}{\operatorname{codim}}
\renewcommand{\setminus}{\smallsetminus}
\def\F{\mathbb{F}}
\def\K{\mathbb{K}}
\def\calH{\mathcal{H}}
\def\calJ{\mathcal{J}}
\def\calL{\mathcal{L}}
\def\calR{\mathcal{R}}
\def\calS{\mathcal{S}}
\def\calT{\mathcal{T}}
\def\calU{\mathcal{U}}
\def\calV{\mathcal{V}}
\def\calW{\mathcal{W}}
\def\calX{\mathcal{X}}
\def\lcro{\mathopen{[\![}}
\def\rcro{\mathclose{]\!]}}
\theoremstyle{definition}
\newtheorem{Def}{Definition}[section]
\newtheorem{Not}[Def]{Notation}
\theoremstyle{plain}
\newtheorem{theo}{Theorem}[section]
\newtheorem{prop}[theo]{Proposition}
\newtheorem{cor}[theo]{Corollary}
\newtheorem{lemma}[theo]{Lemma}
\newtheorem{claim}{Claim}
\theoremstyle{plain}
\theoremstyle{remark}
\newtheorem{Rems}{Remarks}
\newtheorem{Rem}[Rems]{Remark}
\title{Large spaces of bounded rank matrices revisited}
\author{Cl\'ement de Seguins Pazzis\footnote{Universit\'e de Versailles Saint-Quentin-en-Yvelines, Laboratoire de Math\'ematiques
de Versailles, 45 avenue des Etats-Unis, 78035 Versailles cedex, France}
\footnote{e-mail address: dsp.prof@gmail.com}}
\begin{document}

\thispagestyle{plain}

\maketitle

\begin{abstract}
Let $n,p,r$ be positive integers with $n \geq p\geq r$. A rank-$\overline{r}$ subset
of $n$ by $p$ matrices (with entries in a field) is a subset in which every matrix has  rank less than or equal to
$r$. A classical theorem of Flanders states that the dimension of a rank-$\overline{r}$ linear subspace must be less than or equal to $nr$,
and it characterizes the spaces with the critical dimension $nr$.
Linear subspaces with dimension close to the critical one were later studied by Atkinson, Lloyd and Beasley over
fields with large cardinality; their results were recently extended to all fields \cite{dSPclass}.

Using a new method, we obtain a classification of rank-$\overline{r}$ affine subspaces with large dimension, over all fields.
This classification is then used to double the range of (large) dimensions for which the structure of rank-$\overline{r}$ linear subspaces
is known for all fields.
\end{abstract}

\vskip 2mm
\noindent
\emph{AMS Classification:} 15A03, 15A30.

\vskip 2mm
\noindent
\emph{Keywords:} Rank, Bounded rank space, Flanders's theorem, Dimension, Compression space.

\section{Introduction}

\subsection{The context}

Throughout the text, we fix an arbitrary field and denote it by $\K$.
Given non-negative integers $n$ and $p$, we denote by $\Mat_{n,p}(\K)$ the set of all matrices with $n$ rows, $p$ columns and entries in $\K$.
The rank of a matrix $M$ is denoted by $\rk(M)$, while the transpose of $M$ is denoted by $M^T$. We set $\Mat_n(\K):=\Mat_{n,n}(\K)$ and
we denote by $\GL_n(\K)$ the group of units of the ring $\Mat_n(\K)$.
We denote by $T_n^+(\K)$ the subspace of all upper-triangular matrices of $\Mat_n(\K)$.

Subsets $\calV$ and $\calW$ of $\Mat_{n,p}(\K)$ are called \textbf{equivalent} when there exist invertible matrices $P \in \GL_n(\K)$ and
$Q \in \GL_p(\K)$ such that $\calV=P\,\calW\,Q$ (in other words, $\calV$ and $\calW$ represent, in a different choice of bases,
the same set of linear transformations from a $p$-dimensional vector space to an $n$-dimensional vector space).

The \textbf{upper-rank} of a non-empty subset $\calV$ of $\Mat_{n,p}(\K)$, denoted by $\urk \calV$,
is defined as the maximal rank among the matrices of $\calV$.
Given a non-negative integer $r \in \lcro 0,\min(n,p)\rcro$,
a \textbf{rank-$\overline{r}$} subset of $\Mat_{n,p}(\K)$ is a subset $\calV$ such that $\urk \calV \leq r$.
A classical example of such subsets is the so-called compression spaces:
given integers $s \in \lcro 0,n\rcro$ and $t \in \lcro 0,p\rcro$, one defines
$$\calR(s,t):=\biggl\{\begin{bmatrix}
A & C \\
B & [0]_{(n-s) \times (p-t)}
\end{bmatrix} \mid A \in \Mat_{s,t}(\K), \; B \in \Mat_{n-s,t}(\K), \; C \in \Mat_{s,p-t}(\K)\biggr\}.$$
If $s+t \leq \min(n,p)$, then one checks that
$\calR(s,t)$ is a rank-$\overline{s+t}$ linear subspace of $\Mat_{n,p}(\K)$ with dimension $nt+s(p-t)$.
A rank-$\overline{r}$ \textbf{compression space} is a matrix subspace of $\Mat_{n,p}(\K)$ that is equivalent to
$\calR(s,t)$ for some non-negative integers $s$ and $t$ such that $s+t=r$ and $r \leq \min(n,p)$.
A subset $\calV$ of $\Mat_{n,p}(\K)$ is called \textbf{$r$-decomposable} when it
is included in a rank-$\overline{r}$ compression space: in terms of operators, this means that there are non-negative integers $s$ and $t$
such that $s+t=r$, a $(p-t)$-dimensional linear subspace $G$ of $\K^p$ and an $s$-dimensional linear subspace $H$ of $\K^n$ such that
every matrix of $\calV$ maps $G$ into $H$.

Of course, every subset of a rank-$\overline{r}$ subset is also a rank-$\overline{r}$ subset,
and every subset that is equivalent to a rank-$\overline{r}$ subset is a rank-$\overline{r}$ subset.
Thus, in trying to understand the structure of rank-$\overline{r}$ subspaces, one should focus on the equivalence classes of the
\emph{maximal} ones. It is easy to prove that if $s+t=r$, then
$\calR(s,t)$ is a maximal rank-$\overline{r}$ affine subspace and the compression spaces $\calR(i,r-i)$, for $i \in \lcro 0,r\rcro$,
are pairwise inequivalent. However, not every maximal rank-$\overline{r}$ linear subspace is
a compression space. A classical example is the one where $n$ is odd and greater than $1$, and where $p=n$ and $r=n-1$:
then, the space $\Mata_n(\K)$ of all \emph{alternating} $n$ by $n$ matrices is a maximal rank-$\overline{n-1}$ linear subspace
of $\Mat_n(\K)$ (see \cite{FillmoreLaurieRadjavi} for fields with more than $2$ elements, and \cite{dSPprimitiveF2} for fields with two elements);
yet it is easily checked that it is not $(n-1)$-decomposable.

Classifying the maximal rank-$\overline{r}$ subspaces is generally viewed as an intractable problem.
To get meaningful results, one needs to restrict the scope of the research.
One such possible restriction is to focus on small values of $r$ only:
solutions to this problem are known for $r \leq 3$ except for very small fields (see \cite{AtkinsonPrim}). For general values of $r$, another approach
is to focus on the so-called primitive subspaces \cite{AtkLloydPrim,EisenbudHarris}; this approach is generally well-suited
to classify rank-$\overline{r}$ spaces for small values of $r$, but it also has surprising connections
with the topic of large spaces of nilpotent matrices \cite{dSPAtkinsontoGerstenhaber}.
Finally, the most classical approach, which dates back to works of Dieudonn\'e \cite{Dieudonne} and Flanders \cite{Flanders}, consists in studying the rank-$\overline{r}$ subspaces with \emph{large} dimension: in this article, we shall follow this approach.

The basic result in the theory of large spaces of bounded rank matrices is the following one.

\begin{theo}[Flanders's theorem]\label{flanderstheo}
Let $n \geq p \geq r$ be non-negative integers.
Let $\calV$ be an affine subspace of $\Mat_{n,p}(\K)$ with $\urk \calV \leq r$. \\
Then,
$$\dim \calV \leq nr.$$
Moreover, if $\dim \calV=nr$ then:
\begin{enumerate}[(i)]
\item Either $\calV$ is equivalent to $\calR(0,r)$;
\item Or $n=p$ and $\calV$ is equivalent to $\calR(r,0)$;
\item Or $(n,p,r)=(2,2,1)$, $\# \K=2$ and $\calV$ is equivalent to the affine space
$$\calU_2(\K):=\biggl\{
\begin{bmatrix}
x & y \\
0 & x+1
\end{bmatrix} \mid (x,y)\in \K^2\biggr\}.$$
\end{enumerate}
\end{theo}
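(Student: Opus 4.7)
My approach is an induction on $r$, with trivial base case $r=0$ (forcing $\calV=\{0\}$). For the inductive step, first note that if $\urk \calV<r$, the inductive hypothesis with smaller parameter produces $\dim \calV \le n(r-1) < nr$, which handles both the strict bound and the fact that no equality case arises. So assume $\urk \calV=r$, pick $A \in \calV$ with $\rk A=r$, and use left-right equivalence (which preserves dimension and the rank hypothesis) to normalize $A$ to $J_r:=\begin{pmatrix} I_r & 0 \\ 0 & 0\end{pmatrix}$. Write $\calV=J_r+V$ with $V$ the linear direction subspace.

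The heart of the argument is to convert the rank constraint into algebraic relations on elements of $V$. For every $M \in V$ and every $\lambda \in \K$, the matrix $J_r+\lambda M$ lies in $\calV$ by the affine structure and so has rank at most $r$. Decomposing $M=\begin{pmatrix} X & Y \\ Z & W\end{pmatrix}$ in the block pattern of $J_r$, the Schur complement formula (valid whenever $I_r+\lambda X$ is invertible) gives
\[
\rk(J_r+\lambda M)=r+\rk\bigl(\lambda W-\lambda^2 Z(I_r+\lambda X)^{-1}Y\bigr),
\]
so the matrix inside the second rank vanishes for all such $\lambda$. Over an infinite field, expanding in powers of $\lambda$ yields $W=0$ together with the hierarchy of bilinear relations $ZX^kY=0$ for every $k\ge 0$, for each $M \in V$; for a finite field the identity holds only at the available values of $\lambda$ and needs a direct argument.

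Once $W=0$ is established for every $M \in V$, each element of $\calV$ has a zero south-east $(n-r)\times(p-r)$ block. To sharpen the crude bound $\dim \calV \le nr+r(p-r)$ that this directly produces, I combine the relations $ZX^kY=0$ with an inductive dimension count, projecting $V$ onto its blocks and invoking the inductive hypothesis on rank-bounded auxiliary subspaces living in smaller matrix spaces. The equality cases then correspond to the extreme configurations in which the $Y$- and $Z$-blocks cannot coexist nontrivially: up to equivalence, either all $Y$-blocks vanish, placing $\calV$ in $\calR(0,r)$ of dimension $nr$, or all $Z$-blocks vanish, placing $\calV$ in $\calR(r,0)$ of dimension $rp$ (which matches $nr$ only when $n=p$).

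The principal obstacle will be the finite-field case: when $\K$ is too small, the polynomial identity step breaks down, and the exceptional affine family $\calU_2(\F_2)$ in case (iii) arises precisely because the two available scalars in $\F_2$ are insufficient to force the Schur complement expression to vanish as a polynomial. A separate, bespoke analysis of the tiny configuration $(n,p,r)=(2,2,1)$ over $\F_2$ will be needed to confirm that $\calU_2(\F_2)$ is indeed the only sporadic example.
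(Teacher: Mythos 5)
The paper does not prove Flanders's theorem; it is stated as background and attributed to \cite{Flanders} (linear subspaces, $\#\K>r$), \cite{Meshulam} (arbitrary fields, linear subspaces), and \cite{affpres} (affine subspaces, arbitrary fields). There is therefore no in-paper proof to compare against, so I assess your attempt on its own merits.

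Your sketch leaves two essential gaps. First, the passage from the relations $W=0$ and $ZX^kY=0$ to the bound $\dim\calV\le nr$ is the actual content of the theorem, and you replace it with the sentence ``I combine the relations $ZX^kY=0$ with an inductive dimension count, projecting $V$ onto its blocks and invoking the inductive hypothesis.'' After $W=0$ one only gets $\dim V\le\dim\calR(r,r)=nr+r(p-r)$, which overshoots by $r(p-r)$. Closing that gap, and then separating the two equality families $\calR(0,r)$ and $\calR(r,0)$, requires a genuine argument (for instance, polarizing $ZY=0$ and exploiting the resulting bilinear orthogonality between the $Y$- and $Z$-block images, in the spirit of the proof of Lemma~\ref{3rdkey} in this paper applied to the full translation space rather than to $S_H$ and $S^D$). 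As written, the inequality is asserted, not proved.

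Second, and decisively for the statement as given, the Schur-complement/power-series step yields $W=0$ and $ZX^kY=0$ only when $\K$ is large enough: the coefficient-extraction is exactly Flanders's original method under the hypothesis $\#\K>r$, and over small fields it fails at the very first step. Already over $\F_2$ with $r=1$, the two available values of $\lambda$ do not force $W=0$; indeed $\calU_2(\F_2)=E_{1,1}+V$ has $W\not\equiv 0$ on $V$, which is exactly why the exceptional case (iii) exists. You acknowledge this but dismiss it as needing ``a direct argument'' plus a spot check of the single configuration $(n,p,r)=(2,2,1)$. In fact the small-field case for \emph{all} $(n,p,r)$ is where the substance of the theorem lies (it is why Meshulam's paper was needed for the linear case and why the affine extension to all fields came later still), and it is not localized to one configuration. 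Without a replacement for the polynomial-in-$\lambda$ step, the proof does not cover the generality in which the theorem is stated and used throughout the paper.
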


Actually, Flanders \cite{Flanders} only proved the above result for linear subspaces and under the assumption $\# \K>r$.
The rationale for his cardinality assumption stems from Flanders's use of polynomials that are constructed by
considering minors of matrices: the typical argument is to consider two matrices $A$ and $B$ in a linear subspace
$\calS$ and, if $A$ has rank $r$, to write that all the $r+1$ by $r+1$ minors of $A+tB$ are zero whatever the choice of $B$.
Then, by carefully choosing such minors, one obtains precious information on the shape of $B$ (typically, one basic information is
that $B$ maps the kernel of $A$ into the range of $A$). However, such methods are not suited to small finite
fields.

Interestingly, Dieudonn\'e \cite{Dieudonne} had established Theorem \ref{flanderstheo} earlier in the special case when $n=p$
and $r=n-1$ (i.e.\ he considered affine spaces of square singular matrices) for all fields.
It is only much later that Meshulam \cite{Meshulam} managed to remove Flanders's cardinality assumption for linear subspaces.
Later still, the generalization to affine spaces was achieved \cite{affpres}, and
even more recently the case of general division rings was encompassed \cite{dSPFlandersskew}.

Before we go on, it is important to discuss the relevance of considering general affine subspaces rather than
just linear subspaces. At first glance, this extension might seem gratuitous.
It is definitely the case that in almost every theorem dealing with spaces of matrices with rank conditions, only linear subspaces are considered.
Yet, in most of them the results would be dramatically different if one were to consider affine subspaces instead of linear subspaces.
A prime example is the one of subspaces of square matrices in which all the non-zero matrices are invertible: in it,
the maximal dimension is less than or equal to $n$ if one considers linear subspaces, and $\frac{n(n-1)}{2}$ if one considers affine subspaces. Now, in the case of our problem the results are not significantly different if one considers affine subspaces, and under
the popular cardinality assumption $\# \K>r$ they happen to be straightforward consequences of the results on linear subspaces!
Indeed, let $\calV$ be a rank-$\overline{r}$ affine subspace of $\Mat_{n,p}(\K)$ that does not contain the zero matrix,
denote by $V$ its translation vector space and assume that $\# \K>r$.
If an $(r+1)$-homogeneous polynomial on $\Mat_{n,p}(\K)$ vanishes everywhere on $\calV$ then it must vanish everywhere on $V$.
Applying this to all $r+1$ by $r+1$ minors, we deduce that all the matrices of $V$ have rank at most $r$,
and it follows that the linear subspace spanned by $\calV$ has upper-rank at most $r$.
Thus, under the assumption $\# \K>r$, rank-$\overline{r}$ affine subspaces are just affine subspaces of rank-$\overline{r}$ linear subspaces!
However, for small finite fields this result fails, as is demonstrated by the case when $\# \K=2$ and $\calV=\calU_2(\K)$.

Apart from the challenge that it poses, one might also wonder about the underlying motivation for this extension to affine subspaces.
Here is one, to start with: the first statement in Flanders's theorem can be restated as saying that
a linear subspace $\calV$ of $\Mat_{n,p}(\K)$ contains at least one matrix with rank greater than $r$ provided that its dimension is greater than $nr$.
Now, a natural extension is to ask ``how many" matrices with large rank can be expected to be found in such a large subspace.
In terms of linear algebra, a natural way of formulating that problem is to look at the \emph{span} of the matrices with rank more than $r$
in $\calV$. This is where affine subspaces come into play: if that span is not the whole of $\calV$, then
it must be included in a linear hyperplane $H$ of $\calV$; then, by choosing a matrix $A \in \calV \setminus H$,
we construct the affine hyperplane $A+H$ of $\calV$; this hyperplane does not go through zero, and it has upper-rank at most $r$.
With that line of reasoning, one can derive from Flanders's theorem that if a linear subspace $\calV$ of $\Mat_{n,p}(\K)$ has dimension greater than $nr$, then it is spanned by its matrices with rank greater than $r$, unless $n=p=2$, $r=1$, $\# \K=2$ and
$\calV$ is equivalent to $T_2^+(\K)$ (see Propositions 1 and 2 from \cite{dSPlargerank}).

Understanding when large spaces are spanned by their matrices of large rank was the reason that got us
interested in affine subspaces in the first place. Yet, by working on the topic, we slowly came to realize that
\emph{enlarging the discussion to affine subspaces is fundamental to the understanding of linear subspaces over finite fields.}

\vskip 4mm
Let us come back to our general problem.
In Flanders's theorem, rank-$\overline{r}$ spaces with the critical dimension $nr$ are fully understood.
Could it be that every rank-$\overline{r}$ space $\calV$ whose dimension is close enough to the critical
one is equivalent to a subspace of $\calR(r,0)$ or of $\calR(0,r)$?
If so, what is the optimal lower bound on its dimension for $\calV$ to have that property?

The question was partly answered as follows in the nineteen eighties.

\begin{theo}[Atkinson-Lloyd-Beasley]
Let $n \geq p \geq r$ be positive integers. Assume that $\# \K>r$.
Let $V$ be a rank-$\overline{r}$ linear subspace of $\Mat_{n,p}(\K)$
such that
$$\dim V \geq nr-(n-p+r)+1.$$
Then, $V$ is equivalent to a subspace of one of the spaces $\calR(r,0)$, $\calR(r-1,1)$, $\calR(1,r-1)$ or $\calR(0,r)$.
\end{theo}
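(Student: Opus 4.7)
My plan is to classify $V$ in three stages: find a rank-$r$ pivot, trap $V$ in the L-shaped ambient space associated to that pivot, and then perform a Schur-complement based case analysis inside that ambient space.

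First, since $p \geq r$, the hypothesis yields $\dim V > n(r-1)$, so Flanders's theorem applied at rank $r-1$ rules out $\urk V \leq r-1$. Hence $V$ contains some $A$ with $\rk A = r$, and up to equivalence I may assume $A = J_r := \bigl[\begin{smallmatrix} I_r & 0 \\ 0 & 0 \end{smallmatrix}\bigr]$.

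Next, I would write each $B \in V$ in block form $\bigl[\begin{smallmatrix} B_1 & B_2 \\ B_3 & B_4 \end{smallmatrix}\bigr]$ with $B_1 \in \Mat_r(\K)$, and show that $B_4 = 0$; i.e., $V \subseteq \calL := \{B : B_4 = 0\}$. For indices $i, j$, the $(r+1) \times (r+1)$ minor of $J_r + tB$ on rows $\{1,\dots,r,r+i\}$ and columns $\{1,\dots,r,r+j\}$ is a polynomial $m_{ij}(t) \in \K[t]$ vanishing on all of $\K$. Factoring the common $t$ out of the last row writes $m_{ij}(t) = t \cdot q_{ij}(t)$ with $q_{ij}(0) = B_4[i, j]$, and one computes that the leading $t^r$ coefficient of $q_{ij}$ equals the $(r+1) \times (r+1)$ minor of $B$ itself on the same rows and columns, which vanishes because $\urk V \leq r$. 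So $\deg q_{ij} \leq r - 1$, and $q_{ij}$ vanishes on the $\# \K - 1 \geq r$ points of $\K \setminus \{0\}$; the hypothesis $\# \K > r$ then forces $q_{ij} \equiv 0$, whence $B_4[i, j] = 0$.

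Inside $\calL$, the crux of the proof begins. Whenever $B_1 \in \GL_r(\K)$, the Schur-complement identity gives $\rk B = r + \rk(B_3 B_1^{-1} B_2)$, so the rank constraint becomes the bilinear relation $B_3 B_1^{-1} B_2 = 0$. I would analyze the two projections $\pi_T \colon B \mapsto [B_1 \mid B_2]$ onto $\Mat_{r, p}(\K)$ and $\pi_L \colon B \mapsto \bigl[\begin{smallmatrix} B_1 \\ B_3 \end{smallmatrix}\bigr]$ onto $\Mat_{n, r}(\K)$, whose kernels lie in a single corner block of dimension at most $r(n - r)$ and $r(p - r)$ respectively. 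A dimension count combined with Flanders-type arguments applied recursively to $\pi_T(V)$ and $\pi_L(V)$ should yield a trichotomy: (a) $\pi_L(V)$ has upper-rank $\leq r - 1$, and kernel analysis traps $V$ inside $\calR(r, 0)$; (b) symmetrically $V \subseteq \calR(0, r)$; or (c) both projections contain rank-$r$ matrices, in which case propagating the bilinear constraint along a pencil of such matrices extracts either a common $1$-dimensional target subspace of $\K^n$ (giving $V \subseteq \calR(1, r - 1)$) or a common source hyperplane of $\K^p$ (giving $V \subseteq \calR(r - 1, 1)$). The main obstacle is the final extraction in case (c); it is eased by the observation that the threshold $nr - n + p - r + 1$ equals $\dim \calR(1, r - 1)$ and strictly exceeds $\dim \calR(s, r - s)$ for $2 \leq s \leq r - 2$ (the function $s \mapsto n(r - s) + s(p - r + s)$ is a convex quadratic), so intermediate compression types are ruled out automatically by the dimension bound, and only the four extremal structures remain as candidates for $V$.
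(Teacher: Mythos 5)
The paper does not reprove this theorem; it simply cites Atkinson--Lloyd (square case) and Beasley (rectangular case) and then develops a different machinery (the ERC/EC/ER decompositions combined with range-compatible maps) to prove generalizations over all fields. So there is no in-paper proof to compare against, and your proposal has to be judged on its own.

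Your Steps 1 and 2 are correct and are exactly Flanders's original argument. The dimension bound gives $\dim V > n(r-1)$, hence a rank-$r$ pivot $J_r$; the factorization $m_{ij}(t) = t\,q_{ij}(t)$, the identification $q_{ij}(0) = B_4[i,j]$, and the observation that the $t^r$-coefficient of $q_{ij}$ is the $(r+1)\times(r+1)$ minor of $B$ on those rows and columns (hence zero) are all right, and $\#\K > r$ gives $\geq r$ nonzero evaluation points to kill the degree-$\leq r-1$ polynomial $q_{ij}$. But Step 3 has a genuine gap: from the Schur-complement relation $B_3 B_1^{-1} B_2 = 0$, available only at the matrices $B \in V$ with $B_1 \in \GL_r(\K)$, you assert that ``a dimension count combined with Flanders-type arguments applied recursively to $\pi_T(V)$ and $\pi_L(V)$ should yield a trichotomy'' and that ``propagating the bilinear constraint along a pencil of such matrices extracts'' a common target line or source hyperplane. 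That propagation is precisely the substantive content of Atkinson--Lloyd's proof and is not carried out here; nothing in your sketch explains how to get from the pointwise bilinear relations at invertible pivots to a single pair $(G, H)$ with $V\cdot G \subseteq H$ valid for every matrix of $V$ (including those with singular $B_1$), nor how the recursive Flanders applications to the projections lead to cases (a), (b), (c) as claimed. The closing convexity remark, while correct, is a posteriori: it narrows the possible types $\calR(s, r-s)$ that can \emph{contain} a subspace of the given dimension, but it says nothing about whether $V$ lies in any rank-$\overline{r}$ compression space at all, which is the whole point. As written, the argument does not reach the conclusion.
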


The square case was achieved by Atkinson and Lloyd \cite{AtkLloyd} and was later used by Beasley \cite{Beasley}
to derive the general case. In the works of those authors, the cardinality assumption has the same rationale
as in Flanders's, their proofs being based upon Flanders's line of reasoning.
The case of an arbitrary field remained an open problem for over twenty years, in part due to the existence of a counter-example
that appeared in \cite{Meshulam}: if $\# \K=2$, the space
$$\calJ_3(\K):=\left\{\begin{bmatrix}
a & c & d \\
0 & b & e \\
0 & 0 & a+b
\end{bmatrix} \mid (a,b,c,d,e)\in \K^5\right\}$$
of all upper-triangular trace-zero matrices over $\K$ is a rank-$\overline{2}$ space but it is easy to check
that it is not $2$-decomposable.
The cardinality assumption was finally removed in \cite{dSPclass}, where we proved that the example from Meshulam's article
is the only exception up to equivalence. Below is the precise result.

\begin{theo}[de Seguins Pazzis]
Let $n \geq p \geq r$ be positive integers.
Let $V$ be a rank-$\overline{r}$ linear subspace of $\Mat_{n,p}(\K)$
such that
$$\dim V \geq nr-(n-p+r)+1.$$
Then:
\begin{enumerate}[(a)]
\item Either $V$ is equivalent to a subspace of one of the spaces $\calR(r,0)$, $\calR(r-1,1)$, $\calR(1,r-1)$ or $\calR(0,r)$;
\item Or $(n,p,r)=(3,3,2)$, $\# \K=2$ and $V$ is equivalent to $\calJ_3(\K)$.
\end{enumerate}
\end{theo}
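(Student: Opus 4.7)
The strategy is to leverage the classification of rank-$\overline{r}$ \emph{affine} subspaces of large dimension established earlier in the paper (the paper's central technical novelty), and to reduce the linear problem to an affine one by slicing. Since $V$ passes through $0$, it cannot be fed directly to the affine classification; instead, for each linear hyperplane $H$ of $V$ and each $A \in V \setminus H$, the affine hyperplane $\calH := A + H$ has upper rank at most $r$, does not contain the zero matrix, and has dimension $\dim V - 1 \geq nr - (n-p+r)$. This is precisely the range the affine classification is designed to handle.

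Concretely, I would first apply the affine classification to such a slice $\calH$. The conclusion should be that either $\calH$ is, up to equivalence, contained in one of the compression spaces $\calR(s,t)$ with $s+t=r$, or $\calH$ belongs to a short list of low-dimensional exceptional configurations over $\F_2$ (analogous to the appearance of $\calU_2(\K)$ in Flanders's theorem). Second, I would extract enough information by varying the choice of $H$ (two generic slices should suffice) and pull the resulting compression-space structure back to $V$ by linearity. The dimension hypothesis should then constrain the admissible pairs $(s,t)$ to the four cases $\{(r,0),(r-1,1),(1,r-1),(0,r)\}$, which is exactly conclusion (a) of the theorem.

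The hard part will be coping with the small-field exceptions returned by the affine classification. If some slice $\calH$ falls into one of those exceptional cases, one must prove either that enough of the other slices still force $V$ into a standard compression space, or else that $(n,p,r) = (3,3,2)$, $\# \K = 2$, and $V$ is equivalent to $\calJ_3(\K)$. Identifying $\calJ_3(\K)$ as the unique non-decomposable obstruction will require a dedicated low-dimensional analysis over $\F_2$: one should probably exploit that $\calJ_3(\K)$ is the trace-zero hyperplane of $T_3^+(\K)$, use the known classification of rank-$\overline{2}$ primitive subspaces to pin down $V$ up to equivalence, and rule out every other candidate of dimension $\geq 5$ in $\Mat_3(\F_2)$ by direct inspection. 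A sanity check at the end is that over fields with $\# \K > r$ the exceptional slices cannot occur, so the new theorem recovers the Atkinson--Lloyd--Beasley result, as expected from the discussion of affine versus linear rank-$\overline{r}$ subspaces given in the introduction.
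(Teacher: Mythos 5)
The theorem you are being asked to prove is not actually established in this paper; it is quoted from \cite{dSPclass} as a prerequisite. Your proposal instead aims to \emph{derive} it from the new affine classification theorems proved here, which is a legitimate and interesting reversal, but it has a real gap over $\F_2$.

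Your slicing reduction is sound in spirit, and the pull-back step is easier than you suggest: if $\calH=A+H\subset\calR(s,t)$ after a change of bases, then $H=\calH-A$ and $A$ both lie in the linear subspace $\calR(s,t)$, hence $V=H\oplus\K A\subset\calR(s,t)$ as well -- one slice already does the job, no need for two. For $\#\K>2$ the argument then closes cleanly: either $\dim V\geq nr-(n-p+r)+2$, in which case the first classification theorem applies to $V$ itself and gives conclusion~(a); or $\dim V=nr-(n-p+r)+1$, in which case $\dim\calH=nr-(n-p+r)$ puts $\calH$ exactly in range of the \emph{refined} first classification theorem, and the exceptional outcome $\calH\simeq\calU_3(\K)$ is impossible because the translation vector space of $\calU_3(\K)$ contains $I_3$, which would put a rank-$3$ matrix inside the rank-$\overline{2}$ linear space $V$. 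The dimension count then restricts the admissible $(s,t)$ to the four listed ones, as in Remark~\ref{1strefinedremark}. (Your remark that ``$\#\K>r$ rules out the exceptional slices'' is off: the $\calU_3$ exception occurs precisely at $\#\K=3>2=r$; it is ruled out for the structural reason just given, not by a cardinality count.)

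The genuine gap is $\#\K=2$. At the critical dimension $\dim V=nr-(n-p+r)+1$ (the only place where $\calR(1,r-1)$, $\calR(r-1,1)$, and $\calJ_3(\F_2)$ can occur), your slice $\calH$ has $\dim\calH=nr-(n-p+r)$, which is \emph{two} short of the threshold $nr-(n-p+r)+2$ required by the first classification theorem, and the refined version explicitly demands $\#\K>2$. So the paper hands you no affine tool applicable to $\calH$ over $\F_2$ -- the slices do not ``fall into an exceptional case'' of the affine theorem, they are simply outside its scope. Your proposed repair (a low-dimensional analysis at $(n,p,r)=(3,3,2)$ and an appeal to the classification of primitive rank-$\overline{2}$ spaces) addresses only the single triple where the $\calJ_3(\F_2)$ exception lives; it does not establish conclusion~(a) for all the other $(n,p,r)$ over $\F_2$ at this critical dimension, which is the bulk of what remains to be proved. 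That part requires an independent argument of Meshulam/Flanders type, which is what \cite{dSPclass} actually does and which cannot be shortcut through the affine theorems of the present paper.
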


In the last section of \cite{dSPclass}, the basic techniques from Flanders's proof were shown to yield
an extension of the Atkinson-Lloyd-Beasley theorem towards lower dimensions.
The range of dimensions for which the structure of rank-$\overline{r}$ spaces is known
is essentially doubled under the assumption $\# \K>r$.

\begin{theo}[de Seguins Pazzis]\label{dSPdouble}
Let $n \geq p \geq r \geq 2$ be positive integers. Assume that $\# \K>r$.
Let $V$ be a rank-$\overline{r}$ linear subspace of $\Mat_{n,p}(\K)$
such that
$$\dim V \geq nr-2(n-p+r)+4.$$
Then, $V$ is equivalent to a subspace of $\calR(i,r-i)$ for some $i \in \{0,1,2,r-2,r-1,r\}$.
\end{theo}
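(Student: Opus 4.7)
The plan is to adapt and push further the polynomial technique that underlies both Flanders's proof of Theorem \ref{flanderstheo} and the Atkinson--Lloyd--Beasley theorem. The cardinality assumption $\#\K>r$ is what makes those methods available: if $A\in V$ has rank exactly $r$ and $B\in V$ is arbitrary, each $(r+1)\times(r+1)$ minor of $A+tB$ is a polynomial in $t$ of degree at most $r+1$ that vanishes for every $t\in \K$, hence it vanishes identically, yielding coefficient-by-coefficient linear and bilinear constraints on $B$.

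First, I would reduce to the case $\urk V=r$ by induction on $r$, using the elementary inclusion $\calR(i,r'-i)\subseteq \calR(j,r-j)$ for any $j$ with $i\leq j\leq i+r-r'$; such a $j$ in the allowed set $\{0,1,2,r-2,r-1,r\}$ always exists when $i\in\{0,1,2,r'-2,r'-1,r'\}$. Picking then $A\in V$ of rank $r$ and replacing $V$ by an equivalent subspace, we may assume
\[
A=\begin{bmatrix} I_r & 0 \\ 0 & 0 \end{bmatrix}.
\]
Flanders's lemma then forces every $B\in V$ to take the block shape
\[
B=\begin{bmatrix} B_{11} & B_{12} \\ B_{21} & 0 \end{bmatrix}.
\]
I would introduce the projections $\pi_1\colon B\mapsto B_{12}\in \Mat_{r,p-r}(\K)$ and $\pi_2\colon B\mapsto B_{21}\in \Mat_{n-r,r}(\K)$, and set $\calY:=\pi_1(V)$, $\calZ:=\pi_2(V)$, $W:=\Ker(\pi_1,\pi_2)\subseteq \Mat_r(\K)$. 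Define also $U:=\sum_{Y\in\calY}\im Y\subseteq\K^r$ of dimension $u$, and dually $U':=\bigcap_{Z\in\calZ}\Ker Z\subseteq \K^r$ of codimension $v$.

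The next step would be to extract from the higher-order coefficients of the $(r+1)\times (r+1)$ minor polynomials the bilinear vanishings $ZY=0$ for all $Z\in\pi_2(\Ker\pi_1)$ and $Y\in\calY$, together with the analogous statements with $\pi_1$ and $\pi_2$ swapped. These translate into orthogonality relations between $U$, $U'$ and $W$, from which one extracts a sharp upper bound on $\dim V$ as a function of $u$ and $v$. The desired conclusion $V\subseteq \calR(i,r-i)$ for $i\in\{0,1,2,r-2,r-1,r\}$ should then follow by a case distinction on $u$ and $v$: when $u\in\{0,1,2\}$ (resp.\ $v\in\{0,1,2\}$), the orthogonality relations combined with a further equivalence place $V$ inside $\calR(u,r-u)$ (resp.\ $\calR(r-v,v)$), while the lower bound $\dim V\geq nr-2(n-p+r)+4$ is precisely what is required to exclude the other configurations.

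The main obstacle is the intermediate regime where $3\leq u\leq r-3$ and $3\leq v\leq r-3$: neither projection produces a small compression envelope, and the first- and second-order minor identities alone are insufficient. One must exploit the full strength of the higher-order expansions of the $(r+1)\times(r+1)$ minors and carry out a careful analysis of the interaction between $W$, $\calY$ and $\calZ$, in order to produce a dimension estimate strict enough to contradict the hypothesis. This is where the specific numerical form $2(n-p+r)-4$ of the bound is sharp, and it is the technical heart of the proof.
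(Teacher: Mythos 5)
The method you propose --- expanding $(r+1)\times(r+1)$ minors of a pencil $A+tB$ under the hypothesis $\#\K>r$ --- is the classical Flanders/Atkinson--Lloyd route. It is worth noting that Theorem~\ref{dSPdouble} is not actually proved in the present paper: it is cited from \cite{dSPclass}, where (as the text says) precisely the technique you describe is used. The paper's own contribution is to \emph{avoid} this polynomial machinery entirely: it runs an induction on $(n,p,r)$ via the ERC/EC/ER reductions, lifting lemmas, forcing lemmas, and the range-compatible map theory of Sections~2--5, and proves the strictly stronger second classification theorem (Theorem~\ref{secondclasstheo}), valid for all fields with a weaker dimension bound, from which Theorem~\ref{dSPdouble} follows by the convexity computation in Remark~\ref{2ndremark}. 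So you have reconstructed the older approach, not the one developed here.

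Taken as a blind proof, your proposal also has a genuine gap, which you yourself flag: the bilinear vanishings extracted from higher-order minor coefficients, and the ensuing inequality $\dim V\leq\Phi(n,p,r,u,v)$, are only announced, not derived --- yet they are the whole content of the theorem in the difficult regime $3\leq u\leq r-3$, $3\leq v\leq r-3$. There is moreover a substantive flaw in the claimed easy cases. Knowing that $u=\dim\bigl(\sum_{Y\in\calY}\im Y\bigr)\leq 2$ does not, after a row change, place $V$ inside $\calR(u,r-u)$: it only places $V$ inside $\calR(u,r)$, because the blocks $B_{11}\in\Mat_r(\K)$ and $B_{21}\in\Mat_{n-r,r}(\K)$ still fill the first $r$ columns, and $\calR(u,r)$ is a rank-$\overline{u+r}$ compression space, not a rank-$\overline{r}$ one. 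Passing from $\calR(u,r)$ to a genuine rank-$\overline{r}$ compression space is itself a nontrivial lifting problem, of the same kind as the paper's Propositions~\ref{lifting1}, \ref{liftingprop2.1} and \ref{liftingprop4}, and it requires its own dimension count and use of the $ZY=0$ relations --- not merely a further row/column equivalence.
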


Note that the lower bound $nr-2(n-p+r)+4$ in this theorem is exactly the dimension of the $\calR(2,r-2)$ compression space.

Considering the above theorem, the natural question to ask is how low the bound on the dimension of $V$ can be taken so as to ensure
that $V$ is $r$-decomposable. We were tempted at some point to think that a good lower bound could be the minimal dimension among the rank-$\overline{r}$ compression spaces (at least in the case of square matrices). Yet, this is untrue for large values of $r$, as we shall
now demonstrate. To see this, we give a general construction of very large spaces of matrices with upper-rank $r$ less than $p$
and that are not $r$-decomposable. Following Atkinson and Lloyd \cite{AtkLloydPrim}, let us consider an arbitrary non-negative integer $s>0$
together with a linear subspace $W$ of singular matrices of $\Mat_s(\K)$. Let $n \geq p \geq s$.
Then, the space $W \vee \Mat_{n-s,p-s}(\K)$ of all matrices of the form
$$\begin{bmatrix}
A & B \\
[0]_{(n-s) \times s} & C
\end{bmatrix} \quad \text{with $A \in W$, $B \in \Mat_{s,p-s}(\K)$ and $C \in \Mat_{n-s,p-s}(\K)$}$$
is a rank-$\overline{p-1}$ linear subspace, and it is easy to show that if it were
$(p-1)$-decomposable then $W$ would also be $(s-1)$-decomposable.
A classical example is the one where $W=\Mata_3(\K)$, the space of all $3$ by $3$ alternating matrices with entries in $\K$:
it is a rank-$\overline{2}$ subspace of $\Mat_3(\K)$ with dimension $3$, but it not included in a rank-$\overline{2}$ compression space.
If $n \geq p \geq 3$, then $\Mata_3(\K) \vee \Mat_{n-3,p-3}(\K)$ is an example of a rank-$\overline{p-1}$ subspace of $\Mat_{n,p}(\K)$
that is not $(p-1)$-decomposable and yet has
dimension
$$3+(p-3)n=n(p-1)-2(n-p+(p-1))+1.$$
Note how close this dimension is to the critical one in Theorem \ref{dSPdouble}.
Moreover, if $p$ and $n$ are large enough it is easy to check that $3+(p-3)n>\dim \calR(3,p-4)$, which shows
how wrong the naive conjecture is, even for square matrices.
For $\F_2$, we can take $\calJ_3(\F_2)$ instead of $\Mata_3(\F_2)$, and then we obtain a rank-$\overline{p-1}$ subspace
of $\Mat_{n,p}(\K)$ that is not $(p-1)$-decomposable; yet, it has dimension
$$n(p-1)-2(n-p+(p-1))+3.$$

Thus, as far as compression spaces are concerned, Theorem \ref{dSPdouble} is very close to optimality,
at least when $r=p-1$, and it is on the whole well-suited to values of $r$ that are large with respect to $p$.
When $r$ is very small with respect to $p$ (below some bound of the order of magnitude of
$\sqrt{p}$), and provided that the field $\K$ has large cardinality,
another theorem of Atkinson and Lloyd \cite[Theorem 2]{AtkLloydPrim} states that the largest dimension for
a rank-$\overline{r}$ linear subspace of $\Mat_{n,p}(\K)$ which is not $r$-decomposable is $n(r-2)+3$.

\subsection{Main results}

In short, our aim here is to generalize Theorem \ref{dSPdouble} to arbitrary fields, by using a
new strategy.

There are three main stages. The first one consists more or less of an extension of Atkinson and Lloyd's classification
theorem to affine subspaces over arbitrary fields. We coin it as the \textbf{first classification theorem}.

\begin{theo}[First classification theorem]\label{firstclasstheo}
Let $\calS$ be an affine subspace of $\Mat_{n,p}(\K)$ in which every matrix has rank at most $r\geq 1$.
Assume that $\dim \calS > nr-(n-p+r)+1$.
Then, $\calS$ is $r$-decomposable.
More precisely:
\begin{enumerate}[(a)]
\item Either $\calS$ is equivalent to a subspace of $\calR(0,r)$;
\item Or $n=p$ and $\calS$ is equivalent to a subspace of $\calR(r,0)$.
\end{enumerate}
\end{theo}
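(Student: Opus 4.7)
The plan is to reduce to the classification of rank-$\overline{r}$ linear subspaces and then handle the small-field subtleties by a direct block analysis.

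First, I would verify that $\urk\calS=r$. If instead $\urk\calS\le r-1$, Flanders's theorem (Theorem \ref{flanderstheo}) would force $\dim\calS\le n(r-1)=nr-n$; but the hypothesis asserts
\[
\dim\calS > nr-(n-p+r)+1=nr-n+p-r+1 \geq nr-n,
\]
since $p\ge r$, which is a contradiction. So $\calS$ contains a rank-$r$ matrix, and, up to equivalence, one may assume $A_0:=\begin{bmatrix} I_r & 0 \\ 0 & 0\end{bmatrix}\in\calS$. Introduce the translation vector space $V:=\calS-A_0$.

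Next, I would try to reduce to the linear classification theorem of \cite{dSPclass}. When $\#\K>r$, the polynomial argument recalled in the Introduction gives $\urk V\le r$, so the linear theorem applies and $V$ is equivalent to a subspace of $\calR(r,0)$, $\calR(r-1,1)$, $\calR(1,r-1)$, $\calR(0,r)$, or (exceptionally) of $\calJ_3(\K)$. For $r\ge 2$, the $\calR(r-1,1)$ and $\calR(1,r-1)$ possibilities are eliminated by the strict dimension hypothesis (their dimensions are at most $nr-(n-p+r)+1$), and the exceptional case is ruled out likewise ($\calJ_3(\K)$ has dimension exactly $5=nr-(n-p+r)+1$ in the unique triple where it occurs). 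Hence $V$ is equivalent to a subspace of $\calR(0,r)$ or $\calR(r,0)$, the latter failing on dimension grounds when $n>p$. A careful choice of equivalence then lets one absorb the basepoint $A_0$ so that $\calS$ itself sits in $\calR(0,r)$ or $\calR(r,0)$.

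The main obstacle is the small-field case, where $V$ can fail to be rank-$\overline{r}$. Here my plan is a direct block analysis. For each $M=\begin{bmatrix} M_1 & M_2 \\ M_3 & M_4 \end{bmatrix}\in V$ partitioned conformally with $A_0$, the rank condition $\rk(A_0+M)\le r$ yields the Schur-complement identity $M_4=M_3(I_r+M_1)^{-1}M_2$ whenever $I_r+M_1$ is invertible, together with weaker identities otherwise. Studying the slices of $V$ on which various blocks vanish (for instance, $\{M\in V:M_1=0,\ M_2=0\}$ forces $M_4=0$, and similarly for permutations) yields sharp upper bounds on the dimensions of the block projections of $V$; combined with the strict dimension hypothesis, these bounds should force either the last $n-r$ rows, or the last $p-r$ columns, of every matrix in $\calS$ to vanish after a further equivalence. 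The hard part will be to make this combinatorial analysis valid uniformly, including over $\F_2$, and to arrange that the equivalence found for $V$ can be modified to also bring $A_0$ into the appropriate compression form; an induction on $r$ or on $n+p$ may well be required.
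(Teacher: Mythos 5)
Your first two paragraphs are sound, but they amount only to recovering the statement in the already-known regime. Deducing $\urk\calS=r$ from Flanders is fine. For $\#\K>r$, it is cleaner to apply the linear theorem of \cite{dSPclass} directly to $\operatorname{span}(\calS)$ (whose upper rank is at most $r$ by the polynomial argument, and whose dimension is at least $\dim\calS$), which avoids the ``absorb the basepoint'' step you gesture at; your elimination of $\calR(1,r-1)$, $\calR(r-1,1)$ and $\calJ_3(\K)$ by a dimension count (all of which sit at the critical value $nr-(n-p+r)+1$ when $n\ge p$ and $r\ge 2$) is correct. So this part works.

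The gap is the small-field case, which is the entire novel content of the theorem. Your ``direct block analysis'' is not a proof; it is a plan you yourself flag as incomplete (``the hard part will be to make this combinatorial analysis valid uniformly, including over $\F_2$''). Concretely: the Schur-complement identity $M_4=M_3(I_r+M_1)^{-1}M_2$ requires $I_r+M_1$ invertible, and over $\F_2$ the matrix $I_r+M_1$ is singular precisely when $M_1$ fixes a vector, which is generic; the ``weaker identities otherwise'' are exactly where the difficulty lives, and you offer no mechanism to aggregate them into a bound that is sharp at $nr-(n-p+r)+1$. The paper's proof is structured entirely around this difficulty: it first establishes controls on $\dim S_H$ (Lemma \ref{2ndkey}), then runs an induction over $n,p,r$ via the ERC/EC methods, and, crucially, uses two lifting lemmas (Propositions \ref{lifting1} and \ref{speciallifting1}) to pass from a compression-space conclusion in one fewer row/column back to $\calS$ itself; the ``special'' lifting step when $r=p-1$ relies on the theory of range-compatible affine maps (Theorem \ref{RCtheo1}), which has no counterpart in your block analysis. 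Without something functionally equivalent to that machinery, the slices-of-$V$ argument you describe is not going to close over $\F_2$ or $\F_3$. As written, the proposal does not establish the theorem for $\#\K\le r$.
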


Note that Atkinson and Lloyd's theorem is slightly more interesting as it takes into account the critical case when
$\dim \calS=nr-(n-p+r)+1$. Over $\F_2$, their result fails for affine subspaces, not only because of the $\calJ_3(\F_2)$
example: for all $p \geq 2$ and $n \geq p$, we see that $\calU_2(\F_2) \vee \Mat_{n-2,p-2}(\K)$
is a rank-$\overline{p-1}$ affine subspace with the critical dimension $n(p-1)-(n-p+(p-1))+1$ but it is
not equivalent to a subspace of $\calR(r,0)$ or of $\calR(0,r)$, and it is not a compression space since it is not a linear subspace.

In the next step, we shall slightly extend the above theorem for fields with more than $2$ elements by
allowing two additional dimensions below the maximal one.
There is however a counter-example for $\F_3$: if $\# \K=3$, the
affine space
$$\calU_3(\K):=\left\{\begin{bmatrix}
x & a & b \\
0 & x+1 & c \\
0 & 0 & x-1
\end{bmatrix} \mid (x,a,b,c)\in \K^4\right\}$$
has upper-rank $2$ (obviously $\forall x \in \K, \; x(x+1)(x-1)=0$)
and it has the critical codimension $nr-(n-p+r)$ (here $n=p=3$ and $r=2$), but it is not $2$-decomposable
since otherwise its translation vector space would consist
of singular matrices, contradicting the obvious fact that this translation vector space contains the identity matrix $I_3$!

We shall coin our second result as the \textbf{refined first classification theorem}.

\begin{theo}[Refined first classification theorem]
Let $n,p,r$ be non-negative integers such that $n \geq p \geq r$.
Let $\calS$ be a rank-$\overline{r}$ affine subspace of $\Mat_{n,p}(\K)$.
Assume that $\dim \calS \geq nr-(n-p+r)$ and $\# \K>2$.
Then:
\begin{enumerate}[(a)]
\item Either $\calS$ is $r$-decomposable;
\item Or $\# \K=3$ and $\calS$ is equivalent to $\calU_3(\K)$.
\end{enumerate}
\end{theo}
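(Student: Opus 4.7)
The plan is to bootstrap from Theorem \ref{firstclasstheo}, which already settles the case $\dim \calS \geq nr-(n-p+r)+2$. Only the two borderline dimensions $d_0 := nr-(n-p+r)$ and $d_0+1$ require attention, and in each case the goal is either to enlarge $\calS$ to a rank-$\overline{r}$ affine subspace of dimension at least $d_0+2$ (so that Theorem \ref{firstclasstheo} applied to the enlargement forces $r$-decomposability of $\calS$), or to identify the obstruction to such an enlargement as forcing $\calS$ to be equivalent to $\calU_3(\F_3)$.

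The case $\#\K > r$ should reduce almost immediately to the linear classification recalled in the introduction. Let $V$ denote the translation vector space of $\calS$, fix $A_0 \in \calS$, and set $W := V + \K A_0$. The polynomial-minors argument recalled in the introduction shows that $W$ is itself a rank-$\overline{r}$ linear subspace whenever $\#\K > r$. If $\calS$ is not linear, then $\dim W = \dim \calS + 1 \geq d_0+1$; since $\K \neq \F_2$ (as $\#\K > r \geq 2$), the linear classification forces $W$, and hence $\calS$, to be $r$-decomposable. If $\calS$ is linear of dimension exactly $d_0$, a further one-dimensional enlargement $W+\K N$, chosen using the rich supply of scalars, should bring the dimension to $d_0+1$ so that the same linear classification applies.

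The genuine difficulty is the regime $3 \leq \#\K \leq r$, where the polynomial-minors trick fails. I would argue by contradiction, assuming $\calS$ is not $r$-decomposable and not equivalent to $\calU_3(\K)$. Pick $M_0 \in \calS$ of maximal rank and, after equivalence, normalise it to the standard block form $\begin{bmatrix} I_s & 0 \\ 0 & 0 \end{bmatrix}$ with $s := \urk \calS \leq r$; decompose every element of $V$ in matching blocks. Using $\#\K \geq 3$ to access two distinct non-zero scalars, record the Schur-complement rank-$\overline{r}$ identities on each affine line $M_0 + \K v$ for $v \in V$ and subtract them to extract purely polynomial relations on the blocks. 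Either these relations produce an invariant subspace (forcing $r$-decomposability, contradicting the standing assumption), or they pin down a distinguished element of $V$ whose restriction satisfies a low-degree polynomial identity with no root in $\K$. The main obstacle is the ensuing polynomial bookkeeping: matching the degree of that identity against the Frobenius-type relation $x^{\#\K}=x$ valid on $\K$ should force $\#\K = 3$, $r = 2$, $n = p = 3$, and a direct normalisation should then identify $\calS$ with $\calU_3(\F_3)$ up to equivalence.
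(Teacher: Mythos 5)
Your proposal is an outline rather than a proof, and its central strategy has a genuine gap. Writing $d_0 := nr-(n-p+r)$, your plan is to enlarge $\calS$ to a rank-$\overline{r}$ affine (or linear) subspace of dimension at least $d_0+2$, respectively $d_0+1$, and then invoke Theorem \ref{firstclasstheo}, respectively the linear classification of \cite{dSPclass}. This does not work in general, because $\calS$ may already be \emph{maximal} among rank-$\overline{r}$ affine subspaces of $\Mat_{n,p}(\K)$ while having dimension only $d_0$ or $d_0+1$: the space $\calR(1,r-1)$ has dimension $d_0+1$ and is maximal, and when $n-p+r=4$ the space $\calR(2,r-2)$ has dimension exactly $d_0$ and is maximal. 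Those particular spaces happen to be compression spaces, but your plan offers no mechanism to rule out a maximal $\calS$ at these dimensions that is \emph{not} a compression space -- which is precisely what the theorem asserts does not exist, apart from $\calU_3(\K)$. In particular, for $\#\K>r$ and $\calS$ linear of dimension exactly $d_0$, the claimed ``one-dimensional enlargement chosen using the rich supply of scalars'' is unjustified: having many scalars does not manufacture a rank-$\overline{r}$ vector outside $\calS$, and when $\calS$ is maximal no such vector exists. Since the refined theorem lowers the dimension threshold of the earlier results by one, it genuinely cannot be a formal consequence of them.

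The treatment of the hard regime $3\leq \#\K\leq r$ in your last paragraph is entirely speculative. The Schur-complement bookkeeping and the degree comparison against $x^{\#\K}=x$ are not carried out, and nothing in the sketch indicates why they would single out the unique exceptional triple $(n,p,r)=(3,3,2)$ with $\#\K=3$ rather than, say, a primitive exception at larger parameters. The paper proceeds quite differently: it runs an induction on $(n,p,r)$ in which the base case $n=p=3$, $r=2$ is settled by a dedicated seven-step analysis (Proposition \ref{n=3r=2}); lifting lemmas (Propositions \ref{liftingprop2.1} and \ref{liftingprop2.2}) are established with the help of the forcing lemmas \ref{forcing1}--\ref{forcing3}; a special lifting lemma (Proposition \ref{speciallifting2}) is derived from the classification of quasi-range-compatible affine maps (Theorem \ref{RCtheo2}); and the inductive step uses Lemma \ref{3rdkey} to locate a hyperplane $H$ of $\K^p$ or a line $D$ of $\K^n$ with $\dim S_H$ or $\dim S^D$ small, after which the ERC, EC or ER reduction is applied. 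None of this machinery appears in your sketch, and the sketch does not suggest a substitute for it.
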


\begin{Rem}\label{1strefinedremark}
By computing dimensions, we can be more precise as to what
kinds of $r$-decomposable spaces are possible in the above theorem.
The main idea is that, if $n,p,r$ are fixed with $r \geq 2$, the function $s \mapsto \dim \calR(s,r-s)$
is a polynomial of degree $2$ with positive coefficient on $s^2$, and hence it is strictly convex.

Assuming that $\calS$ is $r$-decomposable and that $\dim \calS \geq nr-(n-p+r)$:
\begin{enumerate}[(a)]
\item If $n>p+2$ then $\calS$ is equivalent to a subspace of $\calR(0,r)$
or of $\calR(1,r-1)$.
\item If $n=p+2$, then $\calS$ is equivalent to a subspace of $\calR(0,r)$ or of $\calR(1,r-1)$, or
$r=2$ and $\calS$ is equivalent to $\calR(2,0)$.
\item If $n=p+1$, then $\calS$ is equivalent to a subspace of $\calR(0,r)$, of $\calR(1,r-1)$ or of $\calR(r,0)$,
or $r=3$ and $\calS$ is equivalent to $\calR(2,1)$.
\item If $n=p$, then $\calS$ is equivalent to a subspace of $\calR(s,r-s)$ for some $s \in \{0,1,r-1,r\}$,
or $r=4$ and $\calS$ is equivalent to $\calR(2,2)$.
\end{enumerate}
Moreover, if $\dim \calS> nr-(n-p+r)$ and $\calS$ is equivalent to a subspace of $\calR(1,r-1)$ or of $\calR(r-1,1)$,
then $\calS$ is equivalent to $\calR(1,r-1)$ or to $\calR(r-1,1)$, and in the latter case $n=p$. \\
Finally, if $\dim \calS> nr-(n-p+r)$, $n=p+1$ and $\calS$ is equivalent to a subspace of $\calR(r,0)$, then
it is equivalent to $\calR(r,0)$.
\end{Rem}

The first classification theorem and its refinement will be used to
obtain our ultimate result, which we coin as the \textbf{second classification theorem}.

\begin{Not}
We set
$$\epsilon(\K):=\begin{cases}
0 & \text{if $\# \K>2$} \\
2 & \text{if $\# \K=2$.}
\end{cases}$$
\end{Not}

\begin{theo}[Second classification theorem]\label{secondclasstheo}
Let $S$ be a linear subspace of $\Mat_{n,p}(\K)$ in which every matrix has rank at most $r$.
Assume that $\dim S \geq nr-2(n-p+r)+2+\epsilon(\K)$.
Then:
\begin{itemize}
\item Either $S$ is $r$-decomposable;
\item Or $(n,p,r)=(4,4,3)$, $\# \K=3$ and $S$ is equivalent to the space
$$\calU_4(\K):=\left\{\begin{bmatrix}
x & a & b & c \\
0 & y & d & e \\
0 & 0 & x+y & f \\
0 & 0 & 0 & x-y
\end{bmatrix} \mid (a,b,c,d,e,f,x,y)\in \K^8\right\}.$$
\end{itemize}
\end{theo}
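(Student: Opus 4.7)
I would proceed by induction on one of the parameters (for instance $p-r$), and deduce $r$-decomposability by applying the Refined First Classification Theorem over fields with $\#\K>2$, and the First Classification Theorem over $\F_2$, where the extra two dimensions in the hypothesis compensate for the weaker affine result. The Refined theorem captures both generic $r$-decomposability and the $\calU_3(\K)$ obstruction over $\F_3$, which is ultimately what will produce the exceptional $\calU_4(\K)$.

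The first move is a reduction to the case $\urk S=r$: if every matrix of $S$ had rank at most $r-1$, Flanders's theorem would give $\dim S\leq n(r-1)$, contradicting the dimension hypothesis in all but a few boundary configurations of $(n,p,r)$ (which can be handled separately, by the induction hypothesis or direct inspection). So fix $A\in S$ with $\rk A=r$, and after equivalence take $A=\begin{pmatrix}I_r & 0\\ 0 & 0\end{pmatrix}$, writing each $X\in S$ in the corresponding block form $X=\begin{pmatrix}X_1 & X_2\\ X_3 & X_4\end{pmatrix}$.

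The central technical step is to produce affine subspaces of $S$ that meet the threshold of the Refined First Classification Theorem. For each non-zero linear form $\varphi$ on $S$ vanishing at $A$, the affine hyperplane $\calS_\varphi:=\{X\in S:\varphi(X)=1\}$ is a rank-$\overline{r}$ affine subspace of $S$ of dimension $\dim S-1$, which falls short of the Refined threshold $nr-(n-p+r)$ by roughly $(n-p+r)-1-\epsilon(\K)$ dimensions. The plan would be to exploit the block structure of $A$ to pass to a smaller effective matrix space: intersecting $S$ with the subspace of matrices having prescribed bottom rows or right columns reduces the effective value of $n$ or $p$, and lowers the threshold correspondingly. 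Once met, the Refined theorem yields, for each such $\varphi$, either a compression space $\calR_\varphi$ containing $\calS_\varphi$, or (over $\F_3$ only) an affine subspace equivalent to $\calU_3(\K)$.

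The last and most delicate step is to glue the various local compression structures into a global one. By Remark \ref{1strefinedremark}, each $\calR_\varphi$ belongs to a short explicit list of compression types and contains $A$; their compatibility, rigidified by the $\K$-linearity of $S$, should force either a single compression space containing all of $S$, or one of a small number of degenerate configurations. The main obstacle is to rule out every such configuration except the one that lifts the $\calU_3(\K)$ affine exception to the linear exception $\calU_4(\K)$ when $(n,p,r)=(4,4,3)$ and $\#\K=3$. That final case should be verified directly: the $\calU_3$-block arising in the affine slice, combined with the linearity constraint and the rank bound, ought to admit exactly one extension up to equivalence, namely $\calU_4(\K)$.
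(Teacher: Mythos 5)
Your proposal identifies the right auxiliary theorems (the Refined First Classification Theorem for $\#\K>2$, the First Classification Theorem for $\F_2$, and the $\calU_3(\K)\rightsquigarrow\calU_4(\K)$ lift), but the reduction step you sketch does not close. The affine hyperplanes $\calS_\varphi=\{X\in S:\varphi(X)=1\}$ live in the \emph{same} ambient space $\Mat_{n,p}(\K)$ with the \emph{same} rank bound $r$, so the threshold $nr-(n-p+r)$ of the refined theorem does not move: you are short by $(n-p+r)-1-\epsilon(\K)$ dimensions, which grows linearly in $n-p+r$. Your proposed remedy — intersecting with matrices having prescribed bottom rows or right columns — is exactly where the difficulty lies, and it is left entirely unexplained why such a restriction would lose few enough dimensions to make up the deficit while simultaneously dropping $n$ or $p$. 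The paper's mechanism is different and essential: Lemma \ref{3rdkey} guarantees (outside the $\calR(r/2,r/2)$ case) a line $D\subset\K^n$ or hyperplane $H\subset\K^p$ with $\dim S^D\leq\lfloor(r-1)/2\rfloor$ or $\dim S_H\leq\lfloor(r-1)/2\rfloor$; this quantitative control is what makes the dimension count work. One then distinguishes whether this small space is zero or not. When nonzero, the ERC method uses a rank-$1$ matrix $E_{1,1}$ in it, and the extraction lemma forces the lower-right $(n-1)\times(p-1)$ block space $P(S)$ to have upper-rank $\leq r-1$ — it is this simultaneous drop of $n$, $p$ \emph{and} $r$ that restores the inductive hypothesis. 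When $S_H=\{0\}$ (or $S^D=\{0\}$), one uses the EC (or ER) method, and the case $r=p-1$ forces the special lifting lemma, whose proof relies on the theory of quasi-range-compatible maps (Theorem \ref{RCtheo3}) plus the notion of bad linear forms — none of which appears in your plan.

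Two further omissions: first, your ``gluing" step is asserted rather than proved; in the paper the analogous work is carried by the lifting lemmas 3.1--3.3 (which invert the passage from $P(\calS)$ to $\calS$) together with the forcing lemmas (Corollaries \ref{forcing1}--\ref{forcing4}), and these require their own careful arguments (including the preliminary classification of the case $n=p=4$, $r=3$ and of rank-$\overline 3$ subspaces of $T_4^+(\K)$). Second, your opening reduction to $\urk S=r$ is correct but immaterial; the real fork in the road is whether $S$ is equivalent to $\calR(s,s)$ with $2s\leq r$, i.e.\ outcome (c) of Lemma \ref{3rdkey}, which your proposal never reaches.
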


If $\# \K=3$, then we can use the identity
$$\forall (x,y)\in \K^2, \; xy(x+y)(x-y)=x^3y-xy^3=xy-xy=0$$
to see that $\calU_4(\K)$ has upper-rank $3$, and on the other hand it has dimension $8$.
In that case $(n,p,r)=(4,4,3)$ and we compute that $nr-2(n-p+r)+2=8$. Finally, $\calU_4(\K)$ is
not $3$-decomposable (otherwise, this would be true over any extension of $\K$, and obviously $\calU_4(\K)$ contains invertible matrices if $\# \K>3$).

In contrast with the first classification theorem and its refined version,
this second classification theorem deals only with linear subspaces. Extending it to affine subspaces seems very difficult
(for reasons that we have already explained, it fails for $\F_2$, to start with).

\begin{Rem}\label{2ndremark}
Again, we can be more precise as to what specific kinds of $r$-decomposable spaces are possible in the above theorem.

Assuming that $S$ is $r$-decomposable and that
$\dim S \geq nr-2(n-p+r)+2+\epsilon(\K)$:
\begin{enumerate}[(a)]
\item If $n>p+2$, then $S$ is equivalent to a subspace of $\calR(0,r)$, $\calR(1,r-1)$ or $\calR(2,r-2)$,
or $r \leq 2+\frac{2}{n-(p+2)}$ and $S$ is equivalent to a subspace of $\calR(r,0)$,
or $r \leq 3+\frac{2}{n-(p+1)}$ and $S$ is equivalent to a subspace of $\calR(r-1,1)$.
\item If $n=p+2$,  then $S$ is equivalent to a subspace of $\calR(0,r)$, $\calR(1,r-1)$, $\calR(2,r-2)$ or $\calR(r,0)$,
or $r=5$ and $S$ is equivalent to $\calR(3,2)$ or $\calR(4,1)$, or $r=4$ and $S$ is equivalent to a subspace of $\calR(3,1)$.
\item If $n=p+1$, then $S$ is equivalent to a subspace of $\calR(0,r)$, $\calR(1,r-1)$, $\calR(2,r-2)$, $\calR(r,0)$ or $\calR(r-1,1)$,
or $r=5$ and $S$ is equivalent to a subspace of $\calR(3,2)$ with codimension at most $1$,
or $r=6$ and $S$ is equivalent to $\calR(3,3)$ or to $\calR(4,2)$.
\item If $n=p$, then $S$ is equivalent to a subspace of $\calR(s,r-s)$ for some $s \in \{0,1,2,r-2,r-1,r\}$,
or $r=6$ and $S$ is equivalent to a subspace of $\calR(3,3)$ with codimension at most $1$,
or $r=7$ and $S$ is equivalent to $\calR(4,3)$ or $\calR(3,4)$.
\end{enumerate}
\end{Rem}

\subsection{Main method}\label{methodsection}

Now, we can explain the main strategy of our proof.
We shall use a new method that consists of a mixture of some elements of Dieudonn\'e's proof \cite{Dieudonne},
some ideas from \cite{dSPclass}, and new insights (some of which have been laid out in \cite{dSPFlandersskew}).
As the strategy will be used no less than three times, an explanation of its main components is in order.

The first thing to say is that our method consists in performing an induction over all of $n,p,r$.
Let $\calV$ be a rank-$\overline{r}$ affine subspace, with translation vector space denoted by $V$.
We shall take a close look at the rank $1$ matrices in $V$.
More precisely, we are interested in the intersection of $V$ with the maximal linear subspaces of rank $1$ matrices.

The method actually splits into two sub-methods.
In the first one, which we label as the ``\textbf{Erase one row and column}" (in short: \textbf{ERC}) method,
we assume that $V$ contains a rank $1$ matrix, say $E_{1,1}$.
Then, we can split every matrix $M$ of $\calV$ up as
$$M=\begin{bmatrix}
? & [?]_{1 \times (p-1)} \\
[?]_{(n-1) \times 1} & K(M)
\end{bmatrix} \quad \text{with $K(M) \in \Mat_{n-1,p-1}(\K)$.}$$
Then, it is easy to check (see Lemma \ref{extractionlemma}) that $K(\calV)$ is a rank-$\overline{r-1}$ affine subspace of $\Mat_{n-1,p-1}(\K)$.
On the other hand, the rank theorem shows that
$$\dim K(\calV) \geq \dim \calV-(p-1)-\dim U,$$
where $U$ denotes the vector space of all matrices of $V$ whose last $p-1$ columns equal zero.
If the dimension of $U$ is small enough, then we can hope to apply the induction hypothesis to the space
$K(\calV)$.

If $K(\calV)$ is included in the compression space $\calR(i,r-1-i)$, then $\calV$ is included in
$\calR(i+1,r-i)$, which is not a rank-$\overline{r}$ compression space in general. However,
with extra work, it can be shown in specific cases that $\calV$ is actually equivalent to a subspace of
$\calR(i,r-1)$ or $\calR(i+1,r-i-1)$. A key feature of this \textbf{lifting process} resides in the
use of earlier theorems for \emph{affine subspaces}: thus, in lifting for the first classification theorem, one uses
Flanders's theorem, in lifting for the refined first classification theorem, one uses the first classification theorem,
and in lifting for the second classification theorem, one uses the refined first classification theorem!

To see things in a more general manner, we shall employ the following notation:

\begin{Not}
Let $S$ be a subset of $\Mat_{n,p}(\K)$, $D$ be a $1$-dimensional linear subspace of $\K^n$, and
$H$ be a linear hyperplane of $\K^p$. We define
$$S_H:=\{M \in S : \; H \subset \Ker M\}$$
and
$$S^D:=\{M \in S : \; \im M \subset D\}.$$
\end{Not}

Thus, to apply the ERC method,
we need to find a linear hyperplane $H$ of $\K^p$ such that $V_H$ is non-zero and with small dimension (in the above we have considered the special case where $H=\{0\} \times \K^{p-1}$),
or we need to find a $1$-dimensional linear subspace $D$ of $\K^n$ such that $V^D$ is non-zero and with small dimension.

\vskip 3mm
The second sub-method, which we coin as the ``Erase one column" method (in short: \textbf{EC method}),
deals with the case when we have a linear hyperplane $H$ of $\K^p$ such that $S_H=\{0\}$,
say $H=\{0\} \times \K^{p-1}$.
Then, we split every matrix $M$ of $\calV$ up as
$$M=\begin{bmatrix}
[?]_{n \times 1} & H(M)
\end{bmatrix} \quad \text{with $H(M) \in \Mat_{n,p-1}(\K)$.}$$
Obviously, $H(\calV)$ is an affine subspace of $\Mat_{n,p-1}(\K)$ with the same dimension as $\calV$
and upper-rank less than or equal to $r$. By induction on $p$,
we can retrieve information on the structure of $H(\calV)$.
We are then confronted with a lifting problem that is somewhat similar to the one in the ERC method.
However, in that case we have to consider the extra case when $r=p-1$.
Then, we have the so-called \textbf{special lifting problem}:
there is an affine map $F : H(\calV) \rightarrow \K^n$ such that
$$\forall M \in \calV, \quad M=\begin{bmatrix}
F(H(M)) & H(M)
\end{bmatrix}.$$
In most situations, it will then be possible to prove that $F$ is range-compatible
(or, worse, quasi-range-compatible, see Section \ref{RCsection}),
i.e.\ $F$ maps every matrix of $H(\calV)$ to a vector of its range.
Using the classification of range-compatible maps over large matrix spaces
\cite{dSPRC3,dSPRC1}, it is then possible except in very specific situations -- in which the structure of $H(\calV)$ is already fairly simple --
to find a non-zero vector that is annihilated by all the matrices in $\calV$, which yields
that $\calV$ is equivalent to a subspace of $\calR(0,r)$.

Transposing the EC method, we get the ER method (``Erase one row") which is needless to describe in detail.

Now, in general we want to apply the ERC, the EC or the ER method.
To do this, we need to find a linear hyperplane $H$ for which the dimension of $V_H$ is small, or a
$1$-dimensional linear subspace $D$ for which the dimension of $V^D$ is small.
This will be achieved by proving general theorems on the dimension of such spaces when
$\calV$ is an arbitrary rank-$\overline{r}$ affine subspace (with no specific assumption on its dimension).

Finally, let us discuss one of the later features of our article which is used in the proof of several lifting results.
At some point, we will have an affine space $\calS \subset \Mat_{n,p}(\K)$ of matrices
and we will want to prove that $\calS$ is a subspace of some compression space.
Say that we have a linear subspace $V$ of the translation vector space $S$ of
$\calS$ and a matrix $A \in \calS$ such that every matrix in $A+V$ has rank at most
$r$. Flanders's theorem says that the dimension of $V$ cannot be too large. Yet, say that $\dim V$ is close enough
to the critical dimension: then, by using one of our classification theorems, we have access to the structure of $A+V$,
which basically should show that $A+V$ is included in a rank-$\overline{r}$ compression space $\calR_A$.
Thus, $V$ is also included in $\calR_A$.
Yet, still assuming that the dimension of $V$ is large enough, we shall find that $V$
is included in a \emph{unique} compression space $\calR$, which must then equal $\calR_A$.
By varying $A$ we shall be able to find that
$\calS$ is also included in $\calR$. The result that are needed to perform this will be called \textbf{forcing lemmas.}

\subsection{Structure of the article}

Section \ref{basicsection} consists of some basic lemmas that will be used throughout the article.
There, we will also recall the results on range-compatible and quasi-range-compatible maps that were
proved recently \cite{dSPRC3} and which will be used in the proofs of the special lifting results.

The next three sections are devoted to the proofs of our three main theorems.
In each section, the global structure is the following one:
\begin{itemize}
\item We start by deriving the lifting results from the latest theorem we have proved (in the
case of the first classification theorem, the latest theorem is Flanders's).
The special lifting result will come last.
\item Then, the inductive proof is performed.
\item In the case of the first classification theorem and of its refined version, forcing lemmas are
derived from them in the last paragraph of the corresponding section.
\end{itemize}

In the last section of the article, we will discuss a possible direction for further research on the topic.

\section{Basic results}\label{basicsection}

\subsection{The extraction lemma}

\begin{lemma}[Extraction lemma]\label{extractionlemma}
Let $n,p,r,q$ be positive integers with $q \leq \min(n,p)$ and $r \leq \min(n,p)$.
Let $M=\begin{bmatrix}
A & C \\
B & D
\end{bmatrix}$ be a matrix of $\Mat_{n,p}(\K)$, with $A \in \Mat_q(\K)$.
Set $N:=\begin{bmatrix}
I_q & [0]_{q \times (p-q)} \\
[0]_{(n-q) \times q} & [0]_{(n-q) \times (p-q)}
\end{bmatrix}$. Assume that $\# \K>q$ and that
$$\forall t \in \K, \; \rk(A+tN) \leq r.$$
Then, $\rk(D) \leq r-q$.
\end{lemma}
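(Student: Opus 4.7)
The plan is to fix an arbitrary $(r-q+1) \times (r-q+1)$ submatrix $D'$ of $D$ and to prove that $\det D' = 0$; this will yield $\rk D \leq r-q$ at once. Concretely, if $D'$ is extracted from $D$ through row indices $I \subset \lcro 1, n-q\rcro$ and column indices $J \subset \lcro 1, p-q\rcro$, I would consider the $(r+1) \times (r+1)$ submatrix of $M$ obtained by keeping the first $q$ rows and columns and appending the rows indexed by $q+I$ and the columns indexed by $q+J$; it has the form
$$M' = \begin{bmatrix} A & C' \\ B' & D' \end{bmatrix}$$
for suitable blocks $B'$ and $C'$. Letting $N'$ denote the analogous $(r+1) \times (r+1)$ matrix with $I_q$ in the top-left corner and zeros elsewhere, $M' + t N'$ is precisely the corresponding submatrix of $M + tN$, so the rank hypothesis yields
$$P(t) := \det(M' + tN') = 0 \quad \text{for every } t \in \K.$$

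Next I would observe that $P$ is a polynomial in $t$ of degree at most $q$, since only the $q$ diagonal entries $A_{i,i}+t$ (for $1 \leq i \leq q$) of $M' + tN'$ depend on $t$, each linearly. As $\# \K > q$, a polynomial of degree at most $q$ that vanishes at more than $q$ points of $\K$ must be identically zero, so in particular the coefficient of $t^q$ in $P$ must vanish. The key computation is that this coefficient equals $\det D'$: in the Leibniz expansion of $P$, a permutation $\sigma$ of $\lcro 1, r+1\rcro$ contributes a term involving $t^q$ only if $\sigma$ fixes every element of $\lcro 1, q \rcro$ (otherwise at most $q-1$ factors of the form $A_{i,i}+t$ appear in the monomial, which then has degree at most $q-1$); for such a $\sigma$, its restriction $\tau$ to $\lcro q+1, r+1\rcro$ ranges over all permutations of that set, and summing the resulting $t^q$ contributions $\mathrm{sgn}(\tau) \prod_{i>q} D'_{i-q,\tau(i)-q}$ reconstructs $\det D'$ by definition.

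Thus $\det D' = 0$ for every $(r-q+1) \times (r-q+1)$ submatrix $D'$ of $D$, whence $\rk D \leq r-q$. The only genuinely delicate step is the identification of the leading coefficient of $P$, which is a short but careful bookkeeping with the Leibniz formula; this is also precisely where the cardinality assumption $\# \K > q$ gets used, to force a polynomial of degree $\leq q$ with too many roots to be zero.
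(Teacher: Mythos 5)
Your proof is correct and rests on the same core mechanism as the paper's: isolate a square submatrix of $M+tN$ whose determinant is a polynomial of degree at most $q$ in $t$ with leading coefficient a minor of $D$, then invoke $\#\K>q$ to force that minor to vanish. The paper argues by contradiction and first normalizes $D$ (via left/right multiplication by invertible matrices) so that its top-left $s\times s$ block is $I_s$ with $s=\rk D>r-q$, which makes the leading coefficient trivially equal to $1$; you instead work directly with an arbitrary $(r-q+1)\times(r-q+1)$ minor of $D$ and identify the leading coefficient via the Leibniz formula. The two are interchangeable, and your version spares the normalization at the cost of a short bookkeeping argument, which you carry out correctly.
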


\begin{proof}
Assume on the contrary that $D$ has rank $s>r-q$.
Multiplying on the right and on the left by well-chosen non-singular matrices, we can assume that
$D=\begin{bmatrix}
I_s & [0]_{s \times (p-q-s)} \\
[0]_{(n-q-s)\times s} & [0]_{(n-q-s) \times (p-q-s)}
\end{bmatrix}$. Then, for all $t \in \K$, the $(s+q)$ by $(s+q)$ submatrix
deduced from $A+tN$ by deleting the last $p-q-s$ columns and the last $n-q-s$ rows is singular,
and its determinant is a polynomial function of $t$ whose degree is less than or equal to $q$ and
whose coefficient along $t^q$ equals $1$. As $\# \K>q$, this polynomial should be zero, however.
\end{proof}

In practice, we shall frequently use the following generalized version of this result, which is a straightforward corollary:

\begin{cor}\label{extractioncor}
Let $n,p,r$ be non-negative integers such that $r \leq \min(n,p)$. Let $q \in \lcro 0,r\rcro$.
Let $M \in \Mat_{n,p}(\K)$ and $N \in \Mat_{n,p}(\K)$ and let $I$ and $J$ be respective subsets of $\lcro 1,n\rcro$ and $\lcro 1,p\rcro$
with cardinality $q$. Assume that all the columns of $N$ indexed outside of $I$ are zero and all its rows indexed outside of $J$ are zero.
Assume further that $\rk N=q$, that $\rk(M+tN) \leq r$ for all $t \in \K$, and that $\# \K> q$.
Denote by $D$ the submatrix of $M$ obtained by deleting the rows indexed over $I$ and the columns indexed over $J$.
Then, $\rk D \leq r-q$.
\end{cor}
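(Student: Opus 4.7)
The plan is to reduce the corollary to Lemma \ref{extractionlemma} by two successive normalizations, both of which preserve ranks.

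First, I would apply row and column permutations to put $I$ and $J$ into standard position. Let $P \in \GL_n(\K)$ and $Q \in \GL_p(\K)$ be permutation matrices such that $P$ sends $I$ to $\lcro 1,q\rcro$ and $Q$ sends $J$ to $\lcro 1,q\rcro$. Replacing $M$ by $PMQ$ and $N$ by $PNQ$ leaves all hypotheses intact: the ranks of $M+tN$ are unchanged, $\rk N$ is unchanged, and the resulting $N$ has its nonzero entries concentrated in its top-left $q \times q$ block. The submatrix $D$ of $M$ is, up to reindexing, the bottom-right $(n-q)\times(p-q)$ block of the new $M$, and its rank is unaffected. So from now on I may assume $I=J=\lcro 1,q\rcro$.

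Next, I would turn $N$ into the standard matrix of Lemma \ref{extractionlemma}. Writing $N=\begin{bmatrix} N_0 & 0 \\ 0 & 0 \end{bmatrix}$ with $N_0 \in \Mat_q(\K)$, the rank hypothesis $\rk N=q$ forces $N_0 \in \GL_q(\K)$. Setting $R:=\begin{bmatrix} N_0^{-1} & 0 \\ 0 & I_{n-q}\end{bmatrix} \in \GL_n(\K)$, we have $RN=\begin{bmatrix} I_q & 0 \\ 0 & 0 \end{bmatrix}$, which is the matrix called $N$ in the lemma. The ranks are preserved under left-multiplication by $R$: $\rk(RM+tRN) = \rk(M+tN) \leq r$ for every $t \in \K$. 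Crucially, since $R$ acts as the identity on the last $n-q$ rows, the bottom-right $(n-q)\times(p-q)$ block of $RM$ is exactly $D$.

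Now I can apply Lemma \ref{extractionlemma} to the matrix $M':=RM$, since $\#\K > q$ and $\rk(M'+t\cdot N_{\mathrm{std}}) \leq r$ for all $t$; the lemma then yields that the bottom-right $(n-q)\times(p-q)$ block of $M'$ has rank at most $r-q$, i.e., $\rk D \leq r-q$, as required. There is no real obstacle here: the whole point of the corollary is that the lemma's special configuration (top-left block, identity) is equivalent, via rank-preserving operations, to any $q$-dimensional ``rectangular support'' configuration with an invertible restriction.
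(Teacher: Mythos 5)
Your proof is correct and is exactly the routine reduction the paper intends when it calls this a ``straightforward corollary'' of Lemma~\ref{extractionlemma}: permute rows and columns so that $I$ and $J$ become $\lcro 1,q\rcro$ (which leaves every rank in sight unchanged, including that of $D$), then left-multiply by the block-diagonal invertible matrix built from $N_0^{-1}$ to turn $N$ into the standard $\begin{bmatrix}I_q & 0 \\ 0 & 0\end{bmatrix}$ while fixing the last $n-q$ rows, hence leaving $D$ untouched. The only edge case your write-up does not mention is $q=0$, where the lemma (which requires $q$ positive) does not apply; but there $D=M$ and the conclusion is just the hypothesis at $t=0$, so nothing is lost.
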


\subsection{Affine spaces of matrices with rank at most $1$}

\begin{Not}
Let $n,p,n',p'$ be non-negative integers with $n \leq n'$ and $p \leq p'$, and let $\calW$ be a subset of
$\Mat_{n,p}(\K)$. We denote by
$\widetilde{\calW}^{(n',p')}$ the subset
$$\Biggl\{\begin{bmatrix}
A & [0]_{n \times (p'-p)} \\
[0]_{(n'-n) \times p} & [0]_{(n'-n) \times (p'-p)}
\end{bmatrix} \mid A \in \calW\Biggr\}$$
of $\Mat_{n',p'}(\K)$.
\end{Not}

\begin{prop}[Classification of affine matrix spaces with rank at most $1$]\label{affinerank1}
Let $\calS$ be an affine subspace of $\Mat_{n,p}(\K)$ in which every matrix has rank at most $1$.
Then:
\begin{itemize}
\item Either all the non-zero matrices of $\calS$ have the same kernel;
\item Or all the non-zero matrices of $\calS$ have the same image;
\item Or $\# \K=2$, $n \geq 2$, $p \geq 2$ and $\calS$ is equivalent to an affine subspace of $\widetilde{\calU_2(\K)}^{(n,p)}$.
\end{itemize}
\end{prop}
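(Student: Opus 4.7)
The plan is to split on whether $0\in\calS$ and on $\#\K$. First, if $0\in\calS$, then $\calS$ is a linear rank-$\overline{1}$ subspace, and I would prove the classical fact that such a subspace has either a common image or a common kernel. For any two rank-$1$ matrices $u_1v_1^T, u_2v_2^T\in\calS$, the combination $u_1v_1^T+\lambda u_2v_2^T$ has rank $2$ for every $\lambda\neq 0$ whenever both $(u_1,u_2)$ and $(v_1,v_2)$ are linearly independent; so each pair of non-zero matrices in $\calS$ shares an image or a kernel, and a short three-matrix argument upgrades this to a single common image or common kernel for all of $\calS$.

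Next, suppose $0\notin\calS$ and fix $A_0\in\calS$: as $A_0$ has rank exactly $1$, up to equivalence $A_0=E_{1,1}$. Let $V$ denote the translation vector space of $\calS$ and write each $B\in V$ in block form $\begin{bmatrix} a & b^T \\ c & D \end{bmatrix}$. For every $t\in\K$, $E_{1,1}+tB\in\calS$ and thus has rank at most $1$. When $\#\K\geq 3$, the $2\times 2$ minor using rows $\{1,i\}$ and columns $\{1,j\}$ (with $i,j\geq 2$) equals $t\,D_{i-1,j-1}+t^2(a\,D_{i-1,j-1}-b_{j-1}c_{i-1})$, a polynomial of degree $\leq 2$ in $t$ that vanishes on all of $\K$; since $\#\K\geq 3$, both coefficients vanish, forcing $D=0$ and $b_{j-1}c_{i-1}=0$ for all $i,j$. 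Hence each $B\in V$ has zero lower-right block and either zero outside the first row or zero outside the first column; i.e. $V$ is contained in the union of the subspace $V_{\mathrm{row}}$ of matrices supported on the first row and the subspace $V_{\mathrm{col}}$ of matrices supported on the first column. A linear subspace contained in the union of two linear subspaces is contained in one of them, so $V\subset V_{\mathrm{row}}$ or $V\subset V_{\mathrm{col}}$; this yields a common image $\K e_1$ or a common kernel $\{0\}\times\K^{p-1}$ for the non-zero matrices of $\calS$.

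Finally, suppose $\#\K=2$ with $0\notin\calS$; this is where the exceptional $\calU_2(\F_2)$ structure arises. The polynomial argument collapses, so the plan is to exploit three-point affine combinations: over $\F_2$, for any $M_1,M_2,M_3\in\calS$, the coefficient sum $1+1+1\equiv 1$ shows that $M_1+M_2+M_3\in\calS$ and hence has rank $\leq 1$. Assuming that $\calS$ has neither a common image nor a common kernel, I would first prove that the images of matrices of $\calS$ all lie in a common $2$-dimensional subspace $U$ of $\K^n$. Indeed, if there were $M_i=u_iv_i^T\in\calS$ with $u_1,u_2,u_3$ linearly independent, the identity $\rk\bigl(\sum_i u_iv_i^T\bigr)=\dim\Vect(v_1,v_2,v_3)$ (valid when the $u_i$'s are independent) would force the $v_i$'s to be proportional, so $M_1,M_2,M_3$ would share a kernel $v^\perp$; picking $N=xy^T\in\calS$ with $y\not\propto v$ (which exists by the non-common-kernel assumption), the rank-$1$ conditions on $M_1+M_2+N$ and $M_1+M_3+N$ would yield $x=u_1+u_2=u_1+u_3$, whence $u_2=u_3$, contradicting the linear independence of the $u_i$'s. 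The dual argument gives a common $(p-2)$-dimensional subspace $K\subset\K^p$ contained in every kernel. Taking $U=\Vect(e_1,e_2)\subset\K^n$ and $K=\{x\in\K^p:x_1=x_2=0\}$ up to equivalence, $\calS$ is identified with an affine subspace $\calS'$ of $\Mat_2(\F_2)$ of rank-$\leq 1$ matrices with neither a common image nor a common kernel; a finite inspection of $\Mat_2(\F_2)$ shows that such a $\calS'$ (necessarily of cardinality $2$ or $4$, since no affine hyperplane of $\Mat_2(\F_2)$ consists entirely of rank-$\leq 1$ matrices) is equivalent to a subspace of $\calU_2(\F_2)$. The main obstacle is precisely this $\F_2$ analysis: the pointwise rank-$1$ condition becomes very weak and must be replaced by the three-point affine-combination condition, after which one still has to run a small combinatorial check in $\Mat_2(\F_2)$ to isolate $\calU_2(\F_2)$ as the unique exceptional shape up to equivalence.
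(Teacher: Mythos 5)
Your proof is correct, but it follows a genuinely different route from the paper's. The paper's argument is organized around a single pairwise condition (J) (``for every pair of non-zero matrices, either kernels coincide or images coincide''): under (J) one concludes directly by a three-matrix argument, and when (J) fails the paper finds $A,B$ with distinct kernels and distinct images, observes that $tA+(1-t)B$ has rank $\geq 2$ for $t\neq 0,1$ (forcing $\# \K=2$), then over $\F_2$ uses the trick $C+D=A+B$ together with the classical fact $\im(C+D)=\im C\oplus\im D$ when $\rk(C+D)=\rk C+\rk D$ to reduce to a $2\times 2$ block, and finishes with Flanders. Your proof instead splits on whether $0\in\calS$, handles the case $\# \K\geq 3$ by normalizing one element to $E_{1,1}$ and computing the $2\times 2$ minors of $E_{1,1}+tB$ (then invoking the fact that a subspace lying in a union of two subspaces lies in one of them), and over $\F_2$ exploits odd affine combinations together with the identity $\rk\bigl(\sum u_iv_i^T\bigr)=\dim\Vect(v_1,\dots,v_k)$ when the $u_i$ are independent. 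All of these steps check out: the minor computation correctly yields $D=0$ and $bc^T=0$, the three-term argument over $\F_2$ correctly bounds the span of images by $2$ (and dually the joint kernel has codimension $2$), and the resulting reduction to $\Mat_2(\F_2)$ is sound. What your approach buys is a very explicit, hands-on argument for $\# \K\geq 3$ that does not need the paper's condition (J); what the paper's approach buys is a uniform treatment that avoids the $0\in\calS$ case split and never explicitly normalizes a base point. One small point: your ``finite inspection of $\Mat_2(\F_2)$'' at the end is easiest justified exactly as the paper does it, by invoking Flanders's theorem (you already use it implicitly to cap the dimension of $\calS'$ at $2$); it would be cleaner to cite it rather than appeal to an unspecified case check.
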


Here, the case of linear subspaces is well-known. The generalization to affine subspaces seems to be new.

\begin{proof}
We consider the following condition:
\begin{itemize}
\item[(J)] For each pair $(A,B)$ of non-zero matrices of $\calS$, either $A$ and $B$ have the same kernel or they have the same image.
\end{itemize}
To start with, we prove that under (J) one of the first two outcomes of our proposition holds.
Assume that (J) holds and that the non-zero matrices of $\calS$ do not have the same kernel.
Then, we can find non-zero matrices $A$ and $B$ with distinct kernels. Let $C \in \calS \setminus \{0\}$. Then,
$\Ker A \neq \Ker C$ or $\Ker B \neq \Ker C$, and hence by (J) we have either $\im A=\im C$ or $\im B=\im C$.
On the other hand, (J) yields that $\im A=\im B$, whence in any case $\im A=\im C$.
Thus, all the non-zero matrices of $\calS$ have the same image.

In the rest of the proof, we assume that (J) does not hold, and we aim at proving that the third stated outcome holds.
Thus, we have rank $1$ matrices $A$ and $B$ in $\calS$ such that $\Ker A\neq \Ker B$ and $\im A \neq \im B$.

In particular, we can find vectors $X \in \Ker B \setminus \Ker A$ and $Y \in \Ker A \setminus \Ker B$.
For all $t \in \K \setminus \{0,1\}$,
one has $(tA+(1-t)B)X=t AX \in \im A \setminus \{0\}$ and $(tA+(1-t)B)Y=(1-t) BY \in \im B \setminus \{0\}$.
If $\# \K>2$, this contradicts the assumption that $\rk(tA+(1-t)B) \leq 1$. Therefore, $\# \K=2$.

Now, let $C \in \calS \setminus \{A,B\}$. Then, $D:=A+B+C$ belongs to $\calS$ and $C+D=A+B$.
As before we find that $(A+B)X=AX \in \im A \setminus \{0\}$ and $(A+B)Y=BY \in \im B \setminus \{0\}$, whereas
$\rk (A+B) \leq \rk A+\rk B=2$.
Hence, $\rk (A+B)=2$ and $\im (A+B)=\im A\oplus \im B$.
Then, since $C$ and $D$ have rank at most $1$ and $\rk(C+D)=\rk(A+B)=2$, it is a classical result that
$\im (C+D)=\im C \oplus \im D$. In particular, $\im C \subset \im A+\im B$.
With the same line of reasoning applied to kernels, we find that $\Ker A\cap \Ker B$ has codimension $2$ in $\K^p$
and $\Ker A \cap \Ker B \subset \Ker C$.
With well-chosen matrices $P \in \GL_n(\K)$ and $Q \in \GL_p(\K)$, we obtain $PAQ=E_{1,1}$ and $PBQ=E_{2,2}$,
and replacing $\calS$ with the equivalent space $P\calS Q$ takes us to the reduced situation where $A=E_{1,1}$ and $B=E_{2,2}$.
The previous results can then be translated as saying that every matrix of $\calS$ has zero columns starting from the third one,
and zero rows starting from the third one. Hence, $\calS=\widetilde{\calT}^{(n,p)}$ for some affine subspace
$\calT$ of $\Mat_2(\K)$ that contains $E_{1,1}$ and $E_{2,2}$ and in which every non-zero matrix has rank $1$.

To complete the proof, it remains to show that $\calT$ is equivalent to a subset of $\calU_2(\K)$.
If $\calT=\{A,B\}$, then we are done. Assume now that $\{A,B\} \subsetneq \calT$.
Then, $\dim \calT \geq 2$, and by Flanders's theorem the space $\calT$ is equivalent to $\calU_2(\K)$
as on the one hand no non-zero vector of $\K^2$ is annihilated by both $E_{1,1}$ and $E_{2,2}$, and on the other hand
no non-zero vector of $\K^2$ is annihilated by both
$E_{1,1}^T$ and $E_{2,2}^T$.
\end{proof}

\subsection{On the rank $1$ matrices in the translation vector space of a rank-$\overline{k}$ space}

Let $\calS$ be a rank-$\overline{k}$ affine subspace of $\Mat_{n,p}(\K)$.
In our proof of the classification theorems, we shall need to find a linear hyperplane $H$ of $\K^p$
such that the dimension of $S_H$ is small, or a $1$-dimensional linear subspace $D$ of $\K^n$ such that the dimension of $S^D$ is small.
This will be obtained thanks to the following series of lemmas.
The first one is taken from \cite{dSPFlandersskew}.

\begin{lemma}[Lemma 6 of \cite{dSPFlandersskew}]\label{2ndkey}
Let $n$ and $p$ be non-negative integers.
Let $\calS$ be an affine subspace of $\Mat_{n,p}(\K)$ with upper-rank at most $r$.
Assume that $\dim S_H \geq r$ for every linear hyperplane $H$ of $\K^p$.
Then, $\calS$ is equivalent to $\calR(r,0)$.
\end{lemma}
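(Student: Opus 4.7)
The plan is to identify the rank-$1$ content of the translation vector space $S$ of $\calS$ with subspaces of $\K^n$, and then to use the affine rank bound to force the structure. For any non-zero linear form $\ell \in (\K^p)^*$, the linear isomorphism $v \mapsto v\ell^T$ identifies $V_\ell := \{v \in \K^n : v\ell^T \in S\}$ with $S_{\ker \ell}$, so the hypothesis reads: $\dim V_\ell \geq r$ for every non-zero $\ell$.

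Next, I would establish that $\urk \calS = r$ (not merely $\leq r$), and that for any $M_0 \in \calS$ of rank $r$, the space $V_\ell$ equals $W := \im M_0$ for every non-zero $\ell$. Both claims rest on the rank-$1$ update formula: for a matrix $A$ of rank $k$ and vectors $v \in \K^n$, $\ell \in (\K^p)^* \setminus \{0\}$, the column space of $A + v\ell^T$ is $\im A + \K v$, which has dimension $k+1$ whenever $v \notin \im A$. If $\urk \calS = r' < r$ and $M_0 \in \calS$ achieves rank $r'$, then for every non-zero $\ell$ and every $v \in V_\ell$, the matrix $M_0 + v\ell^T \in \calS$ has rank at most $r'$, which by the update formula forces $v \in \im M_0$; hence $V_\ell \subset \im M_0$ and $\dim V_\ell \leq r' < r$, contradicting the hypothesis. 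So $\urk \calS = r$; fixing $M_0$ of rank $r$ and rerunning the argument gives $V_\ell \subset W$, and $\dim V_\ell \geq r = \dim W$ then forces $V_\ell = W$.

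Consequently $S$ contains $v\ell^T$ for every $v \in W$ and every $\ell \in (\K^p)^*$; their span is the space $\calW := \{N \in \Mat_{n,p}(\K) : \im N \subset W\}$, of dimension $rp$. Thus $S \supset \calW$, and since $\im M_0 \subset W$ the affine space $\calS = M_0 + S$ contains $M_0 + \calW = \calW$. In particular $0 \in \calS$, so $\calS$ is a linear subspace coinciding with $S$.

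The last step, and the one I expect to be the main technical obstacle, is to establish the reverse inclusion $S \subset \calW$. Choosing a basis in which $W = \K^r \times \{0\}$ so that $\calW$ becomes $\calR(r,0)$, I would decompose a generic $M \in S$ as $\begin{bmatrix} A \\ B \end{bmatrix}$ with $A \in \Mat_{r,p}(\K)$ and $B \in \Mat_{n-r,p}(\K)$, and exploit $S \supset \calR(r,0)$ to see that $\begin{bmatrix} A'' \\ B \end{bmatrix} \in S$, hence has rank at most $r$, for every $A'' \in \Mat_{r,p}(\K)$. Letting $A''$ range over matrices whose row space sweeps every $r$-dimensional subspace of $\K^p$ forces each row of $B$ to lie in every such subspace; in the relevant regime $p > r$ this intersection is $\{0\}$, whence $B = 0$ and $S = \calW$. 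This finally gives $\calS = S = \calW$, equivalent to $\calR(r,0)$.
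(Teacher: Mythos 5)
Your overall architecture is sound: use rank-one perturbations to force $S \supset \calW := \{N : \im N \subset W\}$, deduce $0 \in \calS$, and then get $S \subset \calW$ by sweeping the row space of the top block. However, the rank-one update formula that carries the first two steps is false as stated. One only has the inclusion $\im(A + v\ell^T) \subset \im A + \K v$; equality requires $\ell$ not to vanish on $\Ker A$. When $\Ker A \subset \Ker \ell$, one has $\Ker A \subset \Ker(A + v\ell^T)$ and therefore $\rk(A + v\ell^T) \leq \rk A$ for \emph{every} $v$, not merely $v \in \im A$. (Concretely, in $\Mat_2(\K)$ take $A = E_{1,1}$ and $v\ell^T = E_{2,1}$: here $v \notin \im A$ yet $A + v\ell^T$ has rank $1$, not $2$.) Consequently the deduction ``$\rk(M_0 + v\ell^T) \leq \urk \calS$ forces $v \in \im M_0$'' is valid only for those $\ell$ with $\Ker M_0 \not\subset \Ker \ell$, and your claim that $V_\ell = W$ for \emph{every} non-zero $\ell$ is unjustified on the $r$-dimensional annihilator $L_0$ of $\Ker M_0$ in $(\K^p)^\star$, where the rank constraint on $M_0 + v\ell^T$ holds automatically and yields no information.

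The gap is fixable without changing the architecture. For $\urk \calS = r$: once $\rk M_0 = r' < r \leq p$, there exists at least one $\ell$ with $\Ker M_0 \not\subset \Ker \ell$, and one such $\ell$ gives the contradiction. For $S \supset \calW$: with $\rk M_0 = r < p$ you obtain $V_\ell = W$ for $\ell \notin L_0$, and this already suffices because for $\mu \in L_0 \setminus \{0\}$ and any $\ell' \notin L_0$ one has $\mu + \ell' \notin L_0$, so $v\mu^T = v(\mu + \ell')^T - v(\ell')^T \in S$ for all $v \in W$; hence the $v\ell^T$ with good $\ell$ already span $\calW$. After that patch, your reduction $0 \in \calS$ and the final row-sweep forcing $B = 0$ are correct. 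Do make explicit that $r < p$ is needed not only for the final step but already for a good $\ell$ to exist once $\rk M_0 = r$; indeed, the statement as literally written fails for $r = p$ (take $\calS = \Mat_{3,2}(\K)$ and $r = 2$: the hypotheses hold but $\calS$ is not equivalent to $\calR(2,0)$), so the lemma must be read with that implicit restriction.
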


The next lemma is an elaboration of the previous one; it will be used in the proof of the
refined first classification theorem and in the one of the second classification theorem.

\begin{lemma}\label{3rdkey}
Let $n,p,r$ be non-negative integers with $r>0$.
Let $\calS$ be an affine subspace of $\Mat_{n,p}(\K)$ with upper-rank $r<\min(n,p)$.
Then, one of the following three outcomes must occur:
\begin{enumerate}[(a)]
\item There is a linear hyperplane $H$ of $\K^p$ such that $\dim S_H \leq \frac{r-1}{2}\cdot$
\item There is a $1$-dimensional linear subspace $D$ of $\K^n$ such that $\dim S^D \leq \frac{r-1}{2}\cdot$
\item $r$ is even and $\calS$ is equivalent to $\calR(r/2,r/2)$.
\end{enumerate}
\end{lemma}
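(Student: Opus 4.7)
My plan is to assume both (a) and (b) fail and derive (c). Because all dimensions are integers, the hypotheses become
$$\dim S_H \geq \lceil r/2 \rceil \quad\text{and}\quad \dim S^D \geq \lceil r/2 \rceil$$
for every hyperplane $H$ of $\K^p$ and every line $D$ of $\K^n$.

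I begin by picking $M_0 \in \calS$ with $\rk M_0 = r$ and, after applying an equivalence, reducing to $M_0 = \bigl[\begin{smallmatrix} I_r & 0 \\ 0 & 0 \end{smallmatrix}\bigr]$, with the induced splittings $\K^n = N_1 \oplus N_2$ and $\K^p = P_1 \oplus P_2$ where $\dim N_1 = \dim P_1 = r$; both $N_2$ and $P_2$ are non-zero since $r < \min(n,p)$. A direct block computation of $\rk(M_0 + v \otimes \ell)$ then shows that for a rank-one matrix $v \otimes \ell \in S$, the constraint $\rk(M_0 + v \otimes \ell) \leq r$ is equivalent to $v_2 \otimes \ell_2 = 0$ (decomposing $v = v_1+v_2$, $\ell = \ell_1+\ell_2$ along the splittings), i.e.\ $v \in N_1$ or $\ell$ vanishes on $P_2$. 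Consequently, for any hyperplane $H \not\supset P_2$, every element of $S_H$ has image in $N_1$, which embeds $S_H$ into $N_1$ and yields $\lceil r/2 \rceil \leq \dim S_H \leq r$. Dually, $\lceil r/2 \rceil \leq \dim S^D \leq r$ for every line $D \not\subset N_1$.

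I then introduce the two linear subspaces of $S$: the ``upper band'' $U := S \cap (N_1 \otimes (\K^p)^*)$ (matrices in $S$ with vanishing rows in $N_2$) and the ``left band'' $L := S \cap (\K^n \otimes P_1^*)$ (matrices with vanishing columns in $P_2$). By the rank-one test, every rank-one element of $S$ lies in $U \cup L$; moreover $S_H \subset U$ when $H \not\supset P_2$ and $S^D \subset L$ when $D \not\subset N_1$. The heart of the proof is to bound the upper ranks of $U$ and $L$ by $\lceil r/2 \rceil$. For $N_U = \bigl[\begin{smallmatrix} A & B \\ 0 & 0 \end{smallmatrix}\bigr] \in U$ and $N_L = \bigl[\begin{smallmatrix} A' & 0 \\ C & 0 \end{smallmatrix}\bigr] \in L$, the sum $N_U+N_L$ is in $S$, so $M_0+N_U+N_L = \bigl[\begin{smallmatrix} I+A+A' & B \\ C & 0 \end{smallmatrix}\bigr]$ has rank $\leq r$; a Schur-complement identity gives $C(I+A+A')^{-1}B = 0$ when $I+A+A'$ is invertible, and specializing to $A = A' = 0$ yields the orthogonality $CB = 0$ between the top-right blocks of $U$ and the bottom-left blocks of $L$. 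Combining this with the dimensional lower bounds on the $S_H$ and $S^D$ inside $U$ and $L$, a Flanders-type dimension count on the block subspaces of $\Mat_{r, p-r}(\K)$ and $\Mat_{n-r, r}(\K)$ should force $\urk U \leq \lceil r/2 \rceil$ and $\urk L \leq \lceil r/2 \rceil$.

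With these rank bounds in hand, Lemma~\ref{2ndkey} applies to $U$ (viewed inside $\Mat_{r,p}(\K)$) and to $L$ (viewed inside $\Mat_{n,r}(\K)$), identifying each with a compression space $\calR(\lceil r/2 \rceil, 0)$, respectively $\calR(0, \lceil r/2 \rceil)$, of the ambient matrix space. The parity of $r$ then controls the conclusion. When $r$ is even, the structures of $U$ and $L$ glue together as the translation space of $\calR(r/2, r/2)$, and dimension matching with the assumed lower bounds on $\dim S_H$ and $\dim S^D$ forces $\calS \sim \calR(r/2, r/2)$, i.e.\ outcome (c). When $r$ is odd, the threshold $\lceil r/2 \rceil = (r+1)/2$ is strictly larger than $r/2$, and combining the compression structures of $U$ and $L$ produces two rank-one matrices whose sum, added to $M_0$, yields a matrix of rank $r+1$ in $\calS$, contradicting $\urk \calS = r$. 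The main obstacle I anticipate is the rank bound $\urk U, \urk L \leq \lceil r/2 \rceil$: it requires a delicate interplay between the Schur-complement identity and the Flanders-type dimension counts, and may demand a separate treatment when $p-r$ or $n-r$ is small relative to $r$.
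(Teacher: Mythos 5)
Your setup follows the paper's: same normalization of $M_0=\bigl[\begin{smallmatrix} I_r & 0\\ 0 & 0\end{smallmatrix}\bigr]$, same Schur-complement identity, same orthogonality $CB=0$ between the top-right and bottom-left blocks. But the pivotal step you flag as unverified --- that $\urk U\leq\lceil r/2\rceil$ and $\urk L\leq\lceil r/2\rceil$ --- is in fact \emph{false}, and it fails precisely in the target case (c). Take $n=p=3$, $r=2$, $\calS=\calR(1,1)$; both (a) and (b) fail here. A left multiplication bringing the rank-$2$ matrix $E_{1,1}+E_{1,2}+E_{2,1}\in\calS$ to $E_{1,1}+E_{2,2}$ turns $S$ into $S'=\Bigl\{\bigl[\begin{smallmatrix} x & 0 & 0\\ y & z & w\\ u & 0 & 0\end{smallmatrix}\bigr]\Bigr\}$, so that $U=\{N\in S':\im N\subset\K^2\times\{0\}\}=\Bigl\{\bigl[\begin{smallmatrix} x & 0 & 0\\ y & z & w\\ 0 & 0 & 0\end{smallmatrix}\bigr]\Bigr\}$ contains the rank-$2$ matrix $E_{1,1}+E_{2,2}$. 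Hence $\urk U=2=r$, not $1=\lceil r/2\rceil$, and the same happens for every $\calR(s,s)$: the band $U$ always contains rank-$r$ matrices. So the inequality you plan to feed into Lemma~\ref{2ndkey} cannot hold. There is a second obstruction as well: even granting the rank bound, Lemma~\ref{2ndkey} requires $\dim U_H\geq\lceil r/2\rceil$ for \emph{every} hyperplane $H$ of $\K^p$, but your inclusion $S_H\subset U$ is only available when $H\not\supset P_2$, and for $H\supset P_2$ the space $U_H$ is governed by the left $r\times r$ block alone and may well be trivial.

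The paper sidesteps both problems by never working with the full bands $U$ and $L$. It restricts to hyperplanes $H\supset P_1=\K^r\times\{0\}$ and lines $D\subset N_2=\{0\}\times\K^{n-r}$; then $S_H$ and $S^D$ consist of rank-$\leq 1$ matrices, and each is carried isomorphically, via its top-right (resp.\ bottom-left) block, onto a subspace $T_H$ (resp.\ $T^D$) of $\K^r$ with $\dim T_H=\dim S_H$, $\dim T^D=\dim S^D$. Your Schur-complement orthogonality becomes the statement that $T_H\perp T^D$ under a nondegenerate bilinear form on $\K^r$, which directly yields $\dim S_H+\dim S^D\leq r$, hence $r$ even, both dimensions $=r/2$, and $T_H$, $T^D$ fixed mutual orthogonal complements independent of $H$ and $D$. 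The final identification $\calS=\calR(s,s)$ then proceeds by a row/column-operation invariance argument followed by an application of Corollary~\ref{extractioncor}, not via Lemma~\ref{2ndkey}. If you want to pursue your plan, the bands $U$, $L$ must be replaced throughout by the thin slices $S_H$, $S^D$ themselves.
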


\begin{proof}
Assuming that none of outcomes (a) and (b) holds, we aim at proving that outcome (c) holds.

Without loss of generality, we can assume that $\calS$ contains
$$M_0=\begin{bmatrix}
A & [0]_{r \times (p-r)} \\
[0]_{(n-r) \times r} & [0]_{(n-r) \times (p-r)}
\end{bmatrix}$$
for some invertible matrix $A \in \GL_r(\K)$.

Let $H$ be an arbitrary linear hyperplane of $\K^p$ that includes $\K^r \times \{0\}$,
and $D$ be a $1$-dimensional linear subspace of $\K^n$ that is included in $\{0\} \times \K^{n-r}$.
For any matrix $N$ of $S_H \cup S^D$, we can write
$$N=\begin{bmatrix}
[0]_{r \times r} & C(N) \\
B(N) & D(N)
\end{bmatrix}$$
with $B(N) \in \Mat_{n-r,r}(\K)$, $C(N) \in \Mat_{r,p-r}(\K)$ and $D(N) \in \Mat_{n-r,p-r}(\K)$,
and we note that $B(N)=0$ if $N \in S_H$ whereas $C(N)=0$ if $N \in S^D$.
Now, let $N_1 \in S_H$ and $N_2 \in S^D$.
The matrices
$$M_0+N_1=\begin{bmatrix}
A & C(N_1) \\
[0]_{(n-r) \times r} & D(N_1)
\end{bmatrix} \quad \text{and} \quad
M_0+N_2=\begin{bmatrix}
A & [0]_{r \times (p-r)} \\
B(N_2) & D(N_2)
\end{bmatrix}$$
belong to $\calS$, and hence $\rk(M_0+N_1) \leq r$ and $\rk(M_0+N_2) \leq r$, which leads to $D(N_1)=0=D(N_2)$.
Next, the matrix
$$M_0+N_1+N_2=\begin{bmatrix}
A & C(N_1) \\
B(N_2) & [0]_{(n-r) \times (p-r)}
\end{bmatrix}$$
also belongs to $\calS$. Yet, by Gaussian elimination this matrix is equivalent to
$$\begin{bmatrix}
A & C(N_1) \\
0 & -B(N_2)A^{-1}C(N_1)
\end{bmatrix},$$
which leads to $B(N_2)A^{-1}C(N_1)=0$.
Setting
$$T_H:=\sum_{N \in S_H} \im C(N) \quad \text{and} \quad T^D:=\sum_{N \in S^D} \im B(N)^T,$$
which are linear subspaces of $\K^r$, we have
$$\dim S_H=\dim T_H \quad \text{and} \quad \dim S^D=\dim T^D,$$
and $T_H$ is right-orthogonal to $T^D$ for the non-degenerate bilinear form
$$b : (X,Y) \in (\K^r)^2 \longmapsto X^T A^{-1} Y$$
on $\K^r$.

In particular, this shows that $\dim S_H+\dim S^D \leq r$. Yet,
since $\dim S_H \geq \frac{r}{2}$ and $\dim S^D \geq \frac{r}{2}$, we deduce that $r$ is even and that
$\dim S_H=\dim S^D=\frac{r}{2}$, which further leads to $T_H$ being the (right)-orthogonal complement of $T^D$ under $b$.
Set $s:=\frac{r}{2}\cdot$
Hence, varying $H$ and $D$ shows that there are linear subspaces $V$ and $W$ of $\K^r$, both with dimension $s$,
such that $W$ is the left-orthogonal complement of $V$ under $b$ and $T_H=V$ and $T^D=W$ for every linear hyperplane
$H$ of $\K^p$ that includes $\K^r \times \{0\}$ and every $1$-dimensional linear subspace $D$ of $\K^n$ that is included in $\{0\} \times \K^{n-r}$.

Without further loss of generality, we can now assume that $V=W=\K^s \times \{0\}$.
In that reduced situation, we deduce that:
\begin{itemize}
\item For all $(i,j)\in \lcro 1,s\rcro \times \lcro r+1,p\rcro$, the space $S$ contains $E_{i,j}$;
\item For all $(i,j)\in \lcro r+1,n\rcro \times \lcro 1,s\rcro$, the space $S$ contains $E_{i,j}$.
\end{itemize}

Now, we shall use an invariance argument to obtain several more elementary matrices in $S$.
Let $j \in \lcro 1,r\rcro$. Consider the affine space $\calS'$ deduced from $\calS$ by
the elementary column operation $C_j \leftarrow C_j-C_p$, and denote by $S'$ its translation vector space.
We see that $\calS'$ satisfies the same assumptions as $\calS$ and still contains $M_0$.
Hence, we have linear subspaces $V'$ and $W'$ of $\K^r$ that are attached to $\calS'$ as $V$ and $W$ were attached to $\calS$.
Yet, since $S$ contains $E_{n,1},\dots,E_{n,s}$, so does $S'$. It follows that $W'$ includes $\K^s \times \{0\}$
and as the dimensions are equal we deduce that $W'=\K^s \times \{0\}=W$, and hence $V=V'$, both spaces being equal to the right-orthogonal of
$W$ under $b$. It follows that $S'$ contains $E_{1,p},\dots,E_{s,p}$, whence $S$ contains $E_{1,p}+E_{1,j},\dots,E_{s,p}+E_{s,j}$.
Since we already knew that $S$ contains $E_{1,p},\dots,E_{s,p}$, we conclude that $S$ contains $E_{1,j},\dots,E_{s,j}$.
Hence, $S$ contains $E_{i,j}$ for all $(i,j)\in \lcro 1,s\rcro \times \lcro 1,p\rcro$.

By using a similar method (with row operations instead of column operations), we obtain that $S$ contains $E_{i,j}$ for all
$(i,j)\in \lcro 1,n\rcro \times \lcro 1,s\rcro$.
In particular, we see that $\calR(s,s) \subset S$.
To conclude, we demonstrate that $\calS=\calR(s,s)$.
Indeed, let $M \in \calS$, and let $(i_1,\dots,i_s)$ and $(j_1,\dots,j_s)$ be arbitrary increasing sequences
in $\lcro s+1,n\rcro$ and $\lcro s+1,p\rcro$, respectively.
Denote by $N$ the submatrix of $M$ obtained by deleting the rows indexed over $\lcro 1,s\rcro \cup \{i_1,\dots,i_s\}$
and the columns indexed over $\lcro 1,s\rcro \cup \{j_1,\dots,j_s\}$.
For all lists $(x_1,\dots,x_s)$ and $(y_1,\dots,y_s)$ of scalars in $\K$,
the matrix $M+\underset{k=1}{\overset{s}{\sum}} x_k E_{i_k,k}+ \underset{k=1}{\overset{s}{\sum}} y_k E_{k,j_k}$ has rank at most $r$.
Applying Corollary \ref{extractioncor} repeatedly shows that $N=0$.
As $r<p$ and $r<n$, varying the $(i_k)$ and $(j_k)$ sequences shows that
all the entries of $M$ indexed over $\lcro s+1,n\rcro \times \lcro s+1,p\rcro$ equal $0$.
Thus, $\calS \subset \calR(s,s)$. Since $\calR(s,s) \subset S$, we conclude that
$\calS=\calR(s,s)$, and hence condition (c) holds.
\end{proof}

\subsubsection{A review of range-compatible and quasi-range-compatible maps}\label{RCsection}

\begin{Def}
Let $\calS$ be a subset of $\calL(U,V)$, where $U$ and $V$ are vector spaces over an arbitrary field.
A map $F : \calS \rightarrow V$ is called \textbf{range-compatible} whenever
$$\forall s \in \calS, \; F(s) \in \im s.$$
It is called \textbf{quasi-range-compatible} when there exists a $1$-dimensional linear subspace
$D$ of $V$ such that
$$\forall s \in \calS, \; D \not\subset \im s \Rightarrow F(s) \in \im s.$$
It is called \textbf{local} when there exists a vector $x \in U$ such that
$$\forall s \in \calS, \; F(s)=s(x).$$
\end{Def}

Obviously, every range-compatible map is quasi-range-compatible, and every local map is range-compatible.
In each case, the converse does not hold in general, even if $\calS$ is assumed to be a linear subspace of
$\calL(U,V)$ and $F$ is assumed to be linear.
The above notions are naturally adapted to matrix spaces by using the standard identification between $\Mat_{n,p}(\K)$ and
$\calL(\K^p,\K^n)$.

The notion of a range-compatible map was introduced very recently \cite{dSPRC1}.
It is motivated by its connection to the topic of our article (see Section 2 of \cite{dSPclass}),
by its connection to linear invertibility preservers (see \cite{dSPlargelinpres}),
and finally it is closely connected to the fashionable notion of algebraic reflexivity (see Section 1.1 of \cite{dSPRC2}
for a thorough discussion).

In this article, we shall use recent theorems on range-compatible maps to obtain our so-called ``special" lifting results.
Below are the main theorems that we shall use. The first one deals with affine range-compatible maps on affine
subspaces of matrices.

\begin{theo}[Theorem 3.1 of \cite{dSPRC3}]\label{RCtheo1}
Let $n$ and $p$ be non-negative integers, and $\calS$ be an affine subspace of $\Mat_{n,p}(\K)$ such that
$\codim \calS \leq n-2$. Then, every range-compatible affine map on $\calS$ is local.
\end{theo}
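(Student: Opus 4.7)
The strategy is to reduce to the already-known linear version of the theorem, which asserts that every range-compatible \emph{linear} map on a linear subspace of $\Mat_{n,p}(\K)$ of codimension at most $n-2$ is local (this is proved in \cite{dSPRC1} and reappears in \cite{dSPRC3}). If $0 \in \calS$, then $\calS$ equals its direction $V$ and range-compatibility at the origin forces $F(0) \in \im 0 = \{0\}$; hence the affine map $F$ is in fact linear, and one invokes the linear case directly. Henceforth assume $0 \notin \calS$.

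Fix $M_0 \in \calS$ and let $V$ be the direction of $\calS$, so that every matrix of $\calS$ has the form $M_0+N$ with $N \in V$ and $F(M_0+N)=F(M_0)+L(N)$ for a well-defined linear map $L \colon V \to \K^n$. Since $M_0 \notin V$, the sum $W := V \oplus \K M_0$ is direct and $\codim_{\Mat_{n,p}(\K)} W \leq \codim \calS \leq n-2$. Define a linear extension $\tilde F \colon W \to \K^n$ by $\tilde F(N+tM_0) := L(N)+tF(M_0)$, which by construction restricts to $F$ on $\calS$.

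The heart of the argument is to verify that $\tilde F$ is range-compatible on $W$. For $w = N+tM_0$ with $t \neq 0$ this is immediate: $w = tM$ for the element $M := M_0+t^{-1}N$ of $\calS$, whence $\tilde F(w) = tF(M) \in t\,\im M = \im w$. The delicate case is $w = N \in V$, where one must establish $L(N) \in \im N$; this does \emph{not} follow formally from range-compatibility of $F$, since a priori one only gets $L(N) = F(M_0+N)-F(M_0) \in \im(M_0+N)+\im M_0$. I would attack this by exploiting the whole pencil $t \mapsto M_0+tN \in \calS$: for every $t \in \K$ one has $F(M_0)+tL(N) \in \im(M_0+tN)$, so all $(r+1) \times (r+1)$ minors of the augmented matrix $[M_0+tN \mid F(M_0)+tL(N)]$ (where $r:=\urk\calS$) vanish identically in $t$, and extracting the top-degree coefficients in the Flanders style should isolate the rank relation between $L(N)$ and $N$ alone, yielding $L(N) \in \im N$ whenever $\# \K > r$. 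For small finite fields this is the main obstacle: one must redo, directly on $W$, the rank-one analysis underlying the proof of the linear case, leveraging the abundance of low-rank elements in $V$ guaranteed by $\codim V \leq n-2$.

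Once $\tilde F$ is known to be a range-compatible linear map on $W$, the linear version of the theorem supplies $x \in \K^p$ with $\tilde F(w) = wx$ for all $w \in W$; restricting to $\calS$, this reads $F(M) = Mx$, establishing that $F$ is local. Everything apart from the range-compatibility check of $\tilde F$ on $V$ is bookkeeping, so that check is the sole real obstacle in the proof.
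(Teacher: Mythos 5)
The paper does not prove this statement: it is imported verbatim as Theorem~3.1 of reference~\cite{dSPRC3} and used as a black box, so there is no internal proof here to compare against. Assessed on its own terms, your proposed reduction to the linear theorem breaks precisely at the step you yourself single out, and the Flanders-style patch you sketch for $\# \K>r$ does not close the gap. Writing $r:=\urk\calS$ and letting $t$ range over $\K$, the vanishing of all $(r+1)\times(r+1)$ minors of the $n\times(p+1)$ matrix obtained by appending the column $F(M_0)+tL(N)$ to $M_0+tN$ yields, after extracting the coefficient of $t^{r+1}$, only that every $(r+1)\times(r+1)$ minor of the matrix obtained by appending $L(N)$ to $N$ vanishes, i.e.\ that $\rk\bigl[\,N\mid L(N)\,\bigr]\leq r$. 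But this is far weaker than $L(N)\in\im N$, which is the \emph{equality} $\rk\bigl[\,N\mid L(N)\,\bigr]=\rk N$; and when $\# \K>r$ one already knows $\rk N\leq r$ (as recalled in the introduction, for $\#\K>r$ the translation space of a rank-$\overline{r}$ affine space also has upper rank at most $r$), so the minor computation gives no new information whatsoever.

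More fundamentally, the hypothesis $F(M_0+tN)\in\im(M_0+tN)$ can be completely vacuous along the entire pencil. Take $n=p$, $M_0=I_n\in\calS$ and $N$ nilpotent in $V$: then $M_0+tN$ is invertible for every $t\in\K$, so $\im(M_0+tN)=\K^n$ and range-compatibility imposes no condition at any point of the line, while $\im N$ is a proper subspace of $\K^n$. Thus there is no direct path from range-compatibility of $F$ on $\calS$ to $L(N)\in\im N$ for all $N\in V$: that containment is known only a posteriori, once the theorem has produced $x$ with $F(M)=Mx$, and establishing it a priori is essentially as hard as the theorem itself. The vague appeal to ``redoing the rank-one analysis on $W$'' for small fields does not supply an argument. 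A genuine proof must engage the affine structure directly rather than attempt to linearize by extension.
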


The next result deals with quasi-range-compatible maps on affine subspaces.
It is a straightforward corollary to Proposition 4.2 and Theorems 5.2 and 5.3 of \cite{dSPRC3}.

\begin{theo}\label{RCtheo2}
Let $n$ and $p$ be non-negative integers with $n \geq 2$, and $\calS$ be an affine subspace of $\Mat_{n,p}(\K)$ such that
$\codim \calS \leq n-1$ and $\# \K>2$. Let $F$ be a quasi-range-compatible affine map on $\calS$.
Then, one of the following conditions must hold:
\begin{enumerate}[(i)]
\item The map $F$ is local.
\item There exist vectors $X \in \K^p \setminus \{0\}$, $X' \in \K^p$, a $2$-dimensional linear subspace $P$ of $\K^n$ that includes
$\calS X$ and an endomorphism $\varphi$ of $P$ such that
$$F : M \mapsto \varphi(MX)+MX'.$$
\end{enumerate}
\end{theo}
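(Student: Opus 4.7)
The plan is to derive Theorem \ref{RCtheo2} as a synthesis of the three results from \cite{dSPRC3} mentioned before its statement. Starting with the data of an affine quasi-range-compatible map $F : \calS \rightarrow \K^n$ together with the associated $1$-dimensional subspace $D \subset \K^n$ witnessing the quasi-range-compatibility, I would first split off the part of $\calS$ where genuine range-compatibility holds. On the subset $\calS' := \{M \in \calS : D \not\subset \im M\}$, the restriction of $F$ is range-compatible in the usual sense, and the behaviour of $F$ on $\calS \setminus \calS'$ is constrained by affineness and continuity (in the algebraic sense).

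Next I would apply Proposition 4.2 of \cite{dSPRC3}, which I expect to refine the notion of quasi-range-compatibility by asserting that, after a controlled modification, the map $F$ either becomes genuinely range-compatible on all of $\calS$, or else the ``defect" is forced into a rigid normal form dictated by $D$. This naturally yields a dichotomy on which of the two branches of the conclusion of Theorem 5.2 or Theorem 5.3 of \cite{dSPRC3} to invoke.

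In the first branch, I invoke Theorem 5.2 of \cite{dSPRC3}, which should be the analogue of Theorem \ref{RCtheo1} one step beyond the codimension threshold $n-2$: for range-compatible affine maps on affine subspaces with codimension at most $n-1$, locality can fail, but only through the very specific ``enlarged-image" family $M \mapsto \varphi(MX)+MX'$ landing in a $2$-dimensional space $P \supset \calS X$. This produces either outcome (i) or outcome (ii). In the second branch, Theorem 5.3 of \cite{dSPRC3} directly supplies $X$, $X'$, $P$ and $\varphi$ of the required form from the quasi-range-compatibility data together with the subspace $D$, landing in outcome (ii).

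The main obstacle is matching the outputs of Theorems 5.2 and 5.3 of \cite{dSPRC3} with the uniform form stated here: verifying that the same normal form $M \mapsto \varphi(MX)+MX'$ can always be realised with $P$ of dimension exactly $2$ and containing $\calS X$, regardless of which branch produced it, and that the subspace $D$ from quasi-range-compatibility is consistent with the $2$-dimensional $P$ (typically $D \subset P$). The hypothesis $\# \K > 2$ enters critically through Proposition 4.2 of \cite{dSPRC3}, since over $\F_2$ the affine arithmetic of rank-$1$ matrices collapses (cf.\ the role of $\calU_2(\F_2)$ in Proposition \ref{affinerank1} and elsewhere in this paper) and allows genuinely new pathological quasi-range-compatible maps that do not fit the clean dichotomy above.
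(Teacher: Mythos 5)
The paper gives no self-contained proof of Theorem~\ref{RCtheo2}: it is stated with the single line ``It is a straightforward corollary to Proposition 4.2 and Theorems 5.2 and 5.3 of \cite{dSPRC3}.'' So there is no internal argument for your proposal to match or deviate from; what can be said is that you have correctly identified the three ingredients from \cite{dSPRC3} that the paper relies on, and you correctly anticipated that $\# \K>2$ is needed to rule out $\F_2$-specific pathologies (the paper indeed works around $\F_2$ separately, e.g.\ via $\epsilon(\K)$ and a stronger codimension hypothesis in Theorem~\ref{RCtheo3}).

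That said, everything you interpolate about what Proposition 4.2 and Theorems 5.2 and 5.3 actually assert, and how they divide the work, is speculation that cannot be checked from this paper. Two points in your sketch should be flagged as unreliable guesses rather than established steps. First, your opening move of restricting to $\calS'=\{M\in\calS : D\not\subset\im M\}$ does not sit well with the framework: this set is not an affine subspace, so the cited results (which apply to linear or affine subspaces of matrices) do not directly apply to $F|_{\calS'}$, and ``constrained by affineness and continuity'' is too vague to bridge back to all of $\calS$. Second, your description of Theorem 5.2 as the range-compatible (non-quasi) classification one codimension beyond Theorem~\ref{RCtheo1}, with Theorem 5.3 handling the ``genuinely quasi'' case, is one plausible division of labour, but it is not the only one; in particular the form $M\mapsto\varphi(MX)+MX'$ already contains all local maps as a degenerate case ($\varphi=0$), so the two branches of the conclusion are not disjoint and the real content of the cited theorems may be organized differently (for instance, by the image behaviour of $\calS X$ rather than by a dichotomy ``range-compatible versus not''). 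If you want an actual proof, you must look up the precise statements in \cite{dSPRC3}; nothing in the present paper lets one verify the combination beyond the level of ``these three results are invoked.''
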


Our final result deals with quasi-range-compatible linear maps on large linear subspaces of matrices.
It combines Theorem 4.4, Proposition 4.2 and Proposition 4.3 of \cite{dSPRC3}.

\begin{theo}\label{RCtheo3}
Let $n$ and $p$ be non-negative integers, and $S$ be a linear subspace of $\Mat_{n,p}(\K)$.
Assume that $\codim S \leq 2n-4-\epsilon(\K)$.
Let $F$ be a quasi-range-compatible linear map on $S$.
Then, one of the following situations must hold:
\begin{enumerate}[(i)]
\item The map $F$ is local.
\item There exist vectors $X \in \K^p \setminus \{0\}$, $X' \in \K^p$, a $2$-dimensional linear subspace $P$ of $\K^n$ that includes
$SX$ and an endomorphism $\varphi$ of $P$ such that
$$F : M \mapsto \varphi(MX)+MX'.$$
\end{enumerate}
\end{theo}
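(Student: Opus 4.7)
The plan is to combine the three results of \cite{dSPRC3} cited in the statement: no new argument is needed beyond those, and the task is to verify that they dovetail under the hypothesis $\codim S \leq 2n-4-\epsilon(\K)$.

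First I would apply Theorem 4.4 of \cite{dSPRC3}, the main classification of quasi-range-compatible linear maps on large linear subspaces of $\Mat_{n,p}(\K)$. Under the codimension hypothesis (which is precisely where $\epsilon(\K)$ enters, to accommodate $\F_2$), this theorem yields a dichotomy: either $F$ is in fact range-compatible on all of $S$, or $F$ admits a structural description built around a distinguished vector $X\in \K^p\setminus\{0\}$.

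In the range-compatible branch, I would invoke Proposition 4.2 of \cite{dSPRC3}, which promotes a range-compatible linear map on a linear subspace of sufficiently small codimension to a local map. Since our bound $\codim S \leq 2n-4-\epsilon(\K)$ is at least as strong as what Proposition 4.2 requires, $F$ is local and outcome~(i) holds. In the other branch, Proposition 4.3 of \cite{dSPRC3} unpacks the structural description into the explicit form $M\mapsto \varphi(MX)+MX'$ for some $X'\in\K^p$, some $2$-dimensional linear subspace $P$ of $\K^n$ with $SX\subset P$, and some endomorphism $\varphi$ of $P$; this is outcome~(ii).

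The only delicate point, and the reason the correction $\epsilon(\K)$ appears, is the calibration over $\F_2$: over that field, range-compatible linear maps fail to be local in substantially more situations than over larger fields, so the codimension threshold has to be tightened by $2$. The main obstacle of the proof is therefore not a hard computation but rather the bookkeeping check that, in each of the three invocations, the codimension hypothesis of \cite{dSPRC3} is implied by $\codim S \leq 2n-4-\epsilon(\K)$ and that the two outputs of Theorem 4.4 feed correctly into Propositions 4.2 and 4.3 respectively.
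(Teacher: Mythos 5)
Your proposal matches the paper's approach exactly: the paper does not give a proof of this theorem but simply states that it "combines Theorem 4.4, Proposition 4.2 and Proposition 4.3 of \cite{dSPRC3}," which is precisely the combination you describe (Theorem 4.4 for the dichotomy, Proposition 4.2 to promote range-compatibility to locality in outcome (i), Proposition 4.3 to unpack the structured alternative into outcome (ii)). Your write-up just makes explicit the bookkeeping that the paper leaves implicit.
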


\section{Proof of the first classification theorem}\label{1stclasssection}

This section is devoted to the proof of the first classification theorem.
In the first part, we prove two lifting results, and in the second part we use those results to prove the first classification theorem
by induction.

\subsection{Lifting results}

\begin{prop}[Lifting lemma 1]\label{lifting1}
Let $n,p,r$ be positive integers such that $r<\min(n,p)$.
Let $\calV$ be a rank-$\overline{r}$ affine subspace of $\calR(1,r)$
such that
$$\dim \calV \geq nr-(n-p+r)+2.$$
Then, $\calV \subset \calR(0,r)$.
\end{prop}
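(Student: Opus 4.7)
The plan is to argue by contradiction, assuming some matrix of $\calV$ has a nonzero entry in the ``forbidden'' positions, and derive a dimension bound inconsistent with the hypothesis. Write every $M\in\calR(1,r)$ in block form
$$M=\begin{bmatrix} a(M) & c(M) \\ B(M) & [0]_{(n-1)\times (p-r)} \end{bmatrix}$$
with $a(M)\in\Mat_{1,r}(\K)$, $c(M)\in\Mat_{1,p-r}(\K)$ and $B(M)\in\Mat_{n-1,r}(\K)$; then $\calV\subset\calR(0,r)$ is simply $c\equiv 0$ on $\calV$, so I will assume that $\calV_+:=\{M\in\calV : c(M)\neq 0\}$ is nonempty and aim for a contradiction.

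The first step is a rank observation: for every $M\in\calV_+$ one has $\rk B(M)\leq r-1$. Indeed, the last $n-1$ rows of $M$ all sit inside $\K^r\times\{0\}^{p-r}$. If $\rk B(M)$ were equal to $r$, those rows would span the whole of $\K^r\times\{0\}^{p-r}$, and the constraint $\rk M\leq r$ would force the first row $(a(M),c(M))$ to lie in that subspace, contradicting $c(M)\neq 0$.

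Next, let $V$ denote the translation vector space of $\calV$ and set $d:=\dim c(V)$, where $c$ is viewed as a linear form on $V$ with values in $\K^{p-r}$. Fix $M_0\in\calV_+$; for every $N\in\Ker(c|_V)$ we have $c(M_0+N)=c(M_0)\neq 0$, so $M_0+N\in\calV_+$. By the first step, every matrix of the affine subspace
$$B(M_0)+B\bigl(\Ker(c|_V)\bigr)\subset\Mat_{n-1,r}(\K)$$
has rank at most $r-1$. Since $n-1\geq r$, Flanders's theorem bounds its dimension by $(n-1)(r-1)$.

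The final step is a dimension count. The space $\Ker(B|_V)\cap\Ker(c|_V)$ consists of matrices of $V$ whose only possibly nonzero entries are the first $r$ entries of the first row, so its dimension is at most $r$. Combined with the rank theorem this gives $\dim V-d-r\leq(n-1)(r-1)$, hence $\dim V\leq nr-n+1+d\leq nr-n+(p-r)+1$, contradicting the hypothesis $\dim\calV\geq nr-(n-p+r)+2=nr-n+p-r+2$. The main obstacle is locating the rank drop of the first step; once that observation is in place, the Flanders bound applied to an appropriate slice of $\calV_+$ closes the argument, with no case distinction on $\card\K$ needed.
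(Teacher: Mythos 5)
Your proof is correct and follows essentially the same route as the paper's: you consider the slice of $\calV$ on which $c$ is constant and nonzero (the paper's $\calW=\{M\in\calV:\,C(M)=C(M_0)\}$ is precisely your $M_0+\Ker(c|_V)$), observe that $B$ drops rank there, apply Flanders to $B$ of that slice, and close with a dimension count. The only difference is cosmetic: you spell out the rank-drop observation and the bookkeeping via $\Ker(B|_V)\cap\Ker(c|_V)$, while the paper states them tersely.
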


\begin{proof}
We write every matrix $M$ of $\calV$ as
$$M=\begin{bmatrix}
[?]_{1 \times r} & C(M) \\
B(M) & [0]_{(n-1) \times (p-r)}
\end{bmatrix}$$
with $C(M) \in \Mat_{1,p-r}(\K)$ and $B(M) \in \Mat_{n-1,r}(\K)$.
Assume that $C$ is not identically zero on $\calV$, and pick a matrix $M_0 \in \calV$ such that $C(M_0)\neq 0$.
Denote by $\calW$ the affine subspace of $\calV$ consisting of all its matrices $M$ such that $C(M)=C(M_0)$.
Then, we see that
$$\forall M \in \calW, \; \rk B(M) \leq r-1.$$
Therefore, $B(\calW)$ is a rank-$\overline{r-1}$ affine subspace of $\Mat_{n-1,r}(\K)$.
If follows from Flanders's theorem that
$$\dim B(\calW) \leq (n-1)(r-1).$$
Hence,
$$\dim \calV \leq \dim B(\calW)+p \leq  nr-n-r+1+p,$$
contradicting our assumptions.
Thus, $C(M)=0$ for all $M \in \calV$, which shows that $\calV \subset \calR(0,r)$.
\end{proof}

\begin{prop}[Special lifting lemma 1]\label{speciallifting1}
Let $n$ and $r$ be positive integers such that $n>r$. Let $\calW$ be an affine subspace of
$\Mat_{n,r}(\K)$ such that
$$\dim \calW \geq nr-n+3.$$
Let $f : \calW \rightarrow \K^n$ be an affine map such that
the affine subspace
$$\calV:=\biggl\{\begin{bmatrix}
f(N) & N
\end{bmatrix}\mid N \in \calW\biggr\}$$
of $\Mat_{n,r+1}(\K)$ is a rank-$\overline{r}$ space. Then, $\calV$ is equivalent to a subset of $\calR(0,r)$.
\end{prop}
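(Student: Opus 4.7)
The plan is to reduce the conclusion to the locality of $f$ and then apply Theorem~\ref{RCtheo1}. Asserting that $\calV$ is equivalent to a subset of $\calR(0,r)$ amounts to finding a nonzero vector $X = \binom{x_0}{Y} \in \K^{r+1}$ with $x_0 f(N) + NY = 0$ for every $N \in \calW$. The case $x_0 = 0$ would force $\calW$ into $\{N \in \Mat_{n,r}(\K) : NY = 0\}$, a subspace of dimension $n(r-1) = nr - n$, contradicting $\dim \calW \geq nr - n + 3$. So I must prove that $f$ is local. Since $\codim \calW \leq n - 3 \leq n - 2$ in $\Mat_{n,r}(\K)$, Theorem~\ref{RCtheo1} then reduces the problem to proving that $f$ is range-compatible, i.e.\ $f(N) \in \im N$ for every $N \in \calW$.

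Towards this, I would first argue that $\urk \calW = r$: otherwise Flanders's theorem applied to $\calW$ would yield $\dim \calW \leq n(r-1) < nr - n + 3$. So some $N_0 \in \calW$ has rank exactly $r$, and at such an $N_0$ the inequality $\rk[f(N_0) \mid N_0] \leq r = \rk N_0$ forces $f(N_0) \in \im N_0$. After suitable equivalence operations on $\calV$ (including a column operation folding the first column of $\calV$ into the last $r$ ones), I may assume $N_0 = \binom{I_r}{0}$ and $f(N_0) = 0$, so the task becomes to show $\vec{f}(H) \in \im(N_0 + H)$ for every $H$ in the translation vector space $W$ of $\calW$, where $\vec{f}$ denotes the linear part of $f$. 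Writing $H = \binom{A}{B}$ and $\vec{f}(H) = \binom{u}{v}$, a column reduction shows that when $I_r + A$ is invertible, the rank constraint $\rk[\vec{f}(H) \mid N_0 + H] \leq r$ translates to $v = B(I_r+A)^{-1} u$. Rescaling $H \mapsto tH$ and clearing denominators yields the polynomial identity $\det(I_r + tA)\, v = t\, B\, \mathrm{adj}(I_r + tA)\, u$, from which $v = 0$ can be extracted at $t = 0$ provided $\K$ has enough elements.

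The main obstacle will be establishing range-compatibility in full generality, and in particular over $\F_2$, where the polynomial-identity argument degenerates. I expect the resolution to rely on a careful use of Corollary~\ref{extractioncor} applied to one-parameter families in $\calV$: the codimension bound $\codim \calW \leq n - 3$ should guarantee enough rank-$r$ matrices in $\calW$ to probe arbitrary $N \in \calW$, and an iterated extraction would then force $\vec{f}(H) \in \im(N_0 + H)$ for every $H \in W$. Once range-compatibility is fully established, Theorem~\ref{RCtheo1} yields the locality of $f$, and the reformulation in the first paragraph produces the desired equivalence of $\calV$ with a subset of $\calR(0,r)$.
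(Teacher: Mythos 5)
Your reduction to locality of $f$ via Theorem~\ref{RCtheo1}, and the observation that $\codim \calW \leq n-3 \leq n-2$ makes that theorem applicable once range-compatibility is established, match the paper's strategy. The gap is precisely where you identify it: you do not actually prove range-compatibility, and your sketch of how to do so cannot be made to work over all fields. The polynomial-determinant argument (rescaling $H \mapsto tH$ and extracting $v=0$ from $\det(I_r+tA)\,v = t\,B\,\mathrm{adj}(I_r+tA)\,u$ at $t=0$) requires enough field elements to force a polynomial identity from pointwise vanishing, and the fallback of "iterated extraction" via Corollary~\ref{extractioncor} does not help, since that corollary also carries the hypothesis $\#\K > q$. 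So you have not produced an argument over $\F_2$, and saying you "expect" one to exist is not a proof.

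The paper's route to range-compatibility is structurally different and avoids polynomial identities altogether. Fix a linear hyperplane $G = \Ker\varphi$ of $\K^n$ and set $\calW' := \{N \in \calW : \im N \subset G\}$; on $\calW'$ consider the affine form $g : N \mapsto \varphi(f(N))$. If $g$ took some value $a \neq 0$, then every $N$ in the level set $g^{-1}\{a\}$ would satisfy $f(N) \notin G \supset \im N$, forcing $\rk N \leq r-1$ (else appending $f(N)$ would give rank $r+1$). Viewing $g^{-1}\{a\}$ inside $\Mat_{n-1,r}(\K)$ (by a basis of $G$), Flanders's theorem gives $\dim g^{-1}\{a\} \leq (n-1)(r-1)$, and the chain $\dim\calW \leq r + \dim\calW' \leq r+1+\dim g^{-1}\{a\} \leq nr-n+2$ contradicts the hypothesis $\dim\calW \geq nr-n+3$. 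Hence $g \equiv 0$, i.e.\ $\im N \subset G \Rightarrow f(N) \in G$; intersecting over all hyperplanes $G \supset \im N$ yields $f(N) \in \im N$. Because Flanders's theorem in the affine form used holds over every field, this argument is cardinality-free. Your pointwise normalization at a rank-$r$ matrix $N_0$ is also an unnecessary detour: the hyperplane argument works directly on $\calW$ without singling out a base point.
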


\begin{proof}
We shall prove that $f$ is range-compatible.

Let $G$ be a linear hyperplane of $\K^n$, and choose a linear form $\varphi$ on $\K^n$ with kernel $G$.
Set $\calW':=\{N \in \calW : \; \im N \subset \Ker \varphi\}$.
Then, $\calW'$ can be identified with a linear subspace of $\calL(\K^r,\Ker \varphi)$.
Consider the map
$$g : N \in \calW' \mapsto \varphi(f(N)).$$
For all $N \in \calW'$ such that $g(N) \neq 0$ we must have $\rk N \leq r-1$ because
$\rk \begin{bmatrix}
f(N) & N
\end{bmatrix} \leq r$.
If $g \neq 0$, we choose a non-zero element $a$ in the range of $g$, and hence
$g^{-1} \{a\}$ is a rank-$\overline{r}$ affine subspace of $\calW'$ with codimension at most $1$;
then, by Flanders's theorem,
$$\dim g^{-1} \{a\} \leq (n-1)(r-1),$$
which leads to
$$\dim \calW \leq r+\dim \calW' \leq r+1+\dim g^{-1} \{a\} \leq nr-n+2,$$
contradicting our assumptions.
Thus, $g=0$, which shows that
$$\forall N \in \calW, \; \im N \subset G \Rightarrow f(N) \in G.$$
Now, let $N \in \calW$. For each linear hyperplane $G$ of $\K^n$ that includes $\im N$, we have
$f(N) \in G$, and hence $f(N) \in \im N$.
Therefore, $f$ is range-compatible.

The map $f$ is local according to Theorem \ref{RCtheo1}: this yields a vector $X \in \K^r$ such that
every matrix of $\calV$ vanishes at the non-zero vector $\begin{bmatrix}
1 \\
-X
\end{bmatrix}$, and hence $\calV$ is equivalent to a subset of $\calR(0,r)$.
\end{proof}

\subsection{Setting the proof up}

In the next three sections, we perform the inductive proof of the first classification theorem.
We work by induction over $n,p,r$.
Let $\calS$ be an affine subspace of $\Mat_{n,p}(\K)$ in which every matrix has rank at most $r$.
Assume that $\dim \calS\geq nr-(n-p+r)+2$.

We assume that $\calS$ is inequivalent to $\calR(r,0)$, and we
try to prove that $n=p$ and that $\calS$ is actually equivalent to $\calR(0,r)$.
Note that if $n>p$ then $\dim \calR(r,0)<nr-(n-p+r)+2$.

Denote by $S$ the translation vector space of $\calS$.
We shall split the discussion into three cases:
\begin{itemize}
\item Case 0: For every linear hyperplane $H$ of $\K^p$, one has $\dim S_H \geq r$.
\item Case 1: There exists a linear hyperplane $H$ of $\K^p$ such that $0<\dim S_H < r$.
\item Case 2: There exists a linear hyperplane $H$ of $\K^p$ such that $\dim S_H=0$.
\end{itemize}

In Case 0 we use Lemma \ref{2ndkey}: in that situation we know that $\calS$ is equivalent to $\calR(r,0)$ and hence $n=p$.
In Case 1, we shall use the ERC method, and in Case 2 we shall use the EC method.

\subsection{Case 1: There exists a linear hyperplane $H$ of $\K^p$ such that $0<\dim S_H < r$.}

In particular, $r \geq 2$.
Let us apply the ERC method.
We lose no generality in assuming that $S_H$ contains $E_{1,1}$ (so that $H=\{0\} \times \K^{p-1}$).
Then, we split every matrix $M \in \calS$ as
$$M=\begin{bmatrix}
? & [?]_{1 \times (p-1)} \\
[?]_{(n-1) \times 1} & P(M)
\end{bmatrix} \quad \text{with $P(M) \in \Mat_{n-1,p-1}(\K)$.}$$
As $S$ contains $E_{1,1}$, the extraction lemma (Lemma \ref{extractionlemma}) shows that $\urk P(\calS) \leq r-1$.
On the other hand, the rank theorem shows that
\begin{multline*}
\dim P(\calS) \geq \dim \calS-(p-1)-\dim S_H \geq
nr-(n-p+r)+2-(p-1)-(r-1)\\
=(n-1)(r-1)-(r-1)+2.
\end{multline*}
Hence the induction hypothesis applies to $P(\calS)$.
Without loss of generality, we can then assume that either $P(\calS) \subset \calR(0,r-1)$, or $n=p$ and $P(\calS) \subset \calR(r-1,0)$.

If $P(\calS) \subset \calR(0,r-1)$, then $\calS$ is a subspace of $\calR(1,r)$, and Proposition \ref{lifting1}
yields that $\calS$ is a subspace of $\calR(0,r)$.

Assume finally that $P(\calS) \subset \calR(r-1,0)$ and $n=p$, so that $\calS \subset \calR(r,1)$.
Then, Proposition \ref{lifting1} applies to $\calS^T$ and shows that $\calS \subset \calR(r,0)$.

\subsection{Case 2: There exists a linear hyperplane $H$ of $\K^p$ such that $S_H=\{0\}$.}

Without loss of generality, we can assume that $H=\{0\} \times \K^{p-1}$.
Then, we apply the EC method: we write every matrix $M$ of $\calS$ as
$$M=\begin{bmatrix}
[?]_{n \times 1} & J(M)
\end{bmatrix} \quad \text{with $J(M) \in \Mat_{n,p-1}(\K)$.}$$
We know that $\urk J(\calS) \leq r$ and
$\dim J(\calS)=\dim \calS \geq nr-(n-(p-1)+r)+2$.
If $r<p-1$, then we know by induction that $J(\calS)$ is equivalent to an affine subspace of $\calR(0,r)$,
and hence we can assume without loss of generality
that $J(\calS) \subset \calR(0,r)$; then, if we consider the subspace obtained by deleting the last column
in the matrices of $\calS$ we know by induction that it is equivalent to $\calR(0,r)$,
and a similar conclusion follows for $\calS$.

From now on, we assume that $r=p-1$. Set
$$\calW:=J(\calS).$$
Then, as $S_H=\{0\}$ we have an affine map
$$f : \calW \rightarrow \K^n$$
such that
$$\calS=\Bigl\{\begin{bmatrix}
f(N) & N
\end{bmatrix} \mid N \in \calW\Bigr\}.$$
Moreover,
$$\dim \calW=\dim \calS \geq nr-(n-p+r)+2 = nr-n+3.$$
Therefore, Proposition \ref{speciallifting1} shows that $\calS$ is equivalent to a subset of $\calR(0,r)$.

This completes the proof of the first classification theorem.

\subsection{A corollary of the first classification theorem}

\begin{cor}[Forcing lemma 1]\label{forcing1}
Let $\calV$ be a rank-$\overline{r}$ affine subspace of $\Mat_{n,r+1}(\K)$, with translation vector space $V$.
Assume that $\dim \calV \geq nr-n+3$.
\begin{itemize}
\item If $V \subset \calR(0,r)$ then $\calV \subset \calR(0,r)$.
\item If $V \subset \calR(r,0)$ then $\calV \subset \calR(r,0)$.
\end{itemize}
\end{cor}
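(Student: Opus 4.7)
My plan is to read the hypothesis $V \subset \calR(0,r)$ as saying that the last column of every matrix of $\calV$ is a fixed vector $c \in \K^n$, then to apply Proposition \ref{speciallifting1} to get a common kernel vector for $\calV$, and finally to kill $c$ by a dimension count against $V$. Writing each $M \in \calV$ as $M = \begin{bmatrix} N & c \end{bmatrix}$ with $N \in \Mat_{n,r}(\K)$, I set $\calW := \{N : \begin{bmatrix} N & c \end{bmatrix} \in \calV\}$, an affine subspace of $\Mat_{n,r}(\K)$ with $\dim \calW = \dim \calV \geq nr-n+3$. The constant map $f : N \mapsto c$ on $\calW$ then satisfies the hypotheses of Proposition \ref{speciallifting1} (after a column permutation moving the last column of the ambient matrix space to the front), so $\calV$ is equivalent to a subset of $\calR(0,r)$; equivalently, there exists a nonzero vector $Y \in \K^{r+1}$ with $MY = 0$ for every $M \in \calV$.

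I then split $Y = \begin{bmatrix} Y' \\ y \end{bmatrix}$ with $Y' \in \K^r$ and $y \in \K$, so that $MY=0$ becomes $NY' + yc = 0$ for every $N \in \calW$. Subtracting two such identities gives $NY' = 0$ for every $N$ in the translation space, which under the zero-last-column identification is exactly $V$ viewed inside $\Mat_{n,r}(\K)$. If $Y'$ were nonzero, the linear conditions $NY'=0$ would cut $\Mat_{n,r}(\K)$ down to a subspace of codimension $n$, contradicting $\dim V \geq nr-n+3$. Hence $Y' = 0$, which forces $y \neq 0$ and therefore $c = 0$, so $\calV \subset \calR(0,r)$. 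This settles the first bullet.

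For the second bullet, I would reduce to the first one by transposition. The inclusion $V \subset \calR(r,0)$ forces $\dim V \leq r(r+1)$, and combining with $\dim V \geq nr-n+3$ yields $n(r-1) \leq r^2+r-3$. For $r=1$ the two bounds are incompatible (the case is vacuous), and for $r \geq 2$ this inequality forces $n = r+1$. Now $\calV^T$ is an affine subspace of $\Mat_{r+1,r+1}(\K)$ with the same dimension, the same upper rank $r$, and translation space $V^T \subset \calR(0,r)$; moreover $\dim \calV^T \geq (r+1)r - (r+1) + 3$, which is exactly the hypothesis of the first bullet applied in $\Mat_{r+1,r+1}(\K)$. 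The first bullet then yields $\calV^T \subset \calR(0,r)$, equivalent to $\calV \subset \calR(r,0)$.

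The delicate step is to unpack the conclusion of Proposition \ref{speciallifting1} as a usable common kernel vector $Y$ and then to use the lower bound on $\dim V$ to force the top component $Y'$ to vanish; everything else, including the reduction of the second bullet to the first, is essentially bookkeeping once one notices that the hypotheses there force $n = r+1$.
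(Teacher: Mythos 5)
Your proof is correct, and the first bullet takes a genuinely different route from the paper's. The paper invokes the first classification theorem (Theorem~\ref{firstclasstheo}) on $\calV$ to get a dichotomy — either $\calV$ is equivalent to a subspace of $\calR(0,r)$, or $n=r+1$ and $\calV$ is equivalent to a subspace of $\calR(r,0)$ — and then rules the second alternative out via a dimension count against $V \subset \calR(0,r)$. You instead observe that $V \subset \calR(0,r)$ means the last column of $\calV$ is a fixed vector $c$, which lets you manufacture the affine subspace $\calW$ and the constant map $f : N \mapsto c$ and feed them directly into Proposition~\ref{speciallifting1}; that proposition has no second alternative, so it hands you the common kernel vector $Y$ outright. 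The terminal dimension count (if $Y'\neq 0$ then $\dim V \le nr-n$, contradiction) is the same as the paper's collinearity-of-$x$-and-$e_{r+1}$ argument in different clothes, and the transposition reduction for the second bullet is identical, including the observation that $\dim V \ge nr-n+3$ and $V \subset \calR(r,0)$ force $n=r+1$. Your route is arguably leaner since it only uses the special lifting lemma (a sublemma of the first classification theorem's proof) rather than the full classification theorem; both rely on the implicit hypothesis $n \geq r+1$, which you correctly need for Proposition~\ref{speciallifting1} and which the paper needs to invoke Theorem~\ref{firstclasstheo}.
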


\begin{proof}
Denote by $e_{r+1}$ the last vector of the standard basis of $\K^{r+1}$.
Assume that $V \subset \calR(0,r)$.
The assumptions show that $\dim \calV \geq nr-(n-p+r)+2$, where $p:=r+1$, and hence the first classification theorem applies to
$\calV$. There are two possibilities:
\begin{itemize}
\item \textbf{Case 1:} $\calV$ is equivalent to a subspace of $\calR(0,r)$, yielding a non-zero vector $x \in \K^{r+1}$
such that $\calV x=\{0\}$. Then, $V x=\{0\}$. If $x$ and $e_{r+1}$ were non-collinear, we would find that
$\dim V \leq n(r-1)$, which is false. Therefore, $x$ and $e_{r+1}$ are collinear, which yields that $\calV e_{r+1}=\{0\}$.
Therefore, $\calV \subset \calR(0,r)$.

\item \textbf{Case 2:} $n=r+1$ and $\calV$ is equivalent to a subspace of $\calR(r,0)$, meaning that we have a linear hyperplane $H$ of $\K^n$ such that $\im N \subset H$ for all $N \in \calV$. Thus $\im N \subset H$ for all $N \in V$.
    Combining this with $V \subset \calR(0,r)$ yields that $\dim V \leq nr-r$, contradicting our assumptions.
\end{itemize}

If $V \subset \calR(r,0)$, then $n=r+1$ since $\dim \calV \geq nr-n+3$; applying the first case to $\calV^T$
then leads to $\calV \subset \calR(r,0)$.
\end{proof}

If the same line of reasoning, one obtains, by using Proposition \ref{affinerank1} instead of the first classification theorem:

\begin{cor}[Forcing lemma 2]\label{forcing2}
Let $\calV$ be a rank-$\overline{1}$ affine subspace of $\Mat_{n,p}(\K)$, with translation vector space $V$.
Assume that $\dim \calV \geq 2$ and that $\# \K>2$.
\begin{itemize}
\item If $V \subset \calR(0,1)$ then $\calV \subset \calR(0,1)$.
\item If $V \subset \calR(1,0)$ then $\calV \subset \calR(1,0)$.
\end{itemize}
\end{cor}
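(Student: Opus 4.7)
The plan is to mirror the proof of Corollary \ref{forcing1}, substituting Proposition \ref{affinerank1} for the first classification theorem: since $\calV$ itself has upper-rank at most $1$, Proposition \ref{affinerank1} applies directly, and the hypothesis $\#\K>2$ excludes its third (exotic) conclusion. Hence only two scenarios remain: either (a) all nonzero matrices of $\calV$ share a common kernel $K$, necessarily a linear hyperplane of $\K^p$; or (b) they share a common image $L$, necessarily a $1$-dimensional subspace of $\K^n$.

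Focus first on the bullet $V \subset \calR(0,1)$, and set $H:=\{0\}\times \K^{p-1}$, so that every matrix of $V$ annihilates $H$. In scenario (a), taking differences of elements of $\calV$ shows that every $N \in V$ annihilates $K$. Since $\dim V=\dim \calV\geq 2$, one may pick a nonzero $N \in V$; being a nonzero rank-$1$ matrix of $\calR(0,1)$, its kernel equals $H$. The inclusion $K \subset \Ker N=H$ between two hyperplanes of $\K^p$ forces $K=H$, so that $\calV \cdot H=\{0\}$, i.e.\ $\calV \subset \calR(0,1)$, as wished.

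Scenario (b) must be ruled out. Fix $A_0 \in \calV$ and an arbitrary nonzero $N \in V$. Since $A_0+tN=0$ admits at most one solution $t \in \K$ and $\#\K\geq 3$, one may choose two distinct scalars $t_1 \neq t_2$ for which both $A_0+t_1 N$ and $A_0+t_2 N$ are nonzero members of $\calV$; by (b), each has image equal to $L$, so subtracting yields $(t_1-t_2)\,Nx \in L$ for every $x \in \K^p$, whence $\im N \subset L$. Thus $\im N \subset L$ for every $N \in V$. Since any nonzero $N \in \calR(0,1)$ satisfies $\im N=\K\,(Ne_1)$, the injective linear map $N \mapsto Ne_1$ from $V$ to $\K^n$ then takes values in $L$, forcing $\dim V \leq 1$ and contradicting $\dim \calV\geq 2$.

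The second bullet follows at once by applying the first to $\calV^T \subset \Mat_{p,n}(\K)$: the hypothesis $V \subset \calR(1,0)$ in $\Mat_{n,p}(\K)$ transposes to $V^T \subset \calR(0,1)$ in $\Mat_{p,n}(\K)$, whence $\calV^T \subset \calR(0,1)$ and then $\calV \subset \calR(1,0)$. The only delicate step in the whole argument is the disposal of scenario (b), which is also the sole place where the cardinality hypothesis $\#\K>2$ is genuinely used.
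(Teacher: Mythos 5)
Your proof is correct and fills in exactly the argument the paper intends when it says to reuse the reasoning of Corollary \ref{forcing1} with Proposition \ref{affinerank1} in place of the first classification theorem: apply the rank-$\overline{1}$ classification to $\calV$, rule out the $\calU_2$ case by $\#\K>2$, and then a short dimension/injectivity argument settles each of the two remaining scenarios. One small inaccuracy in your closing remark: the hypothesis $\#\K>2$ is already used essentially to exclude the third conclusion of Proposition \ref{affinerank1}, so scenario (b) is not ``the sole place'' it enters (and indeed scenario (b) could instead be disposed of without it, by reducing to $0\notin\calV$ and noting that differences of nonzero rank-$1$ matrices with a common image again have image inside that line).
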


\section{Proof of the refined first classification theorem}\label{refined1stclasssection}

In this section, we prove the refined first classification theorem by induction on $n$.
This proof is structured as follows:
\begin{itemize}
\item We start by tackling the case when $n=p=3$ and $r=2$ (Section \ref{n=3r=2section}), using techniques that are substantially different from those of the rest of our article.
\item In Section \ref{liftingsection2}, we prove two lifting results.
\item In Section \ref{specialliftingsection2}, we prove the special lifting lemma that is needed in the proof of the refined
first classification theorem.
\item We wrap up the inductive proof of the refined first classification theorem in Section \ref{refinedproofsection}.
\item In the last section, we derive the forcing lemmas that will be used in our proof of the second classification theorem.
\end{itemize}

\subsection{The case $n=p=3$ and $r=2$}\label{n=3r=2section}

\begin{prop}\label{n=3r=2}
Let $\calV$ be a rank-$\overline{2}$ affine subspace of $\Mat_3(\K)$ with $\dim \calV\geq 4$. Assume that $\# \K>2$.
Then, either $\calV$ is $2$-decomposable, or $\# \K=3$ and $\calV$ is equivalent to $\calU_3(\K)$.
\end{prop}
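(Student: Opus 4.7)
The strategy is to apply Lemma \ref{3rdkey} and, in the nontrivial cases, perform the EC method followed by a classification of the resulting affine map via the theorems on quasi-range-compatible maps.

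Apply Lemma \ref{3rdkey} to $\calV$ with $n=p=3$ and $r=2$. Since $(r-1)/2 < 1$, the three possible outcomes become: (a) $V_H=0$ for some hyperplane $H$ of $\K^3$; (b) $V^D=0$ for some line $D$ of $\K^3$; (c) $\calV$ is equivalent to $\calR(1,1)$. Outcome (c) gives directly that $\calV$ is $2$-decomposable. Outcomes (a) and (b) are exchanged by transposition, which preserves both $2$-decomposability and equivalence with $\calU_3(\K)$, so we focus on (a). After a suitable equivalence, take $H=\{0\}\times\K^2$ and apply the EC method: each $M\in\calV$ decomposes uniquely as $M=[F(N)\mid N]$, where $N=J(M)\in\calW:=J(\calV)\subset\Mat_{3,2}(\K)$ and $F:\calW\to\K^3$ is an affine map. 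The injectivity of $J$ on $V$ (forced by $V_H=0$) yields $\dim\calW=\dim\calV\geq 4$, hence $\codim\calW\leq 2$ in $\Mat_{3,2}(\K)$.

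The key technical step is to establish that $F$ is quasi-range-compatible. The rank constraint $\rk[F(N)\mid N]\leq 2$ forces $F(N)\in\im N$ whenever $\rk N=2$. For lower ranks, one argues in the spirit of Proposition \ref{speciallifting1}: for each hyperplane $G$ of $\K^3$ with linear form $\varphi_G$ of kernel $G$, the scalar affine map $g_G:\{N\in\calW:\im N\subset G\}\to\K$, $N\mapsto\varphi_G(F(N))$, has the property that a non-zero value at $N$ forces $\rk N\leq 1$ (since then $F(N)\notin G$ is independent of the columns of $N$). Combining Flanders's theorem applied to rank-$\leq 1$ affine subspaces of $\Mat_{2,2}(\K)$ with the codimension bound on $\calW$, one shows that the set of exceptional lines $D\subset\K^3$ for which $F(N)\notin\im N$ can occur at some rank-$\leq 1$ matrix of $\calW$ contains at most one element, yielding quasi-range-compatibility.

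Theorem \ref{RCtheo2} now applies, since $\#\K>2$ and $\codim\calW\leq n-1=2$. Either $F$ is local, i.e., $F(N)=NX'$ for some $X'\in\K^2$, and then every $M\in\calV$ annihilates the nonzero vector $(1,-X_1',-X_2')^T\in\K^3$, so $\calV$ lies in a conjugate of $\calR(0,2)$ and is $2$-decomposable; or $F(N)=\varphi(NX)+NX'$ for some $X\in\K^2\setminus\{0\}$, $X'\in\K^2$, a $2$-dimensional subspace $P\subset\K^3$ containing $\calW X$, and an endomorphism $\varphi$ of $P$. In the second case the identity $M\cdot(1,-X_1',-X_2')^T=\varphi(NX)\in P$ controls the image of every matrix of $\calV$; a case analysis on the Jordan structure of $\varphi$ on $P$ produces either a common image hyperplane (giving a $\calR(2,0)$-type decomposition), a $\calR(1,1)$-type compression structure, or the rigid configuration where $\varphi$ is invertible with three distinct eigenvalues whose sum realises the trace constraint and $\calW X=P$. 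This last configuration forces $\calV$ to be equivalent to $\calU_3(\K)$; and since an invertible element of the translation space would have characteristic polynomial vanishing at every element of $\K$, the cardinality constraint $\#\K\leq 3$ follows, combining with $\#\K>2$ to give $\#\K=3$. The main obstacle is this final case analysis, where the rigidity of the $\calU_3(\K)$ configuration must be extracted from the functional form of $F$; the remaining sub-cases are handled by direct structural arguments enabled by the low codimension of $\calW$ in $\Mat_{3,2}(\K)$.
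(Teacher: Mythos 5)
Your proposal has a genuine gap at the pivotal step: the claim that $F$ is quasi-range-compatible is false, and it fails precisely for the exceptional space $\calU_3(\K)$ that the proposition must carve out. Take $\# \K = 3$, $\calV=\calU_3(\K)$ and $H=\{0\}\times\K^2$. Then indeed $V_H=\{0\}$, the EC method applies, and unwinding the definitions gives $F(N)=\bigl(N_{2,1}-1,\,0,\,0\bigr)^T$ on $\calW=J(\calV)$. Now the parameter choices $(x,a,b,c)=(1,0,0,0)$ and $(x,a,b,c)=(-1,0,0,0)$ in the defining parametrisation of $\calU_3(\K)$ produce rank-one matrices $N_1,N_2\in\calW$ with $\im N_1=\K e_2$, $\im N_2=\K e_3$, $F(N_1)=e_1\notin\im N_1$ and $F(N_2)=-e_1\notin\im N_2$. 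Quasi-range-compatibility demands a line $D$ with $D\subset\im N$ for every such exceptional $N$, hence $D\subset\K e_2\cap\K e_3=\{0\}$, which is impossible. These two are only a sample: the rank-one matrices of $\calW$ at which $F$ fails to be range-compatible realise many distinct image lines, so the alleged ``at most one exceptional line'' bound cannot follow from Flanders's theorem plus the codimension estimate. The failure is also visible a priori: were $F$ quasi-range-compatible, Theorem~\ref{RCtheo2} would make $F$ either local or of the form $N\mapsto\varphi(NX)+NX'$, and in either branch, including the one you propose to analyse via the Jordan structure of $\varphi$, the space $\calV$ would end up inside a rank-$\overline{2}$ compression space, contradicting the non-$2$-decomposability of $\calU_3(\K)$.

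This obstruction is exactly why the paper restricts its special lifting lemma (Proposition~\ref{speciallifting2}) to $n\geq 4$: the branch where $f$ is \emph{not} quasi-range-compatible is handled there via an analysis of bad linear forms and their types, and that analysis degenerates at $n=3$. The paper's own proof of Proposition~\ref{n=3r=2} therefore takes an entirely different route. It first shows that $V$ contains a rank-$1$ matrix, normalises it to $E_{1,3}$, applies the ERC method to extract the lower-left $2\times 2$ block, reduces to $K(\calV)\subset\calR(1,0)$, and then proves that the bottom-right scalar of each matrix is a non-constant affine function of the top-left $2\times 2$ block whose nonzero level sets are parallel with one-dimensional translation direction; the extraction lemma then forces $\calV$ to be upper-triangular with the specific diagonal pattern of $\calU_3(\K)$. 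This diagonal bookkeeping is what detects the rigid $\F_3$ configuration, and it is not recoverable from the quasi-range-compatibility machinery in this borderline dimension.
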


Before we prove the result, note that we can give a direct proof that $\calU_3(\K)^T$ is equivalent to $\calU_3(\K)$ if $\# \K=3$:
indeed, with $J:=\begin{bmatrix}
0 & 0 & 1 \\
0 & 1 & 0 \\
1 & 0 & 0
\end{bmatrix}$, and $D:=\begin{bmatrix}
-1 & 0 & 0 \\
0 & -1 & 0 \\
0 & 0 & -1
\end{bmatrix}$, one checks that $DJ \,\calU_3(\K)^T J=\calU_3(\K)$.

\begin{proof}
Assume that $\calV$ is not $2$-decomposable.
Denote by $V$ the translation vector space of $\calV$.
Our aim is to prove that $\# \K=3$ and $\calV$ is equivalent to $\calU_3(\K)$.

\vskip 2mm
\noindent \textbf{Step 1: The space $V$ contains a rank $1$ matrix.}  \\
Assume the contrary. Let $X \in \K^3 \setminus \{0\}$. Then, $\{N \in V : \; NX=0\}$ is a linear subspace of $V$ with dimension at least $1$
because $\dim V \geq 4$.
Choosing a non-zero matrix $N \in V$ such that $NX=0$, we find that $\rk N \leq 2$ and hence $\rk N=2$.
Applying the extraction lemma (Lemma \ref{extractionlemma}), we deduce that every matrix $M$ in $\calV$ maps $\Ker N=\K X$ into $\im N$.
It follows that $\dim (VX) \leq 2$.

Now, we consider the operator space
$$\widehat{V}:=\{M \in V \mapsto MX \mid X \in \K^3\}.$$
Note that no non-zero vector in $V$ is annihilated by all the operators in $\widehat{V}$.
We have just shown that every operator in $\widehat{V}$ has rank at most $2$.
Then, we apply the classification of rank-$\overline{2}$ linear subspaces (see Section 4 of \cite{AtkinsonPrim}).
The space $\widehat{V}$ cannot be represented by a subspace of $\calR(1,1)$ in well-chosen bases, because $\dim V>1$
and $V$ contains no rank $1$ matrix. As $\dim V \geq 4$, this only leaves open the possibility that
there exists a $2$-dimensional linear subspace $P$ of $\K^3$ such that every operator in $\widehat{V}$
has its range included in $P$. This yields a non-zero vector $Y \in \K^3$ such that $Y^T N=0$ for all $N \in V$.
Working with $\calV^T$, we also obtain a non-zero vector $X \in \K^3$ such that $NX=0$ for all $N \in V$.
Then, $V$ is included in the $4$-dimensional space of all matrices $N \in \Mat_3(\K)$ such that $NX=0$ and $Y^TN=0$,
and it follows that $V$ equals that space. Yet, that space contains a rank $1$ matrix. Therefore, the claimed result is proved.

\vskip 2mm
Now, we have a rank $1$ matrix $N_0 \in V$. Without loss of generality, we can assume that
$N_0=E_{1,3}$. Let us write every matrix $M \in \calV$ as
$$M=\begin{bmatrix}
[?]_{1 \times 2} & ? \\
K(M) & [?]_{2 \times 1}
\end{bmatrix} \quad \text{with $K(M) \in \Mat_2(\K)$.}$$

\noindent
\textbf{Step 2: Reduction to the case when $K(\calV) \subset \calR(1,0)$.} \\
By the extraction lemma we find that $\rk K(M) \leq 1$ for all $M \in \calV$.
By the classification of rank-$\overline{1}$ affine subspaces (Proposition \ref{affinerank1}), we deduce, as $\# \K>2$, that
$K(\calV)$ is equivalent to a subspace of $\calR(1,0)$ or of $\calR(0,1)$.
Without loss of generality, we can then assume that $K(\calV) \subset \calR(1,0)$ or $K(\calV) \subset \calR(0,1)$.
Setting $J:=\begin{bmatrix}
0 & 0 & 1 \\
1 & 0 & 0 \\
0 & 1 & 0
\end{bmatrix}$, we see that in the second case the space $\calV'=J \calV^T J$ satisfies the assumptions of the first case,
and hence if we can prove that $\calV'$ is equivalent to $\calU_3(\K)$ then $\calV$ is equivalent to $\calU_3(\K)^T$, and hence
$\calV$ is equivalent to $\calU_3(\K)$.

Thus, in the rest of the proof, we can assume that $K(\calV) \subset \calR(1,0)$.
Hence, we can write every $M \in \Vect(\calV)$ as
$$M=\begin{bmatrix}
A(M) & [?]_{2 \times 1} \\
[0]_{1 \times 2} & \alpha(M)
\end{bmatrix} \quad \text{with $A(M) \in \Mat_2(\K)$ and $\alpha(M) \in \K$.}$$
Note that $\alpha(M) \neq 0 \Rightarrow \rk A(M) \leq 1$, for all $M \in \calV$.

\vskip 2mm
\noindent
\textbf{Step 3: $\urk A(\calV)=2$ and $\alpha \neq 0$.} \\
The map $\alpha$ is non-zero since $\calV$ is not equivalent to a subspace of $\calR(2,0)$.
If $A(\calV)$ were a rank-$\overline{1}$ space then it would be equivalent to a subspace of $\calR(1,0)$
or $\calR(0,1)$, and hence $\calV$ would be equivalent to a subspace of $\calR(1,1)$ or $\calR(0,2)$, contradicting our assumptions.
Thus, there exists a rank $2$ matrix in $A(\calV)$.

\vskip 2mm
\noindent
\textbf{Step 4: $\alpha(M)$ is an affine function of $A(M)$.} \\
Assume that there exists $M_0 \in V$ such that $\alpha(M_0) \neq 0$ and $A(M_0)=0$.
Then, choosing $M_2 \in \calV$ such that $\rk A(M_2)=2$, we can find $\lambda \in \K$ such that $\alpha(M_2+\lambda M_0)=1$,
and then $A(M_2+\lambda M_0)=A(M_2)$ has rank $2$, contradicting an earlier statement.
Thus, $\alpha(M_0)=0$ for all $M_0 \in V$ such that $A(M_0)=0$, which yields a
non-zero affine map $\gamma : A(\calV) \rightarrow \K$ such that
$$\forall M \in \calV, \; \alpha(M)=\gamma(A(M)).$$

The map $\gamma$ is non-constant as there exists $M \in \calV$ such that $\rk A(M)=2$.
Let $a \in \K \setminus \{0\}$. Then,
$$\calT_a:=\gamma^{-1} \{a\}$$
is an affine
hyperplane of $A(\calV)$ consisting of matrices with rank less than $2$.
Its translation vector space $T$ does not depend on the choice of $a$.

\vskip 2mm
\noindent \textbf{Step 5: $\dim T=1$.} \\
Note that $4 \leq \dim \calV \leq 2+\dim A(\calV)$, and hence $\dim T \geq 1$.
On the other hand $\dim T \leq 2$ by Flanders's theorem.

Assume that $\dim T=2$. Then, by Flanders's theorem $\calT_a$ is a linear subspace of $\Mat_2(\K)$, whence
it equals $T$. However, the affine hyperplanes $\calT_b$ of $A(\calV)$, for $b \in \K \setminus \{0\}$, should be strictly parallel,
which is a contradiction since $\# \K>2$. Hence, $\dim T=1$.

\vskip 2mm
Next, as a consequence of Proposition \ref{affinerank1}, every non-zero matrix in $T$ has rank $1$.
Without loss of generality, we can then assume that $T=\Vect(E_{1,2})$.

\vskip 2mm
\noindent \textbf{Step 6: Every matrix in $\calV$ is upper-triangular.} \\
Applying the extraction lemma, we obtain that
$$\forall M=(m_{i,j}) \in \calT_a, \quad m_{2,1}=0.$$
Let us pick distinct non-zero elements $a$ and $b$ in $\K \setminus \{0\}$.
Then, as $\calT_a \cup \calT_b$ generates the affine space $A(\calV)$, we deduce that
every matrix in $A(\calV)$ is upper-triangular. Therefore, every matrix in $\calV$ is upper-triangular.

\vskip 2mm
\noindent \textbf{Step 7: The space $\calV$ is equivalent to $\calU_3(\K)$.} \\
We have three affine forms $a_1,a_2,a_3$ on $\calV$ such that
$$\forall M \in \calV, \quad
M=\begin{bmatrix}
a_1(M) & ? & ? \\
0 & a_2(M) & ? \\
0 & 0 & a_3(M)
\end{bmatrix}.$$
The maps $a_1,a_2,a_3$ are all non-zero since $\calV$ is not $2$-decomposable.
On the other hand, as every matrix in $\calV$ is singular, we have
$$\forall M \in \calV, \; a_1(M)a_2(M)a_3(M)=0.$$
Thus, the subsets $a_i^{-1} \{0\}$, each of which is either empty or an affine hyperplane of $\calV$, cover $\calV$.
Since $\# \K>2$, the space $\calV$ cannot be covered by two of its affine hyperplanes, and hence
$a_i^{-1}\{0\}$ is an affine hyperplane for all $i \in \{1,2,3\}$.
If we can find an affine hyperplane $\calH$ of $\calV$ that is parallel to $a_1^{-1} \{0\}$
and is different from both $a_2^{-1} \{0\}$ and $a_3^{-1} \{0\}$, then
$a_2^{-1} \{0\} \cap \calH$ and $a_3^{-1} \{0\} \cap \calH$ are proper affine subspaces of $\calH$
(or empty) and they cover $\calH$, which is not possible since $\# \K>2$.
As $\# \K> 2$ it follows that $a_1^{-1} \{0\}$, $a_2^{-1} \{0\}$ and $a_3^{-1} \{0\}$ are parallel and pairwise distinct,
and that $\# \K=3$. Thus, $a_2-a_1$ and $a_3-a_1$ are distinct non-zero constant maps.
If $a_2=a_1+1$ and $a_3=a_1+2$, then $\calV$ is a subspace of $\calU_3(\K)$, and as their dimensions are equal
those spaces are equal. \\
If $a_2=a_1+2$ and $a_3=a_1+1$, then multiplying each column by $-1$ shows that $\calV$ is equivalent to a subspace of $\calU_3(\K)$,
and just like in the first case this would show that $\calV$ is equivalent to $\calU_3(\K)$.
In any case, we have the expected conclusion.
\end{proof}

With the same method as in the proof of Corollary \ref{forcing1}, we derive:

\begin{cor}[Forcing lemma 3]\label{forcing3}
Let $\calV$ be a rank-$\overline{2}$ affine subspace of $\Mat_3(\K)$, with $\dim \calV \geq 5$.
Denote by $V$ its translation vector space.
\begin{itemize}
\item If $V \subset \calR(2,0)$ then $\calV \subset \calR(2,0)$.
\item If $V \subset \calR(0,2)$ then $\calV \subset \calR(0,2)$.
\item If $V=\calR(1,1)$ then $\calV =\calR(1,1)$.
\end{itemize}
\end{cor}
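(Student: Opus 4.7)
My plan is to adapt the proof of Corollary \ref{forcing1}, using Proposition \ref{n=3r=2} in place of the first classification theorem. For each of the three items I begin by applying Proposition \ref{n=3r=2} to $\calV$: the exceptional case ``$\calV$ equivalent to $\calU_3(\K)$'' is ruled out by $\dim \calU_3(\K) = 4 < 5 \leq \dim \calV$, so $\calV$ is $2$-decomposable. Remark \ref{1strefinedremark}(d) with $n=p=3$, $r=2$, together with the subsequent clause stating that if $\dim \calS > 4$ then equivalence to a subspace of $\calR(1,1)$ upgrades to equivalence to $\calR(1,1)$ itself, leaves exactly three sub-cases: (A) $\calV$ is equivalent to a subspace of $\calR(0,2)$; (B) $\calV$ is equivalent to a subspace of $\calR(2,0)$; (C) $\calV$ is equivalent to $\calR(1,1)$.

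For the first item I assume $V \subset \calR(0,2)$, so that $Ve_3=\{0\}$, and I rule out or conclude in each sub-case. In (A) I obtain a non-zero $x \in \K^3$ with $\calV x=\{0\}$, whence $Vx=\{0\}$; if $x$ and $e_3$ were non-collinear then every matrix in $V$ would vanish on the $2$-dimensional span of $x$ and $e_3$, forcing $\dim V \leq 3$, a contradiction; hence $x$ is collinear to $e_3$ and $\calV \subset \calR(0,2)$. In (B) I obtain a linear hyperplane $H$ of $\K^3$ with $\im M \subset H$ for every $M \in \calV$, hence for every $M \in V$; combined with $Ve_3=\{0\}$ this forces $V$ into the $4$-dimensional space of matrices whose third column vanishes and whose first two columns lie in $H$, contradicting $\dim V \geq 5$. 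In (C), $V$ itself is equivalent to $\calR(1,1)$, so $V=P\calR(1,1)Q$ for some $P,Q \in \GL_3(\K)$; the condition $Ve_3=\{0\}$ then reads $\calR(1,1)(Q^{-1}e_3)=\{0\}$, contradicting the fact that the matrices of $\calR(1,1)$ have trivial common kernel. The second item follows from the first by applying it to $\calV^T$.

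For the third item I assume $V=\calR(1,1)$, so $\dim \calV=5$. Sub-cases (A) and (B) are excluded directly by the trivial-common-kernel (respectively trivial-common-image) property of $\calR(1,1)$ applied to $V$ itself. In sub-case (C) I write $P\calV Q=\calR(1,1)$ with $P,Q \in \GL_3(\K)$; taking translation vector spaces gives $PVQ=\calR(1,1)=V$, so the pair $(P,Q)$ stabilises $\calR(1,1)$; then for any $M_0 \in \calV$ we have $PM_0 Q \in \calR(1,1)$, whence $M_0 \in P^{-1}\calR(1,1)Q^{-1}=\calR(1,1)$, so $\calV \subset \calR(1,1)$, and the equality of dimensions yields $\calV=\calR(1,1)$. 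The main obstacle is sub-case (C): unlike in Corollary \ref{forcing1}, it is not killed by a naive dimension count, and its treatment relies crucially on the trivial-common-kernel/image property of $\calR(1,1)$ (for items 1 and 2) and on the stabilisation argument just given (for item 3).
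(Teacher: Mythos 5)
The paper offers no explicit proof of Corollary \ref{forcing3}, merely noting that it follows ``with the same method as in the proof of Corollary \ref{forcing1}.'' Your proof supplies exactly that: you run the forcing argument of Corollary \ref{forcing1} with Proposition \ref{n=3r=2} substituted for the first classification theorem, you correctly use the dimension bound $\dim\calU_3(\K)=4<5$ to discard the exceptional solution, and you correctly handle the one genuinely new sub-case -- $\calV$ equivalent to $\calR(1,1)$ -- via the trivial-common-kernel/trivial-common-image properties of $\calR(1,1)$ (for the first two items) and the stabiliser argument (for the third, where one could also simply observe that a space affine-equivalent to the linear space $\calR(1,1)$ is itself linear, hence equal to its translation vector space $V=\calR(1,1)$). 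The dimension counts in sub-cases (A) and (B) of the first item are correct. Note that, as with Proposition \ref{n=3r=2}, the argument requires $\#\K>2$; the paper leaves this hypothesis implicit in the statement (it is only ever invoked in that setting), and your proof inherits the same implicit restriction, so this is not a gap on your part. Your reliance on Remark \ref{1strefinedremark}(d) is harmless since that remark is a pure dimension computation, but in this low-dimensional case it is overkill: ``$2$-decomposable'' in $\Mat_3(\K)$ already means ``equivalent to a subspace of $\calR(0,2)$, $\calR(1,1)$ or $\calR(2,0)$,'' and the upgrade in the $\calR(1,1)$ case follows immediately from $\dim\calV\geq 5=\dim\calR(1,1)$.
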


\subsection{Lifting results}\label{liftingsection2}

\begin{prop}[Lifting lemma 2.1]\label{liftingprop2.1}
Let $n,p,r$ be non-negative integers such that $r<\min(n,p)$.
Let $\calV$ be an affine subspace of $\Mat_{n,p}(\K)$ with upper-rank at most $r$.
Assume that $\calV \subset \calR(1,r)$, that $n \geq 3$, that $\# \K>2$ and that
$\dim \calV \geq nr-(n-p+r)$.
Then, either $\calV$ is $r$-decomposable, or else $\# \K=3$ and $\calV$ is equivalent to $\calU_3(\K)$.
\end{prop}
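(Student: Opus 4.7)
The plan is to adapt the proof of Proposition~\ref{lifting1} (Lifting Lemma 1). Writing each matrix of $\calV$ in the block form dictated by $\calR(1,r)$,
\[
M=\begin{bmatrix}\alpha(M) & C(M) \\ B(M) & 0\end{bmatrix}
\]
with $\alpha(M)\in\Mat_{1,r}(\K)$, $C(M)\in\Mat_{1,p-r}(\K)$ and $B(M)\in\Mat_{n-1,r}(\K)$, I would split along the behavior of $C$. If $C$ vanishes identically on $\calV$, then $\calV\subset\calR(0,r)$ and we are done. Otherwise, fix $M_0\in\calV$ with $C(M_0)\neq 0$ and set $\calW:=\{M\in\calV : C(M)=C(M_0)\}$; the identity $\rk M=1+\rk B(M)$ valid on $\calW$ then shows that $B(\calW)$ is a rank-$\overline{r-1}$ affine subspace of $\Mat_{n-1,r}(\K)$.

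Whereas the proof of Lifting Lemma 1 finishes by applying Flanders's theorem on $B(\calW)$ and extracting a numerical contradiction, our dimension hypothesis is two units weaker and Flanders alone no longer suffices; instead I would invoke the refined first classification theorem, which is being proved in this section by induction on $n$. A dimension count using that the fibers of the affine map $B:\calW\to B(\calW)$ have dimension at most $r$ and that $\dim C(\calV)\leq p-r$ yields
\[
\dim B(\calW)\geq \dim\calV-p\geq nr-n-r,
\]
and when $n\geq 3$ this quantity is at least $(n-1)(r-1)-(n-2)$, exactly the threshold required for the refined first classification theorem on $\Mat_{n-1,r}(\K)$ with upper rank $r-1$. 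By inductive hypothesis, either (i) $B(\calW)$ is $(r-1)$-decomposable, or (ii) $n=4$, $r=3$, $\K=\F_3$ and $B(\calW)$ is equivalent to $\calU_3(\K)$.

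In case (i), after a change of basis acting on the last $n-1$ rows and on the first $r$ columns of $M$ (which preserves the $\calR(1,r)$ block structure and reduces to multiplying $B$ on the left by some $P\in\GL_{n-1}(\K)$ and on the right by some $Q\in\GL_r(\K)$), we may arrange $B(\calW)\subset\calR(s,r-1-s)$ for some $s\in\{0,1,\dots,r-1\}$. Combining the resulting $(n-1-s)\times(s+1)$ zero block in $B$ with the pre-existing $(n-1)\times(p-r)$ zero block in $M$ yields a $(n-s-1)\times(p-r+s+1)$ zero block in the lower right of $M$, so $\calW\subset\calR(s+1,r-s-1)$, a rank-$\overline{r}$ compression space. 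To push this compression from the single fiber $\calW$ onto the whole of $\calV$, I would use a forcing-style argument: the translation vector space $V_C:=\{M\in V : C(M)=0\}$ has dimension at least $nr-n$, which for admissible values of $s$ is large enough to force uniqueness of the rank-$\overline{r}$ compression space containing it; varying $M_0$ among matrices with $C(M_0)\neq 0$ and using that all fibers share the same translation vector space then drags every matrix of $\calV$ into the same $\calR(s+1,r-s-1)$. A bookkeeping of dimensions along the lines of Remark~\ref{1strefinedremark} rules out the spurious values of $s$.

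In case (ii), the conclusion ``$\calV\sim\calU_3(\K)$'' is unavailable because $\calU_3(\K)$ lives in $\Mat_3(\F_3)$ whereas here $n=4$, so a dedicated small-dimensional argument—handling the inhomogeneity between $B(\calW)$ and the various fibers of $C$ over $\F_3$—is needed to show that $\calV$ is still $3$-decomposable. The hard part of the proof is the propagation in case (i): once $\calW$ is squeezed into a compression space, passing from that single fiber to the full $\calV$ hinges on a delicate use of the forcing lemmas and a careful tracking of which compression spaces $\calR(s+1,r-s-1)$ can actually accommodate the dimension of $V_C$. The hypothesis $\#\K>2$ is used through the inductive applicability of the refined first classification theorem, through Proposition~\ref{n=3r=2} as a base case, and in the ancillary use of Proposition~\ref{affinerank1} to analyze small-rank slices.
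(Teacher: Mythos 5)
Your overall block decomposition and the idea of restricting to a fiber $\calW$ on which $\rk B(M)\le r-1$ match the paper, but the rest of the proposal diverges from the actual proof and has two genuine gaps.

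First, your ``case (ii)'' cannot occur and the real exceptional case is misplaced. You correctly compute $\dim B(\calW)\ge nr-n-r$; for $n=4$, $r=3$ this gives $\dim B(\calW)\ge 5 > 4=\dim\calU_3(\K)$, so $B(\calW)$ is never equivalent to $\calU_3(\K)$ and ``a dedicated small-dimensional argument'' for $n=4$ is not what is missing. The case that actually produces the exceptional conclusion $\calV\sim\calU_3(\K)$ is $n=p=3$, $r=2$, where $\calV$ itself is $3\times 3$. In the paper this is handled by a separate argument (Case~4 in the proof of Proposition~\ref{liftingprop2.1}): one first shows that $B(M_0)=0$ and $C(M_0)\neq 0$ cannot hold simultaneously for $M_0\in V$, deduces that the projection $A(\calV)$ onto the first three columns has the same dimension as $\calV$, and then applies Proposition~\ref{n=3r=2} to $A(\calV)$, working through four subcases; only there does the $\calU_3(\K)$ outcome appear. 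Your sketch gives no mechanism for this, and a naive forcing argument running on $n=3$ would simply fail when $\calV=\calU_3(\K)$ (since $\calU_3(\K)$ is not $2$-decomposable), with no indication of how to convert that failure into the correct exceptional conclusion.

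Second, the propagation step is the crux and your account of it is too loose. The paper does not invoke uniqueness of ``the rank-$\overline{r}$ compression space containing $V_C$''; it applies a forcing lemma (Corollary~\ref{forcing1}, or Corollaries~\ref{forcing2}/\ref{forcing3} for small $n$) to each fiber $B(\calT)$, conclude $B(\calT)$ lies in the specific compression space containing $B(V')$, varies $M_1$, and then uses the observation that $\calV$ is covered by the two affine subspaces $\{C(M)=0\}$ and $\{B(M)\in\text{that compression space}\}$; since $\#\K>2$ an affine space is not the union of two proper affine subspaces, so $B(\calV)$ itself lies in the compression space, contradicting the assumption that $\calV$ is not $r$-decomposable. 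Related to this, your proposal replaces the paper's appeal to the (already proved) first classification theorem for $n>4$ with an inductive appeal to the refined one on $\Mat_{n-1,r}(\K)$. That could in principle be made well-founded, but it forces the lifting lemma (and forcing lemma~4) into the main induction loop, which the paper deliberately avoids: as actually written, the lifting lemmas are proved before the induction and rely only on the first classification theorem plus the base-case Proposition~\ref{n=3r=2} and Proposition~\ref{affinerank1}. You would need to restructure the whole section to make your version go through, and you have not verified that this restructuring is coherent.
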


\begin{Rem}\label{lift2.1remark}
Note that here we do not assume that $n \geq p$.
In some instances, we shall need to apply this result in the situation where
$p$ and $n$ are reversed. In this prospect, it is useful to note that
$nr-(n-p+r) \geq pr-(p-n+r)$ if and only if $(n-p)(r-2) \geq 0$.
\end{Rem}

\begin{proof}
Throughout the proof, we assume that $\calV$ is not $r$-decomposable
and we aim at proving that $\# \K=3$ and that $\calV$ is equivalent to $\calU_3(\K)$.

Denote by $V$ the translation vector space of $\calV$. Since $\calV \subset \calR(1,r)$,
we can split every matrix $M$ of $\Vect(\calV)$ up as
$$M=\begin{bmatrix}
[?]_{1 \times r} & C(M) \\
B(M) & [0]_{(n-1) \times (p-r)}
\end{bmatrix} \quad \text{with $B(M) \in \Mat_{n-1,r}(\K)$ and $C(M) \in \Mat_{1,p-r}(\K)$.}$$
As $\calV$ is not a subspace of $\calR(0,r)$, the map $C$ does not vanish everywhere on it.
Moreover, since $\calV$ is not $r$-decomposable, we obtain:
\begin{itemize}
\item[(A)] The space $B(\calV)$ is not $(r-1)$-decomposable.
\end{itemize}

Denote by $V'$ the linear subspace of $V$ consisting of its matrices $M$ such that $C(M)=0$.
Note that for all $M \in \calV$, we have
$$C(M) \neq 0 \Rightarrow \rk B(M) <r.$$
Note also that in $\Mat_{n-1,r}(\K)$, the lower-bound in the first classification theorem for rank-$\overline{r-1}$ spaces reads
$$(n-1)(r-1)-\bigl((n-1)-r+(r-1)\bigr)+2=(n-1)(r-1)-n+4.$$
Note finally that, by the rank theorem,
$$\dim B(V') \geq \dim \calV-p \geq (n-1)(r-1)-1.$$

Let $M_1 \in \calV$ be such that $C(M_1) \neq 0$ (note that such a matrix exists).
Then,
$$\calT:=\bigl\{M \in \calV : C(M)=C(M_1)\bigr\}$$
is an affine subspace of $\calV$ with translation vector space $V'$.
Obviously, $B(\calT)$ is a rank-$\overline{r-1}$ affine subspace of $\Mat_{n-1,r}(\K)$.
The translation vector space of $B(\calT)$ is $B(V')$.
In most cases, we shall be able to apply the first classification theorem to $B(\calT)$, and then use one of the previous forcing lemmas.

\vskip 3mm
\noindent \textbf{Case 1:} $n>4$. \\
Then, the first classification theorem applies to $B(\calT)$.
Without further loss of generality, we can then assume that
$B(V') \subset \calR(0,r-1)$, or that $n-1=r$ and $B(V') \subset \calR(r-1,0)$,

\begin{itemize}
\item \textbf{Subcase 1.1:  $B(V')\subset \calR(0,r-1)$.} \\
Applying forcing lemma 1 (i.e.\ Corollary \ref{forcing1}) to $B(\calT)$, we obtain that $B(\calT) \subset \calR(0,r-1)$.
Varying the matrix $M_1$ shows that the last column of $B(M)$ vanishes for every $M \in \calV$ for which $C(M) \neq 0$.
Thus, $\calV$ is the union of the two affine subspaces $\{M \in \calV : \; C(M)=0\}$ and $\{M \in \calV : \; B(M) \in \calR(0,r-1)\}$.
As $\# \K>2$, they cannot be both proper subspaces, and as the first one is a proper subspace we deduce that
$B(\calV) \subset \calR(0,r-1)$. This contradicts statement (A).

\item \textbf{Subcase 1.2: $B(V')\subset \calR(r-1,0)$ and $n-1=r$.} \\
Then, Corollary \ref{forcing1} shows that $B(\calT) \subset \calR(r-1,0)$.
With exactly the same line of reasoning as in Subcase 1.1, we obtain that $B(\calV) \subset \calR(r-1,0)$,
which contradicts (A).
\end{itemize}

If follows that $n \leq 4$.

\vskip 3mm
\noindent \textbf{Case 2:} $n=4$ and $r=3$. \\
Then, $\dim B(\calT) \geq 5$, and hence Proposition \ref{n=3r=2} yields that
$B(\calT)$ is equivalent to a subspace of $\calR(2,0)$ or $\calR(0,2)$
or it is equivalent to $\calR(1,1)$ (note that $\calU_3(\K)$ has dimension $4$, if $\# \K=3$).
Then, we lose no generality in assuming that $B(V')$ is included in $\calR(2,0)$, $\calR(0,2)$ or $\calR(1,1)$.
By forcing lemma 3 (Corollary \ref{forcing3}) and the same line of reasoning as in Case 1, we would obtain that $B(\calV)$ is included in $\calR(2,0)$, $\calR(0,2)$ or $\calR(1,1)$, contradicting (A).

\vskip 3mm
\noindent \textbf{Case 3:} $n=4$ and $r=2$. \\
Then, $B(V')$ is equivalent to a subspace of $\calR(1,0)$ or $\calR(0,1)$, and $\dim B(V') \geq 2$.
With the same line of reasoning as in Case 1 -- this time by using Corollary \ref{forcing2} -- we obtain that $B(\calV)$ is equivalent to a subspace of $\calR(1,0)$ or
$\calR(0,1)$, contradicting (A).

\vskip 3mm
\noindent \textbf{Case 4:} $n=3$ and $r=2$. \\
This is the only remaining case. Note that $p\geq r+1=3$.
Assume that there exists $M_0 \in V$ such that $B(M_0)=0$ and $C(M_0) \neq 0$.
For all $M \in \calV$, we see that $C(M) \neq 0$ or $C(M+M_0) \neq 0$, and as $B(M)=B(M_0+M)$ we deduce that $\rk B(M) \leq 1$.
Then, we would obtain that $B(\calV)$ is equivalent to a subspace of $\calR(0,1)$ or $\calR(1,0)$, contradicting (A) once more.
Thus, for all $M \in V$, equality $B(M)=0$ implies $C(M)=0$.

Next, we lose no generality in assuming that some matrix $M \in \calV$ is such that the first entry of $C(M)$ is non-zero.
Let us write every matrix $M \in \calV$ as $M=\begin{bmatrix}
A(M) & [?]_{3 \times (p-3)}
\end{bmatrix}$ with $A(M) \in \Mat_3(\K)$. Then, $A(\calV)$ is a rank-$\overline{2}$ affine subspace of $\Mat_3(\K)$ and
the result we have just proved shows that $\dim A(\calV)=\dim \calV$. Thus, $\dim A(\calV) \geq 4+(p-3)$
and we can apply Proposition \ref{n=3r=2} to $A(\calV)$.
We split the discussion into four subcases.
In all of them, we denote by $(e_1,e_2,e_3)$ the standard basis of $\K^3$, and we note that $A(\calV)e_3\subset \K e_1$
and $A(\calV) e_3 \neq \{0\}$.

\begin{itemize}
\item \textbf{Subcase 4.1: $A(\calV)$ is equivalent to subspace of $\calR(2,0)$.} \\
Then, we have a $2$-dimensional linear subspace $P$ of $\K^3$ such that every matrix of $A(\calV)$ has its range included in $P$.
In particular as $A(\calV)e_3$ contains a non-zero element of $\K e_1$ we find that $e_1 \in P$, and it follows that every matrix of $\calV$
has its range included in $P$. Thus, $\calV$ is equivalent to a subspace of $\calR(2,0)$, contradicting the assumption that it is not
$2$-decomposable.

\item \textbf{Subcase 4.2: $A(\calV)$ is equivalent to subspace of $\calR(0,2)$.} \\
Then, we have a non-zero vector $x \in \K^3$ such that $A(\calV)x =\{0\}$.
Since $A(\calV) e_3 \neq \{0\}$, we have $x\not\in \K e_3$, and
hence $x=y+\lambda e_3$ for some $y \in \Vect(e_1,e_2) \setminus \{0\}$ and some $\lambda \in \K$.
As $A(\calV) e_3 \subset \K e_1$ we deduce that $A(\calV)y \subset \K e_1$. Therefore,
$\calV$ is equivalent to a subspace of $\calR(1,1)$, contradicting (A).

\item \textbf{Subcase 4.3: $A(\calV)$ is equivalent to a subspace of $\calR(1,1)$.} \\
Then, we have a $2$-dimensional linear subspace $P$ of $\K^3$ together with $1$-dimensional linear subspace $D$ of $\K^3$ such that
$A(\calV) P \subset D$. If $e_1 \in D$ then we deduce that $\calV$ is equivalent to a subspace of $\calR(1,1)$.
Hence, $e_1 \not\in D$ and $e_3\not\in P$. It follows that every matrix of $\calV$ has its range included in $D+\K e_1$,
and hence $\calV$ is equivalent to a subspace of $\calR(2,0)$. Again, this contradicts (A).

\item \textbf{Subcase 4.4: $p=3$, $\# \K=3$ and $A(\calV)$ is equivalent to $\calU_3(\K)$.} \\
Then, $\calV=A(\calV)$ is equivalent to $\calU_3(\K)$, which completes the proof.
\end{itemize}
\end{proof}

\begin{prop}[Lifting lemma 2.2]\label{liftingprop2.2}
Let $\calV$ be a rank-$\overline{r}$ affine subspace of $\Mat_{n,p}(\K)$, with $n \geq p > r \geq 2$.
Assume that $\dim \calV \geq nr-(n-p+r)$, that $\calV \subset \calR(2,r-1)$ and that $\# \K>2$. \\
Then, either $\calV$ is a $r$-decomposable or
else $\# \K=3$ and $\calV$ is equivalent to $\calU_3(\K)$.
\end{prop}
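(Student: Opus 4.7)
The plan is to adapt the scheme of the proof of Lifting lemma~2.1 (Proposition~\ref{liftingprop2.1}). Write every matrix $M \in \calV$ in the block form dictated by $\calV \subset \calR(2, r-1)$:
$$M = \begin{bmatrix} A(M) & C(M) \\ B(M) & [0]_{(n-2) \times (p-r+1)} \end{bmatrix}$$
with $A(M) \in \Mat_{2, r-1}(\K)$, $B(M) \in \Mat_{n-2, r-1}(\K)$ and $C(M) \in \Mat_{2, p-r+1}(\K)$; denote by $V$ the translation vector space of $\calV$. Throughout the argument we assume, for contradiction, that $\calV$ is not $r$-decomposable and, in case $\# \K = 3$, is not equivalent to $\calU_3(\K)$. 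A block column reduction of the first $r-1$ columns of $M$ yields the rank identity $\rk M = \rk B(M) + \rk[\bar A(M)\mid C(M)]$, where $\bar A(M)$ is a straightforward correction of $A(M)$ by $B(M)$; in particular, whenever $\rk C(M) = 2$, we have $\rk M = \rk B(M) + 2$, and therefore $\rk B(M) \leq r - 2$. Since $\calV \not\subset \calR(0, r-1)$ (else $\calV$ would be $(r-1)$-decomposable, hence $r$-decomposable), the map $C$ is not identically zero on $\calV$, and we split the discussion according to $\urk C(\calV)$.

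Assume first that $\urk C(\calV) \leq 1$. We apply the classification of rank-$\overline{1}$ affine subsets (Proposition~\ref{affinerank1}) to $C(\calV) \subset \Mat_{2, p-r+1}(\K)$; as $\# \K > 2$, the non-zero matrices of $C(\calV)$ share either a common image or a common kernel. In the common-image subcase, a base change on the first two rows of $\calV$ brings the common image to $\K e_1$, so the second row of $C$ vanishes throughout $\calV$; then $\calV$ sits in a subspace equivalent to $\calR(1, r-1)$, contradicting the assumption that $\calV$ is not $r$-decomposable. In the common-kernel subcase, a base change on the last $p - r + 1$ columns of $\calV$ reduces the common kernel to the hyperplane spanned by the first $p-r$ standard basis vectors of $\K^{p-r+1}$, so $p - r$ columns of every matrix of $\calV$ vanish identically; a column permutation then places $\calV$ inside $\calR(0, r)$, again a contradiction.

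The main case is $\urk C(\calV) = 2$. Pick $M_1 \in \calV$ with $\rk C(M_1) = 2$, set $\calT := \{M \in \calV : C(M) = C(M_1)\}$, and note that $B(\calT)$ is a rank-$\overline{r-2}$ affine subspace of $\Mat_{n-2, r-1}(\K)$ satisfying $\dim B(\calT) \geq \dim \calV - 2p$ by the rank theorem. When the parameters $(n, p, r)$ are generic enough, the first classification theorem (Theorem~\ref{firstclasstheo}) applies to $B(\calT)$, and forcing lemma~1 (Corollary~\ref{forcing1}) then places $B(\calT)$ inside a specific compression space of $\Mat_{n-2, r-1}(\K)$. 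A genericity argument along affine lines through $M_1$ in $\calV$ — valid because $\# \K > 2$ and the locus $\{\rk C < 2\}$ on any such line is the zero set of polynomials of degree at most $2$ — propagates this conclusion from $B(\calT)$ to all of $B(\calV)$, forcing $\calV$ itself to be $r$-decomposable, the desired contradiction. The main obstacle is the small-parameter boundary, where the first classification theorem does not apply directly to $B(\calT)$; those cases are handled ad hoc, using Proposition~\ref{n=3r=2} (when $B(\calT)$ sits in $\Mat_3(\K)$ with upper rank~$2$) together with forcing lemmas~2 and~3 (Corollaries~\ref{forcing2} and~\ref{forcing3}), following the case-analysis pattern of Cases~2--4 in the proof of Proposition~\ref{liftingprop2.1}. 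It is precisely within this boundary regime, at $(n, p, r) = (3, 3, 2)$ with $\# \K = 3$, that the exceptional outcome ``$\calV$ equivalent to $\calU_3(\K)$'' arises.
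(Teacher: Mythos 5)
Your overall architecture (decompose $M$ into blocks $A,B,C$; observe $\rk C(M)=2\Rightarrow\rk B(M)\le r-2$; set up $\calT=\{M:C(M)=C(M_1)\}$; propagate decomposability from $B(\calT)$ to $B(\calV)$) is the right one, but the tool you propose for controlling $B(\calT)$ does not fit, and several load-bearing steps of the paper's proof are absent.

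The central problem is your plan to invoke the first classification theorem and forcing lemma~1 on $B(\calT)$. The Flanders bound $\dim B(\calT)\le(n-2)(r-2)$ caps the dimension of $B(\calT)$ \emph{below} the first-classification threshold $(n-2)(r-2)-(n-5)$ in $\Mat_{n-2,r-1}(\K)$ for most admissible parameters; moreover, comparing the two sides of
$$\dim\calV-2p+\codim C(\calV)\;\le\;\dim B(\calT)\;\le\;(n-2)(r-2)$$
with the hypothesis $\dim\calV\ge nr-(n-p+r)$ yields the inequality $(n-p)+r+\codim C(\calV)\le 4$, which you never derive. This inequality is what makes the lemma tractable: it forces $r\le4$, and it shows $\dim B(\calT)$ is at (or within one of) the Flanders critical dimension. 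The paper therefore applies \emph{the equality case of Flanders's theorem} to $B(\calT)$ — not the first classification theorem — to conclude that $B(\calT)$ is one of the two extremal linear compression spaces. In your formulation, the ``generic enough'' regime where the first classification theorem would apply does not occur under the stated hypotheses, so the bulk of the cases you plan to handle that way fall outside the reach of that theorem.

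Two further gaps. First, the $r=2$ subcase is not reached by your scheme at all: when $r=2$, the constraint $\calT$ gives $\rk B\le 0$ but the codimension bound only yields $\codim C(\calV)\le 2$, and the spanning argument (that $C(\calV)$ is generated by its rank-$2$ matrices) requires $\codim C(\calV)\le 1$ to contradict Flanders for the two disjoint parallel hyperplanes. The paper sidesteps this by observing that $\calV^T\subset\calR(r-1,2)=\calR(1,r)$ when $r=2$, so Lifting lemma~2.1 applies to $\calV^T$ directly (Remark~\ref{lift2.1remark} shows the dimension hypothesis transposes at $r=2$), and the exceptional case is transported via the equivalence $\calU_3(\K)^T\sim\calU_3(\K)$. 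Your proposal does not use this reduction. Second, your ``genericity along affine lines'' argument for propagating from $\calT$ to $\calV$ is too weak as stated: a nonzero degree-$\le2$ polynomial over $\F_3$ can vanish at two of the three points on a line, so one cannot conclude pointwise on lines. The paper instead proves $C(\calV)$ is affinely spanned by its rank-$2$ elements via a Flanders argument on two parallel hyperplanes, and that argument is only available once $\codim C(\calV)\le 1$ has been established — which again depends on the missing codimension inequality. In short, the proposal identifies the right skeleton but replaces the correct tool (Flanders at the critical dimension plus the codimension inequality plus a transpose reduction at $r=2$) with one (first classification theorem plus forcing lemma~1 plus line-genericity) that does not actually apply in this regime.
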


\begin{proof}
As in the proof of Proposition \ref{liftingprop2.1},
we split every matrix $M$ of $\Vect(\calV)$ up as
$$M=\begin{bmatrix}
[?]_{2 \times (r-1)} & C(M) \\
B(M) & [0]_{(n-2) \times (p-r+1)}
\end{bmatrix}$$
with $B(M) \in \Mat_{n-2,r-1}(\K)$ and $C(M) \in \Mat_{2,p-r+1}(\K)$.
We denote by $V$ the translation vector space of $\calV$, and we set
$$V':=\bigl\{M \in V : \; C(M)=0\bigr\}.$$

Throughout the proof, we assume that $\calV$ is not $r$-decomposable.
It follows that $B(\calV)$ is not $(r-2)$-decomposable and that
$C(\calV)$ is not $1$-decomposable. In particular, by Proposition \ref{affinerank1},
this yields $\urk C(\calV)=2$.

Assume first that $r=2$. Then, we see that $\calV^T$ satisfies the assumptions of lifting lemma 2.1
(see Remark \ref{lift2.1remark}). However $\calV^T$ is not $r$-decomposable
since $\calV$ is not either. Thus, $\# \K=3$ and $\calV^T$ is equivalent to $\calU_3(\K)$, and hence $\calV$ is also equivalent to $\calU_3(\K)$
(see the remark underneath Proposition \ref{n=3r=2}).

In the rest of the proof, we assume that $r \geq 3$. We shall prove that this is in conflict with
the assumption that $\calV$ is not $r$-decomposable.

Let $M_1 \in \calV$ be such that $\rk C(M_1)=2$ (note that such a matrix exists).
Set
$$\calT:=\bigl\{M \in \calV : C(M)=C(M_1)\bigr\}.$$
Then, $B(V')$ is the translation vector space of $B(\calT)$. Moreover,
$$\forall M \in \calT, \; \rk B(M) \leq r-2$$
and hence Flanders's theorem for affine subspaces yields
$$\dim B(\calT) \leq (n-2)(r-2).$$
On the other hand, the rank theorem yields
$$\dim B(\calT) \geq \dim B(V') \geq \dim \calV-2p+\codim C(\calV).$$
However,
$$(n-2)(r-2)+2p-(nr-(n-p+r))=4-(n-p+r).$$
Thus,
$$(n-p)+r+\codim C(\calV) \leq 4.$$
In particular, $r \leq 4$. From there, we split the discussion into two main subcases.

\vskip 3mm
\noindent \textbf{Case 1: $r+\codim C(\calV)=4$.} \\
In particular $\codim C(\calV) \leq 1$ and $\dim B(\calT)=\dim B(V')=(n-2)(r-2)$.
Then, Flanders's theorem applies to $B(\calT)$ and shows that it is equivalent to $\calR(0,r-2)$ or to $\calR(r-2,0)$.
Thus, $B(V')=B(\calT)$. Then, $B(M) \in B(V')$ for all $M \in \calV$ such that $\rk C(M)=2$.
Yet, $C(\calV)$ is generated as an affine space by its rank $2$ matrices: indeed, if there existed an affine hyperplane $\calH$ of
$C(\calV)$ such that every matrix of $C(\calV) \setminus \calH$ has rank at most $1$, then, as $\# \K>2$, we could pick
distinct affine hyperplanes $\calH_1$ and $\calH_2$ of $C(\calV)$ that are parallel to $\calH$ and distinct from $\calH$; then,
$\dim \calH_1=\dim \calH_2 \geq (p-r+1)2-2$, and $p-r+1 \geq 2$, whence
Flanders's theorem would yield that $\calH_1$ and $\calH_2$ are both linear subspaces of $\Mat_{2,p-r+1}(\K)$, which is absurd since
they are disjoint.
As $B(V')$ is an affine subspace of $\Mat_{n-2,r-1}(\K)$, it follows that $B(\calV) \subset B(V')$.
Therefore, $B(\calV)$ is $(r-2)$-decomposable, contradicting an earlier result.

\vskip 3mm
\noindent \textbf{Case 2: $r=3$, $n=p$ and $C(\calV)=\Mat_{2,p-r+1}(\K)$.} \\
Note that $r-1=2$. We aim at proving that $n=p=4$ and $B(\calV) \subsetneq \Mat_2(\K)$.
Assume that $n>4$. Note that $\dim B(V') \geq 2$ since $n>4$ and $r+\codim C(\calV)=3$.
As $r-2=1$, Proposition \ref{affinerank1} applies to $B(\calT)$ and shows that
$B(V')$ is equivalent to a subspace of either $\calR(0,1)$ or $\calR(1,0)$.
No generality is then lost in assuming that $B(V') \subset \calR(0,1)$ or $B(V') \subset \calR(1,0)$.

Assume first that $B(V') \subset \calR(0,1)$. Then, by forcing lemma 2 (Corollary \ref{forcing2}),
we see that $B(\calT) \subset \calR(0,1)$. Thus, $B(M) \in \calR(0,1)$ for all $M \in \calV$ such that $\rk C(M)=2$.
As the affine space $C(\calV)=\Mat_{2,p-r+1}(\K)$ is generated by its rank $2$ matrices, it follows that
$B(\calV) \subset \calR(0,1)$.

Similarly if $B(V') \subset \calR(1,0)$ then one proves that $B(\calV) \subset \calR(1,0)$.
In any case, we have contradicted the assumption that $B(\calV)$ is not $1$-decomposable.

Thus, $n=4$ and $\dim B(V') \geq 1$ (and $p=4$). As $B(\calT)$ is a rank-$\overline{1}$ affine subspace we deduce from Proposition \ref{affinerank1}
than $\urk B(V') \leq 1$. Without loss of generality, we can then assume that $B(V')$ contains $E_{1,2}$.
By the extraction lemma, it follows that $B(\calT)$ is included in $T_2^+(\K)$.
Using once more the fact that $C(\calV)$ is generated as an affine space by its rank $2$ matrices, we deduce that
$B(\calV) \subset T_2^+(\K)$. Then, $\calV^T$ satisfies the same basic assumptions as $\calV$, but now
$C(\calV^T)$ is a proper subspace of $\Mat_2(\K)$. Applying Case 1 to $\calV^T$, we obtain a final contradiction.
This completes the proof.
\end{proof}

\subsection{The special lifting lemma}\label{specialliftingsection2}

\begin{prop}[Special lifting lemma 2]\label{speciallifting2}
Let $n>r>1$ be positive integers with $n \geq 4$.
Let $\calW$ be an affine subspace of $\Mat_{n,r}(\K)$ and $f : \calW \rightarrow \K^n$ be an affine map.
Assume that every matrix in
$$\calV:=\Bigl\{\begin{bmatrix}
f(N) & N
\end{bmatrix} \mid N \in \calW\Bigr\}$$
has rank at most $r$.
Assume further that $\codim \calW \leq n-1$ and $\# \K>2$.
Then, $\calV$ is $r$-decomposable.
\end{prop}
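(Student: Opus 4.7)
The plan is to adapt the proof of Special Lifting Lemma 1 (Proposition \ref{speciallifting1}), substituting Theorem \ref{RCtheo2} for Theorem \ref{RCtheo1}: the widened codimension bound ($\codim \calW \leq n-1$ rather than $\leq n-3$) only permits us to conclude that $f$ is \emph{quasi}-range-compatible, so the non-local case of Theorem \ref{RCtheo2} must be handled separately to extract $r$-decomposability.

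First I establish quasi-range-compatibility of $f$. For each linear hyperplane $G$ of $\K^n$ with linear form $\varphi_G$, set $\calW_G := \{N \in \calW : \im N \subset G\}$ and consider the affine map $g_G : \calW_G \to \K$, $N \mapsto \varphi_G(f(N))$. As in the proof of Proposition \ref{speciallifting1}, $g_G(N) \neq 0$ forces $\rk N \leq r-1$. Here $\dim \calW_G \geq \dim \calW - r \geq (n-1)(r-1)$, and for $a \in \K \setminus \{0\}$ in the image of $g_G$, the fiber $g_G^{-1}\{a\}$ is a rank-$\overline{r-1}$ affine subspace of $\Mat_{n,r}(\K)$ of dimension at least $(n-1)(r-1) - 1$. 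Applying the first classification theorem (Theorem \ref{firstclasstheo}) to $g_G^{-1}\{a\}$---which applies when $n \geq r+3$, the edge cases $n \in \{r+1,r+2\}$ being dispatched via direct dimension counts using Flanders's theorem---yields that $g_G^{-1}\{a\}$ is equivalent to a subspace of $\calR(0, r-1)$, so its matrices share a common line in $\K^r$ as kernel. Collating this structural information over the exceptional hyperplanes $G$ where $g_G \not\equiv 0$ produces a fixed line $D \subset \K^n$ such that $g_G \equiv 0$ whenever $D \not\subset G$: precisely the quasi-range-compatibility of $f$ with special direction $D$.

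Theorem \ref{RCtheo2} then applies (since $\codim \calW \leq n-1$ and $\#\K > 2$) and gives two cases: (i) $f$ is local, $f(N) = Ny$ for some $y \in \K^r$, so every matrix of $\calV$ annihilates the non-zero vector $\begin{bmatrix} 1 \\ -y \end{bmatrix}$ and $\calV$ is equivalent to a subspace of $\calR(0, r)$; or (ii) $f(N) = \varphi(NX) + NX'$ for some $X \in \K^r \setminus \{0\}$, $X' \in \K^r$, a $2$-dimensional subspace $P \subset \K^n$ containing $\calW X$, and some endomorphism $\varphi$ of $P$. In case (ii), a change of basis on $\K^r$ takes $X = e_1$, column operations on $\calV$ clear $X'$, and a change of basis on $\K^n$ sets $P = \Vect(e_1, e_2)$; then every $M \in \calV$ takes the block form $M = \begin{bmatrix} A(M) & C(M) \\ 0 & D(M) \end{bmatrix}$ with $A(M) \in \Mat_2(\K)$, $C(M) \in \Mat_{2, r-1}(\K)$ and $D(M) \in \Mat_{n-2, r-1}(\K)$.

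A direct computation gives $\rk M = \rk D + \dim(\im A + C(\Ker D))$, so whenever $\rk D(M) = r-1$ (equivalently, $\Ker D = 0$), the bound $\rk M \leq r$ forces $\rk A(M) \leq 1$. A dimension count (using Flanders's theorem on $\{M \in \calV : \rk D(M) \leq r-2\}$ against the lower bound $\dim \calV \geq nr - n + 1$) shows that the locus $\{\rk D = r-1\}$ is non-empty and Zariski-dense in $\calV$ for large enough $n+r$; Zariski density then yields $\rk A \leq 1$ throughout $\calV$. The rank-$\overline{1}$ affine subspace $A(\calV) \subset \Mat_2(\K)$ then falls under Proposition \ref{affinerank1} (since $\#\K > 2$): a common kernel reduces $f$ to a local map (contradicting Case (ii)), while a common image $D_0 \subset P$ forces every $M \in \calV$ to send $\Vect(e_1, e_2)$ into $D_0$, so $\calV$ is equivalent to a subspace of $\calR(1, r-1)$. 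The chief obstacle will be the small-dimensional edge cases of Step 1---namely $n = r+1$ and $n = r+2$, where the first classification theorem is unavailable---as well as the subcase in Step 3 where $\{\rk D = r-1\}$ is empty, which forces $\urk D(\calV) \leq r-2$ and requires a direct Flanders-style analysis of $D(\calV)$.
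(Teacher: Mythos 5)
Your plan to reduce to quasi-range-compatibility is flawed at its central step: the ``collating'' claim is unjustified, and in fact $f$ need \emph{not} be quasi-range-compatible under the stated hypotheses. The first classification theorem applied to $g_G^{-1}\{a\}$ (viewed inside $\Mat_{n-1,r}(\K)$) yields that it is equivalent to a subspace of $\calR(0,r-1)$ \emph{or} (when $n-1=r$) of $\calR(r-1,0)$; even granting the first branch, this gives a common kernel vector in $\K^r$ for the matrices of $g_G^{-1}\{a\}$, which is information in the \emph{domain} $\K^r$ and does not translate into a single line $D$ in the \emph{codomain} $\K^n$ that all exceptional hyperplanes must contain. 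The paper's proof spends the bulk of its effort precisely on the case where $f$ fails to be quasi-range-compatible: it introduces ``bad'' linear forms, shows via Claims 1--6 that either there is a basis of bad forms all of the same type, and then derives that $\calW$ is equivalent to a subspace of $\calR(1,r-1)$ or of $\calR(r,0)$, after which lifting lemma 2.1 finishes. Your proposal skips this analysis entirely.

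Case (ii) also has issues. Once one reduces to $X=e_1$, $X'=0$, $P=\Vect(e_1,e_2)$, the paper simply observes that $\calV$ is contained in $\calR(2,r-1)$ with $\dim\calV\geq nr-(n-(r+1)+r)$, and invokes lifting lemma~2.2 to conclude. Your substitute argument relies on a ``Zariski density'' of the locus $\{\rk D(M)=r-1\}$, which is not available over finite fields (the whole point of this paper is to handle small $\K$, and $\#\K=3$ is explicitly in play); the intended conclusion ``$\rk A\leq 1$ throughout $\calV$'' does not follow from a polynomial identity argument here. You also flag the small cases $n=r+1,r+2$ as ``dispatched via direct dimension counts,'' but no such counts are supplied, and the paper in fact needs Proposition~\ref{n=3r=2} and the forcing lemmas to handle $n=4$.
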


As in the proof of special lifting lemma 1, the key consists in using the theory of quasi-range-compatible maps.
Assume first that $f$ is quasi-range-compatible. Then,
we know from Theorem \ref{RCtheo2} that there are two options:
\begin{itemize}
\item Either $f$ is local, in which case $\calV$ is equivalent to a subspace of $\calR(0,r)$.
\item Or there is a $2$-dimensional linear subspace $P$ of $\K^n$, a non-zero vector $X \in\K^r$ such that $\calW X \subset P$,
and a vector $Y \in \K^r$ and a linear endomorphism $\varphi$ of $P$ such that
$$f : N \mapsto NY+\varphi(NX).$$
Without loss of generality, we can then assume that $X$ is the first vector of the standard basis and that $P=\K^2 \times \{0\}$.
Then, we write $Y=\begin{bmatrix}
y_1 & \cdots & y_r
\end{bmatrix}^T$. Performing the column operation $C_1 \leftarrow C_1-\underset{k=1}{\overset{r}{\sum}} y_k C_{k+1}$ on $\calV$, we reduce the situation to the one where $Y=0$. Then, every matrix $M$ of $\calV$ splits as
$$M=\begin{bmatrix}
[?]_{2 \times 2} & [?]_{2 \times (r-1)} \\
[0]_{(n-2) \times 2} & [?]_{(n-2) \times (r-1)}
\end{bmatrix}.$$
Thus, $\calV$ is equivalent to a subspace of $\calR(2,r-1)$. As $n>3$ and $\dim \calV \geq n(r-1)+1=nr-(n-(r+1)+r)$,
lifting lemma 2.2 (Proposition \ref{liftingprop2.1}) yields that $\calV$ is $r$-decomposable.
\end{itemize}

In the remainder of the proof, we assume that $f$ is not quasi-range-compatible.
Our aim from there is to demonstrate that $\calW$ is equivalent to a subspace of $\calR(1,r-1)$ or
of $\calR(r,0)$.

\begin{Def}
We shall say that a non-zero linear form $\varphi$ on $\K^n$ is \textbf{bad} (for $f$)
if there exists $N \in \calW$ such that $\im N \subset \Ker \varphi$ and $f(N)\not\in \Ker \varphi$.
\end{Def}

\begin{claim}\label{claim1}
The dual space of $\K^n$ possesses a basis of bad linear forms.
\end{claim}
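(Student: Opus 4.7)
The plan is to reinterpret the non-quasi-range-compatibility of $f$ as the statement that the set $\calB$ of bad linear forms spans the dual space $(\K^n)^*$; any spanning set contains a basis, so the claim would follow immediately. To prove $\Vect \calB = (\K^n)^*$, I would fix an arbitrary nonzero $x_0 \in \K^n$ and produce a bad form $\varphi$ with $\varphi(x_0) \neq 0$; varying $x_0$ over $\K^n \setminus \{0\}$ then shows that $\bigcap_{\varphi \in \calB} \Ker \varphi = \{0\}$, which is the desired spanning statement.

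Setting $D := \K x_0$, the assumption that $f$ is not quasi-range-compatible (and in particular not so with direction $D$) yields $N \in \calW$ such that $D \not\subset \im N$ and $f(N) \not\in \im N$. I would then work in the quotient $\K^n/\im N$: the images of $x_0$ and of $f(N)$ are both nonzero there (the former because $D \not\subset \im N$, the latter by construction). A linear form on $\K^n/\im N$ corresponds to a form $\varphi \in (\K^n)^*$ vanishing on $\im N$, and I would look for such a $\varphi$ with $\varphi(x_0) \neq 0$ and $\varphi(f(N)) \neq 0$ simultaneously. The forms vanishing at the image of $x_0$ form a hyperplane of the dual of $\K^n/\im N$, as do those vanishing at the image of $f(N)$; since a vector space over an arbitrary field is never the union of two proper subspaces, there is a form avoiding both hyperplanes. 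Such a $\varphi$ is then bad via the witness $N$ and satisfies $\varphi(x_0) \neq 0$, as required.

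The main obstacle is exactly this simultaneous nonvanishing step: I need a single $\varphi \in (\K^n)^*$ that vanishes on $\im N$, is nonzero on $f(N)$ (so that $\varphi$ is bad via $N$), and is nonzero on the prescribed vector $x_0$. It rests on the elementary fact that a vector space is not a union of two proper subspaces, applied in the dual of $\K^n/\im N$; this works over every field, including $\F_2$, so no cardinality hypothesis on $\K$ is needed at this stage. The rest of the argument is a routine translation between the geometric condition of quasi-range-compatibility and the algebraic spanning condition for bad forms, followed by the trivial extraction of a basis from a spanning set.
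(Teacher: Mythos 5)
Your proof is correct and follows essentially the same approach as the paper: both reduce the claim to showing that the bad linear forms have trivial common kernel, and both exploit the negation of quasi-range-compatibility together with an elementary avoidance fact in the dual of $\K^n/\im N$ (the paper: a subspace not contained in a hyperplane has a basis avoiding it; yours: a space is not the union of two proper subspaces). The only cosmetic difference is that the paper argues by contradiction (assuming the common kernel is nonzero and deriving quasi-range-compatibility), whereas you argue directly by producing, for each nonzero $x_0$, a bad form not vanishing at $x_0$; this is the contrapositive and is if anything slightly tidier, since it needs only a single form rather than a whole basis of $(\im N)^o$.
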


\begin{proof}
Assuming that the contrary holds, the intersection $Z$ of the kernels of the bad linear forms
is non-zero. Then, we choose a $1$-dimensional linear subspace $D$ of $Z$, and we claim that $f$ is quasi-range-compatible with
respect to $D$. Indeed, let $N \in \calW$ be such that $D \not\subset \im N$.
Then, the orthogonal $(\im N)^o$ of $\im N$ in the dual space $(\K^n)^\star$
is not included in $D^o$. Hence, there is a basis $(\varphi_1,\dots,\varphi_k)$ of $(\im N)^o$
in which no vector belongs to $D^o$. Thus, none of the $\Ker \varphi_i$'s includes $D$,
and hence none of the linear forms $\varphi_i$ is bad.
It follows from the definition of a bad linear form that $f(N) \in \Ker \varphi_i$ for all $i \in \lcro 1,k\rcro$.
Then, $(\varphi_1,\dots,\varphi_k)$ being a basis of $(\im N)^o$, we find $\im N=\underset{i=1}{\overset{k}{\bigcap}} \Ker \varphi_i$
and hence $f(N) \in \im N$.
This contradicts the assumption that $f$ should not be quasi-range-compatible.
\end{proof}

Now, let $\varphi \in (\K^n)^\star$ be a bad linear form.
In particular, there exists $N \in \calW$ with $\im N \subset \Ker \varphi$.
We can then consider the affine subspace
$$\calW_\varphi:=\{N \in \calW : \; \im N \subset \Ker \varphi\}.$$

\begin{claim}\label{claim2}
Let $\varphi$ be a bad linear form on $\K^n$.
Then, the operator space $\calW_\varphi$ has upper-rank less than $r$.
\end{claim}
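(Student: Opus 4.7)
The plan is to argue by contradiction: suppose that some $N \in \calW_\varphi$ has $\rk N = r$, and exploit the witness $N_0 \in \calW_\varphi$ with $\alpha := \varphi(f(N_0)) \neq 0$ (provided by the badness of $\varphi$) to produce a contradiction. The first observation is that $\varphi(f(N)) = 0$: since $\im N \subset \Ker \varphi$ and $\rk[f(N) \mid N] \leq r$ while $\rk N = r$, we must have $f(N) \in \im N$, hence in $\Ker \varphi$.

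Next I would consider the affine line $N_t := (1-t) N_0 + t N$, which lies in $\calW_\varphi$ by affineness. Using the affineness of $f$, we compute $\varphi(f(N_t)) = (1-t)\alpha$, which is non-zero for every $t \in \K \setminus \{1\}$. For such $t$, $f(N_t)$ lies outside $\Ker \varphi$ whereas $\im N_t \subset \Ker \varphi$; hence $f(N_t)$ is not in the column space of $N_t$, and $\rk[f(N_t) \mid N_t] \leq r$ forces $\rk N_t \leq r-1$. In particular, every $r \times r$ minor of $N_t$ vanishes on $\K \setminus \{1\}$.

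At $t = 1$, however, $N_t = N$ has rank $r$, so some $r \times r$ minor of $N$ is non-zero. Each such minor is a polynomial in $t$ of degree at most $r$, and vanishing on $\K \setminus \{1\}$ forces it to be identically zero as soon as $\# \K \geq r+2$, contradicting $\rk N = r$; this is the same polynomial principle that underlies the extraction lemma. The main obstacle is the small-field case $\# \K \leq r+1$, where the direct argument is insufficient. I would handle it by extending scalars to the algebraic closure $\overline{\K}$: both $\urk \calV \leq r$ and the badness of $\varphi$ are preserved under the extension, and the rank of a matrix with entries in $\K$ is invariant under scalar extension, so the polynomial argument applies unobstructed over the infinite field $\overline{\K}$ and the conclusion descends to $\K$; alternatively, one may expand carefully chosen $(r+1) \times (r+1)$ minors of $[f(N_t) \mid N_t]$ that pick up extra factors of $(1-t)$ from the rows of $\calW_\varphi$'s support to lower the degree of the resulting polynomial identities, bringing the threshold on $\# \K$ within the hypothesis $\# \K > 2$.
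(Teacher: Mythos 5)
There is a genuine gap in your handling of small fields. Your polynomial argument on the pencil $N_t=(1-t)N_0+tN$ is sound and shows that every $r\times r$ minor of $N_t$ vanishes on $\K\setminus\{1\}$; since these minors have degree at most $r$ in $t$, the argument forces $\rk N\leq r-1$ only when $\#\K-1>r$, i.e.\ $\#\K\geq r+2$. This is much more restrictive than the hypothesis $\#\K>2$. Your primary fix, extending scalars to $\overline{\K}$, does not work: for an \emph{affine} subspace, the condition $\urk\calV\leq r$ is not preserved under field extension. This is exactly the obstruction that motivates the entire paper. For instance, $\calU_2(\F_2)$ has upper-rank $1$, but its scalar extension to any larger field contains invertible matrices; similarly $\calU_3(\F_3)$ has upper-rank $2$ but contains $I_3$ in its translation vector space, so its extension to any larger field contains invertible matrices. (The passage in the introduction where the author remarks that for $\#\K>r$ affine rank-$\overline{r}$ spaces are just affine subspaces of rank-$\overline{r}$ linear spaces is precisely the observation that the polynomial/minor argument works when the field is large enough and fails otherwise.) Your alternative suggestion --- expanding cleverly chosen $(r+1)\times(r+1)$ minors of $[f(N_t)\,|\,N_t]$ to pick up factors of $(1-t)$ and lower the degree threshold all the way to $\#\K>2$ --- is not carried out, and it is not clear that it can be, given that it would essentially be reproving Flanders-type results over small fields by bare hands.

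The paper's proof is structurally quite different: it defines $g:N\mapsto\varphi(f(N))$ on $\calW_\varphi$, notes that $g\neq 0$, observes that $g(N)\neq 0\Rightarrow\rk N<r$, and then, assuming $g$ non-constant, applies the first classification theorem (or its low-dimensional substitutes, Proposition~\ref{n=3r=2} and Proposition~\ref{affinerank1}) to the affine hyperplane $\calU_a=g^{-1}\{a\}$ together with the forcing lemmas to deduce that two distinct parallel hyperplanes $\calU_a,\calU_b$ lie in a common $(r-1)$-compression space, which then contains all of $\calW_\varphi$. This avoids any polynomial-in-$t$ count and works uniformly for $\#\K>2$, which is the whole point.
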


\begin{proof}
The map $g : N \in \calW_\varphi \mapsto \varphi(f(N))$ is a non-zero affine form on $\calW_\varphi$, and for all
$N \in \calW_\varphi$, we have $g(N)\neq 0 \Rightarrow \rk N <r$ since $\rk \begin{bmatrix}
f(N) & N
\end{bmatrix} \leq r$.
Thus, the result is straightforward if $g$ is constant. In the rest of the proof we assume that $g$ is non-constant.

Choose $a \in \K \setminus \{0\}$ and
consider the affine subspace $\calU_a:=g^{-1} \{a\}$ of $\calW_\varphi$. Then, by the rank theorem
$$\dim \calU_a \geq \dim \calW-r-1 \geq (n-1)(r-1)-1.$$
Note that $\calU_a$ is naturally interpreted as a subspace of $\calL(\K^r,\Ker \varphi)$ and can therefore be represented by an affine
subspace of $\Mat_{n-1,r}(\K)$.

Assume first that $n \geq 5$. Then, $\calU_a$ satisfies the assumptions of the first classification theorem,
and hence it is equivalent to a subspace of $\calR(0,r-1)$ or of $\calR(r-1,0)$.
Then, without loss of generality, we can assume that its translation vector space
$U$, which does not depend on the choice of $a$, is included in either $\calR(0,r-1)$ or $\calR(r-1,0)$.
If $U \subset \calR(0,r-1)$, then by forcing lemma 1 we must have $\calU_a \subset \calR(0,r-1)$.
As $g$ is non-constant and $\# \K>2$, we can also choose $b \in \K \setminus \{0,a\}$, yielding
$\calU_b \subset \calR(0,r-1)$. Then, as $\calU_a$ and $\calU_b$ are distinct parallel hyperplanes of $\calW_\varphi$,
their union generates the affine space $\calW_\varphi$, and hence $\calW_\varphi \subset \calR(0,r-1)$.
Similarly, if $U \subset \calR(r-1,0)$ then we obtain that $\calW_\varphi \subset \calR(r-1,0)$.

It remains to consider the case when $n=4$.
\begin{itemize}
\item Assume first that $r=3$, so that $r-1=2$. Note that $\dim \calU_a \geq 5$.
Then, by Proposition \ref{n=3r=2}, $\calU_a$ is equivalent to a subspace of $\calR(0,2)$ or $\calR(2,0)$, or it is equivalent to $\calR(1,1)$.
With the same line of reasoning as above, one uses forcing lemma 3 to obtain that
$\calW_\varphi$ is equivalent to a subspace of $\calR(0,2)$, of $\calR(2,0)$ or of $\calR(1,1)$.

\item Assume finally that $r=2$, so that $r-1=1$. Note that $\dim \calU_a \geq 2$.
Then, with the same line of reasoning as above, but using forcing lemma 2, we prove that $\calW_\varphi$
is equivalent to a subspace of $\calR(0,1)$ or of $\calR(1,0)$.
\end{itemize}
In any case we have shown that $\urk \calW_\varphi \leq r-1$, as claimed.
\end{proof}

Next, we have $\dim \calW_\varphi \geq (n-1)(r-1)$, and hence Flanders's theorem applies to $\calW_\varphi$.
In particular, it shows that it is a linear subspace, whence $\calW$ is itself a linear subspace of $\Mat_{n,r}(\K)$
(note that a bad linear form actually exists!).
Moreover, there are two mutually exclusive cases to consider:
\begin{itemize}
\item Either there exists a non-zero vector $x \in \K^r$ such that $Nx=0$ for all $N \in \calW_\varphi$,
in which case we say that $\varphi$ has \textbf{type 1};
\item Or $n=r+1$ and there exists a linear hyperplane $G$ of $\Ker \varphi$ such that $\im N \subset G$ for all $N \in \calW_\varphi$,
in which case we say that $\varphi$ has \textbf{type 2}. In that case, $\calW_\varphi$ is exactly the space of all matrices
$N \in \Mat_{n,r}(\K)$ with range included in $G$.
\end{itemize}

\begin{claim}\label{claim3}
All the bad linear forms have the same type.
\end{claim}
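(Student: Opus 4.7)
The plan is to split according to whether $n > r+1$ or $n = r+1$. In the first subcase the claim is immediate, since type 2, by its very definition, requires $n = r+1$: so every bad linear form is automatically of type 1. Note also that the subcase $n = r+1$ forces $r = n-1 \geq 3$ from the standing hypothesis $n \geq 4$, a fact that will be used decisively below.

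In the interesting subcase $n = r+1$, I argue by contradiction. Suppose $\varphi$ is bad of type 1 and $\psi$ is bad of type 2. Type 1 for $\varphi$ supplies a non-zero vector $x \in \K^r$ such that
$$\forall N \in \calW, \quad \im N \subset \Ker\varphi \Rightarrow Nx = 0.$$
Type 2 for $\psi$ supplies a hyperplane $G \subset \Ker \psi$ with $\dim G = r-1$, together with the strong additional information that $\calW_\psi$ is the \emph{full} space of matrices in $\Mat_{n,r}(\K)$ whose range is contained in $G$; in particular every such matrix already lies in $\calW$.

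The central step is to exhibit a single rank-one matrix that belongs to $\calW_\varphi$ and yet does not annihilate $x$. Choose any non-zero vector $v \in G \cap \Ker \varphi$ and any $y \in \K^r$ with $y^T x \neq 0$, and set $N := v\, y^T$. Then $\im N = \K v \subset G$ forces $N \in \calW$ by type 2 of $\psi$, while $v \in \Ker \varphi$ forces $\im N \subset \Ker \varphi$ and hence $N \in \calW_\varphi$; type 1 of $\varphi$ then demands $Nx = 0$, contradicting the non-zero equality $Nx = v\,(y^T x)$. The symmetric case, in which $\varphi$ has type 2 and $\psi$ type 1, is handled identically after swapping the roles.

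The only step that really needs care is the existence of a non-zero $v \in G \cap \Ker \varphi$, and this is the place where the hypothesis $n \geq 4$ enters genuinely. A standard dimension count gives
$$\dim (G \cap \Ker \varphi) \geq \dim G + \dim \Ker \varphi - n = (r-1) + (n-1) - n = r-2,$$
which is $\geq 1$ precisely because $r \geq 3$. With that observation secured, no further obstacle is expected.
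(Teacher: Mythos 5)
Your proof is correct and follows essentially the same approach as the paper: reduce to the case $n=r+1$ (since type 2 forces this), then construct a rank-one matrix with range inside $G\cap\Ker\varphi$ that fails to annihilate the type-1 vector $x$, with $n\geq 4$ ensuring the intersection is non-zero. The only superficial difference is that the paper leaves the trivial subcase $n>r+1$ implicit, while you spell it out.
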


\begin{proof}
Assume on the contrary that we can find a bad linear form $\varphi_1$ of type $1$ and a bad linear form $\varphi_2$ of type $2$.
Obviously, $\varphi_1$ and $\varphi_2$ are non-collinear.
Note that $r=n-1$. Then, we have a linear subspace $G$ of codimension $2$ in $\K^n$ such that $\calW_{\varphi_2}$
is the set of all matrices $N \in \Mat_{n,r}(\K)$ such that $\im N \subset G$, and we have a vector $x \in \K^r \setminus \{0\}$
such that every matrix of $\calW_{\varphi_1}$ vanishes at $x$.

As $n \geq 4$, we can find a non-zero vector $y \in G \cap \Ker \varphi_1$, and then a rank $1$ matrix
$N \in \Mat_{n,r}(\K)$ with range $\K y$ and such that $N x \neq 0$. Then,
we find that $N \in \calW_{\varphi_2}$ (since $\K y \subset G$) and $\im N \subset \Ker \varphi_1$.
Then, $N \in \calW_{\varphi_1}$, contradicting $Nx \neq 0$.
\end{proof}

\begin{claim}\label{claim4}
If all the bad linear forms have type $2$, then $n=r+1$ and
$\calW$ is equivalent to $\calR(r,0)$.
\end{claim}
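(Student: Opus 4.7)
The plan is to exhibit a linear hyperplane $H$ of $\K^n$ such that $\calW$ coincides with the full space $\calM_H:=\{N \in \Mat_{n,r}(\K) : \im N \subset H\}$, which is exactly what it means for $\calW$ to be equivalent to $\calR(r,0)$. The equality $n=r+1$ is free of charge, since a type-$2$ bad form exists only under this constraint.

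First, I would unpack the type-$2$ description: for each bad $\varphi$, the subspace $\calW_\varphi$ coincides with the \emph{full} space $\{N \in \Mat_{n,r}(\K) : \im N \subset G_\varphi\}$ for some $G_\varphi \subset \Ker \varphi$ of codimension $2$ in $\K^n$. In particular, $\calW$ itself contains $\{N : \im N \subset G_\varphi\}$ for every bad $\varphi$, and therefore $\calW$ contains the ``column-wise sum" $\calM_H$, where I set $H := \sum_{\varphi \text{ bad}} G_\varphi$. So it suffices to prove that $H$ is a hyperplane and that $\dim \calW = r(n-1)$.

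Second, I would pin down $\dim \calW$. On the one hand, the linear map $\calW \rightarrow \K^{1 \times r}$, $N \mapsto \varphi N$ has kernel $\calW_\varphi$ and hence $\codim_\calW \calW_\varphi \leq r$; combined with $\dim \calW_\varphi = r(n-2)$ this gives $\dim \calW \leq r(n-1)$. On the other hand, the hypothesis $\codim \calW \leq n-1$ and the relation $n=r+1$ yield $\dim \calW \geq nr-(n-1)=r(n-1)$, so $\dim \calW = r(n-1)$.

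Finally, I would locate $H$. It cannot be $\K^n$ (otherwise $\calM_H=\Mat_{n,r}(\K)$ has dimension $nr>r(n-1)=\dim \calW$); and it cannot have codimension $2$ either, for then every $G_\varphi$ would equal $H$, forcing every bad $\varphi$ to lie in the $2$-dimensional annihilator $H^o$, which contradicts Claim \ref{claim1} that the bad forms span $(\K^n)^\star$ (of dimension $n \geq 4$). So $H$ is a hyperplane, $\dim \calM_H=r(n-1)=\dim \calW$, and the inclusion $\calM_H \subset \calW$ becomes an equality; this gives the desired equivalence with $\calR(r,0)$. I expect the most delicate point to be the dimension bound $\dim \calW \leq r(n-1)$ from the quotient map by $\varphi$, which is what converts the ``each $G_\varphi$ is codimension $2$" information into a sharp constraint that forces $H$ to be exactly a hyperplane.
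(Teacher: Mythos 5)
Your proof is correct and follows essentially the same strategy as the paper: you establish $\codim \calW = n-1$ (equivalently $\dim \calW = r(n-1)$) by combining the two-sided dimension bounds, and then use Claim~\ref{claim1} (the bad forms span $(\K^n)^\star$) to force the union of the $G_\varphi$'s to generate a hyperplane rather than a codimension-$2$ subspace. The only cosmetic difference is that the paper picks two specific bad forms $\varphi, \varphi'$ with $G \not\subset \Ker \varphi'$ and works with $G+G'$, whereas you sum over all bad $\varphi$ at once; the dimension count and the conclusion $\calW = \calM_H$ are identical.
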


\begin{proof}
Assume that all the bad linear forms have type 2.
In particular, as there are bad linear forms, we must have $n=r+1$ and $\codim \calW=n-1$.
Let $\varphi$ be a bad linear form, and $G \subset \K^n$ be a linear subspace with codimension $2$ that is attached to it.
As $n \geq 3$ and as we have a basis of bad linear forms, it turns out that
we can choose another bad linear form $\varphi'$ such that $G \not\subset \Ker \varphi'$.
Then, we obtain another linear subspace $G' \subset \K^n$ with codimension $2$ that it attached to $\varphi'$, so that
$G' \subset \Ker \varphi'$.
Hence $G \neq G'$.
However, $\calW$ contains every matrix with range included in $G$, and every matrix with range included in $G'$.
We conclude that $\calW$ contains every matrix with range included in $G+G'$. Note that $\dim(G+G')\geq n-1$.
As $\codim \calW= n-1=r$, we deduce that
$G+G'$ is a linear hyperplane of $\K^n$ and that $\calW$ is the space of all matrices with range included in $G+G'$.
Thus, $\calW$ is equivalent to $\calR(r,0)$.
\end{proof}

However, if $\calW$ is equivalent to $\calR(r,0)$ and $n=r+1$, then $\calV$ is equivalent to a subspace of $\calR(r,1)$,
and hence lifting lemma 2.1 applies to $\calV^T$ (because $n=r+1$) and shows that $\calV$ is $r$-decomposable.
Thus, in the rest of the proof, we assume that all the bad linear forms have type $1$.
We aim at proving that $\calW$ is equivalent to a subspace of $\calR(1,r-1)$.

\begin{claim}\label{claim5}
There is a vector $x \in \K^r \setminus \{0\}$ such that, for every
bad linear form $\varphi$,
$$\forall N \in \calW, \; \im N \subset \Ker \varphi \Rightarrow N x=0.$$
\end{claim}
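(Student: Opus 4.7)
The plan is to establish collinearity of the ``killing vectors'' attached to the bad linear forms. By our standing assumption every bad linear form $\varphi$ has type $1$: there is $x_\varphi \in \K^r \setminus \{0\}$ (unique up to scalar) such that $Nx_\varphi = 0$ for all $N \in \calW_\varphi$. I shall prove that for any two bad forms $\varphi_1, \varphi_2$, the vectors $x_{\varphi_1}, x_{\varphi_2}$ are collinear; a non-zero common representative $x$ then delivers the claim.

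The first step is a saturation statement: for every bad linear form $\varphi$,
$$\calW_\varphi \;=\; B_\varphi \;:=\; \{N \in \Mat_{n,r}(\K) \mid \im N \subset \Ker \varphi \text{ and } N x_\varphi = 0\}.$$
Indeed, type $1$ gives $\calW_\varphi \subset B_\varphi$. Conversely, $\dim B_\varphi = (n-1)(r-1)$ by a direct count; Flanders's theorem, applied to the rank-$\overline{r-1}$ affine space $\calW_\varphi$ inside $\Mat_{n-1,r}(\K)$ (the assumption $\# \K > 2$ discards the exceptional case), gives $\dim \calW_\varphi \leq (n-1)(r-1)$; and the rank theorem, combined with $\codim \calW \leq n-1$ in $\Mat_{n,r}(\K)$, yields $\dim \calW_\varphi \geq (n-1)(r-1)$. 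Equality of dimensions forces $\calW_\varphi = B_\varphi$.

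Now suppose for contradiction that there are bad forms $\varphi_1, \varphi_2$ with $x_1 := x_{\varphi_1}$ and $x_2 := x_{\varphi_2}$ non-collinear. Then $\varphi_1$ and $\varphi_2$ must themselves be non-collinear (otherwise $\calW_{\varphi_1} = \calW_{\varphi_2}$, forcing $x_1 \sim x_2$). Since $n \geq 4$, the subspace $\Ker \varphi_1 \cap \Ker \varphi_2$ has dimension at least $n-2 \geq 2$; pick $y \neq 0$ in it. Pick also a linear form $\psi \in (\K^r)^\star$ with $\psi(x_1) = 0$ and $\psi(x_2) \neq 0$, which is possible since $x_1, x_2$ are not collinear. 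Consider the rank-$1$ operator $N : v \mapsto \psi(v)\,y$, viewed as a matrix in $\Mat_{n,r}(\K)$. Then $\im N = \K y \subset \Ker \varphi_1$ and $Nx_1 = 0$, so by saturation $N \in \calW_{\varphi_1} \subset \calW$. But also $\im N \subset \Ker \varphi_2$, whence $N \in \calW_{\varphi_2}$; yet $Nx_2 = \psi(x_2)\,y \neq 0$ contradicts the type-$1$ property of $\varphi_2$.

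The main obstacle is really the saturation step, whose success rests on the tight dimension count; once it is in hand, the offending rank-$1$ matrix is produced transparently from $n \geq 4$ (which supplies $y$) and from the non-collinearity of $x_1, x_2$ (which supplies $\psi$).
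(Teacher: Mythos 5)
Your proof is correct, but it follows a genuinely different route from the paper's. The paper argues by pure dimension count: assuming non-collinear $x_1$, $x_2$ attached to bad forms $\varphi_1$, $\varphi_2$, it considers the subspace of $N \in \calW$ with $\im N \subset \Ker\varphi_1$ and $NH \subset \Ker\varphi_2$ for a complement $H$ of $\K x_1$; every such $N$ then has $Nx_1 = Nx_2 = 0$ and $\im N \subset \Ker\varphi_1 \cap \Ker\varphi_2$, which via the rank theorem forces $\codim \calW \geq (n-1)+(n-2) = 2n-3 > n-1$, a contradiction. You instead prove a ``saturation'' identity $\calW_\varphi = \{N : \im N \subset \Ker\varphi,\ Nx_\varphi = 0\}$ by squeezing $\dim \calW_\varphi$ between $(n-1)(r-1)$ from below (rank theorem and $\codim \calW \leq n-1$) and from above (Claim~\ref{claim2} plus Flanders), and then produce an explicit rank-one matrix $N = \psi(\cdot)\,y$ with $\psi(x_1)=0$, $\psi(x_2)\neq 0$, $y \in \Ker\varphi_1 \cap \Ker\varphi_2 \setminus\{0\}$ that lies in $\calW$ yet violates the type-1 property of $\varphi_2$. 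Both arguments ultimately rest on the same numerical ingredient ($\codim \calW \leq n-1$ together with the $(n-1)(r-1)$ bound), but yours buys the extra structural fact that $\calW_\varphi$ is a full compression space, and it is more constructive in exhibiting the offending matrix; the paper's is leaner, as it never needs the reverse inclusion $B_\varphi \subset \calW_\varphi$. One small remark: the Flanders exceptional case $(2,2,1)$ with $\#\K=2$ is excluded here by $n \geq 4$ (so $n-1\geq 3$), not by $\#\K > 2$ as you suggest; the hypothesis $\#\K > 2$ is of course available, but your parenthetical attributes the exclusion to the wrong hypothesis.
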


\begin{proof}
Assume on the contrary that there are two bad linear forms $\varphi_1$ and $\varphi_2$
and non-collinear vectors $x_1,x_2$ in $\K^r \setminus \{0\}$ such that
$$\forall i \in \{1,2\}, \; \forall N \in \calW, \; \im N \subset \Ker \varphi_i \Rightarrow Nx_i=0.$$
Then, we choose a complementary subspace $H$ of $\K x_1$ in $\K^r$.
It follows that, for all $N \in \calW$, if $\im N \subset \Ker \varphi_1$ and $N$ maps $H$ into $\Ker \varphi_2$, then
$Nx_1=0$ and hence $\im N \subset \Ker \varphi_2$ and $Nx_2=0$. It follows from the rank theorem that
$$\codim \calW \geq (n-1)+(n-2) \geq n.$$
This contradicts our assumptions.
\end{proof}

\begin{claim}\label{claim6}
The space $\calW x$ is a linear subspace of $\K^n$ with dimension at most $1$.
\end{claim}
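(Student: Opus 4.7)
The plan is to show that $\calW$ actually contains the whole linear subspace $K_0:=\{N\in \Mat_{n,r}(\K) : Nx=0\}$ of dimension $n(r-1)$; then the linearity of $\calW x$ comes for free (since $\calW$ is a linear subspace, as was established right before Claim 3), and the rank formula closes the proof because
\[
 \dim \calW x \;=\; \dim \calW - \dim(\calW \cap K_0) \;=\; (nr-n+1) - n(r-1) \;=\; 1.
\]
Note that I already know $\dim \calW = nr-n+1$ from the Flanders-plus-codimension computations that preceded Claim 3.

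To show $K_0\subset\calW$, I would fix a basis $(\varphi_1,\dots,\varphi_n)$ of bad linear forms, which exists by Claim 1, and work in the dual basis $(e_1,\dots,e_n)$ of $\K^n$, so that $\varphi_i\circ N = 0$ says exactly that the $i$-th row of $N$ vanishes. In these coordinates introduce
\[
 Z_i:=\bigl\{N\in\Mat_{n,r}(\K) : \varphi_i\circ N=0 \text{ and } Nx=0\bigr\}.
\]
Claim 5 applied to the bad form $\varphi_i$ gives $\calW_{\varphi_i}\subset Z_i$. For the reverse inclusion I would count dimensions directly: killing the $i$-th row is $r$ independent constraints, and the remaining $n-1$ orthogonality relations $\mathrm{row}_j(N)\cdot x = 0$ for $j\neq i$ are independent of those, so $\dim Z_i = nr - r - (n-1) = (n-1)(r-1)$. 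This matches the value $\dim\calW_{\varphi_i}=(n-1)(r-1)$ that was forced by Claim 2 together with Flanders's theorem, so $\calW_{\varphi_i}=Z_i$; in particular every matrix of $Z_i$ lies in $\calW$.

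I would finish by checking $\sum_i Z_i = K_0$: given $N\in K_0$, decompose $N=\sum_{j=1}^n N^{(j)}$ where $N^{(j)}$ retains only the $j$-th row of $N$. For any $i\neq j$ the matrix $N^{(j)}$ has zero $i$-th row, and it still satisfies $N^{(j)}x=0$ because its only nonzero row is the $j$-th row of $N$, which is perpendicular to $x$ by assumption; hence $N^{(j)}\in Z_i\subset\calW$. Thus $K_0\subset\sum_i Z_i\subset\calW$, as required. The main obstacle I anticipate is precisely the equality $\calW_{\varphi_i}=Z_i$: it relies both on the universal vector $x$ from Claim 5 (to get $Nx=0$ on $\calW_{\varphi_i}$) and on the sharp Flanders dimension $(n-1)(r-1)$ (which is what pins down $\calW_{\varphi_i}$ rigidly as the full model $Z_i$ rather than a proper subspace of it); without both ingredients one could only get $\calW_{\varphi_i}\subset Z_i$ strictly and would lose the key containment $K_0\subset\calW$.
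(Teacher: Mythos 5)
Your proof is correct, and it takes a genuinely different route from the paper. Where the paper applies the factorization lemma for linear maps to obtain, for each bad form $\varphi_i$, a map $L_i$ of rank $t:=\dim \calW x$ and then runs an abstract injectivity/dimension count to show $t\geq 2$ is impossible, you instead exploit the rigidity of the $\calW_{\varphi_i}$ head-on: Claim 5 plus the sharp Flanders dimension $\dim\calW_{\varphi_i}=(n-1)(r-1)$ pin each $\calW_{\varphi_i}$ down as the \emph{entire} model space $Z_i=\{N : \varphi_i\circ N=0,\ Nx=0\}$, and the row-by-row decomposition of any $N$ with $Nx=0$ then lands $N$ in $\sum_i Z_i\subset\calW$, i.e.\ the full kernel hyperplane $K_0$ lies inside $\calW$, after which rank--nullity gives $\dim\calW x=1$ outright. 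Your route is more concrete and yields a slightly sharper conclusion (exact dimension $1$ rather than $\leq 1$), at the cost of needing the precise identification $\calW_{\varphi_i}=Z_i$; the paper's factorization argument avoids that identification and bounds $t$ with a softer inequality, which is the kind of argument that transports more easily to other situations (indeed the paper reuses it almost verbatim at the end of special lifting lemma 3). One small point worth making explicit in a write-up: the identity $\dim\calW=nr-n+1$ that you invoke is implicit rather than stated in the text preceding Claim 3, but it follows as you say from $\dim\calW\geq nr-n+1$ together with $\dim\calW_\varphi\leq(n-1)(r-1)$ (Flanders) and $\codim_{\calW}\calW_\varphi\leq r$.
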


\begin{proof}
We already know that $\calW$ is a linear subspace of $\Mat_{n,r}(\K)$, whence $\calW x$ is a linear subspace of $\K^n$.
Set $t:=\dim \calW x$. Let us take a basis $(\varphi_1,\dots,\varphi_n)$ of $(\K^n)^\star$ consisting of bad linear forms.
Let $i \in \lcro 1,n\rcro$ and set
$$V_i:=\{z \in \K^r \mapsto \varphi_i(Nz) \mid N \in \calW\},$$
which is a linear subspace of $(\K^r)^\star$. Then, by the factorization lemma for linear maps,
we find a linear map $L_i : V_i \rightarrow \K^n$ such that
$$\forall N \in \calW, \; Nx=L_i(z \mapsto \varphi_i(Nz)).$$
In particular, $\rk L_i=t$ for all $i \in \lcro 1,n\rcro$.
The linear map
$$N \in \calW \longmapsto \bigl(z \mapsto \varphi_i(Nz)\bigr)_{1 \leq i \leq n}$$
is injective. Its restriction to $\{N \in \calW : \; Nx=0\}$ has its range included in $\underset{k=1}{\overset{n}{\prod}} \Ker L_i$,
and hence
$$\dim \calW-t \leq \sum_{i=1}^n \dim(\Ker L_i) \leq n(r-t).$$
Thus, $\dim \calW \leq nr+t(1-n)$.
If $t \geq 2$, we deduce that
$$\dim \calW \leq nr+2(1-n),$$
which contradicts our assumptions because $nr+2(1-n) \leq n(r-1)$.
Thus, $t \leq 1$, as claimed.
\end{proof}

Finally, $\calW$ is equivalent to a subspace of $\calR(1,r-1)$, and hence $\calV$ is equivalent to a subspace of
$\calR(1,r)$. By lifting lemma 2.1 (Proposition \ref{liftingprop2.1}), the space
$\calV$ is $r$-decomposable.

This completes the proof of special lifting lemma 2.

\subsection{Wrapping the proof up}\label{refinedproofsection}

Now, we are ready to complete the proof of the refined first classification theorem.
As in the proof of the first classification theorem, we proceed by induction over $n,p,r$.
The case $r\leq 1$ is already known for all integers $n$ and $p$, by Proposition \ref{affinerank1}.
Throughout the section, we assume that $\# \K>2$.

Let $n,p,r$ be non-negative integers such that $n \geq p \geq r \geq 2$, and let $\calS$ be a rank-$\overline{r}$
affine subspace of $\Mat_{n,p}(\K)$ such that $\dim \calS \geq nr-(n-p+r)$.
Denote by $S$ its translation vector space.
If $r=p$ we simply have $\calS \subset \calR(0,r)$. Thus, in the rest of the proof, we assume that
$2 \leq r \leq p-1$. In particular $3 \leq p \leq n$. The case $n=3$ has been dealt with in Proposition \ref{n=3r=2}.
In the rest of the proof, we assume that $n \geq 4$.

Denote the upper-rank of $\calS$ by $s$. Then, $s \leq r$.
If $s$ is even and $\calS$ is equivalent to $\calR(s/2,s/2)$, then $\calS$ is $r$-decomposable.
In the rest of the proof, we assume that such is not the case.
By Lemma \ref{3rdkey}, we can then find a $1$-dimensional linear subspace $D$ of $\K^n$
such that $\dim S^D \leq \lfloor \frac{s-1}{2}\rfloor$, or a linear hyperplane $H$ of $\K^p$ such that $\dim S_H \leq \lfloor \frac{s-1}{2}\rfloor$.
Note that $\lfloor \frac{s-1}{2}\rfloor \leq r-2$.

From there, we split the discussion into four main cases.

\subsubsection{Case 1: There exists a $1$-dimensional linear subspace $D$ of $\K^n$ such that $1 \leq \dim S^D \leq \lfloor \frac{r-1}{2}\rfloor$.}

We apply the ERC method.

Without loss of generality, we can assume that $D$ is spanned by the first vector of the standard basis of $\K^n$
and that $S^D$ contains $E_{1,1}$. Then, by the extraction lemma, every matrix $M \in \calS$
splits as
$$M=\begin{bmatrix}
? & [?]_{1 \times (p-1)} \\
[?]_{(n-1) \times 1} & P(M)
\end{bmatrix} \quad \text{with $P(M) \in \Mat_{n-1,p-1}(\K)$,}$$
and $P(\calS)$ is a rank-$\overline{r-1}$ affine subspace of $\Mat_{n-1,p-1}(\K)$.
Then,
$$\dim P(\calS) \geq \dim \calS-(n-1)-\dim S^D.$$
One checks that
$$\bigl(nr-(n-p+r)\bigr)-\bigl((n-1)(r-1)-((n-1)-(p-1)+(r-1))\bigr)=(n-1)+(r-1).$$
As $\dim S^D \leq r-2$,
it follows that
$$\dim P(\calS) \geq (n-1)(r-1)-\bigl((n-1)-(p-1)+(r-1)\bigr)+1.$$
Hence, the induction hypothesis applies to $P(\calS)$.
With regards to Remark \ref{1strefinedremark}, only the following options are possible:
\begin{itemize}
\item The space $P(\calS)$ is equivalent to a subspace of $\calR(0,r-1)$. Then, $\calS$ is equivalent to a subspace of $\calR(1,r)$,
and hence lifting lemma 2.1 (Proposition \ref{liftingprop2.1}) yields the desired conclusion.
\item The space $P(\calS)$ is equivalent to a subspace of $\calR(1,r-2)$. Then, $\calS$ is equivalent to a subspace of $\calR(2,r-1)$,
and hence lifting lemma 2.2 (Proposition \ref{liftingprop2.2}) yields the desired conclusion.
\item The space $P(\calS)$ is equivalent to a subspace of $\calR(r-1,0)$. Then, $\calS$ is equivalent to a subspace of $\calR(r,1)$,
and hence lifting lemma 2.1 applied to $\calS^T$ yields the desired conclusion (see Remark \ref{lift2.1remark}).
\item The space $P(\calS)$ is equivalent to a subspace of $\calR(r-2,1)$, and $n=p$.
Then, $\calS$ is equivalent to a subspace of $\calR(r-1,2)$. By lifting lemma 2.2, the space
$\calS^T$ is $r$-decomposable (note that $n \geq 4$, which discards the exceptional solution),
and hence so is $\calS$.
\end{itemize}

\subsubsection{Case 2: There exists a linear hyperplane $H$ of $\K^p$ such that $1 \leq \dim S_H \leq \lfloor \frac{r-1}{2}\rfloor$.}

Again, we apply the ERC method. Without loss of generality, we can assume that $H=\{0\} \times \K^{p-1}$
and that $S_H$ contains $E_{1,1}$.
This time, with the same notation as in Case 1, we find
$$\dim P(\calS) \geq \dim \calS-(p-1)-\dim S_H.$$
Then, we can follow the same line of reasoning as in Case 1 because $p \leq n$.

\subsubsection{Case 3: There exists a linear hyperplane $H$ of $\K^p$ such that $S_H=\{0\}$.}

We apply the EC method.
Without loss of generality, we can assume that $H=\{0\} \times \K^{p-1}$.
Then, we split every matrix $M \in \calS$ up as
$$M=\begin{bmatrix}
[?]_{n \times 1} & J(M)
\end{bmatrix} \quad \text{with $J(M) \in \Mat_{n,p-1}(\K)$.}$$
Obviously, $J(\calS)$ is a rank-$\overline{r}$ affine subspace of $\Mat_{n,p-1}(\K)$.
Now,
$$\dim J(\calS)=\dim \calS \geq nr-(n-(p-1)+r)+1.$$
Thus, by induction we know that $J(\calS)$ is $r$-decomposable.
Note that $n>p-1$. By Remark \ref{1strefinedremark}, this leaves us only with four cases to consider.

\vskip 3mm
\noindent \textbf{Subcase 3.1: $J(\calS)$ is equivalent to a subspace of $\calR(1,r-1)$.} \\
Then, $\calS$ is equivalent to a subspace of $\calR(1,r)$, and we conclude by lifting lemma 2.1.

\vskip 3mm
\noindent \textbf{Subcase 3.2: $n=p$ and $J(\calS)$ is equivalent to a subspace of $\calR(r,0)$.} \\
Then, $\calS$ is equivalent to a subspace of $\calR(r,1)$, and hence lifting lemma 2.1 applies to
$\calS^T$, which yields the conclusion.

\vskip 3mm
\noindent \textbf{Subcase 3.3: $J(\calS)$ is equivalent to a subspace of $\calR(0,r)$ and $r<p-1$.} \\
Then, there is a non-zero vector of $\K^{p-1}$ on which all the matrices of $J(\calS)$ vanish, yielding
a non-zero vector of $\K^p$ on which all the matrices of $\calS$ vanish.
Thus, there is an affine subspace $\calT$ of $\Mat_{n,p-1}(\K)$ such that
$\calS$ is equivalent to the set of all matrices of the form $\begin{bmatrix}
N & [0]_{n \times 1}
\end{bmatrix}$ with $N \in \calT$. We have $\dim \calT=\dim \calS$, and by induction
$\calT$ is $r$-decomposable, which suffices to see that $\calS$ is $r$-decomposable.

\vskip 3mm
\noindent \textbf{Subcase 3.4: $J(\calS)$ is equivalent to a subspace of $\calR(0,r)$ and $r=p-1$.} \\
Set $\calW:=J(\calS)$. As $S_H=\{0\}$, we have
an affine map $f : \calW \rightarrow \K^n$ such that
$$\calS=\Bigl\{\begin{bmatrix}
f(N) & N
\end{bmatrix} \mid N \in \calW \Bigr\}.$$
Then, as $r=p-1$ we see that $\codim \calW \leq n-1$
and the conclusion follows from special lifting lemma 2 (that is, from Proposition \ref{speciallifting2}).

\subsubsection{Case 4: There exists a $1$-dimensional linear subspace $D$ of $\K^n$ such that $S^D=\{0\}$.}

If $n=p$, then $\calS^T$ satisfies the assumptions of Case 3, and the conclusion follows.
Assume now that $n>p$. Let us apply the ER method.
Without loss of generality, we can assume that $D$ is spanned by the first vector of the canonical basis of $\K^n$.
Then, we split every matrix $M \in \calS$ up as
$$M=\begin{bmatrix}
[?]_{1 \times p} \\
A(M)
\end{bmatrix} \quad \text{with $A(M) \in \Mat_{n-1,p}(\K)$.}$$
Note that $A(\calS)$ is a rank-$\overline{r}$ affine subspace of $\Mat_{n-1,p}(\K)$.
We still have $n-1 \geq p$, and now
$$\dim A(\calS)=\dim \calS \geq (n-1)r-((n-1)-p+r)+(r-1).$$

If $r \geq 3$, then the first classification theorem yields that $A(\calS)$ is equivalent to a subspace of
$\calR(r,0)$ or $\calR(0,r)$. If $r=2$, then we have to resort to induction:
in that case there is the extra possibility that $A(\calS)$ be equivalent to a subspace of $\calR(1,1)$.
We tackle each case separately.

\vskip 3mm
\noindent \textbf{Subcase 4.1: $A(\calS)$ is equivalent to a subspace of $\calR(0,r)$.} \\
Then, $\calS$ is equivalent to a subspace of $\calR(1,r)$, and hence lifting lemma 2.1 shows
that $\calS$ is $r$-decomposable.

\vskip 3mm
\noindent \textbf{Subcase 4.2: $A(\calS)$ is equivalent to a subspace of $\calR(1,1)$, and $r=2$.} \\
Then, $\calS$ is equivalent to a subspace of $\calR(2,1)$, and hence lifting lemma 2.2 shows
that $\calS$ is $r$-decomposable.

\vskip 3mm
\noindent \textbf{Subcase 4.3: $A(\calS)$ is equivalent to a subspace of $\calR(r,0)$, and $p=n-1$.} \\
Then, as $r<p$ we learn that $\calS$ is equivalent to a subspace of $\calR(n-1,0)$.
It follows that we can find a rank-$\overline{r}$ affine subspace $\calT$ of $\Mat_{n-1,p}(\K)$ such that
$\calS$ is equivalent to the space of all matrices $\begin{bmatrix}
N \\
[0]_{1 \times p}
\end{bmatrix}$ with $N \in \calT$. By induction
(note that $\dim \calT>(n-1)r-((n-1)-r+p)$, which discards the exceptional solution), we learn that $\calT$ is
$r$-decomposable, and it follows that $\calS$ is $r$-decomposable.

\vskip 3mm
This completes the proof of the refined first classification theorem.

\subsection{A corollary: the last forcing lemma}

We finish this section by deriving our last forcing lemma from the refined first classification theorem.

\begin{cor}[Forcing lemma 4]\label{forcing4}
Let $n$ and $r$ be positive integers such that $n \geq r+1$.
Let $\calV$ be a rank-$\overline{r}$ affine subspace of $\Mat_{n,r+1}(\K)$, with translation vector space $V$.
Assume that $\dim \calV \geq nr-n+2$ and that $\# \K>2$.
\begin{enumerate}[(a)]
\item If $V \subset \calR(r,0)$ then $\calV \subset \calR(r,0)$.
\item If $V \subset \calR(0,r)$ then $\calV \subset \calR(0,r)$.
\item If $V =\calR(1,r-1)$ then $\calV \subset \calR(1,r-1)$.
\item If $V =\calR(r-1,1)$ then $\calV \subset \calR(r-1,1)$.
\end{enumerate}
\end{cor}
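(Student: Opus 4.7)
The plan is to emulate the proof of Corollary~\ref{forcing1}, replacing the first classification theorem with the refined first classification theorem. Since $p=r+1$ gives $nr-(n-p+r)=nr-n+1$, the hypothesis $\dim \calV \geq nr-n+2$ reads exactly $\dim \calV > nr-(n-p+r)$, so the refined first classification theorem applies to $\calV$, together with all the ``moreover'' clauses of Remark~\ref{1strefinedremark}.

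First I would rule out the $\calU_3(\K)$ exception. There $n=p=3$ and $r=2$, and a direct computation shows that the translation vector space of $\calU_3(\K)$ contains $I_3$, a rank $3$ matrix. By equivalence $V$ would then contain a rank $3$ matrix, contradicting the fact that in each of (a)--(d) the space $V$ sits inside a rank-$\overline{2}$ compression space. Hence $\calV$ is $r$-decomposable.

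By Remark~\ref{1strefinedremark}, the compression spaces that can contain $\calV$ form a short list indexed by $n-p \in \{0,1,2,\geq 3\}$. For each of (a)--(d) I would go through that list and eliminate every candidate except the one matching $V$. The template is case (b), where $V \subset \calR(0,r)$, i.e., $Ve_{r+1}=0$: if $\calV$ is equivalent to a subspace of $\calR(0,r)$, pick $x\in\K^{r+1}\setminus\{0\}$ with $\calV x=0$; then $Vx=0$, and non-collinearity of $x$ and $e_{r+1}$ would give $\dim V \leq n(r-1)<nr-n+2$, forcing $x\in\K e_{r+1}$ and $\calV\subset\calR(0,r)$. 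Every other candidate $\calR(s,r-s)$ with $s\geq 1$ is eliminated by bounding above the dimension of the space of $N\in\Mat_{n,r+1}(\K)$ satisfying both the $(s,r-s)$-compression constraint (inherited by $V$ from the equivalence of $\calV$) and the constraint $Ne_{r+1}=0$, and then verifying that this upper bound is strictly less than $nr-n+2$. Cases (a), (c) and (d) run along the same template, replacing $Ne_{r+1}=0$ by the structural constraint characterizing the ambient compression space of $V$ (last $n-r$ rows zero in (a), the $(1,r-1)$-compression pattern in (c), the $(r-1,1)$-compression pattern in (d)).

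The main obstacle lies in the case analysis of the preceding paragraph: a handful of subcases have to be worked through (indexed by $n-p$ and by the candidate compression space for $\calV$), each requiring a careful dimension count for $V$ viewed as sitting in the intersection of two compression-type spaces. The ``moreover'' clauses of Remark~\ref{1strefinedremark} --- promoting ``equivalent to a subspace of'' to ``equivalent to'' for $\calR(1,r-1)$, $\calR(r-1,1)$, and for $\calR(r,0)$ when $n=p+1$ --- are crucial to making several of these dimension counts tight.
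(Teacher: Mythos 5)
Your plan is essentially the paper's own proof: apply the refined first classification theorem, use Remark~\ref{1strefinedremark} to cut down to a short candidate list, and for each of (a)--(d) play the dimension/common-kernel argument off against the structure imposed on $V$. The paper organizes the case analysis slightly more efficiently by first disposing of the situations where $\calV$ is equivalent to $\calR(1,r-1)$, $\calR(r-1,1)$, or (when $n>r+1$) $\calR(r,0)$ — there $\calV$ is a linear subspace, so $\calV=V$ and all four implications hold trivially — and only then arguing mutual exclusivity of the remaining compression types; your per-candidate dimension bounds on the intersection of two compression patterns accomplish the same thing with a bit more computation, and the bounds you would need do check out, so the approach is sound.
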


\begin{proof}
First of all, if $\calV$ is equivalent to a subspace of $\calR(1,r-1)$ or of $\calR(r-1,1)$,
then by $\dim \calV \geq nr-n+2$ we obtain that $\calV$ is equivalent to $\calR(1,r-1)$ or to $\calR(r-1,1)$,
and hence $\calV=V$: in that case all four implications are obvious.
The same line of reasoning applies to the case when $n>r+1$ and $\calV$ is equivalent to a subspace of $\calR(r,0)$.

In the rest of the proof, we assume that $\calV$ is inequivalent to a subspace of $\calR(1,r-1)$ or of $\calR(r-1,1)$,
and that if $n>r+1$ then $\calV$ is inequivalent to a subspace of $\calR(r,0)$.

The refined first classification theorem applies to $\calV$.
As $\dim \calV >nr-(n-(r+1)+r)$, the only possibilities (see Remark \ref{1strefinedremark})
are that $\calV$ be equivalent to a subspace of $\calR(0,r)$, $\calR(r,0)$, $\calR(1,r-1)$ or $\calR(r-1,1)$.
Hence, $\calV$ is equivalent to a subspace of $\calR(0,r)$, or
it is equivalent to a subspace of $\calR(r,0)$ and $n=r+1$. In any case $r \geq 2$. 

Assume for the time being that $V$ is equivalent to a subspace of $\calR(0,r)$, yielding a non-zero vector $x \in \K^{r+1}$
such that $\forall M \in V, \; Mx=0$.
\begin{itemize}
\item Assume further that $n=r+1$ and that $\calV$ is equivalent to a subspace of $\calR(r,0)$.
Then, the same is true of $V$, and hence we have that every matrix of $V$ vanishes at $x$
all the while having its image included in a fixed $r$-dimensional linear subspace of $\K^n$, which leads to
$\dim V \leq r^2$. Yet,
$$nr-n+2-r^2 \geq (r+1)(r-1)+2-r^2=1.$$
This contradicts our assumptions on the dimension of $\calV$.

\item Assume that $V$ is equivalent to a subspace of $\calR(1,r-1)$. Then, as the dimensions are equal we find that
$V$ is equivalent to $\calR(1,r-1)$, contradicting the fact that all the matrices of $V$ vanish at $x$.

\item Similarly, if $V$ is equivalent to a subspace of $\calR(r-1,1)$ then it is equivalent to $\calR(r-1,1)$, contradicting the
fact that all the matrices of $V$ vanish at $x$.
\end{itemize}
Thus, if $V$ is equivalent to a subspace of $\calR(0,r)$, it can neither be equivalent to a subspace of $\calR(r,0)$, nor to a subspace of
$\calR(1,r-1)$, nor to a subspace of $\calR(r-1,1)$.
By transposing, we see that if $n=r+1$ and $V$ is equivalent to a subspace of $\calR(r,0)$, then
it can neither be equivalent to a subspace of $\calR(1,r-1)$ nor to a subspace of $\calR(r-1,1)$.

We are now ready to conclude.

\begin{enumerate}[(a)]
\item Assume first that $V \subset \calR(0,r)$. By the above, $\calV$ must be equivalent to a subspace of $\calR(0,r)$.
Hence, we have a non-zero vector $x \in \K^{r+1}$ such that
$Mx=0$ for all $M \in \calV$. Then, $Mx=0$ for all $M \in V$. Yet, $Me_{r+1}=0$ for all $M \in V$, where
$e_{r+1}$ denotes the last vector of the standard basis of $\K^{r+1}$. If $e_{r+1}$ and $x$ were not collinear, this
would lead to $\dim V \leq n(r-1)$, contradicting the fact that $n(r-1)<nr-n+2$.
Hence, $e_{r+1} \in \K x$, and every matrix of $\calV$ vanishes at $e_{r+1}$: therefore,
$\calV \subset \calR(0,r)$, as claimed.

\item Assume that $V \subset \calR(r,0)$. By a previous step, $\calV$ must be equivalent to a subspace of $\calR(r,0)$,
and then $n=r+1$. Then, the result follows from point (a) applied to $\calV^T$.

\item Assume that $V \subset \calR(1,r-1)$.
If $n>r+1$, then $\calV$ must be equivalent to a subspace of $\calR(0,r)$,
and hence $V$ is also equivalent to a subspace of $\calR(0,r)$, which contradicts an earlier statement.
If $n=r+1$, then $\calV$ is equivalent to a subspace of $\calR(r,0)$ or $\calR(0,r)$, and hence the same holds for $V$:
in any case we find a contradiction with an earlier step.

\item Finally, if $V \subset \calR(r-1,1)$ then as $\dim V \geq nr-n+2$
 we must have $n=r+1$, and we derive a contradiction by applying the previous
step to $\calV^T$.
\end{enumerate}
This completes the proof of forcing lemma 4.
\end{proof}

\section{Proof of the second classification theorem}\label{2ndclasssection}

This section is devoted to the proof of the second classification theorem.
Its structure is globally similar to the one of the preceding section, with a couple of exceptions.
We shall start by classifying rank-$\overline{3}$ linear subspace of upper-triangular
$4$ by $4$ matrices with dimension greater than or equal to $8$ (Section \ref{triang4section}).
This result will be used in the proof of the three lifting lemmas (Section \ref{lastliftingsection}).
Then, we will complete the classification of rank-$\overline{3}$ linear spaces of $4$ by $4$ matrices
with dimension greater than or equal to $8$ (Section \ref{4by4class}).
Afterwards, we will prove the special lifting lemma (Section \ref{lastspeciallifting}).
Finally, we will prove the second classification theorem by induction (Section \ref{wrapupsecond}),
with the help of all the previous results.

\subsection{The simple case of $4$ by $4$ upper-triangular matrices}\label{triang4section}

\begin{lemma}\label{triang4}
Let $V$ be a rank-$\overline{3}$ linear subspace of $T_4^+(\K)$ such that $\dim V \geq 8$ and $\# \K>2$.
Then, either $V$ is $3$-decomposable, or $\# \K=3$ and
$V$ is equivalent to $\calU_4(\K)$.
\end{lemma}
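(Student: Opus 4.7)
The plan is to exploit the fact that for any $M \in T_4^+(\K)$, $\rk M \leq 3$ is equivalent to $\det M = 0$, hence to the vanishing of the product $d_1(M) d_2(M) d_3(M) d_4(M)$, where $d_i(M) := M_{ii}$. So the rank hypothesis on $V$ translates into the single polynomial identity $d_1 d_2 d_3 d_4 \equiv 0$ on $V$. I will study the diagonal map $D := (d_1, \ldots, d_4) : V \to \K^4$, denote $P := D(V)$ and $r := \dim P$. As a preliminary reduction, if some $d_i$ vanishes identically on $V$, then $V$ is included in a rank-$\overline{3}$ compression space: for $i = 1$ or $4$, the matrices of $V$ have first column zero or last row zero (giving subspaces of $\calR(0,3)$ or $\calR(3,0)$); for $i = 2$ or $3$, the space $V$ sends $\Vect(e_1, e_2)$ into $\Vect(e_1)$ or $\Vect(e_1, e_2, e_3)$ into $\Vect(e_1, e_2)$ (giving subspaces of $\calR(1,2)$ or $\calR(2,1)$), so $V$ is $3$-decomposable and we are done. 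Henceforth I assume $d_i \not\equiv 0$ for every $i$, so that each $\pi_i := d_i|_P$ is a nonzero linear form on $P$.

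Next I will rule out $r \in \{0, 1, 3, 4\}$. The cases $r = 0$ (then $V$ consists of strictly upper triangular matrices, of dimension at most $6 < 8$) and $r = 4$ (then $(1, 1, 1, 1) \in P$, violating the product identity) are immediate. The case $r = 1$ forces $d_i = \alpha_i t$ for a single nonzero form $t$, and the identity $(\prod \alpha_i)\, t^4 \equiv 0$ on $V$ yields some $\alpha_i = 0$. For $r = 3$, $P$ is a hyperplane $\{\sum c_i x_i = 0\}$; a short coefficient analysis (valid whenever $\# \K \geq 3$) shows that if at least two $c_i$'s are nonzero, then $P$ contains a point with all coordinates nonzero, so the identity $\prod x_i \equiv 0$ on $P$ forces $P$ to be a coordinate hyperplane $\{x_j = 0\}$, whence $d_j \equiv 0$, again excluded.

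We are thus left with $r = 2$. The four nonzero forms $\pi_i$ on $P \cong \K^2$ have kernels whose union is $P$; since $\K^2$ admits exactly $\# \K + 1$ lines through the origin and covering $\K^2$ by such lines requires at least this many, we deduce $\# \K + 1 \leq 4$, hence $\# \K = 3$, and furthermore the four kernels $\ker \pi_i$ must be pairwise distinct, so the $\pi_i$ are pairwise non-collinear. Thus there is a basis $(x, y)$ of $P^*$ in which $\{\pi_1, \pi_2, \pi_3, \pi_4\} = \{x, y, x + y, x - y\}$ up to signs. The bound $\dim V \leq \dim D_0^{-1}(P) = 6 + 2 = 8$ (with $D_0 : T_4^+(\K) \to \K^4$ the natural surjection) combined with $\dim V \geq 8$ then forces $V = D_0^{-1}(P)$. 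To conclude, I will exhibit explicit $A, B \in \GL_4(\K)$ with $A V B = \calU_4(\K)$, constructed from diagonal rescaling of rows and columns (which absorbs the signs and realizes any componentwise scaling of $P$) together with the $\operatorname{PGL}_2(\F_3) \cong S_4$-action on the four lines of $P^*$ (which matches any bijection of coordinates-to-lines to the distinguished one used in $\calU_4(\K)$). I anticipate this final equivalence verification to be the main obstacle: conceptually clear, but requiring some bookkeeping across the possible orderings of the $\pi_i$ to exhibit $A, B$ uniformly.
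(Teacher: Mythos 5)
Your argument is correct and follows essentially the same route as the paper: project onto the diagonal, translate the rank hypothesis into the identity $d_1d_2d_3d_4\equiv 0$, rule out $\dim D(V)\in\{3,4\}$, deduce $\dim D(V)=2$ with the four restricted coordinate forms pairwise non-collinear (forcing $\#\K=3$), conclude $V=D_0^{-1}(P)$ by the dimension count, and finish with diagonal row scalings. The only cosmetic differences are that you exclude $\dim P=3$ by producing a point of $P$ with all coordinates nonzero where the paper invokes a degree-$<3$-in-each-variable polynomial argument, and you invoke $\operatorname{PGL}_2(\F_3)\cong S_4$ to pick a convenient basis of $P^*$ before rescaling, where the paper simply lists the two remaining orderings of $\{x+y,x-y\}$ and rescales in each case.
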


\begin{proof}
Assume that $V$ is not $3$-decomposable.

We consider the space $\Delta \subset \K^4$ consisting of the diagonal vectors of the matrices in $V$.
As $\dim V\geq 8$ we must have $\dim \Delta \geq 2$.
For $x \in \Delta$, let us write
$$x=\begin{bmatrix}
a_1(x) & a_2(x) & a_3(x) & a_4(x)
\end{bmatrix}^T.$$
The $a_i$ maps are linear forms on $\Delta$. None of them is zero for the contrary would yield that
$V$ is equivalent to a subspace of $\calR(0,3)$, $\calR(1,2)$, $\calR(2,1)$ or $\calR(3,0)$.
On the other hand $\dim \Delta <4$ as $\Delta$ does not contain $\begin{bmatrix}
1 & 1 & 1 & 1
\end{bmatrix}^T$ (indeed, every matrix of $V$ is singular).

Assume that $\dim \Delta=3$. Since the intersection of the kernels of the $a_i$ maps
is zero, we can extract a basis of the dual space of $\Delta$ from $(a_1,a_2,a_3,a_4)$.
Then, in the pre-dual basis of $\Delta$, the polynomial mapping $x \mapsto a_1(x)a_2(x)a_3(x)a_4(x)$ has degree less than $3$ in each variable,
and as it vanishes everywhere on $\Delta$ we obtain a contradiction from the fact that the $a_i$'s are all non-zero and that $\# \K>2$.

It follows that
$$\dim \Delta=2.$$
Hence, $\Delta=\underset{k=1}{\overset{4}{\bigcup}} \Ker a_k$, and it follows that
the $2$-dimensional space $\Delta$ is the union of four of its $1$-dimensional linear subspaces.
In turn, this shows that $\# \K=3$ and that the kernels $\Ker a_k$ are exactly the four $1$-dimensional linear subspaces of $\Delta$.

We deduce that, for some $(a,b)\in (\K \setminus \{0\})^2$, either
$$\Delta=\Bigl\{\begin{bmatrix}
x & y & a(x+y) & b (x-y)
\end{bmatrix}^T \mid (x,y)\in \K^2\Bigr\}$$
or
$$\Delta=\Bigl\{\begin{bmatrix}
x & y & a(x-y) & b (x+y)
\end{bmatrix}^T \mid (x,y)\in \K^2\Bigr\}.$$
In any case, we see by multiplying rows by appropriate scalars that $V$ is equivalent to a subspace of $\calU_4(\K)$.
Since $\dim \calU_4(\K)=8$ we conclude that $V$ is equivalent to $\calU_4(\K)$.
\end{proof}

\subsection{Lifting lemmas}\label{lastliftingsection}

\begin{prop}[Lifting lemma 3.1]\label{liftingprop4}
Let $n,p,r$ be positive integers such that $r<\min(n,p)$.
Let $V$ be a rank-$\overline{r}$ linear subspace of $\Mat_{n,p}(\K)$ such that
$\dim V \geq nr-2(n-p+r)+2+\epsilon(\K)$ and $V \subset \calR(1,r)$. \\
Then, either $V$ is $r$-decomposable, or $\# \K=3$ and $V$ is equivalent to $\calU_4(\K)$.
\end{prop}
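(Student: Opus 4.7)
The plan mirrors the proof of Lifting Lemma 2.1 (Proposition \ref{liftingprop2.1}), with two main adjustments: the uses of Flanders's theorem and of the first classification theorem in the inductive step are replaced, respectively, by the first classification theorem and the refined first classification theorem (together with Proposition \ref{n=3r=2} and the appropriate forcing lemmas), and the final conclusion must admit the new $\calU_4(\K)$ exception when $\# \K = 3$.

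Concretely, I would split every matrix $M \in V$ as
$$M=\begin{bmatrix} [?]_{1 \times r} & C(M) \\ B(M) & [0]_{(n-1) \times (p-r)} \end{bmatrix},$$
with $B(M) \in \Mat_{n-1,r}(\K)$ and $C(M) \in \Mat_{1,p-r}(\K)$. Assuming $V$ is not $r$-decomposable, $C$ must be non-zero on $V$ (otherwise $V \subset \calR(0,r)$) and $B(V)$ cannot be $(r-1)$-decomposable (otherwise $V$ would sit inside a rank-$\overline{r}$ compression space). For a fixed non-zero $c \in C(V)$, set $V' := \ker C \cap V$ and $\calT := \{M \in V : C(M) = c\}$. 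The inequality $\rk M \leq r$ combined with $C(M) \neq 0$ forces $\rk B(M) \leq r-1$ for all $M \in \calT$, so $B(\calT)$ is a rank-$\overline{r-1}$ affine subspace of $\Mat_{n-1,r}(\K)$. A routine dimension count, using that the subspace $K := V \cap \ker B \cap \ker C$ has dimension at most $r$ and that $\dim C(V) \leq p-r$, yields $\dim B(\calT) \geq nr - 2n + p - 2r + 2 + \epsilon(\K)$, which is enough to apply the refined first classification theorem to $B(\calT)$ when $\# \K > 2$ (and the first classification theorem when $\# \K = 2$).

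Based on the outcome from the refined first classification theorem and Remark \ref{1strefinedremark}, the subsequent analysis is a case-by-case chase: each possible compression structure on $B(\calT)$ (typically equivalent to a subspace of $\calR(s, r-1-s)$ for $s \in \{0, 1, r-2, r-1\}$, with a few small-$(n,r)$ exceptional types) is lifted, via the appropriate forcing lemma (forcing lemma 4 when $\# \K > 2$, forcing lemma 1 when $\# \K = 2$, or forcing lemmas 2 and 3 in low-rank subcases), to a compression structure on $B(V)$, which in turn forces $V$ into a rank-$\overline{r}$ compression space such as $\calR(1, r-1)$, $\calR(2, r-2)$, $\calR(r, 0)$ or $\calR(r-1, 1)$, contradicting the non-decomposability hypothesis.

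The main obstacle is the exceptional case where $B(\calT)$ is equivalent to $\calU_3(\K)$: this forces $(n, r) = (4, 3)$ and $\# \K = 3$, and I must then prove that $p = 4$ and that $V$ is equivalent to $\calU_4(\K)$. The idea is that the $\calU_3(\K)$ structure of $B(\calT)$ dictates, after suitable equivalence, that the lower $3 \times 3$ block of the matrices in $V$ is upper-triangular with a specific diagonal pattern; together with the one-dimensionality of $C(V)$ and a careful normalization of the first row, this should place $V$ (after equivalence) inside $T_4^+(\K)$ with dimension at least $8$, whereupon Lemma \ref{triang4} identifies $V$ as equivalent to $\calU_4(\K)$. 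A secondary technical difficulty is that when $p = r + 1$ and $\# \K > 2$, the dimension count for forcing lemma 4 is tight by one; that subcase will likely require either a direct ad hoc argument or a more refined exploitation of the explicit structure provided by the refined first classification theorem.
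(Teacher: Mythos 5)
Your proposal follows essentially the same strategy as the paper: the same block decomposition $M \mapsto (B(M), C(M))$, the same dimension count, the same use of the (refined) first classification theorem on $B(\calT)$, lifting back to $B(V)$ via forcing lemmas, and passage through $T_4^+(\K)$ and Lemma \ref{triang4} for the exceptional case. The one genuinely missing idea is the resolution of the "tight" subcase you flag at the end: when $p=r+1$ (and $\#\K>2$), forcing lemma 4 indeed falls short by one, but the paper does not repair this with forcing at all. Instead it observes that when $p=r+1$ the space $C(V)$ is at most one-dimensional, so $V=V'+\K M_1$ and hence $B(\calT)$ \emph{spans} $B(V)$; consequently any compression structure on $B(\calT)$ transfers directly to $B(V)$, no forcing lemma needed. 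This spanning observation also disposes of the $\calR(2,2)$ exceptional type from Remark \ref{1strefinedremark} (which only arises when $n=p=6$, $r=5$, hence again $p=r+1$), a subcase that your "forcing lemmas 2 and 3 in low-rank subcases" parenthetical does not actually reach, since those lemmas concern rank-$\overline{1}$ and rank-$\overline{2}$ spaces and here $B(\calT)$ has upper rank $4$. (The paper treats $\calR(2,2)$ by a separate maximality argument, but the spanning observation would cover it as well.) Finally, for the $\calU_3(\K)$ case your description of "careful normalization of the first row" overcomplicates matters: once $p=r+1=4$, the same spanning observation gives $B(V)\subset T_3^+(\K)$, and a single cyclic column permutation (moving the last column to the front) puts $V$ inside $T_4^+(\K)$ directly.
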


\begin{proof}
We write every matrix $M$ of $V$
as
$$M=\begin{bmatrix}
[?]_{1 \times r} & C(M) \\
B(M) & [0]_{(n-1) \times (p-r)}
\end{bmatrix} \quad \text{with $B(M) \in \Mat_{n-1,r}(\K)$ and $C(M) \in \Mat_{1,p-r}(\K)$.}$$
Throughout the proof, we assume that $V$ is not $r$-decomposable,
and we seek to prove that $\# \K=3$ and that $V$ is equivalent to $\calU_4(\K)$.
In particular, $C(V) \neq \{0\}$ for the contrary would yield $V \subset \calR(0,r)$, and $B(V)$
is not $(r-1)$-decomposable.

Set
$$V':=\bigl\{M \in V : \; C(M)=0\bigr\}.$$
Let $M_1 \in C(V)$ be such that $C(M_1) \neq 0$ (such a matrix exists), and set
$$\calT:=M_1+V'.$$
Note that for all $M \in \calT$ we have $C(M)\neq 0$, whence $\rk B(M)<r$.

We have
$$\dim B(V') \geq \dim V-p
\geq nr-2n-2r+p+2+\epsilon(\K).$$
On the other hand,
$$(n-1)(r-1)-\bigl((n-1)-r+(r-1)\bigr)
=nr-2n-r+3.$$
Thus,
\begin{equation}\label{lift31inequality}
\dim B(\calT) \geq \bigl((n-1)(r-1)-((n-1)-r+(r-1))\bigr)+\bigl(p-r-1+\epsilon(\K)\bigr).
\end{equation}
Thus, the refined first classification theorem applies to $B(\calT)$ if $\# \K>2$, and the first classification theorem
applies to it if $\# \K=2$.

\vskip 3mm
\noindent \textbf{Case 1: $B(\calT)$ is equivalent to a subspace of $\calR(i,r-1-i)$ for some
$i \in \{0,1,r-2,r-1\}$.}

Without loss of generality we can then assume that $B(V')$ is included in $\calR(i,r-1-i)$ for some $i \in \{0,1,r-2,r-1\}$ which we now fix.

\noindent \textbf{Subcase 1.1: $p=r+1$.} \\
Then, $B(\calT)$ spans $B(V)$, and we deduce that $B(V) \subset \calR(i,r-1-i)$, contradicting the fact that
$B(V)$ is not $(r-1)$-decomposable.

\noindent \textbf{Subcase 1.2: $p>r+1$.} \\
Then, we note that $\dim B(\calT) \geq (n-1)(r-1)-\bigl((n-1)-r+(r-1)\bigr)+1+\epsilon(\K)$.
Assume for instance that $B(V') \subset \calR(0,r-1)$.
Then, by forcing lemma 4 if $\# \K>2$, and by forcing lemma 1 otherwise, we learn that $B(\calT) \subset \calR(0,r-1)$.
Varying the matrix $M_1$ we started from yields that $B(V) \subset \calR(0,r-1)$.

Likewise, if $B(V')$ is included in, respectively, $\calR(1,r-2)$, $\calR(r-2,1)$ or $\calR(r-1,0)$,
then one shows that the same holds for $B(V)$.

In any case, we have contradicted the fact that $B(V)$ is not $(r-1)$-decomposable.

\vskip 3mm
\noindent \textbf{Case 2: $r=5$, $\# \K>2$ and $B(\calT)$ is equivalent to $\calR(2,2)$.} \\
Then, $B(\calT)$ is a linear subspace, which yields a matrix $M_2$ such that $B(M_2)=0$
and $C(M_2) \neq 0$. Then, for all $M \in V$, we can find a scalar $\lambda$ such that
$C(M+\lambda M_2) \neq 0$, so that $B(M)=B(M+\lambda M_2)$ has rank less than $r$.
It follows that $\urk B(V) \leq r-1$. Then, as $\calR(2,2)$ is a maximal rank-$\overline{4}$ linear subspace of $\Mat_{n-1,5}(\K)$,
we deduce that $B(V)$ is equivalent to $\calR(2,2)$, contradicting our assumptions.

\vskip 3mm
\noindent \textbf{Case 3: $\# \K=3$ and $B(\calT)$ is equivalent to $\calU_3(\K)$.} \\
Without loss of generality, we can assume that $B(\calT) \subset T_3^+(\K)$.
Then, inequality \eqref{lift31inequality} shows that $p=r+1$, and hence $B(\calT)$ spans $B(V)$.
It follows that $B(V) \subset T_3^+(\K)$. By permuting columns, we deduce that
$V$ is equivalent to a subspace of $T_4^+(\K)$. Then, it follows from
Lemma \ref{triang4} that $V$ is equivalent to $\calU_4(\K)$, which completes the proof.
\end{proof}

\begin{prop}[Lifting lemma 3.2]\label{liftingprop5}
Let $n,p,r$ be positive integers such that $r <\min(n,p)$.
Let $V$ be a rank-$\overline{r}$ linear subspace of $\Mat_{n,p}(\K)$ such that
$\dim V \geq nr-2(n-p+r)+2+\epsilon(\K)$ and $V \subset \calR(2,r-1)$. \\
Then, either $V$ is $r$-decomposable, or
$\# \K=3$ and $V$ is equivalent to $\calU_4(\K)$.
\end{prop}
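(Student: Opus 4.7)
The plan follows the same template as Lifting Lemma 3.1 (Proposition \ref{liftingprop4}), with the refinements required by the shift from $\calR(1,r)$ to $\calR(2,r-1)$ modeled on the transition from Lifting Lemma 2.1 to Lifting Lemma 2.2 (Proposition \ref{liftingprop2.2}). Write every $M \in V$ as
$$M = \begin{bmatrix} A(M) & C(M) \\ B(M) & [0]_{(n-2) \times (p-r+1)} \end{bmatrix}$$
with $A(M) \in \Mat_{2,r-1}(\K)$, $B(M) \in \Mat_{n-2,r-1}(\K)$ and $C(M) \in \Mat_{2,p-r+1}(\K)$. Assuming $V$ is not $r$-decomposable, one verifies exactly as in Lifting Lemma 2.2 that $B(V)$ is not $(r-2)$-decomposable and that $C(V)$ is not $1$-decomposable; since $V$ and hence $C(V)$ are linear, Proposition \ref{affinerank1} forces $\urk C(V) = 2$, the affine $\widetilde{\calU_2}$-exception being ruled out by linearity. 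The special case $r = 2$ is dispatched directly by Lifting Lemma 2.2, whose $\calU_3$ exception is likewise ruled out by linearity.

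Assume henceforth $r \geq 3$. Pick $M_1 \in V$ with $\rk C(M_1) = 2$, let $V' = \{M \in V : C(M) = 0\}$, and set $\calT = M_1 + V'$; every $M \in \calT$ satisfies $\rk B(M) \leq r - 2$. A rank-theorem computation gives
$$\dim B(\calT) = \dim B(V') \geq \dim V - 2p + \codim C(V) \geq nr - 2n - 2r + 2 + \epsilon(\K) + \codim C(V),$$
and comparing with Flanders's upper bound $\dim B(\calT) \leq (n-2)(r-2)$ yields the key constraint $\codim C(V) \leq 2 - \epsilon(\K)$; in particular, $C(V) = \Mat_{2,p-r+1}(\F_2)$ when $\#\K = 2$. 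This lower bound on $\dim B(\calT)$ meets the threshold of the refined first classification theorem (for $\#\K > 2$) or of the first classification theorem (for $\#\K = 2$) applied to the rank-$\overline{r-2}$ affine subspace $B(\calT) \subset \Mat_{n-2,r-1}(\K)$.

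In the generic subcases the classification yields an equivalence of $B(\calT)$ with a subspace of $\calR(i, r-2-i)$ for some $i \in \{0,1,r-3,r-2\}$, or, when $r$ is even, with the middle compression $\calR(\tfrac{r-2}{2},\tfrac{r-2}{2})$. Forcing lemma 4 (for $\#\K > 2$) or forcing lemma 1 (for $\#\K = 2$), applied with parameters $(n-2, r-1, r-2)$, then lifts $B(V') \subset \calR(i, r-2-i)$ to $B(\calT) \subset \calR(i, r-2-i)$, and hence forces $B(M_1) \in \calR(i, r-2-i)$. Since the rank-$2$ matrices of $C(V)$ linearly span $C(V)$ --- by Zariski density when $\#\K > 2$, and by fullness when $\#\K = 2$ --- varying $M_1$ yields $B(V) \subset \calR(i, r-2-i)$, contradicting the non-$(r-2)$-decomposability of $B(V)$.

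The main obstacle lies in the exceptional subcases. When $\#\K = 3$ and $B(\calT)$ is equivalent to $\calU_3(\K)$, the dimensions force $r = 4$ and $n = 5$, which is incompatible with the $\calU_4$-exception of the conclusion ($r = 3$, $n = p = 4$); this subcase must therefore be ruled out by a dedicated forcing argument exploiting the non-$(r-2)$-decomposability of $B(V)$. The genuine $\calU_4(\F_3)$ exception instead arises in the small-dimensional regime $n = p = 4$, $r = 3$, where $B(\calT) \subset \Mat_2(\F_3)$ is a rank-$\overline{1}$ affine subspace falling under Proposition \ref{affinerank1}; there different choices of $M_1$ can produce different compression containers for $B(\calT)$, blocking the immediate contradiction above, and a diagonal/coordinate analysis in the spirit of Lemma \ref{triang4} will then identify $V$ with $\calU_4(\K)$. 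I expect this delicate bookkeeping, together with the rank-$2$ spanning argument over $\F_2$, to require the most care.
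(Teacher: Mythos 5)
Your high-level plan — block decomposition into $B$ and $C$, bounding $\codim C(V)$ via Flanders, classifying $B(\calT)$, forcing, and then spanning $C(V)$ by its rank-$2$ matrices — does match the skeleton of the paper's proof, and your dimensional bookkeeping (e.g.\ $\codim C(V) \le 2 - \epsilon(\K)$) is correct. However there are concrete gaps.

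First, the paper's proof opens with a dedicated step establishing that $\urk B(V) = r-1$ (if $\urk B(V) \le r-2$ one deduces, via the first or refined first classification theorem and a careful parity-of-$\epsilon$ argument, that $B(V)$ is already $(r-2)$-decomposable). This fact is not cosmetic: it is what makes $B(V')$ a \emph{proper} subspace of $B(V)$, and the elimination of $\#\K=2$ hinges on that strictness. Your proposal skips this step entirely.

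Second, your treatment of $\#\K = 2$ is not sound in the boundary case $n = 4$, $p = 4$, $r = 3$. There, $B(\calT)$ is a two-dimensional rank-$\overline{1}$ affine subspace of $\Mat_2(\F_2)$ at the critical Flanders dimension; the Flanders theorem then allows $B(\calT)$ to be equivalent to $\calU_2(\F_2)$, which is not a linear subspace. In that situation neither the first classification theorem nor forcing lemma 1 applies to $B(\calT)$ (both require strictly more dimension than $(n-2)(r-2)$ delivers when $n - 2 = 2$), so the ``fullness plus rank-$2$ spanning'' argument cannot produce $B(V) \subset B(V')$. The paper's Step 2 instead eliminates $\#\K = 2$ \emph{unconditionally} by a different mechanism: since $B(V') \subsetneq B(V)$ (here Step 1 is used), one finds $M_2 \in V$ with $\rk C(M_2) = 1$ and $B(M_2) \notin B(V')$, then invokes Lemma 1.2 of \cite{dSPfullranklines} to obtain a full-rank line $C(M_1) + \K C(M_2)$ in $C(V)$; the affine slice $\calT' := \{M : C(M) \in C(M_1)+\K C(M_2)\}$ then has $\dim B(\calT') > (n-2)(r-2)$, contradicting Flanders. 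This two-step pivot (Step 1, then Step 2) is missing from your plan, and without it the case $\#\K = 2$, $n = 4$ is genuinely open.

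Third, your handling of the small-$n$ residue (once $\#\K > 2$ and the forcing argument has been exhausted) is too vague. The paper shows successively $n \le 5 - \codim C(V)$, then $n \le 4$, then $n = 4$, then (using Theorem 1.3 of \cite{dSPfullranklines}) that $p = r+1$ and $C(V)$ is equivalent to $T_2^+(\K)$; this last reduction puts $V$ inside $\calR(1,r)$ and finishes by applying Lifting Lemma 3.1. Your appeal to ``a diagonal/coordinate analysis in the spirit of Lemma \ref{triang4}'' shortcuts precisely this chain. On the other hand, your worry about the $\calU_3(\K)$ sub-case for $B(\calT)$ is unnecessary: in the regime where the forcing argument runs, the dimension of $B(\calT)$ strictly exceeds the critical threshold of the refined first classification theorem, so the $\calU_3$ exception simply never appears.
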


\begin{proof}
We split every matrix $M$ of $V$
up as
$$M=\begin{bmatrix}
[?]_{2 \times (r-1)} & C(M) \\
B(M) & [0]_{(n-2) \times (p-r+1)}
\end{bmatrix}$$
with $B(M) \in \Mat_{n-2,r-1}(\K)$ and $C(M) \in \Mat_{2,p-r+1}(\K)$. Set
$$V':=\bigl\{M \in V : \; C(M)=0\bigl\}.$$

Throughout the proof, we assume that $V$ is not $r$-decomposable and we aim at proving that $\# \K=3$ and that $V$ is equivalent to $\calU_4(\K)$.

It follows that $C(V)$ is not $1$-decomposable and that $B(V)$ is not $(r-2)$-decomposable.
In particular $\urk C(V)=2$, owing to the classification of vector spaces of matrices with rank at most $1$.
Note that, if $\# \K>2$, this yields that $C(V)$ is spanned by its rank $2$ matrices.

\vskip 3mm
\noindent \textbf{Step 1: $\urk B(V)=r-1$.} \\
Assume on the contrary that $\urk B(V) \leq r-2$.
Then, $r \geq 4$ for the contrary would yield that $B(V)$ is $(r-2)$-decomposable.
It follows that $n \geq 5$.

In the first classification theorem for rank-$\overline{r-2}$ subspaces of $\Mat_{n-2,r-1}(\K)$, the lower bound is
$$(n-2)(r-2)-(n-2-(r-1)+r-2)+2=nr-3n-2r+9.$$
Moreover,
$$\dim B(V) \geq \dim V-2p \geq nr-2n-2r+\epsilon(\K)+2.$$
If $n \geq 7-\epsilon(\K)$ then the first classification theorem would yield that $B(V)$ is equivalent to a subspace of
$\calR(r-2,0)$ or $\calR(0,r-2)$, contradicting an early result.
Thus, $n \leq 6-\epsilon(\K)$. In particular, this shows that $\# \K>2$.
Then, the refined first classification theorem yields that $B(V)$ is $(r-2)$-decomposable (note that
the special case of $\calU_3(\K)$ is discarded as it is not a linear subspace), a contradiction.
This completes our first step.

\vskip 3mm
Let us start from an arbitrary matrix $M_1 \in V$ such that $\rk C(M_1)=2$, and let us consider the
affine space
$$\calT:=\{M \in V : \; C(M)=C(M_1)\}.$$
Note that the translation vector space of $\calT$ is $V'$ and that $\urk B(\calT) \leq r-2$.
In particular, using Step 1,
$$B(V') \subsetneq B(V).$$
On the other hand
$$\dim B(V')=\dim B(\calT) \geq \dim V-2p+\codim C(V)$$
and hence
$$\dim B(V') \geq nr-2n-2r+\epsilon(\K)+2+\codim C(V).$$
Moreover, Flanders's theorem yields
$$\dim B(\calT) \leq (n-2)(r-2).$$

\vskip 3mm
\noindent \textbf{Step 2: $\# \K>2$.} \\
Assume on the contrary that $\# \K=2$. Then, $\epsilon(\K)=2$ and we deduce from the above that $\codim C(V)=0$,
i.e.\ $C(V)=\Mat_{2,p-r+1}(\K)$, and that $\dim B(V')=(n-2)(r-2)$.
Note that $C(V)$ is spanned by its rank $1$ matrices. As $B(V') \subsetneq B(V)$, it follows
that we can find a matrix $M_2 \in V$ such that $\rk C(M_2)=1$ and $B(M_2) \not\in B(V')$.
By \cite[Lemma 1.2]{dSPfullranklines}, we can choose $M_1 \in V$ such that every matrix of $C(M_1)+\K C(M_2)$
has rank $2$. Then, instead of $\calT$ we consider the affine space
$\calT':=\bigl\{M \in V : \; C(M)\in C(M_1)+\K C(M_2)\bigr\}$.
Then, $\urk B(\calT')\leq r-2$. On the other hand the translation vector space of $B(\calT')$ includes $B(V') \oplus \K B(M_2)$,
whence $\dim B(\calT') >(n-2)(r-2)$, contradicting Flanders's theorem.
Therefore, $\# \K>2$, as claimed.

\vskip 3mm
\noindent \textbf{Step 3: $n \leq 5-\codim C(V)$.} \\
Assume on the contrary that $n \geq 6-\codim C(V)$.
Combining this with the above inequalities yields
$$\dim B(V') \geq (n-2)(r-2)-\bigl((n-2)-(r-1)+(r-2)\bigr)+1.$$
Then, we use the forcing method.
The refined first classification theorem applies to $B(\calT)$, and hence
without loss of generality we can assume either that $B(V')$ is included in $\calR(0,r-2)$ or $\calR(r-2,0)$,
or that $B(V')=\calR(1,r-3)$ or $B(V')=\calR(r-3,1)$.
Assume for instance that the first case holds. Then, one deduces from point (a) of forcing lemma 4
(that is Corollary \ref{forcing4}) that $B(\calT)$ is included in $\calR(0,r-2)$ whatever the choice of $M_1$.
As $C(V)$ is spanned by its rank $2$ matrices, it follows that $B(V) \subset \calR(0,r-2)$, contradicting an earlier result.
Similarly, any of the other three cases yields a contradiction, this time by applying one of points (b) to (d) from forcing lemma 4.
Hence, $n \leq 5-\codim C(V)$, as claimed.

\vskip 3mm
\noindent \textbf{Step 4: $n \leq 4$.} \\
Assume on the contrary that $n \geq 5$. Then, by Step 3 we find
$n=5$ and $C(V)=\Mat_{2,p-r+1}(\K)$.
As in Step 2, we can choose $M_2 \in V$ such that $B(M_2) \not\in B(V')$ and $\rk C(M_2)=1$, and then we choose
$M_1 \in V$ such that every matrix of $C(M_1)+\K C(M_2)$ has rank $2$.
Set $\calT':=\bigl\{M \in V : \; C(M)\in C(M_1)+\K C(M_2)\bigr\}$.
Then, $\urk B(\calT') \leq r-2$ and $\dim B(\calT')>(n-1)(r-2)-((n-1)-(r-1)+r-2)$,
whence the refined first classification theorem yields that $B(\calT')$ is $(r-2)$-decomposable.
Hence, every matrix in either $B(\calT')$ or its translation vector space has rank less than or equal to $r-2$, and in particular
$\rk B(M_2) \leq r-2$ and $\urk B(V') \leq r-2$. Then, for all $M \in V$ such that $\rk C(M) \leq 1$, either $B(M) \in B(V')$ and hence
$\rk B(M) \leq r-2$, or $B(M) \not\in B(V')$ and hence taking $M_2:=M$ in the above yields
$\rk B(M) \leq r-2$. Finally, if $\rk C(M)=2$ then we readily find $\rk B(M) \leq r-2$.
Therefore, $\urk B(V) \leq r-2$, contradicting Step 1.

\vskip 3mm
\noindent \textbf{Step 5: $n=4$.} \\
Assume on the contrary that $n\leq 3$. Note that $r \leq 1$ would lead to $V$ being $r$-decomposable,
by Proposition \ref{affinerank1}. Hence, $r=2$.
Thus, $B(M)=0$ for all $M \in V$ such that $\rk C(M)=2$. As $C(V)$ is spanned by its rank $2$ matrices, this leads to $B(V)=\{0\}$,
 contradicting Step 1. Therefore, $n=4$.

\vskip 2mm
Note, as $n=4$, that $\codim C(V) \leq 1$.

\vskip 3mm
\noindent \textbf{Step 6: $p=r+1$ and $C(V)$ is equivalent to $T_2^+(\K)$.} \\
Assume that for every rank $1$ matrix $N \in C(V)$, there exists a matrix $N' \in C(V)$ such that every matrix of
$N'+\K N$ has rank $2$. Let $N \in C(V)$ be of rank $1$ (note that such a matrix exists since $\codim C(V) \leq 1$).
Then, we choose a matrix $N' \in C(V)$ such that every matrix of
$N'+\K N$ has rank $2$. The affine space $\calT':=\bigl\{M \in V : \;
C(M) \in N'+\K N\bigr\}$ satisfies $\urk B(\calT') \leq r-2$.
As $r-2 \leq 1$, we deduce that every matrix in the translation vector space of $B(\calT')$ has rank at most $r-2$
(this is obvious if $r-2=0$, otherwise one can use the classification of affine spaces of matrices with upper-rank at most $1$).
Then, $\rk B(M) \leq r-2$ for every $M \in V$ such that $C(M) \in \K N$. Hence $\rk B(M) \leq r-2$ for every $M \in V$ such that $\rk C(M) \leq 1$. Yet,
as this was known to hold for all the other matrices of $V$, we conclude that $\urk B(V) \leq r-2$, contradicting
Step 1.

Thus, it is not true that for every rank $1$ matrix $N \in C(V)$, there exists a matrix $N' \in C(V)$ such that every matrix of
$N'+\K N$ has rank $2$. By \cite[Lemma 1.2]{dSPfullranklines} this requires that $\codim C(V)=1$, i.e.\ $C(V)$ is a linear hyperplane of $\Mat_{2,p-r-1}(\K)$. Moreover, Theorem 1.3 of \cite{dSPfullranklines} applied to $C(V)^T$ then shows that $p-r+1=2$, that is $p=r+1$.

Thus, $C(V)$ is a linear hyperplane of $\Mat_2(\K)$. Then, by replacing $C(V)$ with an equivalent subspace,
we are reduced to two cases: either $C(V)$ is equivalent to $\Mats_2(\K)$ (the space of all $2$ by $2$ symmetric matrices)
or it is equivalent to $T_2^+(\K)$. Yet, for every rank $1$ matrix $N$ of
$\Mats_2(\K)$, there exists a rank $2$ matrix $N' \in \Mats_2(\K)$ such that every matrix of $N'+\K N$ has rank $2$:
indeed, we can assume that $N=a E_{1,1}$ for some $a \in \K \setminus \{0\}$ (as $N$ must be congruent to such a matrix),
in which case it suffices to take $N':=E_{2,1}+E_{1,2}$. Hence, $C(V)$ is equivalent to $T_2^+(\K)$, as claimed.

\vskip 3mm
\noindent \textbf{Step 7: $r=3$, $\# \K=3$ and $V$ is equivalent to $\calU_4(\K)$.} \\
As $C(V)$ is equivalent to $T_2^+(\K)$, the space $V$ is equivalent to a subspace of $\calR(1,r)$,
and hence lifting lemma 3.1 yields that $\# \K=3$ and $V$ is equivalent to $\calU_4(\K)$.
\end{proof}

\begin{Rem}\label{liftingremark3}
In lifting lemmas 3.1 and 3.2, we did not assume that $n \geq p$.
In some instances, we shall need to apply these results to situations where
$p$ and $n$ are reversed. In this prospect, it is useful to note that
$nr-2(n-p+r) \geq pr-2(p-n+r)$ if and only if $(n-p)(r-4) \geq 0$.
Moreover, $\calU_4(\K)$ is easily seen to be equivalent to its transpose:
indeed, by setting
$$K:=\begin{bmatrix}
0 & 0 & 0 & 1 \\
0 & 0 & 1 & 0 \\
0 & 1 & 0 & 0 \\
1 & 0 & 0 & 0
\end{bmatrix} \quad \text{and} \quad D:=\begin{bmatrix}
-1 & 0 & 0 & 0 \\
0 & 1 & 0 & 0 \\
0 & 0 & -1 & 0 \\
0 & 0 & 0 & 1
\end{bmatrix},$$
one checks that $K \calU_4(\K)^T KD=\calU_4(\K)$.
\end{Rem}

\begin{prop}[Lifting lemma 3.3]\label{liftingprop6}
Let $n,p,r$ be positive integers such that $2 \leq r<p \leq n$. Assume furthermore
that $n \geq 6$ if $r\geq 4$.
Let $V$ be a rank-$\overline{r}$ linear subspace of $\Mat_{n,p}(\K)$ such that
$\dim V \geq nr-2(n-p+r)+2+\epsilon(\K)$ and $V \subset \calR(3,r-2)$. \\
Then, either $V$ is $r$-decomposable, or $\# \K=3$ and $V$ is equivalent to $\calU_4(\K)$.
\end{prop}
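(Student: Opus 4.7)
The plan is to mimic the pattern of lifting lemmas 3.1 and 3.2. I split every matrix $M \in V$ as
$$M = \begin{bmatrix} [?]_{3 \times (r-2)} & C(M) \\ B(M) & [0]_{(n-3) \times (p-r+2)} \end{bmatrix}$$
with $B(M) \in \Mat_{n-3,r-2}(\K)$ and $C(M) \in \Mat_{3,p-r+2}(\K)$, and set $V' := \{M \in V : C(M) = 0\}$. Throughout, I assume $V$ is not $r$-decomposable and try to derive that $\# \K = 3$ and $V$ is equivalent to $\calU_4(\K)$. The non-decomposability of $V$ forces $C(V)$ to be non-$2$-decomposable inside $\Mat_{3,p-r+2}(\K)$ (otherwise $V$ would sit inside some $\calR(s,t)$ with $s+t=r$) and $B(V)$ to be non-$(r-3)$-decomposable inside $\Mat_{n-3,r-2}(\K)$.

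The first major step is to prove that $C(V)$ contains a rank-$3$ matrix: if $\urk C(V) \leq 2$, then either the refined first classification theorem applied to $C(V)$ or (when $p=r+1$) the known classification of rank-$\overline{2}$ subspaces of $3$-rowed matrices should force $C(V)$ to be $2$-decomposable, with the sporadic alternatives ($\calU_3(\K)$, $\Mata_3(\K)$, $\calR(1,1)$) disposed of by dimension counting against the hypothesis on $\dim V$. The second major step is to establish $\urk B(V) = r-2$: if $\urk B(V) \leq r-3$, then $B(V)$ is a rank-$\overline{r-3}$ linear subspace of $\Mat_{n-3,r-2}(\K)$ of dimension at least $\dim V - 3(p-r+2)$, which (using the hypothesis $n \geq 6$ when $r \geq 4$, together with $\epsilon(\K)$) puts us within the range of applicability of the refined first classification theorem; the outcome would make $B(V)$ either $(r-3)$-decomposable (contradiction) or equivalent to one of the sporadic $\calU_3$ or $\calR(i,r-3-i)$ sporadics, each of which is eliminated by dimension against the non-decomposability of $B(V)$.

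Having secured a matrix $M_1 \in V$ with $\rk C(M_1)=3$, I then slice $\calT:=\{M \in V : C(M)=C(M_1)\}$; the extraction lemma gives $\urk B(\calT) \leq r-3$, and Flanders combined with the count $\dim B(\calT) \geq \dim V - 3(p-r+2) + \codim C(V)$ constrains the triple $(n,p,r,\codim C(V))$. In the generic range, the refined first classification theorem (or the first classification theorem when $\# \K=2$) applies to $B(\calT)$, and forcing lemma 4 (Corollary \ref{forcing4}) for $\# \K>2$ or forcing lemma 1 (Corollary \ref{forcing1}) for $\# \K=2$ lifts any structural outcome ``$B(V') \subset \calR(i, r-3-i)$'' from $B(\calT)$ to every $B(M)$ with $\rk C(M)=3$; since $C(V)$ is spanned as an affine space by its rank-$3$ matrices in the generic case, this propagates to $B(V) \subset \calR(i, r-3-i)$, contradicting non-$(r-3)$-decomposability.

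The hard part, and the main obstacle, is the battery of boundary situations: small $n$, $p = r+1$, $\codim C(V) \geq 1$, and the sporadic appearance of $\calR(2,2)$ or $\calU_3(\K)$ at the $B(\calT)$-level. By analogy with Step 6 of the proof of lifting lemma 3.2, one constrains $C(V)$ via the theory of rank-bounded spaces of $3$-rowed matrices of small width (invoking \cite{dSPfullranklines} as needed) to force $C(V)$ to carry a distinguished row of zeros (up to equivalence) on all of $V$, so that $V$ becomes equivalent to a subspace of $\calR(2,r-1)$, at which point lifting lemma 3.2 (Proposition \ref{liftingprop5}) closes the argument and produces the $\calU_4(\K)$ exception in the advertised way. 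The appearance of $\calU_3$ or $\Mata_3$ exceptions inside $C(V)$, and the matching dimension threshold $n \geq 6$ when $r \geq 4$, is what I expect to require the most delicate case analysis, since in those exceptional configurations neither the refined first classification theorem nor the forcing lemmas apply off the shelf and one must argue directly from the maximality of $\calR(3,r-2)$ and a row-by-row (or column-by-column) extraction argument.
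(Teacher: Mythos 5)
Your overall frame is correct and matches the paper: split $M$ into the $B$- and $C$-blocks, fix $M_1$ with $\rk C(M_1)=3$, consider the affine slice $\calT$, combine $\dim B(\calT)\leq(n-3)(r-3)$ from Flanders with the dimension count, then feed $B(\calT)$ through the refined first classification theorem and lift via forcing lemma 4 (or the first classification theorem plus forcing lemma 1 for $\F_2$). Step 1 ($\urk C(V)=3$ via the classification of rank-$\overline{2}$ spaces) is also right. However, several substantial steps in your proposal do not work as stated.

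Your ``second major step'' ($\urk B(V)=r-2$) is not part of the paper's proof and the argument you sketch for it fails. You propose to view $B(V)$ as a rank-$\overline{r-3}$ linear subspace of $\Mat_{n-3,r-2}(\K)$ and apply the refined first classification theorem directly. Even with the correct lower bound $\dim B(V)\geq\dim V-3p$ (your $\dim V-3(p-r+2)$ drops the $3(r-2)$ degrees of freedom in the upper-left corner block of $\calR(3,r-2)$), a short computation shows the required threshold reduces to $2n-p+r+\epsilon(\K)\geq 11$, which fails for many admissible triples, e.g.\ $n=p=6$, $r=4$, $\#\K>2$. So the refined first classification theorem simply doesn't apply here and the purported contradiction doesn't follow. (The paper never needs $\urk B(V)=r-2$; in fact, in its ultimate configuration it proves $\urk B(V)\leq 2$ while $r=5$, so $\urk B(V)=r-2$ is exactly what is \emph{refuted}.)

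Your treatment of the boundary cases does not match the paper and, more importantly, is not actually carried out. You conjecture that one can ``force $C(V)$ to carry a distinguished row of zeros'' so that $V$ lands in $\calR(2,r-1)$, and then invoke lifting lemma 3.2. The paper's actual route through the boundary is quite different and considerably more intricate: after $\urk C(V)=3$, it establishes (a) that $C(V)$ is spanned by its rank-$3$ matrices, forcing $n\geq 5$ (resp.\ $n\geq 6$ over $\F_2$) and handling $n\leq 5$ directly; (b) that $r\geq 5$, by disposing of $r=3$ via the spanning property and $r=4$ via lifting lemma 3.2 applied to $V^T$; (c) a sharp inequality $n+r+(n-p)+\codim C(V)+\epsilon(\K)/2\leq 11$, which pins down $\#\K>2$, $n=p=6$, $r=5$, $C(V)=\Mat_3(\K)$; (d) by transposition (since $n=p$), $B(V)=\Mat_3(\K)$; and finally (e) two forcing-type arguments relying on \cite[Lemma 1.2]{dSPfullranklines} to conclude $\urk B(V)\leq 2$, contradicting (d). None of this coincides with ``reduce to $\calR(2,r-1)$.''

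One smaller point: you invoke the extraction lemma to get $\urk B(\calT)\leq r-3$, but that lemma requires $\#\K>q=3$ and is not applicable over $\F_2$ or $\F_3$. The correct justification is elementary: given the block structure with $\rk C(M_1)=3$ and a full-rank $3$-row block $C(M_1)$, the row space of any $M\in\calT$ contains independent copies of the top rows and the $B$-rows, forcing $\rk M\geq 3+\rk B(M)$, hence $\rk B(M)\leq r-3$.
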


\begin{proof}
Throughout the proof, we assume that the stated conclusion does not hold, and we
seek to find a contradiction.
We split every matrix $M$ of $V$ up as
$$M=\begin{bmatrix}
[?]_{3 \times (r-2)} & C(M) \\
B(M) & [0]_{(n-3) \times (p-r+2)}
\end{bmatrix}$$
with $B(M) \in \Mat_{n-3,r-2}(\K)$ and $C(M) \in \Mat_{3,p-r+2}(\K)$.
Set
$$V':=\bigl\{M \in V : \; C(M)=0\bigr\}.$$
As $V$ is not $r$-decomposable, we note that
$C(V)$ is not $2$-decomposable and that, if $r \geq 3$, the space
$B(V)$ is not $(r-3)$-decomposable.

\vskip 3mm
\noindent \textbf{Step 1: $\urk C(V)=3$.} \\
Assume on the contrary that $\urk C(V) \leq 2$.
Note that
$$\dim C(V) \geq \dim V-n(r-2) \geq 2(p-r)+2+\epsilon(\K) \geq 4+\epsilon(\K).$$
Then, by the classification of rank-$\overline{2}$ vector spaces (see Section 4 of \cite{AtkinsonPrim} for fields
with more than $2$ elements, and \cite{dSPprimitiveF2} for fields with two elements),
we find that $C(V)$ is $2$-decomposable, contradicting an earlier result.
Therefore, $\urk C(V)=3$.

In particular, it follows that $r \geq 3$ and $n \geq 4$.

\vskip 3mm
In the rest of the proof, we fix a matrix $M_1 \in V$ such that $\rk C(M_1)=3$, and we consider the affine subspace
$$\calT:=\bigl\{M \in V : \; C(M)=C(M_1)\bigr\},$$
whose translation vector space equals $V'$.
Note that $\urk B(\calT) \leq r-3$ and that $B(V')$ is the translation vector space of $B(\calT)$.

\vskip 3mm
\noindent \textbf{Step 2: $C(V)$ is spanned by its rank $3$ matrices.} \\
Assume that the contrary holds. Then, we have an affine hyperplane $\calH$ of $C(V)$ that does not go through
zero and which contains only matrices with rank less than $3$. By Flanders's theorem, we deduce that
$\codim C(V) \geq p-(r-2)$, unless $\# \K=2$ in which case we can only assert that
$\codim C(V) \geq (p-(r-2))-1$.
On the other hand, Flanders's theorem applied to $B(\calT)$ yields
$$\dim B(V')=\dim B(\calT) \leq (n-3)(r-3).$$
Yet, the rank theorem yields
$$\dim V \leq \dim B(V')+3p-\codim C(V).$$
As $\dim V \geq nr-2(n-p+r)+2+\epsilon(\K)$, we deduce that
$$\begin{cases}
n \leq 5 & \text{if $\# \K>2$} \\
n \leq 4 & \text{if $\# \K=2$.}
\end{cases}$$
If $n \leq 4$, then $n=p=4$ and $r=3$. In that case, as $n=p$ we see that $V^T$ satisfies the assumptions
of lifting lemma 3.1, and we conclude that $\# \K=3$ and that $V^T$
is equivalent to $\calU_4(\K)$, whence $V$ is equivalent to $\calU_4(\K)$ (see Remark \ref{liftingremark3}), contradicting our assumptions.
Thus, $n=5$ and $\# \K>2$. From our basic assumptions, we deduce that $r=3$.
Going back to the above line of reasoning we find that
$\codim \calH=p-(r-2)+1$. Then, by the refined first classification theorem,
$\calH$ must be $2$-decomposable, and hence
so is $C(V)=\Vect(\calH)$, which contradicts our assumptions.
Therefore, $C(V)$ is spanned by its rank $3$ matrices.

\vskip 3mm
\noindent \textbf{Step 3: $r \geq 5$.} \\
Assume first that $r=3$. Then, $B(M)=0$ for every $M \in V$ such that $C(M)$ has rank $3$.
As $C(V)$ is spanned by its rank $3$ matrices, it follows that $B(V)=\{0\}$, contradicting the fact that
$B(V)$ is not $(r-3)$-decomposable.
Hence, $r \geq 4$. If $r=4$, then we note that $V^T$ satisfies the assumptions of lifting lemma 3.2
(see Remark \ref{liftingremark3}), and we obtain a contradiction just like in Step 2. Hence, $r \geq 5$.

In particular, it follows that $n \geq 6$.

\vskip 3mm
Note that, since $\dim B(\calT) \geq \dim V-3p+\codim C(V)$, we have
\begin{multline*}
\dim B(\calT)-\Bigl((n-3)(r-3)-\bigl((n-3)-(r-2)+(r-3)\bigr)+1+\frac{\epsilon(\K)}{2}\Bigr) \\
\geq n+r+(n-p)-12+\frac{\epsilon(\K)}{2}+\codim C(V).
\end{multline*}

\vskip 3mm
\noindent \textbf{Step 4: $n+r+(n-p)+\codim C(V)+\frac{\epsilon(\K)}{2} \leq 11$.} \\
Assume on the contrary that $n+r+(n-p)+\codim C(V)+\frac{\epsilon(\K)}{2} \geq 12$. \\
Then, we can apply the refined first classification theorem to $B(\calT)$ if $\# \K>2$,
and the first classification theorem to $B(\calT)$ is $\# \K=2$ (note that in the former case the
exceptional situation of $\calU_3(\K)$ is avoided because the dimension of $B(\calT)$ is greater than the lower bound from the
refined first classification theorem).
Then, no generality is lost in assuming that $B(V')$ is included in either one of $\calR(0,r-3)$, $\calR(1,r-4)$, $\calR(r-4,1)$ or
$\calR(r-3,0)$ (and in the second and third cases, that $B(V')$ equals the given compression space).
By applying either forcing lemma 4 if $\# \K>2$, or forcing lemma 1 if $\# \K=2$, we see that if $B(V') \subset \calR(0,r-3)$
then $B(M) \in \calR(0,r-3)$ for all $M \in V$ such that $\rk C(M)=3$; then, as $C(V)$ is spanned by its rank $3$ matrices
we deduce that $B(V) \subset \calR(0,r-3)$. With the same line of reasoning, we obtain in any case that $B(V)$ is $(r-3)$-decomposable, contradicting an earlier result. Hence, $n+r+(n-p)+\codim C(V)+\frac{\epsilon(\K)}{2} \leq 11$.

\vskip 3mm
\noindent \textbf{Step 5: $\# \K>2$, $n=p=6$, $r=5$ and $C(V)=\Mat_3(\K)$.} \\
Note that $n \geq 6$, $r \geq 5$, $\codim C(V) \geq 0$, $n-p \geq 0$ and $\frac{\epsilon(\K)}{2}\geq 0$.
By Step 4, all those inequalities turn out to be equalities, which yields
the claimed result.

\vskip 3mm
\noindent \textbf{Step 6: $B(V)=\Mat_3(\K)$.} \\
As $n=p$, both the assumptions and the conclusion of the lemma we are trying to prove are invariant under transposing $V$.
Thus, applying Step 5 to $V^T$ yields the claimed result.

\vskip 3mm
\noindent \textbf{Step 7: $\urk B(V') \leq 2$.} \\
Note that $B(V') \subsetneq \Mat_3(\K)=B(V)$
as $B(V')$ is the translation vector space of a rank-$\overline{2}$ affine subspace of $\Mat_3(\K)$.
As $C(V)=\Mat_3(\K)$ is spanned by its rank $1$ matrices, we can choose a matrix $M_2 \in V$ such that
$\rk C(M_2)=1$ and $B(M_2) \not\in B(V')$. By \cite[Lemma 1.2]{dSPfullranklines} we can choose
$M'_1 \in V$ such that every matrix of $C(M'_1)+\K C(M_2)$ has rank $3$.
Then, we set
$$\calT':=\bigl\{M \in V : \; C(M) \in C(M'_1)+\K C(M_2)\bigr\},$$
which is an affine subspace of $V$ such that the translation vector space of $B(\calT')$ includes $B(V')$ as a \emph{proper} subspace.
Hence,
$$\dim B(\calT') \geq \dim B(V')+1 \geq (n-3)(r-3)-((n-3)-(r-2)+(r-3))+1.$$
Using the refined first classification theorem, we find that $B(\calT')$ is $2$-decomposable,
and hence $B(V')$ is $2$-decomposable.
The claimed result follows.

\vskip 3mm
\noindent \textbf{Step 8: $\urk B(V) \leq 2$.} \\
Let $M_2 \in V$. If $B(M_2) \in B(V')$ or $\rk C(M_2)=3$, then we already know that $\rk B(M_2) \leq 2$.
Assume now that $\rk C(M_2) \leq 2$ and $B(M_2) \not\in B(V')$ (so that $C(M_2) \neq 0$).
As in the previous step, \cite[Lemma 1.2]{dSPfullranklines} shows that we can
choose $M'_1 \in V$ such that every matrix of $C(M'_1)+\K C(M_2)$ has rank $3$. Then, we
consider $\calT':=\bigl\{M \in V : \; C(M) \in C(M'_1)+\K C(M_2)\bigr\}$, and we apply the refined first classification theorem to $B(\calT')$.
It follows that the translation vector space of $B(\calT')$, which contains $B(M_2)$, is $2$-decomposable.
Therefore, $\rk B(M_2) \leq 2$, which proves our claim.

\vskip 3mm
Obviously, the result of the last step contradicts the one of Step 6, which completes the proof.
\end{proof}

\subsection{Completing the special case when $n=4$, $r=3$ and $\# \K>2$}\label{4by4class}

\begin{prop}\label{4by4r=3}
Let $V$ be a rank-$\overline{3}$ linear subspace of $\Mat_4(\K)$. Assume that $\# \K>2$ and $\dim V \geq 8$.
Then, either $V$ is $3$-decomposable, or $\# \K=3$ and
$V$ is equivalent to $\calU_4(\K)$.
\end{prop}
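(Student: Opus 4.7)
The plan is to apply Lemma \ref{3rdkey} to $V$ with $n=p=4$ and $r=3$: since $r$ is odd, the $\calR(r/2,r/2)$ alternative is precluded, so either there is a linear hyperplane $H$ of $\K^4$ with $\dim V_H \leq 1$, or a $1$-dimensional linear subspace $D$ of $\K^4$ with $\dim V^D \leq 1$. Both the hypotheses on $V$ and the two possible conclusions are invariant under transposition (using $\calU_4(\K)^T \equiv \calU_4(\K)$ from Remark \ref{liftingremark3}), so I would assume the former. The argument then splits according to whether $\dim V_H = 1$ (ERC case) or $\dim V_H = 0$ (EC case).

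In the ERC case, after reducing to $V_H = \K E_{1,1}$ with $H = \{0\} \times \K^3$, the extraction lemma shows that the bottom-right $3 \times 3$ block map $P : V \to \Mat_3(\K)$ has upper-rank at most $2$; combined with the rank-nullity theorem and the constraint $\dim V_H = 1$, this yields $\dim P(V) \geq 4$. Proposition \ref{n=3r=2} then produces four outcomes. If $P(V)$ is equivalent to a subspace of $\calR(0,2)$, $\calR(2,0)$, or $\calR(1,1)$ in $\Mat_3(\K)$, then $V$ is equivalent to a subspace of $\calR(1,3)$, $\calR(3,1)$, or $\calR(2,2)$ in $\Mat_4(\K)$, and lifting lemmas 3.1, 3.3, and 3.2 (Propositions \ref{liftingprop4}, \ref{liftingprop6}, \ref{liftingprop5}) respectively conclude. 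In the exceptional outcome $\# \K = 3$ and $P(V) \equiv \calU_3(\K)$, I would aim to show, via a column permutation argument analogous to the one used in Case $3$ of the proof of lifting lemma 3.1, that $V$ is equivalent to a subspace of $T_4^+(\K)$, whereupon Lemma \ref{triang4} finishes.

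In the EC case, the map $J : V \to \Mat_{4,3}(\K)$ dropping column $1$ is injective, producing a rank-$\overline{3}$ linear subspace $J(V)$ of dimension at least $8$ and a linear map $f : J(V) \to \K^4$ whose graph (first column $f(N)$, last three columns $N$) reconstitutes $V$. If $\urk J(V) \leq 2$, then Flanders's theorem, saturated at $\dim = 8$ in $\Mat_{4,3}(\K)$, forces $J(V) \equiv \calR(0,2)$, which translates to $V$ having a common null vector and hence $V$ being equivalent to a subspace of $\calR(0,3)$ in $\Mat_4(\K)$. Otherwise $\urk J(V) = 3$, and the rank constraint $\rk \begin{bmatrix} f(N) & N \end{bmatrix} \leq 3$ automatically gives $f(N) \in \im N$ whenever $\rk N = 3$. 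I would then show, via a bad-linear-form dichotomy patterned on the proof of Proposition \ref{speciallifting2}, that $f$ is quasi-range-compatible, and apply Theorem \ref{RCtheo3} at the boundary $\codim J(V) \leq 4 = 2n - 4 - \epsilon(\K)$: if $f$ is local, then $V$ reduces to a subspace of $\calR(0,3)$; if $f$ has the two-dimensional form, then a column operation brings $V$ inside a subspace of $\calR(2,2)$, where lifting lemma 3.2 applies.

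The two main obstacles I anticipate are: first, the passage from $P(V) \equiv \calU_3(\K)$ to $V \subset T_4^+(\K)$ (up to equivalence) in the ERC case, which requires exploiting both the triangular structure of $P(V)$ and the rigidity imposed by $V_H = \K E_{1,1}$, together with a column permutation that turns the bottom-right triangular block into a globally triangular one; and second, establishing quasi-range-compatibility of $f$ in the EC case when $\urk J(V) = 3$, since range-compatibility on rank-$3$ elements alone is insufficient and one must identify the candidate $1$-dimensional subspace $D$ through a dualized argument on the bad linear forms of $f$.
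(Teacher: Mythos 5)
Your plan takes a genuinely different route from the paper's. The paper proves Proposition \ref{4by4r=3} by a direct, hands-on argument: it first shows (Steps 1 and 2 of its proof) that $V$ must contain a rank $1$ matrix, then normalizes so that $E_{1,1} \in V$, extracts the lower-right $3 \times 3$ block space $A(V)$, shows $A(V)$ is equivalent to $\Mata_3(\K)$ using the classification of rank-$\overline{2}$ linear spaces, and reaches a contradiction from the abundance of rank $1$ matrices in $V$. You instead propose to run the ERC/EC template via Lemma \ref{3rdkey}, which is precisely the strategy of the main induction in Section \ref{wrapupsecond} --- and that strategy has a blind spot at $(n,p,r)=(4,4,3)$ with $\# \K > 2$, which is exactly why the paper introduces Proposition \ref{4by4r=3} as a separate patch.

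Your ERC branch is essentially sound. One small observation: the exceptional outcome $P(V) \equiv \calU_3(\K)$ is vacuous here, since $P(V)$ is the image of the linear space $V$ under a linear map and so is a linear subspace, whereas $\calU_3(\K)$ does not contain $0$ and equivalence preserves containing $0$; so $P(V)$ must be $2$-decomposable and the lifting lemmas conclude directly.

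The gap is in your EC branch when $\urk J(V) = 3$ and $f$ is not quasi-range-compatible. Here $\calW := J(V)$ has $\codim \calW \leq 12 - 8 = 4$, but the bad-linear-form machinery you want to import from Propositions \ref{speciallifting2} and \ref{specialliftingprop3} requires either $\codim \calW \leq n-1 = 3$ (special lifting lemma 2) or $n \geq 5$ (special lifting lemma 3, when $\# \K > 2$); neither holds. The concrete failure is in the analogue of Claim \ref{claim8}: one considers the affine hyperplane $\calU = g^{-1}\{a\}$ of $W_\varphi$, a rank-$\overline{2}$ affine subspace of $\Mat_3(\K)$, and with your bound one only gets $\dim \calU \geq 8 - (r+1) = 4$. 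The refined first classification theorem then allows $\calU \equiv \calU_3(\K)$ when $\# \K = 3$; since $\Vect(\calU_3(\K))$ contains invertible matrices, one cannot conclude that $\urk W_\varphi \leq 2$, and the subsequent classification of the spaces $W_\varphi$ (the analogue of Claim \ref{claim9}) collapses. Proposition \ref{speciallifting2} excludes this because there $\dim \calU \geq 5 > \dim \calU_3(\K)$, and Proposition \ref{specialliftingprop3} excludes it because there $n-1 \geq 4$; your situation satisfies neither. You would need either to rule this sub-case out by hand or to treat it separately, and your proposal does not address it. This is the obstruction the paper's direct argument is designed to avoid.
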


\begin{proof}
If $V$ is equivalent to a subspace of $\calR(i,4-i)$ for some $i \in \{1,2,3\}$, then
the conclusion follows from one of lifting lemmas 3.1, 3.2 and 3.3.
In the rest of the proof, we assume that $V$ is not equivalent to a subspace of $\calR(i,4-i)$ for some $i \in \{1,2,3\}$.

\vskip 3mm
\noindent \textbf{Step 1: $V$ contains a matrix with rank $1$ or $2$.} \\
Assume on the contrary that all the non-zero matrices of $V$ have rank $3$.
Then, for all non-collinear vectors $x$ and $y$ in $\K^4$, the linear mapping
$\varphi_{x,y}: M \in V \mapsto (Mx,My) \in (\K^4)^2$ is injective, and as $\dim V \geq 8$ it must be an isomorphism.
Denote by $(e_1,e_2,e_3,e_4)$ the standard basis of $\K^4$. Using the surjectivity of $\varphi_{e_1,e_2}$, we
find some $M$ in $V$ with first column $e_1$.
Denote by $\calT$ the affine subspace of $V$ consisting of the matrices $M \in V$ such that $M e_1=e_1$.
Every matrix $M$ in $\calT$ splits as
$$M=\begin{bmatrix}
1 & [?]_{1 \times 3} \\
[0]_{3 \times 1} & P(M)
\end{bmatrix} \quad \text{with $P(M) \in \Mat_3(\K)$.}$$
Then, $P(\calT)$ is a rank-$\overline{2}$ affine subspace of $\Mat_3(\K)$.
However, $\dim P(\calT)=\dim \calT = 4$ since $V$ contains no rank $1$ matrix.
By Proposition \ref{n=3r=2}, $P(\calT)$ is $2$-decomposable, or it is equivalent to
$\calU_3(\K)$ and $\# \K=3$. In any case, $\dim P(\calT)z\leq 2$ for some non-zero vector $z \in \K^3$, to the effect that
there is a vector $y \in \K^4 \setminus \K e_1$ such that $\calT y \subsetneq \K^4$.
Thus, $\varphi_{e_1,y}$ is non-surjective, contradicting an earlier result.
Hence, $V$ contains a matrix with rank $1$ or $2$.

\vskip 3mm
\noindent \textbf{Step 2: $V$ contains a rank $1$ matrix.} \\
Assume that the contrary holds. Then, we can choose $M_0 \in V$ with rank $2$.
Without loss of generality, we can assume that
$$M_0=\begin{bmatrix}
[0]_{2 \times 2} & I_2 \\
[0]_{2 \times 2} & [0]_{2 \times 2}
\end{bmatrix}.$$
As $\# \K>2$, we get from the extraction lemma (see Corollary \ref{extractioncor}) that any $M \in V$ splits as
$$M=\begin{bmatrix}
[?]_{2 \times 2} & [?]_{2 \times 2}  \\
J(M) & [?]_{2 \times 2}
\end{bmatrix}$$
for some matrix $J(M) \in \Mat_2(\K)$ such that $\rk J(M) \leq 1$.
Thus, $J(V)$ is a rank-$\overline{1}$ linear subspace of $\Mat_2(\K)$, and hence
it is equivalent to a subspace of $\calR(0,1)$ or of $\calR(1,0)$.
If the second case holds, we see that the first case applies to $K V^T K$, where
$K:=\begin{bmatrix}
0 & 0 & 0 & 1 \\
0 & 0 & 1 & 0 \\
0 & 1 & 0 & 0 \\
1 & 0 & 0 & 0
\end{bmatrix}$. Thus, no generality is lost in assuming that $J(V)$ is equivalent to a subspace of $\calR(0,1)$,
and using column operations we can further reduce the situation to the one where the first column of every matrix of $J(V)$ equals zero.
As $V$ is not equivalent to a subspace of $\calR(1,3)$, we know that $\dim(Vx) \geq 2$ for all $x \in \K^4 \setminus \{0\}$.
Hence, we can find $M \in V$ such that $Me_1=e_1$.
Then, just like in Step 1 we consider the affine subspace $\calT:=\{M \in V : \; Me_1=e_1\}$ and its projection $P(\calT)$
onto the lower-right $3 \times 3$ block. This time around, we obtain that $\dim P(\calT) \geq 6$ since $\dim V e_1 \leq 2$.
Thus, by Flanders's theorem $P(\calT)$ is a \emph{linear} subspace
of $\Mat_3(\K)$. As $P(\calT)$ contains the zero matrix, there is a rank $1$ matrix in $\calT$.
This proves the claimed statement.

\vskip 3mm
Now, by Step 2, no generality is lost in assuming that $V$ contains $E_{1,1}$.
Then, by the extraction lemma, every matrix $M$ of $V$ splits as
$$M=\begin{bmatrix}
? & [?]_{1 \times 3} \\
[?]_{3 \times 1} & A(M)
\end{bmatrix}$$
for some $A(M) \in \Mat_3(\K)$ such that $\rk A(M) \leq 2$.

\vskip 3mm
\noindent \textbf{Step 3: $A(V)$ is equivalent to $\Mata_3(\K)$.} \\
Let us apply the classification of rank-$\overline{2}$ vector spaces to $A(V)$ (see the first paragraph from Section 4 of \cite{AtkinsonPrim}).
If, for some $i \in \{0,1,2\}$, the space $A(V)$ were equivalent to a subspace of $\calR(i,2-i)$, then
$V$ would be equivalent to a subspace of $\calR(i+1,3-i)$, contradicting our assumptions.
The claimed result follows.

\vskip 3mm
\noindent \textbf{Step 4: The final contradiction.} \\
Since $\dim V \geq 8$ and $\dim A(V)=3$, we get that the space of all matrices $M \in V$ such that $\im N \subset \im E_{1,1}$
has dimension at least $2$, and ditto for the space of all matrices $M \in V$ such that $\Ker E_{1,1} \subset \Ker N$.
As we have started from an arbitrary rank $1$ matrix of $V$, this can be generalized as follows:
\begin{center}
For every rank $1$ matrix $N \in V$, there exist rank $1$ matrices $N_1$ and $N_2$ in $V$ such that
 $$\Ker N_1 =\Ker N, \quad \im N_1 \neq \im N, \quad \im N_2=\im N\quad \text{and} \quad \Ker N_2 \neq \Ker N.$$
\end{center}
Then, we successively choose:
\begin{itemize}
\item A rank $1$ matrix $N_1 \in V$ such that $\im N_1=\im E_{1,1}$ and $\Ker N_1 \neq \Ker E_{1,1}$;
\item A rank $1$ matrix $N_2 \in V$ such that $\im N_2 \neq \im N_1$ and $\Ker N_2=\Ker N_1$.
\end{itemize}
It follows that $\Ker N_2 \neq \Ker E_{1,1}$ and $\im N_2 \neq \im E_{1,1}$, and as $N_2$ has rank $1$
we deduce that $A(N_2) \neq 0$. Since $A(V)$ is equivalent to $\Mata_3(\K)$ it follows that $\rk A(N_2)=2$.
Yet, $A(N_2)$, being a submatrix of a rank $1$ matrix, should have rank at most 1. This final contradiction completes the proof.
\end{proof}

\subsection{The last special lifting lemma}\label{lastspeciallifting}

\begin{prop}[Special lifting lemma 3]\label{specialliftingprop3}
Let $n>r$ be positive integers such that $r \geq 3$. Assume that $n \geq 5$ if $\# \K>2$.
Let $W$ be a linear subspace of $\Mat_{n,r}(\K)$ and $f : W \rightarrow \K^n$ be a linear map.
Assume that every matrix in
$$V:=\Bigl\{\begin{bmatrix}
f(N) & N
\end{bmatrix} \mid N \in W\Bigr\}$$
has rank at most $r$.
Assume furthermore that $\codim W \leq 2n-4-\epsilon(\K)$. \\
Then, $V$ is $r$-decomposable.
\end{prop}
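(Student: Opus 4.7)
The plan is to adapt the structure of special lifting lemma 2 to the linear setting, invoking Theorem \ref{RCtheo3} (applicable since $\codim W \leq 2n-4-\epsilon(\K)$) in place of Theorem \ref{RCtheo2}, and using lifting lemmas 3.1 and 3.2 in place of 2.1 and 2.2. The key dichotomy is whether or not $f$ is quasi-range-compatible.

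Suppose first that $f$ is quasi-range-compatible. Then Theorem \ref{RCtheo3} gives two possibilities. If $f$ is local, a fixed non-zero vector of $\K^{r+1}$ is annihilated by every matrix of $V$, so $V$ is equivalent to a subspace of $\calR(0,r)$. Otherwise $f(N)=\varphi(NX)+NX'$ for some $X \in \K^r\setminus\{0\}$, $X'\in \K^r$ and a $2$-dimensional subspace $P\subset \K^n$ containing $WX$, with $\varphi$ an endomorphism of $P$. A column operation eliminates $X'$, and taking $X$ as the first standard basis vector and $P=\K^2\times\{0\}$, both the leading column $f(N)$ of $[f(N)\mid N]$ and its adjacent column lie in $P$. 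After permuting columns, $V$ becomes a subspace of $\calR(2,r-1)$ of dimension $\dim W \geq nr-2n+4+\epsilon(\K)$, which is precisely the threshold $nr-2(n-(r+1)+r)+2+\epsilon(\K)$ needed to invoke lifting lemma 3.2. The $\calU_4(\K)$ exception requires $n=4$ and $\# \K=3$, which is precluded by the assumption $n\geq 5$ when $\# \K>2$, so $V$ is $r$-decomposable.

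Assume now that $f$ is not quasi-range-compatible, and mirror the bad-form analysis of special lifting lemma 2. Call $\varphi \in (\K^n)^\star$ \emph{bad} if some $N\in W$ satisfies $\im N\subset \Ker\varphi$ and $f(N)\notin \Ker\varphi$; as in Claim \ref{claim1}, the bad forms span $(\K^n)^\star$, since otherwise a $1$-dimensional subspace in the intersection of their kernels would witness the quasi-range-compatibility of $f$. For bad $\varphi$, view $W_\varphi:=\{N\in W:\im N\subset \Ker\varphi\}$ as a subspace of $\Mat_{n-1,r}(\K)$; the non-zero linear form $g_\varphi:N\mapsto \varphi(f(N))$ on $W_\varphi$ has the property that every $N$ in the affine hyperplane $g_\varphi^{-1}\{a\}$ (for $a\neq 0$) has rank at most $r-1$, and a direct count using $\codim W\leq 2n-4-\epsilon(\K)$ gives $\dim g_\varphi^{-1}\{a\} \geq (n-1)(r-1)-((n-1)-r+(r-1))+\epsilon(\K)$, which is exactly the threshold of the refined first classification theorem when $\# \K>2$ (the $\calU_3(\K)$ exception being ruled out since $n-1\geq 4$ and $r\geq 3$) and of the first classification theorem when $\# \K=2$. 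Combined with forcing lemmas 1 and 4 as in Claim \ref{claim2}, this yields $\urk W_\varphi \leq r-1$.

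Flanders's theorem then forces $W_\varphi$ to be a linear subspace of one of two \emph{types}: type 1, a common $x\in \K^r\setminus\{0\}$ is annihilated by every $N\in W_\varphi$; or type 2, $n=r+1$ and the images of the $N\in W_\varphi$ lie in a common hyperplane of $\Ker\varphi$. Reproducing the analysis of Claims \ref{claim3}--\ref{claim6} then yields either $W$ equivalent to $\calR(r,0)$ (in the type 2 case, whence $V$ is equivalent to a subspace of $\calR(r,1)$ and lifting lemma 3.1 applied to $V^T$ via Remark \ref{liftingremark3} concludes, the $\calU_4(\K)$ exception being again excluded by $n=r+1\geq 5$ when $\# \K>2$) or $W$ equivalent to a subspace of $\calR(1,r-1)$ (in the type 1 case, where a rank-counting argument against a basis of bad forms forces $\dim Wx\leq 1$, whence $V$ is equivalent to a subspace of $\calR(1,r)$ and lifting lemma 3.1 concludes). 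The main obstacle will be the dimension bookkeeping in the bad-form analysis, especially verifying that when $\# \K=2$ the enlarged codimension $2n-6$ still leaves exactly enough room for the classification and forcing arguments to close; this is precisely why the $\epsilon(\K)$ correction appears in the hypothesis.
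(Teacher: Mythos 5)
Your first half — the dichotomy on whether $f$ is quasi-range-compatible, Theorem \ref{RCtheo3} with lifting lemmas 3.1 and 3.2 handling the compatible case, the definition of bad linear forms, Claim \ref{claim7}-style basis, and the derivation of $\urk W_\varphi \leq r-1$ via the refined first classification theorem plus forcing lemmas — matches the paper's Claims \ref{claim7} and \ref{claim8} accurately.

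The gap appears at ``Flanders's theorem then forces $W_\varphi$ to be a linear subspace of one of two types.'' That step was valid in special lifting lemma 2 because there $\codim \calW \leq n-1$ gave $\dim \calW_\varphi \geq (n-1)(r-1)$, exactly the Flanders critical dimension in $\Mat_{n-1,r}(\K)$. Here the weaker bound $\codim W \leq 2n-4-\epsilon(\K)$ only gives $\dim W_\varphi \geq (n-1)(r-1)-(n-2)+1+\epsilon(\K)$, which is well below the Flanders threshold. One must instead apply the (refined) first classification theorem to $W_\varphi$, which produces \emph{four} possible structures for the bad form $\varphi$ (the paper's Claim \ref{claim9}): not only types 1 and 2, but also type 3 (a compression space $\calR(1,r-2)$) and type 4 (a compression space $\calR(r-2,1)$). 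The bulk of the paper's proof of special lifting lemma 3 consists of Claims \ref{claim10} through \ref{claim13}, a delicate case analysis eliminating types 3, 4, and 2 in turn, using rank and codimension counting, the structure of pairs of bad forms, and an inductive construction of bad forms. Your proposal omits this analysis entirely: ``reproducing the analysis of Claims 3--6'' does not work because those claims only ever encountered types 1 and 2 and have no mechanism to handle types 3 and 4, which behave quite differently (e.g., a type-3 form fixes both a column space constraint and an image constraint on $W_\varphi$). You also cannot dismiss these extra types as degenerate; they occur precisely in the regime $r \in \{n-2, n-1\}$ which the hypothesis does not exclude. Without the analogue of Claims \ref{claim10}--\ref{claim13}, the proof cannot close.
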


The structure of the proof is essentially similar to the one of special lifting lemma 2.
The goal is to show that either $f$ is quasi-range-compatible, in which case we shall gather the result from the theory of quasi-range-compatible linear maps and from one of the lifting lemmas, or that $W$ can be reduced to a special form, in which case we shall use lifting lemmas 3.1 or 3.2.
We perform a \emph{reductio ad absurdum} by assuming that $V$ is not $r$-decomposable.
Throughout the proof, it will be important to note that $V$ satisfies the dimensional requirement from
lifting lemmas 3.1 and 3.2.

\vskip 3mm
Assume first that $f$ is quasi-range-compatible.
Then, by Theorem \ref{RCtheo3}, we have two cases to consider:

\begin{itemize}
\item \textbf{Case 1.} $f$ is local. Then, $V$ is equivalent to a subspace of $\calR(0,r)$, contradicting our assumptions.

\item \textbf{Case 2.}
There exist a (non-zero) vector $x \in \K^r$, a $2$-dimensional linear subspace $P$ of $\K^n$, a vector
$x' \in \K^r$ and an endomorphism $u$ of $P$ such that $Wx \subset  P$ and
$f : N \mapsto Nx'+u(Nx)$. Hence, $V$ is equivalent to a subspace of $\calR(2,r-1)$.
Then, lifting lemma 3.2 contradicts our assumptions.
\end{itemize}

In the rest of the proof, we assume that $f$ is not quasi-range-compatible.

As in Section \ref{specialliftingsection2}, we say that a non-zero linear form $\varphi$ on $\K^n$ is \textbf{bad}
when there exists $N \in W$ such that $\im N \subset \Ker \varphi$ and $\varphi(f(N))\neq 0$.

With the same proof as the one of Claim \ref{claim1}, the fact that $f$ is not quasi-range-compatible yields
the following result:

\begin{claim}\label{claim7}
There exists a basis of $(\K^n)^\star$ consisting of bad linear forms.
\end{claim}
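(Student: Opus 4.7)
The plan is to mimic the proof of Claim~\ref{claim1} essentially verbatim; the argument there never used the affine (versus linear) nature of $f$, so it transfers to the present setting without modification. I would assume for contradiction that the bad linear forms fail to span $(\K^n)^\star$, so that
$$Z := \bigcap_{\varphi \text{ bad}} \Ker \varphi$$
is a non-zero linear subspace of $\K^n$, and pick a $1$-dimensional linear subspace $D$ of $Z$.

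From there, the goal is to show that $f$ is quasi-range-compatible with respect to $D$, contradicting the standing assumption. Let $N \in W$ with $D \not\subset \im N$. Then the orthogonal $(\im N)^o$ in $(\K^n)^\star$ is not contained in $D^o$, so a standard linear algebra argument (pick one element of $(\im N)^o \setminus D^o$ and use it to adjust the remaining basis vectors) yields a basis $(\varphi_1, \dots, \varphi_k)$ of $(\im N)^o$ with $\varphi_i \notin D^o$, i.e.\ $D \not\subset \Ker \varphi_i$, for every $i$.

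The key observation is that no such $\varphi_i$ can be bad, since by the very definition of $Z$ the kernel of every bad linear form contains $Z$ and hence $D$. Because $\varphi_i$ is not bad and $N$ itself is a witness in $W$ with $\im N \subset \Ker \varphi_i$, the definition of badness forces $\varphi_i(f(N)) = 0$. Since the $\varphi_i$ form a basis of $(\im N)^o$, this yields
$$f(N) \in \bigcap_{i=1}^k \Ker \varphi_i = \im N.$$
Hence $f$ is quasi-range-compatible with respect to $D$, the desired contradiction.

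I do not foresee any substantive obstacle: every step is elementary linear algebra and tracks the proof of Claim~\ref{claim1} line for line. The claim is stated separately only because the ambient hypotheses---linear $W$ and linear $f$, governed by the codimension bound $\codim W \leq 2n-4-\epsilon(\K)$---differ from those of Section~\ref{specialliftingsection2}, but these differences play no role in this particular argument.
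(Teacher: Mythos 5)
Your proof is correct and follows exactly the same route as the paper, which simply invokes the proof of Claim~\ref{claim1} without change. You correctly observe that the argument nowhere relies on affineness versus linearity of $f$ or $\calW$, only on the standing assumption that $f$ is not quasi-range-compatible.
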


Next, given a bad linear form $\varphi$, we set
$$W_\varphi:=\bigl\{N \in W : \; \im N \subset \Ker \varphi\bigr\}.$$

\begin{claim}\label{claim8}
Let $\varphi$ be a bad linear form. Then, $W_\varphi$ is a rank-$\overline{r-1}$ space.
\end{claim}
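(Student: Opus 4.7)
The plan is to adapt the argument of Claim~\ref{claim2} to the present linear setting. The key object is the linear form $g : N \in W_\varphi \mapsto \varphi(f(N))$, which is non-zero precisely because $\varphi$ is bad. A short argument will show that any $N \in W_\varphi$ with $g(N) \neq 0$ satisfies $\rk N \leq r-1$: indeed $\im N \subset \Ker \varphi$ whereas $f(N) \notin \Ker \varphi$, so $f(N)$ is linearly independent from the columns of $N$, forcing $\rk \begin{bmatrix} f(N) & N \end{bmatrix} = \rk N + 1 \leq r$. The task then reduces to showing that every matrix in the linear hyperplane $\Ker g$ of $W_\varphi$ also has rank at most $r-1$.

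To handle $\Ker g$, I would pick any $a \in \K \setminus \{0\}$ and consider the affine hyperplane $\calU_a := g^{-1}\{a\}$, which by the above is a rank-$\overline{r-1}$ affine subspace of $W_\varphi$. Viewing $W_\varphi$ inside $\calL(\K^r, \Ker \varphi) \cong \Mat_{n-1,r}(\K)$ and using $\codim W \leq 2n-4-\epsilon(\K)$ together with the fact that $W_\varphi$ is cut out by at most $r$ linear equations inside $W$, one obtains
$$\dim \calU_a \geq \dim W - r - 1 \geq nr-2n-r+3+\epsilon(\K),$$
which matches exactly, with slack $\epsilon(\K)$, the threshold $(n-1)(r-1)-\bigl((n-1)-r+(r-1)\bigr)$ of the refined first classification theorem applied to rank-$\overline{r-1}$ affine subspaces of $\Mat_{n-1,r}(\K)$.

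From here the argument splits on $\# \K$. If $\# \K > 2$, the refined first classification applies to $\calU_a$; its $\calU_3(\K)$ exception would force $n-1 = r = 3$, i.e.\ $n = 4$, which is excluded by the hypothesis $n \geq 5$. Hence $\calU_a$ is equivalent to a subspace of some rank-$\overline{r-1}$ compression space $\calR(s, r-1-s)$. If $\# \K = 2$, the extra slack $\epsilon(\K) = 2$ lets me invoke the first classification theorem instead, which yields the same conclusion (with $s \in \{0, r-1\}$). In both cases, I would then fix invertible matrices $P, Q$ with $P \calU_a Q \subset \calR(s, r-1-s)$; since that compression space is a linear subspace, the translation vector space $P(\Ker g) Q$ is automatically contained in it, and since $W_\varphi = \K M \oplus \Ker g$ for any $M \in \calU_a$, one concludes that $P W_\varphi Q \subset \calR(s, r-1-s)$, so $\urk W_\varphi \leq r-1$.

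The main obstacle is pinning down the dimension estimate sharply enough to invoke the classification theorems, and ruling out the $\calU_3(\K)$ exception: the bound on $\dim \calU_a$ meets the refined first classification threshold \emph{on the nose} when $\# \K > 2$, leaving no room for error---this is exactly why the hypothesis $n \geq 5$ (when $\# \K > 2$) is built into the lemma, as it eliminates the only exceptional possibility in the classification.
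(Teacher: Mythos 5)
Your proof is correct and follows essentially the same route as the paper: you use the linear form $N \mapsto \varphi(f(N))$, observe that its nonvanishing forces $\rk N \leq r-1$, restrict to an affine level set $\calU_a$, verify the dimension threshold (with $\epsilon(\K)$ providing exactly the slack needed to switch between the refined and unrefined first classification theorems), rule out the $\calU_3(\K)$ exception via $n \geq 5$, and conclude via the span of $\calU_a$. The only cosmetic difference is that you spell out the span argument through $W_\varphi = \K M \oplus \Ker g$ rather than citing it directly, but the content is identical.
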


\begin{proof}
Let us consider the linear form
$$\gamma : N \in W_\varphi \mapsto \varphi(f(N)).$$
As $\varphi$ is bad, we know that $\gamma$ is non-zero.
Consider the affine hyperplane $\calU:=\gamma^{-1} \{1\}$ of $W_\varphi$.
For all $N \in \calU$, since $\varphi(f(N))=1$ and $\im N \subset \Ker \varphi$ we must have
$\rk N \leq r-1$. On the other hand, the rank theorem yields
$$\dim \calU \geq \dim W-r-1 \geq (n-1)(r-1)-(n-2)+\epsilon(\K).$$
As the matrices in $\calU$ have their range included in $\Ker \varphi$, the space $\calU$ can be represented
by a subspace of $\Mat_{n-1,r}(\K)$. Thus, we see that
either $\# \K>2$, in which case the refined first classification theorem applies to $\calU$
(and the exceptional situation where $\calU$ is equivalent to $\calU_3(\K)$ cannot occur since
in that case $n-1 \geq 4$), or $\# \K=2$, in which case the first classification theorem applies to $\calU$.
In any case, we obtain that $\calU$ is $(r-1)$-decomposable, and hence
it is also the case of $W_\varphi$ because $\calU$ spans it. Therefore, $\urk W_\varphi \leq r-1$.
\end{proof}

Let $\varphi$ be a bad linear form. The space $W_\varphi$ can be naturally identified with a linear subspace of $\calL(\K^r,\Ker \varphi)$
which, by choosing a basis of $\Ker \varphi$, can be represented by a subspace of $\Mat_{n-1,r}(\K)$.

\begin{claim}\label{claim9}
Let $\varphi$ be a bad linear form. Let $W_\varphi'$ be a subspace of $\Mat_{n-1,r}(\K)$
that represents $W_\varphi$.
Then, one and only one of the following results holds:
\begin{enumerate}[(1)]
\item $W'_\varphi$ is equivalent to a subspace of $\calR(0,r-1)$ ;
\item $W'_\varphi$ is equivalent to a subspace of $\calR(r-1,0)$, and $r \geq n-2$.
\item $W'_\varphi$ is equivalent to $\calR(1,r-2)$ and $\# \K>2$ ;
\item $W'_\varphi$ is equivalent to $\calR(r-2,1)$, and $r=n-1$ and $\# \K>2$.
\end{enumerate}
\end{claim}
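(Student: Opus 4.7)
The strategy is to view $W'_\varphi$ as a rank-$\overline{r-1}$ linear subspace of $\Mat_{n-1,r}(\K)$ (by Claim \ref{claim8}), establish a strict dimensional surplus over the relevant classification-theorem threshold, and then read off the four cases from Remark \ref{1strefinedremark}, discarding the isolated exceptional compression spaces by a direct dimension count.

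The first step is to derive a lower bound on $\dim W_\varphi$. Since the condition $\im N \subset \Ker \varphi$ cuts $W_\varphi$ out of $W$ by $r$ linear equations, we have $\codim_W W_\varphi \leq r$; combined with the hypothesis $\codim W \leq 2n - 4 - \epsilon(\K)$, this yields
$$\dim W'_\varphi \;\geq\; (n-1)(r-1) - (n-2) + 1 + \epsilon(\K),$$
which strictly exceeds the critical dimension $(n-1)(r-1) - (n-2)$ of the refined first classification theorem applied in $\Mat_{n-1,r}(\K)$, by a surplus of $1 + \epsilon(\K)$.

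When $\# \K > 2$, I would apply the refined first classification theorem to $W'_\varphi$. Its exceptional outcome $\calU_3(\K)$ is blocked by the standing hypothesis $n \geq 5$, since that outcome would force $n-1 = r = 3$. Remark \ref{1strefinedremark}, with parameters $(n,p,r)$ replaced by $(n-1, r, r-1)$, then leaves four compression-space possibilities: a subspace of $\calR(0, r-1)$ (case (1)); a subspace of $\calR(1, r-2)$ (case (3)); a subspace of $\calR(r-1, 0)$, which occurs only in clauses (c)--(d) of the remark and so forces $n-1 \leq r+1$, i.e.\ $r \geq n-2$ (case (2)); and a subspace of $\calR(r-2, 1)$, which occurs only in clause (d) and so forces $n-1 = r$, i.e.\ $r = n-1$ (case (4)). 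The strict inequality above then invokes the final clause of Remark \ref{1strefinedremark} to upgrade any inclusion in $\calR(1, r-2)$ or $\calR(r-2, 1)$ to an equivalence, matching the statements of cases (3) and (4). When $\# \K = 2$, the surplus becomes $3$, which comfortably exceeds the strict-inequality threshold of the plain first classification theorem (Theorem \ref{firstclasstheo}); that theorem returns only inclusion in $\calR(0, r-1)$, or $n-1 = r$ with inclusion in $\calR(r-1, 0)$, corresponding to cases (1) and (2), while cases (3) and (4) correctly do not arise in view of their ``$\# \K > 2$'' side condition.

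The only remaining obstacle, which I expect to be the main (though very mild) difficulty, is to discard the three isolated exceptional compression spaces that appear in Remark \ref{1strefinedremark}, namely $\calR(2,0)$, $\calR(2,1)$ and $\calR(2,2)$ in $\Mat_{n-1,r}(\K)$, each tied to a single triple $(r,n) \in \{(3,6), (4,6), (5,6)\}$. A routine dimension computation shows that each such candidate has dimension strictly below our surplus lower bound on $\dim W'_\varphi$ (respectively $6 < 7$, $11 < 12$, $16 < 17$), so all three are excluded. This bookkeeping exhausts the possibilities and yields exactly cases (1)--(4).
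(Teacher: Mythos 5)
Your proof is correct and follows essentially the same route as the paper's: establish $\dim W'_\varphi \geq (n-1)(r-1)-(n-2)+1+\epsilon(\K)$ from $\codim W \leq 2n-4-\epsilon(\K)$ and the $r$ linear conditions cutting out $W_\varphi$, then apply the refined (resp.\ plain) first classification theorem and read off the four cases from Remark~\ref{1strefinedremark} with parameters $(n-1,r,r-1)$. The paper's proof is just a one-line citation of these theorems; your version spells out the case bookkeeping from the remark, the elimination of $\calU_3(\K)$ via $n\geq 5$, and the dimension counts that discard the exceptional spaces $\calR(2,0)$, $\calR(2,1)$, $\calR(2,2)$, all of which are correct.
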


\begin{proof}
Noting that
$$\dim W'_\varphi =\dim W_\varphi \geq \dim W-r \geq (n-1)(r-1)-(n-2)+1+\epsilon(\K),$$
the result follows either from the first classification theorem if $\# \K=2$, or from the refined one if $\# \K>2$.
\end{proof}

Given $i \in \{1,2,3,4\}$, we shall say that $\varphi$ has \textbf{type $i$} when it satisfies condition ($i$)
in the above claim. Thus:
\begin{itemize}
\item If $\varphi$ has type $1$ then there exists a non-zero vector $x$ of $\K^r$ such that
$\forall N \in W, \; \im N \subset \Ker \varphi \Rightarrow Nx=0$.
\item If $\varphi$ has type $2$ then there exists an $(r-1)$-dimensional linear subspace $Q$ of $\Ker \varphi$ such that
$\forall N \in W, \; \im N \subset \Ker \varphi \Rightarrow \im N \subset Q$.
\item If $\varphi$ has type $3$ then there exist a $2$-dimensional linear subspace $P$ of $\K^2$
and a $1$-dimensional linear subspace $D$ of $\Ker \varphi$ such that
$W_\varphi$ is the set of all matrices $N \in \Mat_{n,r}(\K)$ that map $\K^r$ into $\Ker \varphi$
and $P$ into $D$.
\item If $\varphi$ has type $4$ then $n=r+1$ and there exist a linear subspace
$Q$ of $\K^r$ and a linear subspace $R$ of $\Ker \varphi$
such that $\dim Q=r-1$, $\dim R=n-3$, and $W_\varphi$ is the set of all matrices of $\Mat_{n,r}(\K)$
that map $Q$ into $R$ and $\K^r$ into $\Ker \varphi$.
\end{itemize}

\begin{claim}\label{claim10}
There do not exist bad linear forms $\varphi_1$ and $\varphi_2$ such that $\varphi_1$ has type $1$ and $\varphi_2$ has type $2$.
\end{claim}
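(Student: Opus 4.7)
I argue by contradiction. Assume bad linear forms $\varphi_1$ of type $1$ (with associated vector $x_1 \in \K^r \setminus \{0\}$) and $\varphi_2$ of type $2$ (with associated $(r-1)$-dimensional subspace $Q \subset \Ker \varphi_2$) coexist; by Claim \ref{claim9} the type of a bad form is unique, so $\varphi_1, \varphi_2$ are linearly independent. Set $V_1 := Q \cap \Ker \varphi_1$, so $\dim V_1$ equals $r-1$ when $Q \subset \Ker \varphi_1$ (Case A) and $r-2$ otherwise (Case B, where one may pick a complementary vector $y_0 \in Q \setminus V_1$).

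The heart of the argument is a dimension bound on $W^* := W_{\varphi_1} \cap W_{\varphi_2}$: for $N \in W^*$, type $1$ gives $Nx_1 = 0$ and type $2$ combined with $\im N \subset \Ker \varphi_1$ gives $\im N \subset V_1$, so $W^*$ is contained in $\{N \in \Mat_{n,r}(\K) : Nx_1 = 0, \ \im N \subset V_1\}$, whose dimension is $(r-1)\dim V_1$. The rank theorem gives $\dim W^* \geq \dim W - 2r$, so combined with $\dim W \geq nr - 2n + 4 + \epsilon(\K)$ and $r \in \{n-1, n-2\}$ (from the constraint $r \geq n-2$ built into type $2$) I obtain in Case B the inequality $r(n-r+1) \leq 2n - 2 - \epsilon(\K)$. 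This inequality fails (producing the contradiction) for $r = n-2$ when $n \geq 5$ (automatic when $\# \K > 2$, and following from $r \geq 3$ when $\# \K = 2$), as well as for $r = n-1$ when $\# \K = 2$. In Case A the inclusion $W_{\varphi_2} \subset W_{\varphi_1}$ sharpens the bound to $\dim W_{\varphi_2} \leq (r-1)^2$, hence $\dim W \leq r^2 - r + 1$, which contradicts the lower bound both for $r = n-1$ (forcing $\epsilon(\K) \leq -1$) and for $r = n-2$ (forcing $n \leq 3 - \epsilon(\K)$).

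The only subcase left is the tight one $r = n-1$ with $\# \K > 2$, $n \geq 5$ and $Q \not\subset \Ker \varphi_1$. Here the previous bounds become equalities, so $W^* = \{N : Nx_1 = 0, \ \im N \subset V_1\}$ and the projection $\pi_y : W_{\varphi_2} \to \K^r$ sending $N$ to its ``$y_0$-component'' is surjective. The associated obstruction map $\tau : \K^r \to Q$, $\tau(\mu) := M x_1$ for any $M$ in the fiber $\pi_y^{-1}(\mu) \cap W_{\varphi_2}$, is well-defined (since $W^* x_1 = 0$) and controls rank-one membership through the equivalence $y_0 \mu^T \in W_{\varphi_2} \iff \tau(\mu) = (\mu^T x_1) y_0$. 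The main obstacle is extracting a contradiction from this rigid picture: the naive rank-one construction $yz^T \in W$ with $y \in V_1$ is blocked, since such a matrix is automatically in $W^*$ and hence forced by type $1$ to satisfy $z^T x_1 = 0$. The plan is to combine the data $(\tau, \pi_y)$ with a further bad linear form (available from the basis of bad forms supplied by Claim \ref{claim7}) or with the refined first classification theorem applied to the rank-$\overline{r-1}$ subspace $W_{\varphi_1}$, whose dimension $n^2 - 4n + 5$ in $\Mat_{n-1, r-1}(\K)$ leaves only a few possible compression-space shapes by Remark \ref{1strefinedremark}, in order to produce a structural collapse that contradicts the standing assumption that $f$ is not quasi-range-compatible.
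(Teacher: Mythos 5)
Your dimension count handles Case A (both values of $r$) and Case B when $r=n-2$ or $\#\K=2$, and there the reasoning is sound and essentially equivalent to the paper's. But the remaining subcase --- Case B with $r=n-1$ and $\#\K>2$ --- is not proved: the final paragraph sets up a fibration $(\pi_y,\tau)$ and announces a ``plan'' to combine it with a further bad form or with the refined first classification theorem, without actually extracting a contradiction. Moreover that last suggestion cannot work as stated: after deleting the column $x_1$ and the row $\Ker\varphi_1$, the space $W_{\varphi_1}$ sits inside $\Mat_{n-1,n-2}(\K)$ with upper-rank $\leq n-2$, i.e.\ $r'=p'$, where the refined first classification theorem is vacuous (every subspace is trivially rank-$\overline{r'}$). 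So the submitted argument has a genuine gap in exactly the tight subcase.

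The reason your count falls one dimension short, and how the paper closes it: you intersect the \emph{two} full range constraints $\im N\subset\Ker\varphi_1$ and $\im N\subset\Ker\varphi_2$, which cuts codimension at most $2r$ in $W$, and land in the $(r-1)(r-2)$-dimensional space $\{N: Nx_1=0,\ \im N\subset V_1\}$. With $r=n-1$ this yields $\codim W\geq r(n-r+1)-2=2n-4$, which only meets the hypothesis $\codim W\leq 2n-4$ without crossing it. The paper instead fixes a complement $H$ of $\K x_1$ in $\K^r$ and intersects $W_{\varphi_1}$ with the \emph{milder} constraint $NH\subset\Ker\varphi_2$ (codimension at most $r-1$, not $r$). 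For $N\in W_{\varphi_1}$ one already has $Nx_1=0$, so $\im N=N(H)\subset\Ker\varphi_2$ follows automatically, and then $\im N\subset Q$ by type $2$; thus the intersection still lies inside the same $(r-1)(n-3)$-dimensional space, but was obtained by cutting only $2r-1$ dimensions. This saves exactly one, giving $\codim W\geq r+n-2=2n-3>2n-4$, the needed contradiction. To repair your proof, replace the definition of $W^*$ by
\[
W^*:=\bigl\{N\in W_{\varphi_1}:\ NH\subset\Ker\varphi_2\bigr\}
\]
in Case B and redo the count; the rest of your argument (Case A and the $r=n-2$ subcase) can stand as written.
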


\begin{proof}
Assume that the contrary holds, and take $\varphi_1$ of type $1$ and $\varphi_2$ of type $2$.
Then, $n-2 \leq r \leq n-1$.
If $r=n-2$ then $W_{\varphi_2}$ is equivalent to $\calR(r-1,0)$ because of its dimension. Then, by following the same line of reasoning
as in Claim \ref{claim3}, we obtain a contradiction.
Thus, $r=n-1$.

Obviously, $\varphi_1$ and $\varphi_2$ are non-collinear, whence $\Ker \varphi_1 \cap \Ker \varphi_2$ has codimension $2$ in $\K^n$.

There exist a non-zero vector $x \in \K^r$ and a linear subspace $Q \subset \K^n$ with codimension $2$ which is included in $\Ker \varphi_2$ and
such that, for all $N \in W$,
$$\im N \subset \Ker \varphi_1 \Rightarrow Nx=0$$
and
$$\im N \subset \Ker \varphi_2 \Rightarrow \im N \subset Q.$$

We split the discussion into two subcases, whether $Q$ is included in $\Ker \varphi_1$ or not.
\begin{itemize}
\item Assume that $Q\subset \Ker \varphi_1$. \\
Then, for all $N \in W$ such that $\im N \subset \Ker \varphi_2$
we obtain that $\im N \subset Q$ and then $Nx=0$. It follows from the rank theorem that
$$\codim W \geq (n-1)+(r-1) = 2n-3.$$
\item Assume that $Q \not\subset \Ker \varphi_1$. \\
Let us choose a complementary subspace $H$ of $\K x$ in $\K^r$.
For all $N \in W$ such that $\im N \subset \Ker \varphi_1$
and $NH \subset \Ker \varphi_2$, we find $Nx=0$ and then $\im N \subset Q \cap \Ker \varphi_1$.
Since $Q \cap \Ker \varphi_1$ has codimension $3$ in $\K^n$, the rank theorem yields
$$\codim W \geq (n-1)+(r-1) = 2n-3.$$
\end{itemize}
In any case, we have contradicted our assumption on the codimension of $W$.
\end{proof}

Now, we examine the bad linear forms of type $3$.

\begin{claim}\label{claim11}
There is no bad linear form of type $3$.
\end{claim}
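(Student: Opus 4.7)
Assume for contradiction that there exists a bad linear form $\varphi$ of type $3$, with associated $2$-dimensional subspace $P\subset \K^r$ and $1$-dimensional subspace $D\subset \Ker\varphi$. From the type-$3$ description, $\dim W_\varphi=(n-1)(r-2)+2$. Since $W_\varphi$ is the kernel of the linear map $W\to (\K^r)^\star$, $N\mapsto \varphi\circ N$, we have $\dim W\leq \dim W_\varphi+r=nr-2n+4$; combined with the standing hypothesis $\dim W\geq nr-2n+4+\epsilon(\K)$, this forces $\epsilon(\K)=0$, $\dim W=nr-2n+4$, and the surjectivity of $N\mapsto \varphi\circ N$. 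In particular $\#\K>2$, and hence $n\geq 5$ by the hypothesis of Proposition~\ref{specialliftingprop3}.

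Fix bases of $\K^n$ and $\K^r$ such that $\Ker\varphi=\Vect(u_1,\ldots,u_{n-1})$, $\varphi(u_n)=1$, $D=\K u_1$ and $P=\Vect(e_{r-1},e_r)$. Then $W_\varphi$ is exactly the space of matrices $N\in \Mat_{n,r}(\K)$ with $N_{n,j}=0$ for all $j$ and $N_{i,k}=0$ for $i\in\{2,\ldots,n-1\}$, $k\in\{r-1,r\}$. Take a linear section of $W\to(\K^r)^\star$ and normalize it by elements of $W_\varphi$ so that each $M^{(\ell)}\in W$ (with last row $\ell$) has vanishing row $1$ and vanishing entries in rows $2,\ldots,n-1$, columns $1,\ldots,r-2$. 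The residual entries $M^{(\ell)}_{i,k}$ (for $i\in\{2,\ldots,n-1\}$, $k\in\{r-1,r\}$) define linear forms $\gamma_{i,k}\colon (\K^r)^\star\to \K$, and $W$ is fully determined by them. The main step is to show $\gamma_{i,k}\equiv 0$ for every such $(i,k)$: assuming some $\gamma_{i_0,k_0}(\ell_0)\neq 0$, one constructs a matrix $M=M^{(\ell_0)}+N\in W$ with a carefully chosen rank-$(r-1)$ matrix $N\in W_\varphi$, such that $\rk M=r$; the rank-compatibility condition $\rk[f(M)\mid M]\leq r$ then forces $f(M)\in \im M$, and varying $N$ inside $W_\varphi$ and exploiting the linearity of $f$ extracts a linear relation that is incompatible with $\gamma_{i_0,k_0}(\ell_0)\neq 0$. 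The precise construction of $N$ and the extraction of this identity form the main technical obstacle.

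Granted that $\gamma_{i,k}\equiv 0$ for all $(i,k)$, the space $W$ coincides with the set of $N\in \Mat_{n,r}(\K)$ satisfying $N(P)\subset \Vect(u_1,u_n)$, and a direct count $n(r-2)+4=nr-2n+4$ matches $\dim W$, whence equality. After the row permutation swapping rows $2$ and $n$, the space $W$ becomes $\calR(2,r-2)\subset \Mat_{n,r}(\K)$, and therefore $V$ is equivalent to a subspace of $\calR(2,r-1)\subset \Mat_{n,r+1}(\K)$, of dimension $nr-2n+4=nr-2(n-(r+1)+r)+2+\epsilon(\K)$. Since $n\geq 5$ rules out the exceptional case $V\simeq \calU_4(\K)$ (which would require $n=p=4$), lifting lemma 3.2 (Proposition~\ref{liftingprop5}) forces $V$ to be $r$-decomposable, contradicting our standing assumption that $V$ is not $r$-decomposable.
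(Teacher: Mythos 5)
Your opening dimension count is correct: from the explicit description of $W_\varphi$ you get $\dim W_\varphi=(n-1)r-2(n-2)$, and combining $\dim W\leq\dim W_\varphi+r$ with the hypothesis $\codim W\leq 2n-4-\epsilon(\K)$ forces $\epsilon(\K)=0$, $\dim W=nr-2n+4$, and surjectivity of $N\mapsto\varphi\circ N$. That is a useful normalization, and it is consistent with the fact (from Claim \ref{claim9}) that type $3$ requires $\# \K>2$, hence $n\geq 5$.

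The trouble is that the entire substance of the argument is then delegated to the assertion that $\gamma_{i,k}\equiv 0$, i.e.\ that $W$ is exactly the space of matrices mapping $P$ into the fixed $2$-dimensional subspace $D+\K u_n$. You state this as ``the main step'' and then write that ``the precise construction of $N$ and the extraction of this identity form the main technical obstacle.'' In other words, the key claim is left unproven; what remains after that step (equivalence to $\calR(2,r-2)$ and an appeal to lifting lemma 3.2) is routine. As written, this is a conditional sketch, not a proof. Moreover, I see no reason to expect the sketched argument to close: nothing in the data you have used -- $\varphi$ alone, $W_\varphi$, and the surjectivity of $N\mapsto\varphi\circ N$ -- pins $W(P)$ into a $2$-plane; indeed the residual section $\ell\mapsto M^{(\ell)}$ is only determined modulo $W_\varphi$, and the choice of $u_n$ is arbitrary modulo $\Ker\varphi$, so the target plane $D+\K u_n$ is not even intrinsically defined. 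Nor does the paper's own proof ever establish that $W$ has this compression form.

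The paper's route is structurally different and uses more of the hypotheses. Starting from Claim \ref{claim7} (that $(\K^n)^\star$ has a basis of bad linear forms), it first shows (Step 1 of the printed proof) that every bad linear form non-collinear to $\varphi$ must have type $1$, by separately ruling out types $2$, $3$ (with $D\neq D'$ handled via the $\calR(2,r-1)$ compression plus lifting lemma 3.2, and $D=D'$ handled via a factorization argument that reduces to $\calR(1,r)$ and lifting lemma 3.1) and $4$. It then shows (Steps 2--3) that the type-$1$ vectors $x_2,\dots,x_n$ lie in $P$, satisfy $\rk L_i=2$, and are pairwise collinear; a final factorization through $\pi:\K^n\to\K^n/D$ yields the dimension bound $\dim W\leq nr-2n+1$, which contradicts $\dim W=nr-2n+4$. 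So the contradiction comes not from a compression structure for $W$ but from a rank count on the linear maps $L_i$. To repair your proposal you would need to supply the missing core argument, and it would almost certainly require you to bring in the other bad linear forms as the paper does.
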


\begin{proof}
Assume on the contrary that there exists a bad linear form $\varphi$ of type $3$.
Then, we attach linear subspaces $P \subset \K^r$ and $D \subset \Ker \varphi$ to the type $3$ form $\varphi$.
Note that $n \geq 5$ since $\# \K>2$.

Here is our first step:

\vskip 3mm
\noindent \textbf{Step 1: Every bad linear form that is non-collinear to $\varphi$ has type $1$.} \\
Let $\psi$ be a bad linear form that is non-collinear to $\varphi$ and that does not have type $1$.

\begin{itemize}
\item \textbf{Case 1: $\psi$ has type $2$.} \\
Let us consider a linear subspace $Q$ of codimension $2$ such that
$$\forall N \in W, \; \im N \subset \Ker \psi \Rightarrow \im N \subset Q.$$
Let $y \in \Ker \psi \cap \Ker \varphi$. Then, we know that some $N \in W_\varphi$ has range $\K y$,
whence $y \in Q$. As $\Ker \psi \cap \Ker \varphi$ has codimension $2$ in $\K^n$, this leads to
$Q=\Ker \psi \cap \Ker \varphi$, and hence $Q \subset \Ker \varphi$.
Thus, for all $N \in W$, if $\im N \subset \Ker \psi$, then $\im N \subset Q$ and $N$ maps $P$ into $D$.
This leads to
$$\codim W \geq 2(n-2)+(r-2)>2n-4,$$
contradicting our assumptions.

\item \textbf{Case 2: $\psi$ has type $4$.} \\
Then, $n=r+1$ and we can attach subspaces $Q \subset \K^r$ and $R \subset \Ker \psi$ to the type $4$ form $\psi$.
Note that $\dim Q=r-1=n-2$.
Choose $x \in Q \setminus P$ (such a vector exists because $\dim Q \geq 3$ and $\dim P=2$).
Let $y \in \Ker \varphi \cap \Ker \psi$.
There is a matrix $N \in W_\varphi$ such that $Nx=y$ and $\im N=\K y$. Then, $N \in W_\psi$ and hence
$Nx \in R$. Thus $\Ker \varphi \cap \Ker \psi \subset R$, which is absurd because
$R$ has codimension $3$ in $\K^n$.

\item \textbf{Case 3: $\psi$ has type $3$.} \\
Then, we attach spaces $P'$ and $D'$ to the type $3$ form $\psi$.
We claim that $P=P'$. Indeed, assume that such is not the case, and choose $x \in P' \setminus P$.
Let $y \in \Ker \varphi \cap \Ker \psi$ be a non-zero vector.
We have a matrix $N \in W_\varphi$ such that $\im N=\K y$ and $Nx=y$.
Then, $N \in W_\psi$ and hence $y \in D'$. This is absurd because $\dim(\Ker \varphi \cap \Ker \psi) \geq 2$, owing to the assumption that $n \geq 4$.

Now, $W_\varphi+W_\psi$ contains every matrix $M$ that maps $P$ into $D+D'$.
If $D \neq D'$, then the subspace of all such matrices has codimension $2n-4$ in $\Mat_{n,r}(\K)$,
whence $W$ equals that space, which shows that $V$ is equivalent to a subspace of $\calR(2,r-1)$, thereby contradicting our assumptions.

Hence, $D=D'$ and $D \subset \Ker \varphi \cap \Ker \psi$.
Since $(\K^n)^\star$ has a basis of bad linear forms, we can choose a bad linear form $\chi$ such that $D \not\subset \Ker \chi$.
It follows from the previous study that $\chi$ must have type 1. Hence, there is a non-zero vector $x \in \K^r$
such that $\forall N \in W, \; \im N \subset \Ker \chi \Rightarrow Nx=0$.
Set $\calX:=\{z \in \K^r \mapsto \chi(Nz)\mid N \in W\}$, which is a linear subspace of $(\K^r)^\star$.
By the factorization lemma for linear maps,
we obtain a linear map $g : \calX \rightarrow \K^n$ such that $Nx=g(z \mapsto \chi(Nz))$ for all $N \in W$.
As $W_\varphi$ contains every matrix of $\Mat_{n,r}(\K)$ with range $D$, we successively find that
$$\calX=(\K^r)^\star=\{z \in \K^r \mapsto \chi(Nz)\mid N \in W_\varphi\}$$
and that $\im g \subset D$, whence $N x \in D$ for all $N \in W$. Therefore, $V$ is equivalent to a subspace of $\calR(1,r)$, which, by lifting lemma 3.1, contradicts our assumptions.
\end{itemize}
This proves the claimed result.

\vskip 3mm
Next, let $\psi$ be an arbitrary type $1$ bad linear form.
Then, we have a non-zero vector $x \in \K^r$ such that every matrix of $W_\psi$ vanishes at $x$.
We claim that $x\in P$. If not, we choose a non-zero vector $y \in \Ker \varphi \cap \Ker \psi$, and then we find some
$N \in W_\varphi$ such that $Nx=y$ and $\im N=\K y$; then, $N \in W_\psi$, which contradicts the fact that $Nx \neq 0$.

Now, we can extend $\varphi$ into a basis $(\varphi,\varphi_2,\dots,\varphi_n)$ of bad linear forms on $\K^n$.
We obtain non-zero vectors $x_2,\dots,x_n$ of $P$ such that, for all $i \in \lcro 2,n\rcro$, every matrix of
$W_{\varphi_i}$ vanishes at $x_i$.
Let $i \in \lcro 2,n\rcro$. For all $N \in W$, we set
$$N^{(i)} : z \in \K^r \mapsto \varphi_i(Nz),$$
we consider the subspace
$$S_i:=\bigl\{N^{(i)} \mid N \in W\bigr\} \subset (\K^r)^\star,$$
and we obtain a linear map
$$L_i : S_i \rightarrow \K^n$$
such that
$$\forall N \in W, \; L_i(N^{(i)})=Nx_i.$$

\noindent \textbf{Step 2: $\rk L_i=2$ for all $i \in \lcro 2,n\rcro$.} \\
Let $i \in \lcro 2,n\rcro$. Note that $\rk L_i=\dim W x_i$.
Assume that $\dim W x_i \leq 1$. Then, we successively obtain that $W$ is equivalent to a subspace of $\calR(1,r-1)$, that
$V$ is equivalent to a subspace of $\calR(1,r)$, and that $V$ is $r$-decomposable (by lifting lemma 3.1).
Hence, $\dim W x_i \geq 2$.
Next, let $g^\star \in (\K^r)^\star$ be a linear form that vanishes everywhere on $P$.
Let us choose $y \in \Ker \varphi \setminus \Ker \varphi_i$. We can find
$N \in W_\varphi$ such that $\Ker N=\Ker g^\star$ and $\im N=\K y$.
It follows that $N^{(i)}$ is a non-zero linear form whose kernel includes that of $g^\star$, and hence
$g^\star=(\lambda N)^{(i)}$ for some $\lambda \in \K$. Then, as $Nx_i=0$ we obtain that $L_i(g^\star)=0$.
Using the rank theorem, we deduce that $\rk L_i \leq 2$, which completes the proof.
\vskip 3mm
In particular, we have just shown that $\dim W x_i=2$ for all $i \in \lcro 2,n\rcro$.

\vskip 3mm
\noindent \textbf{Step 3: The vectors $x_2,\dots,x_n$ are pairwise collinear.} \\
Assume that the contrary holds. Without loss of generality, we can assume that $x_2$ and $x_3$ are non-collinear.
Then, as $\dim W x_2=2$, $\dim W x_3=2$ and $\codim W \leq 2n-4$, we get that $W$ is exactly the space of all matrices
$N \in \Mat_{n,r}(\K)$ such that $Nx_2 \in W x_2$ and $N x_3 \in W x_3$.
We can choose a non-zero vector $y \in Wx_2 \cap \Ker \varphi_2$.
There exists a rank $1$ matrix $N \in W$ such that $N x_2=y$, but then $\im N=\K y$
and hence $N \in W_{\varphi_2}$, leading to $Nx_2=0$. This is a contradiction.

\vskip 3mm
Now, we are ready to complete the proof.
Since we can safely replace $x_i$ with any collinear non-zero vector, no generality is lost in assuming that all the $x_i$ vectors are equal to some non-zero vector $x$ of $P$. Set
$$S:=\bigl\{z \mapsto \varphi(Nz) \mid N \in W\bigr\}.$$
Then, with the canonical projection $\pi : \K^n \rightarrow \K^n/D$, we have an additional linear map
$L : S \rightarrow \K^n/D$ such that
$$\forall N \in W, \; \pi(Nx)=L\bigl(z \mapsto \varphi(Nz)\bigr).$$
Note that $1 \leq \rk L$ since $Wx$ has dimension $2$. Then, with the same line of reasoning as in
the proof of Claim \ref{claim6} from Section \ref{specialliftingsection2}, we obtain
$$\dim W-2 \leq (n-1)(r-2)+(r-1)=nr-2n+1,$$
which contradicts our assumptions.
This final contradiction proves the claimed result.
\end{proof}

\begin{claim}\label{claim12}
There is no bad linear form of type $4$.
\end{claim}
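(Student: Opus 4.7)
The plan is to suppose for contradiction that there exists a bad linear form $\psi$ of type $4$. By Claim \ref{claim9} this forces $n=r+1$ and $\# \K > 2$, and the standing hypothesis $n \geq 5$ of Proposition \ref{specialliftingprop3} then yields $r \geq 4$. Write $Q \subset \K^r$ and $R \subset \Ker \psi$ for the subspaces attached to $\psi$, with $\dim Q = r-1$ and $\dim R = r-2$, so that $W_\psi = \{N : N(Q) \subset R, \, \im N \subset \Ker\psi\}$. The argument has two phases: first rule out that any bad form non-collinear to $\psi$ has type $1$, $2$, or $3$; then exploit the remaining case (non-collinear type $4$) to identify $W$ with a single compression space $\{N : N(Q) \subset R+R'\}$, from which lifting lemma 3.2 (Proposition \ref{liftingprop5}) applied to $V^T$ furnishes the contradiction.

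The first phase proceeds by direct constructions inside $W_\psi$. Claim \ref{claim11} disposes of type $3$. For a type $1$ form $\chi$ with associated vector $x$, a rank-one matrix of $W_\psi$ whose image line lies in $\Ker\psi \cap \Ker\chi$ (if $x \notin Q$) or in $R \cap \Ker\chi$ (if $x \in Q$, where $R \cap \Ker\chi$ is non-zero thanks to $r \geq 4$) belongs also to $W_\chi$ yet violates $Nx=0$. For a type $2$ form $\chi$ with subspace $Q_\chi$, either $Q_\chi \neq \Ker\psi \cap \Ker\chi$ and a similar rank-one construction produces an image line in $\Ker\psi \cap \Ker\chi$ outside $Q_\chi$, or $Q_\chi = \Ker\psi \cap \Ker\chi$; in the latter case $W_\chi \subset W_\psi$, and the Claim \ref{claim9} dimension bound forces $W_\chi = W_\psi$, so testing rank-one matrices of $W_\psi$ with arbitrary first column in $\Ker\psi$ yields $\Ker\psi \subset \Ker\chi$, contradicting non-collinearity.

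Every bad form non-collinear to $\psi$ is therefore of type $4$ with data $(Q', R')$. The rank-one matrix $v \cdot e_1^T$ with $v \in \Ker\psi \cap \Ker\chi$ excludes $Q' \neq Q$, since it would force the $(r-1)$-dimensional space $\Ker\psi \cap \Ker\chi$ inside the $(r-2)$-dimensional $R'$. With $Q' = Q$, testing a rank-$(r-2)$ matrix of $W_\psi$ with image $R$ shows that either $R = R'$ (whence $\chi \in R^\perp$) or $R \neq R'$ with $R \not\subset \Ker\chi$, and further the rank-$(r-3)$ construction with $N(Q) = R \cap \Ker\chi$ then forces $R \cap \Ker\chi \subset R'$. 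If every non-collinear bad form had $R' = R$, all bad forms would lie in the $3$-dimensional annihilator $R^\perp$, contradicting Claim \ref{claim7} since $n = r+1 \geq 5$; so some non-collinear type $4$ form $\chi$ has $R' \neq R$. In that case $\dim(R \cap R') = r-3$ and $\dim(R + R') = r-1$, and a direct computation yields $\dim(W_\psi + W_\chi) = r^2 - r + 2$. But $\dim W = r^2 - r + 2$ exactly (upper bound from the injection $W/W_\psi \hookrightarrow (\K^r)^\star$, $N \mapsto \psi \circ N$; lower bound from $\codim W \leq 2n-4$), so $W = W_\psi + W_\chi = \{N \in \Mat_{n,r}(\K) : N(Q) \subset R+R'\}$.

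Consequently, $V^T$ is equivalent to a subspace of $\calR(2, r-1)$ in $\Mat_{r+1,r+1}(\K)$ of dimension $r^2 - r + 2$. Lifting lemma 3.2 applied to $V^T$ then shows that $V$ is $r$-decomposable, the exceptional $\calU_4(\K)$ outcome being discarded by $r \geq 4$. This contradicts the running assumption of the proof of Proposition \ref{specialliftingprop3} that $V$ is not $r$-decomposable, completing the proof. The main obstacle is the careful case analysis inside the non-collinear type $4$ case, and in particular the cluster of dimensional identities $\dim(W_\psi+W_\chi) = \dim W = r^2-r+2$ that compress $W$ into a single compression space on which lifting lemma 3.2 can take over.
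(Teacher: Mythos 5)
Your proof is correct, and it ultimately funnels into the same final step as the paper's (write $W$ as the single compression space $\{N:N(Q)\subset R+R'\}$, then apply lifting lemma 3.2 to $V^T$), but the route to get there is genuinely different. The paper makes a single targeted choice: by Claim \ref{claim7} it picks one bad form $\psi$ with $R\not\subset\Ker\psi$ right away, shows \emph{that particular} $\psi$ cannot have type $1$, $2$, or $3$ (a quick computation since $R\cap\Ker\psi\neq\{0\}$ hands you plenty of rank-one matrices in $W_\varphi\cap W_\psi$), and then the condition $R\not\subset\Ker\psi$ forces $R\neq R'$ for free. It then shows $Q=Q'$, observes $W\supseteq\{N:N(Q)\subset R+R'\}$, and finishes with a codimension count ($\codim W=2n-4$). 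You instead classify \emph{every} bad form non-collinear to the fixed $\psi$ as type $4$ with the same $Q$, then use an annihilator argument ($R^\perp$ is only $3$-dimensional, but $n\geq 5$) to locate a second type-$4$ form $\chi$ with $R'\neq R$, and close with a dimension count $\dim(W_\psi+W_\chi)=\dim W=r^2-r+2$. Both computations correctly identify $W$ with the same compression space; your version yields slightly more information (all bad forms are type $4$ with common $Q$) at the cost of a heavier case analysis, and it needs $r\geq 4$ in a couple of spots where the paper's chosen $\psi$ sidesteps the issue via $R\cap\Ker\psi\neq\{0\}$.

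One cosmetic remark: the notation ``$v\cdot e_1^T$'' in your $Q'=Q$ step is imprecise (there is no distinguished basis vector in play), though the intended construction -- a rank-one $N\in W_\psi$ with $\Ker N=Q$, $\im N=\K v$ for $v\in\Ker\psi\cap\Ker\chi$, evaluated at some $x\in Q'\setminus Q$ -- is clear and does give $\Ker\psi\cap\Ker\chi\subset R'$, which is the dimensional contradiction you want. Also, the rank-$(r-2)$ and rank-$(r-3)$ constructions are not strictly necessary: once $R\neq R'$ is in hand, the bound $\dim(W_\psi+W_\chi)\leq\dim W\leq r+\dim W_\psi$ already forces $\dim(R+R')=r-1$, so those constructions could be pruned without loss.
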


\begin{proof}
Assume on the contrary that there exists a bad linear form $\varphi$ of type $4$. Note that this implies that $\# \K>2$, and hence
$n \geq 5$.
Then, $n=r+1$, $\codim W=2n-4$ and we can fix subspaces $Q \subset \K^r$ and $R \subset \Ker \varphi$ that are
attached to the type $4$ form $\varphi$.

By Claim \ref{claim7}, we can choose a bad linear form $\psi$ such that $R \not\subset \Ker \psi$ (as $\dim R>0$).

\vskip 3mm
\noindent \textbf{Step 1: $\psi$ does not have type $1$.} \\
As $\dim (R \cap \Ker \psi) \geq \dim R-1 \geq n-4>0$, we can choose a non-zero vector $y$ in $R \cap \Ker \psi$.
Then, $W_\varphi$ contains all the matrices of $\Mat_{n,r}(\K)$ with range $\K y$, and hence they all belong to $W_\psi$.
Obviously, this bars $\psi$ from having type $1$.

\vskip 3mm
\noindent \textbf{Step 2: $\psi$ does not have type $2$.} \\
Assume on the contrary that $\psi$ has type $2$.
Then, we have a subspace $S$ of $\K^n$ with codimension $2$
such that $\im N \subset S$ for all $N \in W_\psi$.
For each non-zero vector $y \in \Ker \varphi \cap \Ker \psi$, we can find a rank $1$ matrix $N \in W_\varphi$ with range $\K y$,
whence $N \in W_\psi$ and $y \in S$. As $\dim (\Ker \varphi \cap \Ker \psi)=n-2$, we deduce that $S=\Ker \varphi \cap \Ker \psi$.
It follows that for all $N \in W$, if $\im N \subset \Ker \psi$ then we successively obtain that $\im N \subset S$,
that $N \in W_\varphi$ and finally that $N$ maps $Q$ into $R$.
We deduce that
$$\codim W \geq 2r-1=2n-3,$$
contradicting our basic assumptions.

\vskip 3mm
It follows that $\psi$ must have type $4$.
Let us consider subspaces $Q' \subset \K^r$ and $R' \subset \Ker \psi$ attached to the type $4$ form $\psi$.
As $\Ker \psi$ does not include $R$, we have $R \neq R'$, and hence $\dim(R+R') \geq n-2$.

On the other hand, $W_\varphi$ (respectively, $W_\psi$) contains every matrix of $\Mat_{n,r}(\K)$ with range included in $R$ (respectively, in $R'$).
It follows that $W$ contains every matrix with range included in $R+R'$.

If $\dim(R+R') \geq n-1$, it would follow that $\codim W \leq r=n-1<2n-4$, contradicting earlier results.
Hence, $\dim(R+R')=n-2$.

\vskip 3mm
\noindent \textbf{Step 3: $Q=Q'$.} \\
Assume that we can find $x \in Q' \setminus Q$.
Let $y \in \Ker \varphi \cap \Ker \psi$. We can find $N \in W_\varphi$ with range $\K y$ such that $Nx=y$.
It follows that $y \in R$. Hence, $\Ker \varphi \cap \Ker \psi \subset R$, which contradicts $\dim R=n-3$.
Thus $Q' \subset Q$, and hence $Q=Q'$.

\vskip 3mm
We are ready to reach a final contradiction.
As $\K^n=\Ker \varphi+\Ker \psi$, every vector $y$ of $\K^n$ spans the range of some matrix
$N \in W$ with kernel $Q$. It follows that $W$ contains every matrix of $\Mat_{n,r}(\K)$ that
maps $Q$ into $R+R'$. As the space of all such matrices has codimension $2n-4$ in $\Mat_{n,r}(\K)$, we deduce that both spaces are equal,
whence $W$ is equivalent to $\calR(r-1,1)$. It follows that $V$ is equivalent to a subspace of $\calR(r-1,2)$.
Since $n=r+1$, lifting lemma 3.2 applies to $V^T$, which shows that $V^T$ is $r$-decomposable.
This contradicts the assumption that $V$ is not $r$-decomposable, thereby completing the proof.
\end{proof}

\begin{claim}\label{claim13}
There is no bad linear form of type $2$.
\end{claim}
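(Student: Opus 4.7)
Assume for contradiction that a bad linear form of type~2 exists. By Claims~\ref{claim10}, \ref{claim11}, and \ref{claim12}, every bad linear form is then of type~2, and Claim~\ref{claim7} provides a basis $(\varphi_1, \dots, \varphi_n)$ of $(\K^n)^\star$ consisting of such forms. For each $i$, let $Q_i \subset \Ker \varphi_i$ denote the associated $(r-1)$-dimensional subspace. A first dimension count, combining the embedding $W_\varphi \hookrightarrow \Hom(\K^r, Q_\varphi)$ of dimension $r(r-1)$ with the rank-theorem bound $\dim W_\varphi \geq \dim W - r \geq nr - (2n - 4 - \epsilon(\K)) - r$, yields $n(r-2) \leq r^2 - 4 - \epsilon(\K)$, so the possibilities narrow down to $n \in \{r+1, r+2\}$.

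The next step will be to extract coupling identities on $W$ coming from type~2. For each bad form $\varphi$, I pick $\psi_\varphi \in Q_\varphi^o \setminus \K \varphi$; the type~2 implication ``$\varphi \circ N = 0 \Rightarrow \psi_\varphi \circ N = 0$'', valid on all of $W$, provides through the factorization lemma for linear maps a linear endomorphism $u_\varphi$ of $(\K^r)^\star$ such that $\psi_\varphi \circ N = u_\varphi(\varphi \circ N)$ for every $N \in W$. Reading this identity tensorially, for every $v \in \K^r$ the functional
$$N \longmapsto \psi_\varphi(Nv) - \varphi(N u_\varphi^\star(v))$$
on $\Mat_{n,r}(\K)$ lies in the annihilator $W^o$; since $\varphi$ and $\psi_\varphi$ are linearly independent, the assignment $v \mapsto \psi_\varphi \otimes v - \varphi \otimes u_\varphi^\star(v)$ is injective, thereby producing an $r$-dimensional subspace $\Lambda_\varphi \subset W^o$.

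The heart of the argument will then be to exploit the collection $\Lambda_1, \dots, \Lambda_n$ obtained from the basis of bad forms: these $n$ subspaces of dimension $r$ sit inside $W^o$ of dimension at most $2n - 4 - \epsilon(\K)$, so their total nominal dimension $nr$ vastly exceeds the ambient one and they must overlap substantially. By analysing when an element of $\Lambda_i$ coincides with an element of $\Lambda_j$ in $(\K^n)^\star \otimes \K^r$ -- a situation governed by the rigidity of rank-$\leq 2$ tensor identities -- I expect to force strong algebraic compatibility among the endomorphisms $u_i$, and from this compatibility to extract a non-zero vector $x \in \K^r$ together with a linear subspace $P \subset \K^n$ of dimension at most two such that $Nx \in P$ for every $N \in W$. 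Then $W$ will be equivalent to a subspace of $\calR(1, r-1)$ or $\calR(2, r-1)$, so that $V$ is equivalent to a subspace of $\calR(1, r)$ or $\calR(2, r-1)$, and Proposition~\ref{liftingprop4} (lifting lemma 3.1) or Proposition~\ref{liftingprop5} (lifting lemma 3.2) will deliver the contradiction that $V$ is $r$-decomposable. The main obstacle is precisely this extraction of the pair $(x, P)$: unlike in the type~4 situation of Claim~\ref{claim12}, where the saturation of $W_\varphi$ permitted a clean ``union of ranges'' argument via rank-one matrices, here the type~2 condition is only a one-sided implication and $W_\varphi$ is not in general saturated, so the analysis has to be carried out purely at the level of the tensors defining the $\Lambda_i$ and their rigid coincidence relations modulo the codimension budget $2n-4-\epsilon(\K)$ on $W$.
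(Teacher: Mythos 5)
Your setup is sound as far as it goes: the bound $\dim W_\varphi \leq r(r-1)$ combined with $\dim W_\varphi \geq \dim W - r$ does force $n \in \{r+1, r+2\}$ (though this already follows from the definition of type~2 via Claim~\ref{claim9}), and the factorization-lemma identity $\psi_\varphi \circ N = u_\varphi(\varphi \circ N)$, read tensorially as an $r$-dimensional subspace $\Lambda_\varphi$ of $W^o$, is a legitimate reformulation of the type~2 hypothesis. But the proof has a genuine gap exactly where you yourself flag it: the passage from ``the $\Lambda_\varphi$ overlap a lot inside a space of dimension $\leq 2n-4-\epsilon(\K)$'' to ``there exist $x$ and $P$ with $\dim P \leq 2$ and $Nx \in P$ for all $N \in W$'' is asserted, not proved. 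It is not at all clear that the rank-$\leq 2$ elements $\psi_\varphi \otimes v - \varphi \otimes u_\varphi^\star(v)$ from different $\varphi$'s can coincide only when the $u_\varphi$'s are rigidly compatible; a coincidence $\psi_{\varphi_i}\otimes v_i - \varphi_i \otimes w_i = \psi_{\varphi_j}\otimes v_j - \varphi_j \otimes w_j$ is a rank-$\leq 2$ relation in $(\K^n)^\star \otimes (\K^r)^\star$ whose solution set is large and does not obviously constrain the maps $u_{\varphi_i}$ the way you need. You would have to carry out an actual pigeonhole/linear-algebra argument here, and that argument is the whole content of the claim.

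There is a second problem downstream: even granting the extraction of $(x, P)$, the conclusion ``$V$ is equivalent to a subspace of $\calR(1,r)$ or $\calR(2,r-1)$'' is not correct in the $\dim P = 2$ case. From $Wx \subset P$ alone one only gets that one column of $N$ (hence one column of $\begin{bmatrix} f(N) & N\end{bmatrix}$) lies in $P$; since $f(N)$ is unconstrained, this puts $V$ into $\calR(2,r)$, not $\calR(2,r-1)$, and there is no lifting lemma for $\calR(2,r)$. The case analysis in the paper's special lifting lemma~3 reaches $\calR(2,r-1)$ only because, in the quasi-range-compatible branch, the map $f$ itself has the structured form $N \mapsto Nx' + u(Nx)$ with $u$ an endomorphism of $P$, which allows a column operation to absorb $f$ into $P$. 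Your sketch produces no such structure on $f$.

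The paper's own argument is organized quite differently and both of these difficulties are addressed there. It first proves that $W$ cannot be equivalent to $\calR(r,0)$ (Step~1), and then uses that as the target of the final contradiction; this preliminary step has no analogue in your plan. It then shows $r = n-1$, constructs an adapted basis of type~2 bad forms inductively so that the associated $Q_i$'s drop in codimension by exactly one at each step (Step~3), derives that all the $Q_i$'s lie inside the common hyperplane $Q_1 + Q_2$ (Steps~4--6), and only \emph{after} this coordinate preparation carries out a factorization argument on the rows $R_1(M),\dots,R_n(M)$ of each $M \in W$ to conclude $W \subset \calR(r,0)$. The careful inductive choice of basis and the hyperplane containment are what make the final factorization tractable; your plan skips both.
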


\begin{proof}
Assume on the contrary that we can find a bad linear form $\varphi_1$ of type $2$.
By the above claims every bad linear form must be of type $2$.
It follows that $r=n-1$ or $r=n-2$.

\vskip 3mm
\noindent \textbf{Step 1: $W$ is not equivalent to $\calR(r,0)$.} \\
Assume that $W$ is equivalent to $\calR(r,0)$.
If $r=n-1$, then $V$ is equivalent to a subspace of $\calR(r,1)$, and lifting lemma 3.1
applied to $V^T$ would show that $V$ is $r$-decomposable, contradicting our assumptions.
It follows that $r=n-2$.
Then, without loss of generality we can assume that $W=\calR(r,0)$. For every $N \in \Mat_r(\K)$, denote by
$\beta(N)$ and $\gamma(N)$ the last two entries of the image of $\begin{bmatrix}
N \\
[0]_{2 \times r}
\end{bmatrix}$ under $f$. Then, $\beta$ and $\gamma$ are linear forms on $\Mat_r(\K)$
that vanish at every non-singular matrix. Yet, as $r \geq 2$, Flanders's theorem shows that
no affine hyperplane of $\Mat_r(\K)$ consists solely of singular matrices. Thus, $\beta=\gamma=0$.
It follows that $V \subset \calR(r,0)$, contradicting our assumption that $V$ is not $r$-decomposable.

\vskip 3mm
\noindent \textbf{Step 2: $r=n-1$.} \\
Assume on the contrary that $r=n-2$. Then, $\#\K>2$ and $\codim W=2n-4$.
Let us choose a bad linear form $\varphi_1$. Then, we have a linear subspace $Q_1$ of codimension $3$ in $\K^n$
such that $W_{\varphi_1}$ is the set of all matrices $N$ in $\Mat_{n,r}(\K)$ such that $\im N \subset Q_1$.
As $\dim Q_1>0$, Claim \ref{claim7} shows that we can find a bad linear form $\varphi_2$ such that $Q_1 \not\subset \Ker \varphi_2$,
yielding a linear subspace $Q_2$ of codimension $3$ in $\K^n$
such that $W_{\varphi_2}$ is the set of all matrices $N$ in $\Mat_{n,r}(\K)$ such that $\im N \subset Q_2$.
Then, $W$ contains every matrix with range included in $Q_1+Q_2$. However, $\dim (Q_1+Q_2) \geq n-2$ since
$Q_1 \neq Q_2$ and $\dim Q_1=\dim Q_2=n-3$. As $\codim W =2n-4$, we deduce that $\dim(Q_1+Q_2)=n-2$ and that
$W$ is exactly the space of all matrices $N \in \Mat_{n,r}(\K)$ such that $\im N \subset Q_1+Q_2$.
Thus, $W$ is equivalent to $\calR(r,0)$, contradicting Step 1.

\vskip 3mm
\noindent \textbf{Step 3: An inductive construction.} \\
To every bad linear form $\varphi$, we can attach a linear subspace $Q \subset \Ker \varphi$ with codimension $2$ in
$\K^n$ such that every matrix of $W_\varphi$ has its range included in $Q$.
Let $i \in \lcro 1,n-2\rcro$, and assume that there exist bad linear forms $\varphi_1,\dots,\varphi_{i-1}$,
with attached linear subspaces $Q_1,\dots,Q_{i-1}$ such that, for all $j \in \lcro 2,i-1\rcro$,
the space $\underset{k=1}{\overset{j-1}{\bigcap}} Q_k$ has codimension $j$ in $\K^n$ and is not included in $\Ker \varphi_j$,
and $\underset{k=1}{\overset{i-1}{\bigcap}} Q_k$ has codimension $i$ in $\K^n$.
By Claim \ref{claim7}, we can choose a bad linear form $\varphi_i$ whose kernel does not include
$\underset{k=1}{\overset{i-1}{\bigcap}} Q_k$, and denote by $Q_i$ an attached space.
Then, $\underset{k=1}{\overset{i}{\bigcap}} Q_k$ is a proper subspace of $\underset{k=1}{\overset{i-1}{\bigcap}} Q_k$
with codimension $1$ or $2$ (since $Q_i$ is included in $\Ker \varphi_i$).
Assume that $\underset{k=1}{\overset{i}{\bigcap}} Q_k$ has codimension $2$ in $\underset{k=1}{\overset{i-1}{\bigcap}} Q_k$;
then, it has codimension $i+2$ in $\K^n$; yet,
$$\forall N \in W, \; \im N \subset \underset{k=1}{\overset{i}{\bigcap}} \Ker \varphi_k
\Rightarrow \im N \subset \underset{k=1}{\overset{i}{\bigcap}} Q_k$$
and hence
$$\codim W \geq 2r = 2n-2,$$
contradicting our assumptions.
Thus, $\underset{k=1}{\overset{i}{\bigcap}} Q_k$ has codimension $i+1$ in $\K^n$.

Therefore, by induction (the case $i=1$ being obvious) we obtain bad linear forms
$\varphi_1,\dots,\varphi_{n-1}$, with attached spaces $Q_1,\dots,Q_{n-1}$
such that, for all $i \in \lcro 2,n-1\rcro$, the hyperplane $\Ker \varphi_i$ does not include
$Q_1 \cap \cdots \cap Q_{i-1}$, which has codimension $i$ in $\K^n$.

\vskip 3mm
\noindent \textbf{Step 4: The space $Q_1+Q_2$ is a hyperplane of $\K^n$ that includes all the $Q_i$ spaces.} \\
Since $\codim Q_1=\codim Q_2=2$ and $\codim (Q_1\cap Q_2)=3$, we find that $\codim (Q_1+Q_2)=1$.
In other words, $Q_1+Q_2$ is a linear hyperplane of $\K^n$.
Next, let $i \in \lcro 3,n\rcro$. Since $Q_1 \cap \cdots \cap Q_{i-1}$ is not included in $\Ker \varphi_i$,
$Q_1 \cap Q_2$ is not included in $Q_i$ either.
With the line of reasoning from Step 3 (more precisely, the inductive step at $i=2$),
we find that both $Q_1 \cap Q_i$ and $Q_2 \cap Q_i$ have codimension $3$ in $\K^n$, and hence they have codimension $1$ in $Q_i$.
Their intersection is $(Q_1 \cap Q_2) \cap Q_i$. Again, with the line of reasoning from Step 3, we see that
$(Q_1 \cap Q_2) \cap Q_i$ has codimension $4$ in $\K^n$, and hence it has codimension $2$ in $Q_i$.
Therefore, $Q_1 \cap Q_i$ and $Q_2 \cap Q_i$ are distinct linear
hyperplanes of $Q_i$, which yields $Q_i=(Q_1 \cap Q_i)+(Q_2 \cap Q_i)$.
Hence, $Q_i \subset Q_1+Q_2$, as claimed.

\vskip 3mm
Next, we choose a (non-zero) linear form $\chi$ on $\K^n$ such that
$$\Ker \chi=Q_1+Q_2.$$

\vskip 3mm
\noindent \textbf{Step 5: $Q_2\not\subset \Ker \varphi_1$.} \\
Assume on the contrary that $Q_2 \subset \Ker \varphi_1$. Then, for every $N \in W$,
if $\im N \subset \Ker \varphi_2$ we successively find that $\im N \subset Q_2$, and then $\im N \subset Q_1 \cap Q_2$.
This leads to
$$\codim W \geq 2r>2n-4,$$
contradicting our assumptions.

\vskip 3mm
\noindent \textbf{Step 6: $Q_1=\Ker \chi \cap \Ker \varphi_1$.} \\
As $Q_2$ is not included in $\Ker \varphi_1$, we have $\Ker \chi \neq \Ker \varphi_1$, and hence
$\Ker \chi \cap \Ker \varphi_1$ has dimension $n-2$. Since $\dim Q_1=n-2$ and $Q_1 \subset \Ker \chi \cap \Ker \varphi_1$,
the claimed result follows.

\vskip 3mm
\noindent \textbf{Step 7: The linear forms $\varphi_1,\dots,\varphi_{n-1},\chi$ constitute a basis of the dual space of $\K^n$.} \\
It suffices to prove that the intersection of their kernels equals zero.
Let $i \in \lcro 2,n-1\rcro$.
As $Q_1 \cap \cdots \cap Q_{i-1}$ is not included in $\Ker \varphi_i$,
the space $Q_1 \cap \cdots \cap Q_{i-1} \cap \Ker \varphi_i$ is a hyperplane of $Q_1 \cap \cdots \cap Q_{i-1}$, and hence it has codimension
$i+1$ in $\K^n$. On the other hand $Q_1 \cap \cdots \cap Q_{i-1} \cap \Ker \varphi_i$ includes
$Q_1 \cap \cdots \cap Q_i$, and hence those spaces are equal.
By induction, we deduce that
$$\{0\}=Q_1 \cap \cdots \cap Q_{n-1}=Q_1 \cap \Ker \varphi_2 \cap \cdots \cap \Ker \varphi_{n-1.}$$
Using Step 6, we conclude that $\{0\}=\Ker \chi \cap \Ker \varphi_1 \cap \Ker \varphi_2 \cap \cdots \cap \Ker \varphi_{n-1}$,
which yields the claimed result.

\vskip 3mm
Now, we are ready to conclude. Operating on rows, we lose no generality in assuming that
$\varphi_1,\dots,\varphi_{n-1},\chi$ are the canonical linear forms on $\K^n$.
For $M \in W$, we denote its rows by $R_1(M),\dots,R_n(M)$, so that
$$M=\begin{bmatrix}
R_1(M) \\
\vdots \\
R_n(M)
\end{bmatrix}.$$
As $\Ker \chi$ includes all the $Q_i$'s, we obtain, for all $i \in \lcro 1,n-1\rcro$, a linear mapping
$$L_i : R_i(W) \rightarrow R_n(W)$$
such that
$$\forall i \in \lcro 1,n-1\rcro, \; \forall M \in W, \quad R_n(M)=L_i(R_i(M)).$$
Finally, set $t:=\dim R_n(W)$, which is the common rank of the $L_i$ maps.
Then, we obtain a linear injection of $\Ker R_n$ into $\Ker L_1 \times \cdots \times \Ker L_{n-1}$,
so that
$$\dim W-t \leq (n-1)(r-t).$$
This yields
$$\dim W \leq nr-r-tn+2t=nr-2n+4+\bigl((n-3)+(2-n)t\bigr)$$
As $\dim W \geq nr-2n+4$, this yields $t=0$.
Hence, $W \subset \calR(r,0)$. Just like in the proof of Step 1, this leads to a contradiction. This completes the proof.
\end{proof}

It follows from the previous results that every bad linear form has type $1$.
Then, one checks that the proofs of Claims 5 and 6 from Section \ref{specialliftingsection2}
still hold under the assumption $\dim W \geq nr-2n+4$, which yields that
$V$ is equivalent to a subspace of $\calR(1,r)$.
A final contradiction then follows from lifting lemma 3.1.
This completes the proof of special lifting lemma 3.

\subsection{Wrapping the proof up}\label{wrapupsecond}

We are finally ready to complete the proof of the second classification theorem.
As in the proof of the other two classification theorems, we proceed by induction over $n,p,r$.
The case $r\leq 1$ is already known for all integers $n$ and $p$.
If $r=2$, it is known (see \cite{AtkLloydPrim} for fields with more than $2$ elements, and \cite{dSPprimitiveF2} for the field
with two elements) that a linear subspace $V$ of $\Mat_{n,p}(\K)$ with upper-rank $2$ is $2$-decomposable
whenever $\dim V\geq 4$ if $\# \K>2$, and $\dim V \geq 6$ if $\# \K=2$. This yields the second classification theorem in the case
when $r=2$.

In the rest of the proof, we let $n,p,r$ be non-negative integers such that $n \geq p \geq r \geq 3$.
Let $S$ be a rank-$\overline{r}$ linear subspace of $\Mat_{n,p}(\K)$ such that
$$\dim S \geq nr-2(n-p+r)+2+\epsilon(\K).$$
If $r=p$ we simply have $S\subset \calR(0,r)$.
Thus, in the rest of the proof, we assume that
$3 \leq r \leq p-1$. It follows that $4 \leq p \leq n$.

If $S$ were equivalent to $\calR(s,s)$ for some non-negative integer $s$ such that $2s \leq r$,
then $S$ would be $r$-decomposable. In the rest of the proof, we assume that this situation does not hold.
Thus, according to Lemma \ref{3rdkey}, we can find a $1$-dimensional linear subspace $D$ of $\K^n$
such that $\dim S^D \leq \lfloor \frac{r-1}{2}\rfloor$, or we can find a linear hyperplane
$H$ of $\K^p$ such that $\dim S_H \leq \lfloor \frac{r-1}{2}\rfloor$.
Note that $\lfloor \frac{r-1}{2}\rfloor \leq r-2$.

From there, we split the discussion into four main cases.
Remember from Remark \ref{liftingremark3} that if the conclusion of the second classification theorem holds
for $S^T$ then it also holds for $S$.

\subsubsection{Case 1: There exists a $1$-dimensional linear subspace $D$ of $\K^n$ such that $1 \leq \dim S^D \leq \lfloor \frac{r-1}{2}\rfloor$.}

We apply the ERC method.

Without loss of generality, we can assume that $D$ is spanned by the first vector of the standard basis of $\K^n$
and that $S^D$ contains $E_{1,1}$. Then, by the extraction lemma, every matrix $M \in S$
splits as
$$M=\begin{bmatrix}
? & [?]_{1 \times (p-1)} \\
[?]_{(n-1) \times 1} & P(M)
\end{bmatrix},$$
where $P(S)$ is a rank-$\overline{r-1}$ linear subspace of $\Mat_{n-1,p-1}(\K)$.
Then,
$$\dim P(S) \geq \dim S-(n-1)-\dim S^D.$$
One checks that
\begin{multline*}
\bigl(nr-2(n-p+r)+2+\epsilon(\K)\bigr)-\bigl((n-1)(r-1)-2((n-1)-(p-1)+(r-1)\bigr)+2+\epsilon(\K)\bigr) \\
=(n-1)+(r-2).
\end{multline*}
As $\dim S^D \leq r-2$,
it follows that
$$\dim P(S) \geq (n-1)(r-1)-2\bigl((n-1)-(p-1)+(r-1))+2+\epsilon(\K).$$
Hence, the induction hypothesis applies of $P(S)$.
Moreover, since $\dim S^D \leq  \lfloor \frac{r-1}{2}\rfloor$, the above inequality on $\dim P(S)$ can be
sharpened so as to discard the following ``exceptional" cases from Remark \ref{2ndremark}:
\begin{enumerate}[(i)]
\item $n \geq p+1$, $r=6$ and $P(S)$ is equivalent to a linear subspace of $\calR(3,2)$ with codimension at most $1$;
\item $n=p+1$, $r=7$ and $P(S)$ is equivalent to $\calR(3,3)$ or to $\calR(4,2)$;
\item $n=p$, $r=7$ and $P(S)$ is equivalent to a subspace of $\calR(3,3)$ with codimension at most $1$;
\item $n=p$, $r=8$ and $P(S)$ is equivalent to $\calR(4,3)$ or $\calR(3,4)$;
\item $(n,p,r)=(5,5,4)$, $\# \K=3$ and $P(S)$ is equivalent to $\calU_4(\K)$.
\end{enumerate}

Thus, according to Remark \ref{2ndremark}, only four cases remain to be considered:
\begin{itemize}
\item \textbf{Subcase 1.1: $P(S)$ is equivalent to a subspace of $\calR(i,r-1-i)$ for some $i \in \{0,1\}$.} \\
Then, $S$ is equivalent to a subspace of $\calR(i+1,r-i)$ for some $i \in \{0,1\}$, and the conclusion
follows from lifting lemma 3.1 or from lifting lemma 3.2.
\item \textbf{Subcase 1.2: $P(S)$ is equivalent to a subspace of  $\calR(r-1-i,i)$ for some $i \in \{0,1\}$, and $r \geq 4$ or $n=p$.} \\
Then, $S$ is equivalent to a subspace of $\calR(r-i,i+1)$ for some $i \in \{0,1\}$, and the
conclusion follows from lifting lemma 3.1 or from lifting lemma 3.2 applied to $S^T$.
\item \textbf{Subcase 1.3: $P(S)$ is equivalent to a subspace of $\calR(2,r-3)$, and $r=3$ or $n \geq 6$.} \\
Then, $S$ is equivalent to a subspace of $\calR(3,r-2)$ and the conclusion follows from lifting lemma 3.3.
\item \textbf{Subcase 1.4: $P(S)$ is equivalent to a subspace of $\calR(r-3,2)$, $n=p$, and $n \geq 6$.} \\
Then, $S$ is equivalent to a subspace of $\calR(r-2,3)$ and the conclusion follows from lifting lemma 3.3 applied to $S^T$.
\end{itemize}

\subsubsection{Case 2: There exists a linear hyperplane $H$ such that $1 \leq \dim S_H \leq \lfloor \frac{r-1}{2}\rfloor$.}

Again, we apply the ERC method.
This time around, we obtain
$$\dim P(S) \geq \dim S-(p-1)-\dim S_H,$$
and we can follow the same line of reasoning as in Case 1 because $p \leq n$.

\subsubsection{Case 3: There exists a linear hyperplane $H$ of $\K^p$ such that $S_H=\{0\}$.}

We apply the EC method.
Without loss of generality, we can assume that $H=\{0\} \times \K^{p-1}$.
Then, we split every matrix $M \in S$ as
$$M=\begin{bmatrix}
[?]_{n \times 1} & J(M)
\end{bmatrix},$$
and $J(S)$ is a rank-$\overline{r}$ linear subspace of $\Mat_{n,p-1}(\K)$.
Now,
$$\dim J(S)=\dim S \geq nr-2\bigl(n-(p-1)+r\bigr)+4+\epsilon(\K).$$
Thus, by induction we know that $J(S)$ is $r$-decomposable.
Note that $n>p-1$. Moreover, it is easily seen that the following ``exceptional" cases
can be discarded thanks to the improved lower bound on $\dim J(S)$:
\begin{enumerate}[(i)]
\item $n \geq p$, $r=5$ and $J(S)$ is equivalent to a linear subspace of $\calR(3,2)$ with codimension at most $1$;
\item $n=p$, $r=6$ and $J(S)$ is equivalent to $\calR(3,3)$ or to $\calR(4,2)$;
\end{enumerate}

This leaves us with only four subcases to consider.

\vskip 3mm
\noindent \textbf{Subcase 3.1: $J(S)$ is equivalent to a subspace of $\calR(1,r-1)$ or $\calR(2,r-2)$.} \\
Then, $S$ is equivalent to a subspace of $\calR(1,r)$ or $\calR(2,r-1)$, and we conclude
by applying lifting lemma 3.1 or lifting lemma 3.2.

\vskip 3mm
\noindent \textbf{Subcase 3.2: $J(S)$ is equivalent to a subspace of $\calR(r,0)$ or of $\calR(r-1,1)$.} \\
Then, $S$ is equivalent to a subspace of $\calR(r,1)$ or $\calR(r-1,2)$.
If $r \geq 4$, then one of lifting lemmas 3.1 and 3.2 applies to $S^T$, otherwise
one of lifting lemmas 3.2 or 3.3 directly applies to $S$.
In any case, we obtain the expected conclusion.

\vskip 3mm
\noindent \textbf{Subcase 3.3: $J(S)$ is equivalent to a subspace of $\calR(0,r)$, and $r=p-1$.} \\
Then, with $W:=J(S)$, we find a linear map $f : W \rightarrow \K^n$
such that
$$S=\Bigl\{\begin{bmatrix}
f(N) & N
\end{bmatrix} \mid N \in W \Bigr\}.$$
As $r=p-1$ we see that
$$\codim W \leq 2n-4-\epsilon(\K).$$
If $n \geq 5$ or $\# \K=2$, the conclusion follows from special lifting lemma 3.
Otherwise, we must have $n=p=4$, $r=3$ and $\# \K>2$, and the conclusion follows directly from Proposition \ref{4by4r=3}.

\vskip 3mm
\noindent \textbf{Subcase 3.4: $J(S)$ is equivalent to a subspace of $\calR(0,r)$, and $r<p-1$.} \\
Then, $S$ is equivalent to a subspace of $\calR(0,p-1)$.
Thus, there exists a rank-$\overline{r}$ linear subspace $V$ of $\Mat_{n,p-1}(\K)$
such that $S$ is equivalent to the space $\widetilde{V}$ of all matrices of the form
$\begin{bmatrix}
N & [0]_{n \times 1}
\end{bmatrix}$ with $N \in V$. As $\dim V=\dim S$ and $n>p-1$, we see by induction that $V$
is $r$-decomposable, and it follows that $\widetilde{V}$ is $r$-decomposable. Hence, $S$ is $r$-decomposable.

\subsubsection{Case 4: There exists a $1$-dimensional linear subspace $D$ of $\K^n$ such that $S^D=\{0\}$.}

If $n=p$, then Case 3 applies to $S^T$, and we obtain the expected conclusion.
In the remainder of the proof, we assume that $n>p$.

Then, we apply the ER method.
Without loss of generality, we can assume that $D$ is spanned by the first vector of the standard basis of $\K^n$.
Then, we split every matrix $M \in S$ up as
$$M=\begin{bmatrix}
[?]_{1 \times p} \\
A(M)
\end{bmatrix},$$
and $A(S)$ is a rank-$\overline{r}$ linear subspace of $\Mat_{n-1,p}(\K)$.
We still have $n-1 \geq p$, but now
$$\dim A(S)=\dim S \geq (n-1)r-2\bigl((n-1)-p+r\bigr)+(2+\epsilon(\K))+(r-2).$$
Thus, the induction hypothesis can be applied to $A(S)$.
Again, thanks to the improved lower-bound on $A(S)$, we can discard the following exceptional cases from Remark \ref{2ndremark}:
\begin{enumerate}[(i)]
\item $n \geq p+2$, $r=5$ and $A(S)$ is equivalent to a linear subspace of $\calR(3,2)$ with codimension at most $1$;
\item $n=p+2$, $r=6$ and $A(S)$ is equivalent to $\calR(3,3)$ or to $\calR(4,2)$;
\item $n=p+1$, $r=6$ and $A(S)$ is equivalent to a subspace of $\calR(3,3)$ with codimension at most $1$;
\item $n=p+1$, $r=7$ and $A(S)$ is equivalent to $\calR(4,3)$ or $\calR(3,4)$;
\item $(n,p,r)=(5,4,4)$, $\# \K=3$ and $A(S)$ is equivalent to $\calU_4(\K)$.
\end{enumerate}

Hence, only the following remaining cases need to be considered.
\vskip 3mm
\noindent \textbf{Subcase 4.1: $A(S)$ is equivalent to a subspace of $\calR(i,r-i)$ for some $i \in \{0,1,2\}$.} \\
Then, $S$ is equivalent to a subspace of $\calR(i+1,r-i)$.
Noting that $n \geq p+1 \geq r+2$, we obtain that $n \geq 6$ whenever $r \geq 4$. Hence,
one of lifting lemmas 3.1, 3.2 or 3.3 applies to $S$, which yields the excepted conclusion.

\vskip 3mm
\noindent \textbf{Subcase 4.2: $r \geq 4$ and $A(S)$ is equivalent to a subspace of $\calR(r-i,i)$ for some $i \in \{1,2\}$.} \\
Then, $S$ is equivalent to a subspace of $\calR(r+1-i,i)$, and, since $r \geq 4$, one of lifting lemmas 3.1 or 3.2
applies to $S^T$, yielding that $S$ is $r$-decomposable.

\vskip 3mm
\noindent \textbf{Subcase 4.3: $A(S)$ is equivalent to a subspace of $\calR(r,0)$.} \\
Then, as $r<p$, we see that $A(S)$ is equivalent to a subspace of $\calR(n-1,0)$.
This yields a linear subspace $V$ of $\Mat_{n-1,p}(\K)$ such that
$S$ is equivalent to the space $\widetilde{V}$ of all matrices of the form
$\begin{bmatrix}
N \\
[0]_{1 \times p}
\end{bmatrix}$ with $N \in V$.
Then, as $\dim V=\dim S$, we obtain by induction that
$V$ is $r$-decomposable: indeed, the only exceptional solution would be the case when $n=5$, $p=4$, $r=3$, $\# \K=3$ and $V$ is equivalent to
$\calU_4(\K)$, but this case has been previously discarded.

\vskip 3mm
Our case-by-case study is now completed. Hence, the second classification theorem is finally established.

\section{Conclusion, or where to go next}

As we have demonstrated through the examples in our introduction, the second classification theorem is optimal in the full-rank case, i.e.\ in the case when $r=p-1$: right under the lower bound from that theorem, we can find a rank-$\overline{r}$ space that is not $r$-decomposable. Under this bound, we expect that the maximal subspaces should become increasingly complicated, even for algebraically closed fields. Anyway,
the methods that we have used in this article have clearly reached their limit.

In this short section, we wish to point to a possible direction for further research on the topic.
Let us start with a simple observation: let $s$ be a positive integer such that $s<p-1$.
Let $n \geq m \geq s+1$ and $W$ be a linear subspace of $\Mat_{m,s+1}(\K)$ with upper-rank $s$ which is not $s$-decomposable.
Then, $V:=W \vee \Mat_{n-m,p-s-1}(\K)$ has upper-rank $p-1$ but it is not $(p-1)$-decomposable.
Note that
$$\dim V \geq n(p-s-1) \quad \text{and} \quad n(p-s-1)=n(p-1)-sn.$$
Thus, for small values of $s$ we obtain a dimension that is substantially larger
than the lowest dimension among the rank-$\overline{p-1}$ compression spaces.
Moreover, one checks that the equivalence class of $V$ determines that of $W$. Thus,
finding whole new ranges of dimensions for which the classification of rank-$\overline{p-1}$ subspaces is known
would require a rather precise understanding of the structure of all rank-$\overline{s}$ subspaces for \emph{small} values of $s$.
Incidentally, our proof of the second classification theorem did involve some results from the classification of rank-$\overline{2}$ spaces.

Let us reframe the problem by using the notion of a \textbf{primitive} bounded rank space.
Let us recall the definition from \cite{AtkLloydPrim}:

\begin{Def}
Let $n,p,r$ be non-negative integers with $r \leq \min(n,p)$, and
$\calV$ be a linear subspace of $\Mat_{n,p}(\K)$ with upper-rank $r$.
We say that $\calV$ is \textbf{primitive} when
it satisfies the following four conditions:
\begin{enumerate}[(i)]
\item It is not equivalent to a subspace of $\calR(0,p-1)$.
\item It is not equivalent to a subspace of $\calR(n-1,0)$.
\item There does not exist a space $\calT$ that is equivalent to $\calV$ and in which
we can write every matrix $N$ as $N=\begin{bmatrix}
H(N) & [?]_{n \times 1}
\end{bmatrix}$ with $H(\calT)$ a rank-$\overline{r-1}$ subspace of $\Mat_{n,p-1}(\K)$.
\item There does not exist a space $\calT$ that is equivalent to $\calV$ and in which
we can write every matrix $N$ as $N=\begin{bmatrix}
H(N) \\
[?]_{1 \times p}
\end{bmatrix}$ with $H(\calT)$ a rank-$\overline{r-1}$ subspace of $\Mat_{n-1,p}(\K)$.
\end{enumerate}
\end{Def}

Roughly, a primitive space with upper-rank $r$ is one which cannot be obtained from
another bounded rank space with fewer rows or columns by an ``obvious" completion method.
Note that any space with upper-rank $r$ that includes a primitive one is primitive.
Moreover, it is obvious that no compression space is primitive (except the one of all $0$ by $0$ matrices!).

Primitive spaces are connected to general spaces through the following result.

\begin{prop}[See Theorem 1 of \cite{AtkLloydPrim}]
Let $V$ be a linear subspace of $\Mat_{n,p}(\K)$ with upper-rank $r$.
Then, there are integers $s,t,s',t'$ such that $s+t \leq \min(p-t',n-s')$
together with a primitive linear subspace
$W$ of $\Mat_{s',t'}(\K)$ such that:
\begin{enumerate}[(i)]
\item The space $V$ is equivalent to a subspace of the space of all matrices of the form
$$\begin{bmatrix}
[?]_{s \times t} & [?]_{s \times t'} & [?]_{s \times (p-t-t')} \\
[?]_{s' \times t} & N & [0]_{s' \times (p-t-t')} \\
[?]_{(n-s-s') \times t} & [0]_{(n-s-s') \times t'} & [0]_{(n-s-s') \times (p-t-t')}
\end{bmatrix}$$
with $N \in W$.
\item The space $W$ has upper-rank $r-(s+t)$.
\end{enumerate}
\end{prop}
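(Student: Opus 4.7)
The plan is to proceed by induction on $n + p$. If $V$ is primitive, then we simply take $(s, t, s', t') = (0, 0, n, p)$ and $W := V$; the outer blocks in the block display vanish and both conclusions are immediate.

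Suppose $V$ is not primitive, so that one of conditions (i)--(iv) in the definition fails. When (i) or (ii) fails, $V$ is equivalent to a space whose final column or row vanishes; we strip that column (resp.\ row) off, apply the induction hypothesis to the resulting space in $\Mat_{n, p-1}(\K)$ (resp.\ $\Mat_{n-1, p}(\K)$) of the same upper-rank $r$, and then restore the stripped zero column (resp.\ row). The restoration enlarges only the zero blocks on the right (resp.\ bottom) of the block display, so $(s, t, s', t', W)$ pulls back unchanged and both requirements are preserved.

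When (iii) fails, we have $V \sim \calT$ where writing $N \in \calT$ as $[H(N) \mid c(N)]$, the space $H(\calT) \subset \Mat_{n, p-1}(\K)$ has upper-rank at most $r - 1$. A rank-$r$ element of $V$ has rank at least $r - 1$ after deletion of one column, so $\urk H(\calT) = r - 1$ exactly. Apply induction to $H(\calT)$ to obtain parameters $(s_0, t_0, s'_0, t'_0)$ and a primitive $W \subset \Mat_{s'_0, t'_0}(\K)$ with $\urk W = (r - 1) - (s_0 + t_0)$. The restored column $c(N)$ ranges freely over $\K^n$; after a column permutation (a form of equivalence) we install it as a new leftmost free column and take $(s, t, s', t') := (s_0, t_0 + 1, s'_0, t'_0)$, giving $\urk W = r - (s + t)$ as required. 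Case (iv) is symmetric, producing a new free top row.

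The main obstacle is verifying the constraint $s + t \leq \min(p - t', n - s')$ in cases (iii) and (iv). One of the two inequalities gains a free unit of slack from the removed column (or row), but the other inherits a potentially tight bound from the induction hypothesis. In the boundary case where this bound is tight, the naive augmentation described above would violate the constraint; the resolution requires a more delicate choice of parameters---for instance, reassigning one row from the bottom free block to the middle block, which in turn forces a change in the primitive subspace $W$ and necessitates a verification that primitivity is preserved---or strengthening the inductive hypothesis to guarantee an extra unit of slack in the tight configuration. Once this bookkeeping is dispatched, the remainder of the proof is a mechanical block-matrix assembly.
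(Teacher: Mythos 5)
This proposition is cited from Theorem~1 of \cite{AtkLloydPrim}; the paper itself offers no proof, so there is no in-paper argument to compare against. Your outline (induction on $n+p$, peeling a row or column whenever one of the four primitivity conditions fails) is the natural one, but the step you flag as ``the main obstacle'' is a genuine unresolved gap, not routine bookkeeping. Concretely, suppose condition~(iii) fails, so after equivalence $V\sim\calT$ with $\urk H(\calT)=r-1$, and suppose the induction applied to $H(\calT)\subset\Mat_{n,p-1}(\K)$ returns $(s_0,t_0,s'_0,t'_0)$ with $s_0+t_0=n-s'_0$. Then $(s,t,s',t')=(s_0,t_0+1,s'_0,t'_0)$ violates $s+t\le n-s'$. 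Your proposed repair---reassigning a row from the bottom free block to the middle block---is unavailable precisely in the cleanest boundary case, where $H(\calT)$ is itself primitive: there $s_0=t_0=0$ and $s'_0=n$, so the bottom free block is already empty and there is no row to move. Moreover, moving a row into the middle block replaces $W$ by a larger space, whose primitivity (and whose new upper rank matching $r-(s+t)$) is exactly what one would then have to re-establish. A correct proof must either show that in the tight configuration a different reduction of type (i), (ii) or (iv) is always applicable, or carry a strengthened inductive hypothesis that rules the tight case out; your sketch supplies neither, and the phrase ``a mechanical block-matrix assembly'' undersells what remains.

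A smaller but real misstatement: in case~(iii) you write that the restored column ``ranges freely over $\K^n$.'' Condition~(iii) only asserts that the first $p-1$ columns of $\calT$ form a rank-$\overline{r-1}$ space; it grants no freedom to the remaining column, which is in general an arbitrary linear function of the rest. This happens to be harmless for the inclusion you want (one only needs $V$ to be a \emph{subspace} of the block envelope, which has a free slot where that column lands), but the assertion as written is false and should be removed. Your treatment of cases (i), (ii) and of the primitive base case is correct.
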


Moreover, if $V$ is a \emph{maximal} linear subspace of $\Mat_{n,p}(\K)$ with upper-rank $r$,
then it is easily shown that $W$ is a maximal primitive linear subspace of $\Mat_{s',t'}(\K)$ with upper-rank $r-s-t$,
and its equivalence class is uniquely determined by that of $V$.

Thus, determining maximal bounded rank spaces amounts to classifying the maximal primitive ones.
On the other hand, the classification theorems of the present article can be roughly restated (for linear subspaces) as saying that
a space of matrices with upper-rank $r$ is non-primitive as long as its dimension is large enough.

Thus, in our view, the ultimate challenge consists in tackling the following issue:
\begin{center}
What is the maximal dimension for a (maximal) primitive subspace of $\Mat_{n,p}(\K)$
with upper-rank $r$ (if such subspaces exist)?
\end{center}

When $p$ is very large with respect to $n$, or vice versa,
this dimension is known to equal $r+1$ provided that $\K$ has more than $r$ elements and such subspaces exist (see \cite{AtkinsonPrim}). On the other hand, when $n$ is close to $p$, very large primitive subspaces of bounded rank matrices exist: when $n$ is odd, a key example is
the one of the space $\Mata_n(\K)$ of all $n$ by $n$ alternating matrices with entries in $\K$;
its upper rank equals $n-1$, it is easy to prove that it is primitive, and on the other hand it is maximal
(see Proposition 5 of \cite{FillmoreLaurieRadjavi} for fields with more than $2$ elements, and Proposition 3.7 of \cite{dSPprimitiveF2}
for the field with $2$ elements).
A reasonable conjecture would be that, when $n$ is odd, $\dbinom{n}{2}$ is the maximal dimension for a
primitive subspace of $\Mat_n(\K)$ with upper-rank $\overline{n-1}$, provided that $\# \K \geq n$.
We believe that profound new insights are needed to prove such a result if it happens to be true.

\end{document}